\newtheorem{theorem}{Theorem}
\newtheorem{lemma}[theorem]{Lemma}
\newtheorem{corollary}[theorem]{Corollary}
\newtheorem{proposition}[theorem]{Proposition}
\theoremstyle{definition}
\newtheorem{definition}{Definition}
\newtheorem*{example*gplm}{Example 1}
\newtheorem*{example*gpliv}{Example 2}
\newtheorem*{example*anchor}{Example 3}
\newtheorem*{example*huber}{Example 4}
\newtheorem*{example*survival}{Example 5}
\newtheorem{assumption}{Assumption}
\theoremstyle{remark}
\newcommand{\R}{\mathbb{R}}
\newcommand{\PP}{\mathbb{P}}
\newcommand{\E}{\mathbb{E}}
\newcommand{\Vol}{\mathrm{Vol}\,}
\newcommand{\sgn}{\mathrm{sgn}}
\newcommand{\indist}{\stackrel{d}{\to}}
\newcommand{\TV}{\mathrm{TV}}
\newcommand{\eqdist}{\stackrel{d}{=}}
\newcommand{\norm}[1]{\left\lVert #1 \right\rVert}
\newcommand{\ind}{\mathbbm{1}}
\newcommand{\interior}{\mathrm{int}\,}
\newcommand{\floor}[1]{\left\lfloor #1 \right\rfloor}
\newcommand{\Cov}{\mathrm{Cov}}
\newcommand{\Var}{\mathrm{Var}}
\newcommand{\cX}{\mathcal{X}}
\newcommand{\cZ}{\mathcal{Z}}
\newcommand{\cY}{\mathcal{Y}}
\newcommand{\cG}{\mathcal{G}}
\newcommand{\rose}{\mathrm{(rose)}}
\newcommand{\roseplus}{\mathrm{(rose+)}}
\newcommand{\loceff}{\mathrm{(loceff)}}
\newcommand{\SL}{\mathrm{SL}}
\newcommand{\Strain}{\hat{w}_1}
\newcommand{\SIkc}{S_{\cI_k^c}}
\newcommand{\cL}{\mathcal{L}}
\newcommand{\cR}{\mathcal{R}}
\newcommand{\cP}{\mathcal{P}}
\newcommand{\cI}{\mathcal{I}}
\newcommand{\supP}{\sup_{P\in\cP}}
\newcommand{\given}{\,|\,}
\newcommand{\biggiven}{\,\big|\,}
\newcommand{\Biggiven}{\,\Big|\,}
\newcommand{\bigggiven}{\,\bigg|\,}
\newcommand{\Bigggiven}{\,\Bigg|\,}
\newcommand{\hr}{\bar{r}^{\perp}}
\newcommand{\ball}[1]{{\mathcal{B}_{\epsilon^*}(#1)}}
\newcommand{\B}[1]{B_{\RN{#1}}}
\newcommand{\diam}{\mathrm{diam}}
\theoremstyle{plain}% default
\newcommand{\mb}{\mathbf}
\newcommand{\vertiii}[1]{{\left\vert\kern-0.25ex\left\vert\kern-0.25ex\left\vert #1 
    \right\vert\kern-0.25ex\right\vert\kern-0.25ex\right\vert}}
\newcommand\independent{\protect\mathpalette{\protect\independenT}{\perp}}
    \def\independenT#1#2{\mathrel{\rlap{$#1#2$}\mkern2mu{#1#2}}}
\newcommand{\RN}[1]{%%%%
  \textup{\uppercase\expandafter{\romannumeral#1}}%%%%
}%%%
\newcommand{\mylabel}[2]{#2\def\@currentlabel{#2}\label{#1}}
\title{ROSE Random Forests for Robust Semiparametric Efficient Estimation}
\date{December 2024}
\author[1]{Elliot Young}
\author[1]{Rajen D.\ Shah}
\affil{Statistical Laboratory, University of Cambridge, UK}
\begin{document}

\maketitle

\begin{abstract}
It is widely recognised that semiparametric efficient estimation can be hard to achieve in practice: estimators that are in theory efficient may require unattainable levels of accuracy for the estimation of complex nuisance functions. As a consequence, estimators deployed on real datasets are often chosen in a somewhat ad hoc fashion, and may suffer high variance. We study this gap between theory and practice in the context of a broad collection of semiparametric regression models that includes the generalised partially linear model. We advocate using estimators that are robust in the sense that they enjoy $\sqrt{n}$-consistent uniformly over a sufficiently rich class of distributions characterised by certain conditional expectations being  estimable by user-chosen machine learning methods. We show that even asking for locally uniform estimation within such a class narrows down possible estimators to those parametrised by certain weight functions. Conversely, we show that such estimators do provide the desired uniform consistency and introduce a novel random forest-based procedure for estimating the optimal weights. We prove that the resulting estimator recovers a notion of $\textbf{ro}$bust $\textbf{s}$emiparametric $\textbf{e}$fficiency (ROSE) and provides a practical alternative to semiparametric efficient estimators. We demonstrate the effectiveness of our ROSE random forest estimator in a variety of semiparametric settings on simulated and real-world data.
\end{abstract}

\section{Introduction}\label{sec:intro}
In a variety of statistical problems, the target of interest is a low-dimensional parameter. When estimating such a parameter, rather than postulating a fully parametric model for the data at hand, which may be vulnerable to misspecification, it is often desirable to formulate a model relying on as few assumptions as possible. Such models typically involve nonparametric nuisance functions which can only be estimated at relatively slow rates. Semiparametric statistics~\citep{bickel, vandervaart, tsiatis, kosorok} provides a rich theory for how to construct estimators for the parameter of interest that can retain the attractive features of estimates constructed within parametric models; namely $\sqrt{n}$-consistency and asymptotic normality. These ideas form the basis of targeted learning~\citep{targeted-learning} and debiased machine learning~\citep{chern}: two highly popular frameworks that leverage the predictive capabilities of user-chosen flexible regression (machine learning) methods to estimate the infinite or high-dimensional
nuisance functions, and deliver estimators for the target of interest that enjoy parametric rates of convergence.

A key concept in semiparametric statistics is that of an influence function, which encapsulates how the target parameter changes when the underlying distribution of the data varies within the postulated semiparametric model. Strictly semiparametric models, that is models which do not include all possible data distributions (but nevertheless are highly flexible through the inclusion of nonparametric nuisance functions) give rise to a family of influence functions, each of which may in principle be used to construct an estimator for the target of interest. Among influence functions, the efficient influence function is that which results in the estimator with the smallest asymptotic variance: in fact it yields the smallest asymptotic variance among all regular estimators (see e.g.~\citet[\S25]{vandervaart}), and is therefore a natural choice to use in a given problem. However, while in theory such estimators are optimal, many authors tend not to recommend them for practical use, citing poor empirical performance (see for example \citet[\S25.9]{vandervaart}, \citet[Rem.~2.3~and~\S2.2.4]{chern}, \citet[\S4.6]{tsiatis}). We illustrate some of the issues involved in the case of the partially linear model.

\subsection{Motivation: the partially linear model}\label{sec:PLMmodel}
Given a triple $S:=(Y,X,Z)\in\R\times\R\times \mathcal{Z}$ the partially linear model posits the regression model
\begin{equation}\label{eq:PLM}
    Y = X\theta_0 + f_0(Z) + \varepsilon,
\end{equation}
where the function $f_0:\mathcal{Z}\to\R$ is unknown, and where the error term $\varepsilon$ satisfies $\E\left[\varepsilon\given X,Z\right]=0$ and $v_0(X,Z) := \Var\left(\varepsilon\given X,Z\right)<\infty$ almost surely. The target of inference is the parameter $\theta_0 \in \R$ which characterises the contribution of the covariate $X$ to the response after accounting for the additional predictors $Z$. This model has been studied in great detail \citep{engle, green, rice, chen, speckman, robinson, schick-plr, liang-zhou, hardle, ma, you, chern} and  the efficient influence function $\psi_{\text{eff}}$ is proportional to 
\begin{gather}\label{eq:EIF}
    \psi_{\text{eff}}(S;\theta,\eta_0) := \frac{1}{v_0(X,Z)}\left(X - h_0(Z)\right)\left(Y - X\theta - f_0(Z)\right), \\
\label{eq:h}
    h_0(Z) := \E\left[\frac{1}{v_0(X,Z)}\,\bigg|\,Z\right]^{-1}\E\left[\frac{X}{v_0(X,Z)}\,\bigg|\,Z\right],
    \qquad
    \eta_0 = (h_0,f_0,v_0);
\end{gather}
see \citet{ma} for a derivation. Suppose we are given i.i.d.\ data $S_i:=(Y_i, X_i, Z_i)$, $i=1,\ldots,n$, where $S_1 \eqdist S$. A corresponding estimator $\hat{\theta}_{\text{eff}}$  that can in principle asymptotically achieve the minimum attainable variance among regular estimators is (under additional conditions) then obtained through solving for $\theta$ the estimating equation
\begin{equation} \label{eq:est_eqn}
\sum_{i=1}^n \psi(S_i;\theta,\hat{\eta})=0;
\end{equation}
see \citet[\S25]{vandervaart}. 
Here $\psi = \psi_{\text{eff}}$ and $\hat{\eta} = (\hat{h}, \hat{f}, \hat{v})$ collects together estimates of the nuisance functions $(h_0, f_0, v_0)$. Moreover, the efficient influence function satisfies what is known as a double robustness property a key consequence of which is that the primary component of the bias of the corresponding estimator will be negligible when
\[
\E\left[\big(\hat{h}(Z) - h_0(Z)\big)^2 \,\big|\, \hat{h} \right] \E\left[ \big(\hat{f}(Z) - f_0(Z)\big)^2 \,\big|\, \hat{f} \right] = o_P(n^{-1}).
\]
When this is satisfied we may expect $\sqrt{n}(\hat{\theta}_{\text{eff}} - \theta_0) \indist \mathcal{N}(0, \sigma_{\text{eff}}^2)$, for a minimal attainable variance $\sigma_{\text{eff}}^2$.
Crucially, the appearance of a product of errors on the left-hand side permits relatively slow rates for each of these. 
So far, so good. 
However, while $f_0(Z) = \E(Y \given Z) - \theta_0 (X - \E(X \given Z))$ is an interpretable quantity directly linked to the target $\theta_0$ and whose estimation may be informed by domain knowledge, $h_0$ depends intricately on the distribution of $X$ given $Z$ and the conditional variance $v_0$, and can be challenging to estimate well, even with fairly large sample sizes. Figure~\ref{fig:surface} shows that $h_0$ can be prohibitively complicated even when $\E(Y \given X, Z)$, $\E(X \given Z)$ and $v_0$ are all very simple, and Figure~\ref{fig:EIF_performing_poorly} demonstrates the impact $h_0$ being too complex can have on the estimation quality of $\hat{\theta}_{\text{eff}}$; full details of the simulation setup are given in Section~\ref{sec:sim1}.

\begin{figure}[ht]
   \centering
   \includegraphics[width=1\textwidth]{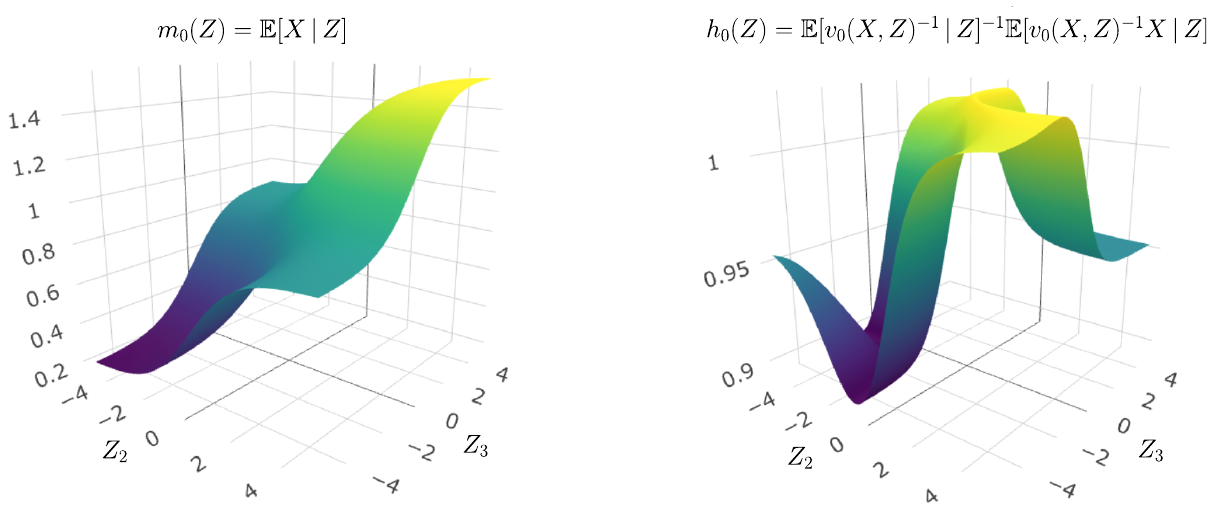}
   \caption{Surface plots of the functions $(z_2,z_3) \mapsto m_0(-1,z_2,z_3)$ and $(z_2, z_3) \mapsto h_0(-1,z_2,z_3)$ for the setting considered in Figure~\ref{fig:EIF_performing_poorly}b (see also Section~\ref{sec:sim3}).}
    \label{fig:surface}
\end{figure}

To address this, \citet{chern} for example instead consider solving \eqref{eq:est_eqn} with $\psi$ given by the influence function (up to a constant of proportionality)
\begin{equation}\label{eq:unwIF}
    \psi_{\mathrm{unw}}(S;\theta,\eta_0(Z)) := \left(X - m_0(Z)\right)\left(Y - X\theta - f_0(Z)\right),
    \qquad
    \eta_0 = (f_0, m_0),
\end{equation}
where $m_0(z) := \E_P\left[X\given Z=z\right]$. We refer to the corresponding estimator $\hat{\theta}_{\text{unw}}$ as the \textbf{unw}eighted estimator, for reasons that will become clear in the following. 
In contrast to $\hat{\theta}_{\text{eff}}$, the unweighted estimator $\hat{\theta}_{\text{unw}}$ enjoys $\sqrt{n}$-consistency under the more benign condition that
\begin{equation} \label{eq:unweighted_criterion}
    \E\left[\big(\hat{m}(Z)-m_0(Z)\big)^2\,\Big|\,\hat{m}\right]
    \E\left[\big(\hat{f}(Z)-f_0(Z)\big)^2\,\Big|\,\hat{f}\right]
    = o_P(n^{-1}).
\end{equation}
Figure~\ref{fig:EIF_performing_poorly} demonstrates how this can sacrifice efficiency, but may be a safer choice compared to the semiparametric efficient estimator $\hat{\theta}_{\text{eff}}$.

\begin{figure}[ht]
   \centering
   \includegraphics[width=1.0\textwidth]{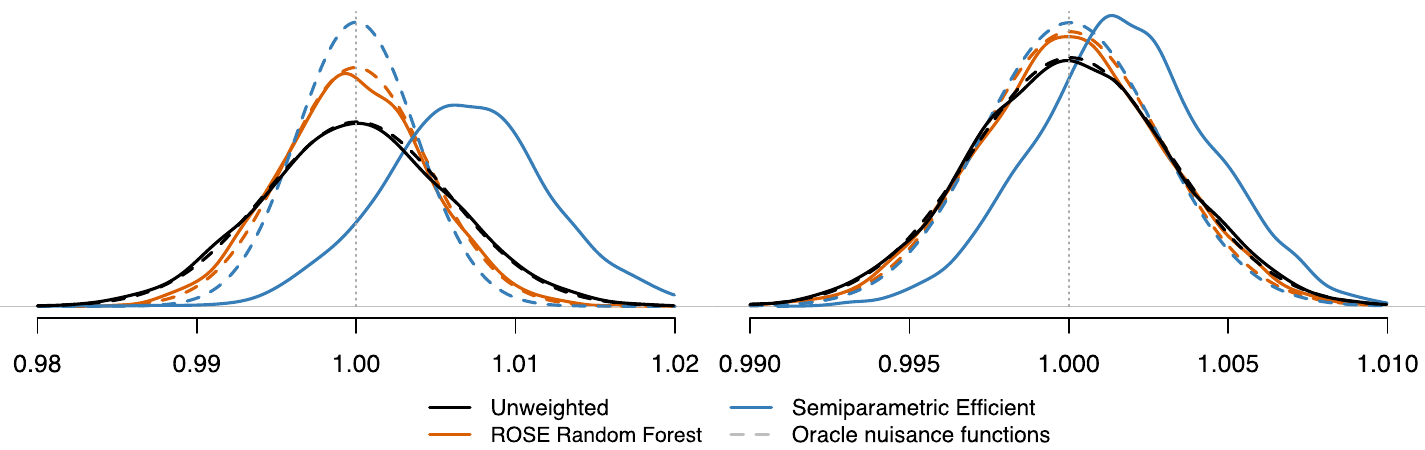}
    \caption{Kernel density estimates for the $\theta$-estimators: the unweighted estimator~\eqref{eq:unwIF}; the semiparametric efficient estimator~\eqref{eq:EIF}; and the robust semiparametric efficient (ROSE) estimator for a partially linear model ($4000$ simulations, each of sample size $n=80\,000$). Figure 2a (left) is an adapted version of the simulation in Section~\ref{sec:sim1} (see Section~\ref{appsec:Fig2a}) and Figure 2b (right) corresponds to the simulation in Section~\ref{sec:sim3} for which some associated nuisance functions are plotted in Figure~\ref{fig:surface}. Dashed curves correspond to versions of the associated estimators using oracle nuisance functions.}
    \label{fig:EIF_performing_poorly}
\end{figure}

\subsection{Our contributions and organisation of the paper}\label{sec:contributions}
As we have discussed, choosing an estimator based on the notion of efficiency, while theoretically sound, is often not well-founded from a practical perspective. On the other hand, the estimator $\hat{\theta}_{\text{unw}}$, which is apparently more robust in a certain sense, seems somewhat ad hoc. These issues raise the following questions which we seek to address in this work:
\begin{enumerate}[label=(\roman*)]
    \item How can we formalise the notion of robustness afforded by $\hat{\theta}_{\text{unw}}$ more generally?
    \item What is the maximal class of estimators that retain this form of robustness?
    \item How can we pick the optimal estimator from among this class?
\end{enumerate}
We study these questions in the context of a broad collection of conditional moment models for the triple $(Y, X, Z)$ which includes the class of generalised partially linear models. The target of inference is a scalar parameter $\theta_0$, and $Z$ is permitted to contribute to $\E(Y \given X, Z)$ through an unknown nonparametric function $f_0$. Our final goal then is to arrive at a principled alternative to choosing estimators based on efficient influence functions within such models, that bridges the divide between practical performance and semiparametric efficiency.

Our answer to (i) involves recognising that it is typically desirable to understand the performance of an estimator \emph{uniformly} over a class of plausible data-generating distributions. 
No estimator can retain good performance uniformly over the class of \emph{all} distributions in our setting. Indeed, with such an estimator we would then  be able to test the null hypothesis $\theta_0=0$, but this contains the smaller conditional independence null $Y \independent X \given Z$, for which \citet{shahpeters} prove there exist no non-trivial tests when $Z$ is a continuous random variable. Thus the use of any estimator necessarily involves an implicit choice of subsets of distributions, over which the estimator does have good performance.
In particular, to distinguish the contribution to the response of $X$ from the remaining predictors $Z$ requires that at least some aspects of the conditional distribution of $X$ given $Z$ are estimable. 
We recommend specifying that certain conditional moments $\E[M_j(X) \given Z=z]$, for user-chosen functions $M_j$ for $j=1,\ldots,J$, are relatively simple and can hence be estimated sufficiently well. For instance, in the case of the partially linear model we may take $J=1$ and $M_1(X) = X$ and ask that $\E(X \given Z=z)$ be estimated well, as also required by the unweighted estimator. 
 
Regarding question (ii), we argue that even the requirement of good performance of the $\theta$ estimator \emph{locally} uniformly over a class of distributions of the form given above narrows down the set of corresponding influence functions to a subset  parametrised by weight functions  $w_j:\mathcal{Z} \to (0, \infty)$ for $j=1,\ldots,J$. For instance, in the case of the partially linear model and taking $J=1$ and $M_1$ as above, we obtain the set of influence functions taking the form
 \begin{equation}\label{eqn:rose-IF}
    \psi_{w}(S;\theta,\eta) := w(Z)(X-m(Z))(Y-X\theta-f(Z)),
    \qquad
    \eta = (f, m),
\end{equation}
where the  weight function $w$ is to be chosen; comparing to \eqref{eq:unwIF} explains the tagline of $\hat{\theta}_{\text{unw}}$ as the `unweighted' estimator. We show that estimators derived through solving \eqref{eq:est_eqn} with $\psi= \psi_{w}$ for some $w$ enjoy an attractive robustness property: the primary condition guaranteeing their $\sqrt{n}$-consistency is precisely that required by the unweighted estimator \eqref{eq:unweighted_criterion}.

Similarly to how a semiparametric efficient estimator arises through picking from among all influence functions, that which yields the minimal variance, we can think of the estimator based on picking among influence functions of the form \eqref{eqn:rose-IF} as a
`\textbf{ro}bust \textbf{s}emiparametric \textbf{e}fficient' (ROSE) estimator. We introduce a new random forest-based procedure (ROSE forests) that we prove
estimates the corresponding optimal weight function $w^*$ (or weight functions in the case where $J>1$) consistently. As a consequence, the resulting estimator of $\theta$ achieves the minimal asymptotic variance among those estimators our theory identifies as robust in the sense described above. 
Figure~\ref{fig:EIF_performing_poorly} shows the performance of our ROSE random forest estimator compared to a semiparametric efficient estimator and an unweighted estimator in the two settings, including the simulation in Section~\ref{sec:sim1} referred to earlier. We see in particular that although the underlying influence function is doubly robust, even at large sample sizes, the semiparametric estimator can suffer from substantial bias. Both the unweighted at ROSE random forest estimators have negligible bias, but the latter improves on the variance of the former. Note that this improvement comes essentially for free, as both estimators lie in the same robustness class only requiring estimation of the nuisance function $\E(X \given Z=z)$ in addition to $f_0$. 

The rest of the paper is organised as follows. After reviewing some related work and introducing relevant notation in Sections~\ref{sec:literature} and \ref{sec:notation} respectively, in Section~\ref{sec:robust-class} we formally set out the class of conditional moment models we consider in this work. Given user-chosen functions $M_1, \ldots, M_J$, we derive the subset of influence functions with the robustness properties sketched out above, showing in particular that they may be parametrised in terms of weight functions. 
In Section~\ref{sec:robust-eff-est} we set out a general scheme for estimating $\theta_0$ based on a (potentially) data-driven choice of weight functions and show that the resulting estimators are uniformly asymptotically Gaussian. Section~\ref{sec:rrfs} introduces our ROSE forest estimator in the case $J=1$ and proves that using this to estimate the weight function(s) results in an estimator with minimal asymptotic variance among those considered to be robust. We further demonstrate that our approach can outperform an unweighted estimator, which in turn can outperform a locally efficient estimator (see Section~\ref{sec:literature} and Section~\ref{appsec:div-ratio-gplm}), by arbitrary magnitudes. The supplementary material contains the proofs of all results presented in the main text, additional theoretical results, an extension of the ROSE random forest estimator to the case where $J>1$ and further details on the examples and numerical experiments. ROSE random forests are implemented in the \texttt{R} package~\texttt{roseRF}\footnote{\url{https://github.com/elliot-young/roseRF/}}.

\subsection{Related literature} \label{sec:literature}

As discussed earlier, a number of authors have informally noted that efficient semiparametric estimators may suffer from robustness issues and not be computationally practical~\citep{chern, chen-odds, tchetgen, tan, liu}. In a similar vein, a number of authors have recognised that the relaxed rates certain nuisance functions are commonly asked to achieve are often unattainable~\citep{dukes, vansteelandt-comments}. 
The classical notion of \emph{local efficiency}, which refers to an estimator being efficient in a working submodel of the semiparametric model being considered, and a consistent estimator outside this submodel, represents one attempt to justify the use of estimators that do not use the efficient influence function~\citep{tsiatis, locallyeff1, locallyeff2, targeted-learning}. This idea is commonly used in the context of generalised estimating equations for clustered data~\citep{liangzeger, geehardin, ziegler} in the form of a `working (co)variance' model under which estimators are locally efficient. Recent work by~\citet{sandwich-boosting} highlights how in the setting of the partially linear model, while locally efficient estimators may perform well when postulated working submodel holds, there may be arbitrary suboptimal in the rest of the semiparametric model. This work further proposes to use a weighted least squares approach in the context with weights determined through minimising a sandwich estimate of the variance of the final estimator. We build on these ideas here,  with our ROSE random forest method providing a concrete weight estimation procedure that we show can estimate optimal weights in the broader set of semiparametric models we consider here.

Our work also connects to a literature on random forests, which have received a lot of interest in recent years due to their success when deployed on modern datasets. Since their inception~\citep{randomforest} a number of variants have been proposed for different statistical problems~\citep{meinshausen,grfs,distrandfor} that use alternative splitting and evaluation procedures. Another favourable property of random forests are the theoretical guarantees they possess, notably those of consistency~\citep{meinshausen, biau} and asymptotic normality~\citep{wagerwalther,grfs,grfannals}; our result (Theorem~\ref{thm:rose_forest_consistency}) for ROSE random forests draws on some of these ideas.

\subsection{Notation}\label{sec:notation}
We denote $\Phi$ as the cumulative distribution of a standard Gaussian distribution. We will also use the shorthand $[n]:=\{1,\ldots,n\}$ for $n\in\mathbb{N}$. We write $\R_+$ for the set of positive real numbers. For the uniform convergence results we will present it will be helpful to write, for a distribution $P$ governing the distribution of a random variable $U\in\R$, $\E_PU$ for its expectation, and $\PP_P(U\in B)=:\E\ind_B(U)$ for any measurable $B\in\R$. Further, given a family of probability distributions $\cP$ and a sequence of families of real-valued random variables $(A_{P,n})_{P\in\cP, n\in\mathbb{N}}$, we write $A_{P,n}=o_\cP(1)$ if $\lim_{n\to\infty}\sup_{P\in\cP}\PP_P(|A_{P,n}|>\epsilon)=0$ for all $\epsilon>0$, $A_{P,n}=o_\cP(f(n))$ for a given function $f:(0,\infty)\to(0,\infty)$ if $f(n)^{-1}A_{P,n}=o_\cP(1)$, and $A_{P,n}=O_\cP(1)$ if for any $\epsilon>0$ there exists $M_\epsilon,N_\epsilon>0$ such that $\sup_{n\geq N_\epsilon}\sup_{P\in\cP}\PP_P(|A_{P,n}|\geq M_\epsilon)<\epsilon$. In several places we consider the expected estimation errors of regression or other nuisance function estimates on new data points, conditional on the data used to train them, and any additional randomness in the estimation procedure. For such an estimate $\hat{\eta}$, we will denote the relevant conditional expectation by $\E( \cdot \given \hat{\eta})$.

\section{Robust influence functions}\label{sec:robust-class}
In this section, we introduce a broad family of semiparametric models where we argue that in order for an estimator of the target parameter to be robust in the sense sketched out in Section~\ref{sec:contributions}, its influence function should take a particular form. (In fact our results and methodology in Sections~\ref{sec:robust-eff-est} and~\ref{sec:rrfs} are applicable more broadly to collections of models including instrumental variables models for example; we discuss this further in Appendix~\ref{appsec:additional-models}.)

\subsection{Partially parametric models}
Consider a semiparametric model $\mathcal{P}$ for the random triple $S:=(X, Y, Z) \in \mathcal{X} \times \mathcal{Y} \times \mathcal{Z}\subseteq \R \times \R \times \R^d$ characterised by the conditional mean specification
\begin{equation}\label{eq:semipara-model}
	\E_P[Y\given X,Z] = \mu(X,Z,\theta_P,f_P(Z)).
\end{equation}
Here $\mu$ is a known function, $\theta_P \in \Theta \subseteq \R$ is the parameter of interest and $f_P: \mathcal{Z}\to \R$ is an unknown nuisance function.   We refer to such a model as a \emph{partially parametric restricted moment model} or simply a \emph{partially parametric model} for brevity, following the naming scheme of~\citet{tsiatis}, who consider the conditional mean model taking a fully parametric form. In contrast, the `partial' aspect emphasises the additional flexibility given by the nuisance function $f_P$. We will require $\mathcal{P}$ to be such that $\theta_P$ and $f_P$ are identifiable, i.e.\ for each $P \in \mathcal{P}$ there exists a unique $\theta_P$ and measurable function $f_P$ such that \eqref{eq:semipara-model} holds. Typically this will be ensured by an additional requirement that $\E \Var( X \given Z) >0$, though identifiability will come as a consequence of other conditions we will impose in later results.

Our leading example of partially parametric models will be the class of generalised partially linear models specified by
\begin{equation}\label{eq:gplm}
	g\big(\E_P[Y\given X,Z]\big) = X\theta_P + f_P(Z),
\end{equation}
where $g$ is a known, strictly increasing link function (note that we do not constrain the conditional variance). This includes the partially linear model discussed in Section~\ref{sec:PLMmodel} which takes the link $g$ to be the identity function. However the family of partially parametric models also includes nonlinear models of the form
$
\E_P[Y\given X,Z] = e^{X\theta_P} + f_P(Z)
$
for example. 

As the partially parametric model is a strictly semiparametric model, there exists a family of influence functions. The following result shows that this family takes a particular form and may be parametrised by the collection of functions $\phi : \mathcal{X}\times \mathcal{Z}\to \R$ that have mean zero conditional on $Z$.
\begin{theorem}[Influence functions of the partially parametric model]\label{thm:IF}
    Consider the partially parametric model~\eqref{eq:semipara-model} satisfying Assumption~\ref{ass:IF} (see Appendix~\ref{appsec:semipara}). Each influence function $\psi$ of this model takes the form
    \begin{equation}\label{eq:IF}
        \psi(y,x,z;\theta_P,f_P,\phi_P) =
        \frac{\big(y-\mu(x,z;
        \theta_P,f_P(z))\big)\phi_P(x,z)}{\frac{\partial}{\partial\gamma}\big|_{\gamma=f_P(z)}\mu(x,z;\theta_P,\gamma)},
    \end{equation}
    for some $\phi_P:\cX\times\cZ\to\R$ satisfying $\E_P[\phi_P(X,Z)\given Z=z]=0$ for all $z\in\cZ$.
\end{theorem}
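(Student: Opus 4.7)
The plan is to characterise each influence function for $\theta_P$ via orthogonality to the nuisance tangent space of~\eqref{eq:semipara-model}, narrowing its functional form in three stages, each corresponding to a different family of nuisance perturbations that leave $\theta_P$ unchanged.

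First, I would consider perturbations of $P(Y \given X,Z)$ that preserve the conditional mean $\mu(X,Z,\theta_P,f_P(Z))$; their scores are precisely the $L_2(P)$ functions $B(X,Y,Z)$ with $\E_P[B\given X,Z] = \E_P[B(Y-\mu)\given X,Z] = 0$. Orthogonality of $\psi$ to all such $B$ pins down the dependence of $\psi$ on $Y$ to be affine, forcing
\[ \psi(X,Y,Z) = A_1(X,Z) + A(X,Z)\bigl(Y - \mu(X,Z,\theta_P,f_P(Z))\bigr) \]
for some measurable $A_1, A$.

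Second, perturbations of the marginal distribution of $(X,Z)$ generate scores exhausting the mean-zero functions of $(X,Z)$; orthogonality combined with $\E_P\psi=0$ gives $\E_P[\psi\given X,Z]=0$, so $A_1 \equiv 0$ and every influence function reduces to $\psi = A(X,Z)(Y-\mu(X,Z,\theta_P,f_P(Z)))$.

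Third, and this is the main step, I would exploit the infinite-dimensional nuisance $f_P$. For each bounded $h:\cZ\to\R$, the submodel $f_t = f_P + th$, paired with (say) a location-shift of $P(Y\given X,Z)$ chosen so that its conditional mean equals $\mu(X,Z,\theta_P, f_P(Z) + t h(Z))$, is regular under Assumption~\ref{ass:IF}, with score at $t=0$ of the form
\[ S_h(X,Y,Z) = T(Y,X,Z)\,\frac{\partial \mu}{\partial \gamma}\Big|_{\gamma=f_P(Z)}\, h(Z), \qquad \E_P[(Y-\mu)T\given X,Z]=1. \]
Since $\theta_P$ is held constant along this path, $\E_P[\psi S_h]=0$; applying iterated expectations and ranging over $h$ yields
\[ \E_P\!\left[A(X,Z)\,\frac{\partial \mu}{\partial \gamma}\Big|_{\gamma=f_P(Z)} \,\Big|\, Z=z\right] = 0 \quad P_Z\text{-a.e.\ } z \in \cZ. \]
Setting $\phi_P(X,Z) := A(X,Z)\,\frac{\partial \mu}{\partial \gamma}|_{\gamma=f_P(Z)}$ encodes the required $\E_P[\phi_P\given Z]=0$ property, and rearranging for $A$ and substituting back recovers the claimed form~\eqref{eq:IF}.

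The main obstacle is step three: verifying that the proposed perturbation of $P(Y \given X,Z)$ genuinely defines a regular one-parameter submodel of $\mathcal{P}$ realising the stated score $S_h$, along with the integrability needed for the iterated-expectation manipulation. This is precisely where the regularity hypotheses collected in Assumption~\ref{ass:IF} earn their keep; the remaining two steps are routine Hilbert-space projection arguments.
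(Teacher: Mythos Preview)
Your three-stage decomposition is correct and follows the classical route (essentially Tsiatis, Chapter~4): separately exploit mean-preserving perturbations of $P(Y\mid X,Z)$, perturbations of $P(X,Z)$, and perturbations of $f_P$, each cutting down the form of $\psi$. The paper's proof is organised differently: it constructs \emph{only} the third family---submodels $p_t$ with $f_t=f_0+tg$ for bounded $g$, built via an explicit bounded multiplicative tilt of $p_0(y\mid x,z)$ so that $\int y\,p_t(y\mid x,z)\,d\nu(y)=\mu(x,z;f_t(z))$ exactly (Lemma~\ref{lem:valid-density} verifies positivity and the mean identity for small $|t|$). From this single family the paper derives, via a carefully chosen test function $H_t$ paired against the quadratic-mean derivative, a necessary condition on every score, identifies the closed linear span $\Lambda_0$ generated by these scores, and computes $\Lambda_0^\perp$ directly as the set~\eqref{eq:IF}. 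Your route trades one careful submodel construction for three, gaining modularity at the expense of two extra (routine) projection arguments; the paper's route needs only one construction but then a more delicate identification of $\Lambda_0$ and its orthogonal complement.

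One technical caveat on your step three: the ``location shift of $P(Y\mid X,Z)$'' presumes a translation-invariant dominating measure, whereas Assumption~\ref{ass:IF}(iv) only grants an arbitrary $\sigma$-finite $\nu$ on $\cY$, and $\cY$ need not be all of $\R$. To make the submodel honest under the stated hypotheses you will likely need to replace the location shift with a bounded multiplicative tilt of the form $p_t(y\mid x,z)=p_0(y\mid x,z)\bigl(1+\Delta_t(x,z)\,r(y,x,z)\bigr)$ with $r$ bounded and $\Cov\bigl(r(Y,X,Z),Y\mid X,Z\bigr)$ bounded away from zero---exactly the device the paper uses, and precisely what Assumption~\ref{ass:IF}(ii)--(iii) are engineered to support.
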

All influence functions of the form~\eqref{eq:IF} satisfy a Neyman orthogonality~\citep{neyman, neyman2} property, corresponding to an insensitivity in expectation of the function about first order perturbations of the nuisance functions (in this case $\phi_P$ and $f_P$); see Appendix~\ref{appsec:NO}. In our case, this amounts to the fact that for any estimators $(\hat{f},\hat{\phi})$ of $(f_P,\phi_P)$,
\begin{equation*}
	\E_P\bigg[\frac{\partial}{\partial t}\Big|_{t=0} \psi\big(Y,X,Z;\theta_P, f_P+t(\hat{f}-f_P), \phi_P+t(\hat{\phi}-\phi_P)\big)\bigggiven \hat{f},\hat{\phi}\bigg] = 0.
\end{equation*}
Therefore the dominant term in the expansion of \begin{equation*}
	\E_P\big[\psi(Y,X,Z;\theta_P,\hat{f},\hat{\phi})-\psi(Y,X,Z;\theta_P,f_P,\phi_P)\biggiven \hat{f},\hat{\phi}\big],
\end{equation*}
is quadratic in the estimation errors of the nuisance functions $f_P$ and $\phi_P$ (rather than linear) and may be controlled by the mean squared errors $\E_P[(f_P(Z) - \hat{f}(Z))^2 \given \hat{f}]$ and $\E_P[(\phi_P(X, Z) - \hat{\phi}(X, Z))^2 \given \hat{\phi}]$. Thus
\begin{equation*} 
	\sqrt{n}\,\E_P\big[\psi(Y,X,Z;\theta_P,\hat{f},\hat{\phi})-\psi(Y,X,Z;\theta_P,f_P,\phi_P)\given\hat{f},\hat{\phi}\big] = o_P(1),
\end{equation*}
if these mean squared errors converge at faster than relatively slow $n^{-1/2}$ rates. 
The significance of this for estimating $\theta_P$ is that then, given i.i.d.\ copies $S_{1}, \ldots, S_n$ of $S$, the solution in $\theta$ to the estimating equation
\begin{equation} \label{eq:estimating_eqn}
\sum_{i=1}^n \psi(S_i; \theta, \hat{f}, \hat{\phi}) = 0
\end{equation}
will, at the $\sqrt{n}$ scale, be asymptotically equivalent to the solution of an oracular version of the above with estimators $\hat{f}$ and $\hat{\phi}$ replaced by their true values $f_P$ and $\phi_P$. Such an estimator will therefore enjoy asymptotic normality and $\sqrt{n}$-consistency (see Theorem~\ref{thm:theta_est_asymp_norm} to follow). However, we argue below that the crucial sufficiently fast  rates for estimating the nuisance functions $f_P$ and $\phi_P$ that allow for this should not be taken for granted.

\subsection{Slow rates for estimating nuisance functions}\label{sec:hardness}
As discussed in Section~\ref{sec:contributions}, the required rates on the mean squared errors in estimating $f_P$ and $\phi_P$ cannot be guaranteed to hold over all data-generating distributions $P$, and $\sqrt{n}$-consistency of an estimator of $\theta_P$ can only hold uniformly over some subset $\mathcal{P}$ of distributions. The rate requirement in estimating $f_P$ with a specified regression procedure implicitly determines some constraints on $\mathcal{P}$. On the other hand, the additional constraints to be imposed by the estimation of $\phi_P$ are not set in stone as we have a choice in which $\phi_P$ to target. 
Note that each function of $(X,Z)$ that is mean zero conditional on $Z$ may be written as 
\begin{equation}\label{eq:phi-UW}
	W(X,Z)\bigg(U(X,Z)-\frac{\E_P[W(X,Z)U(X,Z)\given Z]}{\E_P[W(X,Z)\given Z]}\bigg).
\end{equation}
Thus a rather general way of constructing an estimate $\hat{\phi}$ for use in \eqref{eq:estimating_eqn} is to pick some functions $U$ and $W$ and estimate the conditional mean $h_P$
\begin{equation*}
	h_P(z) := \frac{\E_P[W(X,Z)U(X,Z)\given Z=z]}{\E_P[W(X,Z)\given Z=z]}.
\end{equation*}
For example, the unweighted estimator \eqref{eq:unwIF} is obtained by taking $W(X,Z)=1$ and $U(X,Z)=X$. The choices of $U$ and $W$ for the $\phi_P$ resulting in the efficient estimator are given in Appendix~\ref{appsec:semi-eff-class}.
While these latter choices may be appealing in theory, their complexity suggests that the resulting constraints on $\mathcal{P}$ may be impractical in some settings.
We instead advocate imposing modelling constraints on $\mathcal{P}$ by first specifying a class functions $(M_j)_{j=1}^J$ of $X$ whose conditional means given $Z$ are sufficiently regular, and then considering only those $\phi_P$ which can be estimated well, given these assumptions.

In the following, we formalise these ideas and arrive at a resulting set of functions $\phi_P$. Suppose that $\mathcal{X} \subset \R$ and $\mathcal{Z} \subset \R^d$ are open sets. Let $\mathcal{P}_{XZ}$ be a set of distributions $P$ for $(X, Z)$ such that for given measurable functions $M_j :\mathcal{X}\to \R$ ($j=1,\ldots,J$) and associated (non-empty) classes $\mathcal{M}_j$ of functions from $\mathcal{Z} \to \R$ we have the following:
\begin{enumerate}[label=(\roman*)]
	\item $m_{P,j}(z) := \E_P[M_j(X)\given Z=z]$ exists and $m_{P,j} \in \mathcal{M}_j$ for $j=1,\ldots,J$.
	\item $P$ has a density $p$ with respect to Lebesgue measure and writing $p_Z$ and $p_{X|Z}$ for the corresponding marginal density for $Z$ and conditional density for $X$ given $Z$ respectively, $0 < \inf_{z \in \mathcal{Z}} p_Z(z), \, \inf_{x \in \mathcal{X}, z\in \mathcal{Z}} p_{X|Z}(x | z)$ and $\sup_{z \in \mathcal{Z}} p_Z(z), \, \sup_{x \in \mathcal{X}, z\in \mathcal{Z}} p_{X|Z}(x| z) < \infty$. Further suppose that the  functions $z \mapsto p_{X|Z}(x|z)$ for $x \in \mathcal{X}$ are equicontinuous.
\end{enumerate}
Condition (i) should be regarded as the primary restriction on $\mathcal{P}_{XZ}$. The function classes $\mathcal{M}_j$ can be thought of smoothness classes such as H\"{o}lder classes, but can also be singletons for example. The additional mild regularity conditions in (ii) are made mainly for convenience. Now consider $(X_1, Z_1), (X_2, Z_2),\ldots$ formed of i.i.d.\ copies of the tuple $(X, Z)$  and a sequence of estimators $(\hat{\phi}_n)_{n \in \mathbb{N}}$ of $\phi_P$ where each $\hat{\phi}_n$ is trained on the data $(X_i, Z_i)_{i=1}^n$. Theorem~\ref{thm:slow-rates} below shows that even locally uniformly consistent estimation of $\phi_P$ within $\mathcal{P}_{XZ}$ is not possible unless $\phi_P$ takes a particular form dictated by the conditional mean conditions in (i) above. Recall that the total variation distance between distributions  $P_1$ and $P_2$ with densities $p_1$ and $p_2$ with respect to Lebesgue measure on $\mathcal{X}\times \mathcal{Z}$ is given by
\[
{\TV}(P_1,P_2) := \frac{1}{2}\int_{\mathcal{X} \times \mathcal{Z}} |p_1(x, z) - p_2(x,z)| \,dx dz. 
\]

\begin{theorem}[Slow rates for estimating nuisance functions]\label{thm:slow-rates}
Suppose $U, W :\cX\times\cZ\to\R\times\R_+$ are bounded functions and $\inf_{x \in \mathcal{X}, z \in \mathcal{Z}} W(x, z) > 0$. Suppose further that the families of functions $\{z \mapsto U(x, z) : x \in \mathcal{X}\}$ and $\{z \mapsto W(x, z) : x \in \mathcal{X}\}$ are equicontinuous. For each $P \in \mathcal{P}_{XZ}$, define
\begin{equation*}
	\phi_P : (x,z) \mapsto W(x,z)\bigg(U(x,z) - \frac{\E_P[W(X,Z)U(X,Z)\given Z=z]}{\E_P[W(X,Z)\given Z=z]}\bigg).
\end{equation*}
If there exists a $P^*\in\cP_{XZ}$ such that
\begin{equation*}
    \phi_{P^*}\not\in\bigg\{(x,z)\mapsto \sum_{j=1}^J w_j(z)\big(M_j(x)-\E_{P^*}[M_j(X)\given Z=z]\big) \;:\; w_j:\cZ\to\R \bigg\},
\end{equation*}
then for every positive sequence $(a_n)_{n \in \mathbb{N}}$ converging to $0$ and every sequence of estimators $(\hat{\phi}_n)_{n \in \mathbb{N}}$ of $\phi_P$, we have that there exists a sequence of distributions $(P_n)_{n \in \mathbb{N}} \subset \mathcal{P}_{XZ}$ such that
\begin{equation*}\label{eq:slow-rates-2}
	\TV(P_n, P^*) \to 0 \qquad \text{and} \qquad  \underset{n\to\infty}{\limsup} \frac{\E_{P_n}\Big[\big(\hat{\phi}_n(X,Z)-\phi_{P_n}(X,Z)\big)^2\Big]}{a_n}\geq1.
\end{equation*}
\end{theorem}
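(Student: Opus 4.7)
The plan is to use a multi-hypothesis (Le Cam / Fano-style) minimax argument. The idea is to perturb $P^*$ in many mutually orthogonal directions that all leave the conditional means $m_{P,j}(z) = \E_P[M_j(X)\given Z=z]$ unchanged---so the perturbed distributions stay in $\cP_{XZ}$---yet induce distinct first-order changes in $\phi_P$. Because the constraints defining $\cP_{XZ}$ cannot distinguish such perturbations, Fano's inequality then yields a minimax lower bound on estimating $\phi_P$ that decays arbitrarily slowly.

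First, I would find an $x$-perturbation direction. Using the theorem's hypothesis I would exhibit $\eta_1: \cX\to\R$ and $z^*\in\cZ$ with $\int \eta_1(x)\,dx = 0$, $\int M_j(x)\eta_1(x)\,dx = 0$ for all $j$, and $\int \phi_{P^*}(x,z^*)\eta_1(x)\,dx \neq 0$. The first two orthogonalities ensure that perturbations of the form $p^*(x\given z) + t\,\eta_1(x)\eta_2(z)$ leave both normalisation and each $m_{P,j}$ unchanged, while the third ensures a nonzero first-order change in $h_P$ and hence in $\phi_P$. The existence of such an $\eta_1$ is the heart of the argument: if none existed then $\phi_{P^*}(\cdot, z) \in \mathrm{span}\{1, M_1,\ldots, M_J\}$ for a.e.~$z$, and combining with $\E_{P^*}[\phi_{P^*}\given Z]=0$ would force $\phi_{P^*}(x,z) = \sum_j w_j(z)(M_j(x) - m_{P^*,j}(z))$ for some $w_j$; showing these $w_j$ cannot be taken non-negative (as the theorem's hypothesis demands) is the delicate point flagged below.

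Next I would build $K$ competing hypotheses and apply Fano. On a small neighbourhood $N$ of $z^*$ where $c(z) := \E_{P^*}[W\given Z=z]^{-1}\int\phi_{P^*}(x,z)\eta_1(x)\,dx$ is bounded away from $0$ (continuity of $c$ follows from the equicontinuity in~(ii)), take a uniformly bounded $L^2(N)$-orthonormal family $\{\eta_2^{(k)}\}_{k=1}^K$ (e.g.\ rescaled cosines, smoothed at the boundary), extended by $0$ off $N$. Define $P^{(k)}_t$ by $p^{(k)}_t(x\given z) := p^*(x\given z) + t\,\eta_1(x)\,\eta_2^{(k)}(z)$, keeping $p_Z$ fixed; for $t$ below a constant uniform in $k, K$ the density lower bound in~(ii) keeps $P^{(k)}_t\in\cP_{XZ}$, and by construction $m_{P^{(k)}_t,j}=m_{P^*,j}$. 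A direct linearisation yields
\begin{equation*}
\phi_{P^{(k)}_t}(x,z) - \phi_{P^*}(x,z) = t\,\eta_2^{(k)}(z)\,W(x,z)\,c(z) + O(t^2),
\end{equation*}
so by $L^2$-orthogonality of the $\eta_2^{(k)}$ (with weight bounded above and below on $N$) we get $\|\phi_{P^{(k)}_t} - \phi_{P^{(k')}_t}\|^2 \asymp t^2$ for $k\neq k'$, while $\mathrm{KL}(P^{(k)}_t, P^{(k')}_t) \asymp t^2$. Fano's inequality on $n$ i.i.d.\ samples then gives
\begin{equation*}
\max_k \E_{P^{(k)}_t}\!\big[\|\hat\phi_n - \phi_{P^{(k)}_t}\|^2\big] \;\gtrsim\; t^2\Big(1 - \tfrac{C_1\,n t^2 + \log 2}{\log K}\Big).
\end{equation*}
Given $a_n\to 0$, I would set $t_n^2 = c\,a_n$ and $K_n = \lceil e^{C n a_n}\rceil$ with $c$ small and $C$ large enough that the Fano bracket is at least $1/2$ and the right-hand side exceeds $a_n$; setting $P_n := P^{(k_n)}_{t_n}$ for the maximising $k_n$ gives the required sequence, with $\TV(P_n, P^*)\lesssim t_n\to 0$ because $\TV(P_n, P^*) \leq \tfrac{t_n}{2}\|\eta_1\|_1 \int p_Z|\eta_2^{(k_n)}|$.

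The hardest part is the positivity constraint on the $w_j$. When $\phi_{P^*}(\cdot, z)\not\in\mathrm{span}\{1, M_1,\ldots, M_J\}$ for some $z$, the existence of $\eta_1$ in Step~1 is immediate via $L^2$ projection. When $\phi_{P^*}$ does lie in the linear span but only admits representations with some $w_j$ taking negative values, additive perturbations as above preserve $\phi_P$ to first order and the scheme stalls. To close this gap I would either (i) enlarge the perturbation family to also deform $m_{P,j}$ within its class $\cM_j$ (using the smoothness structure of $\cM_j$), or (ii) invoke a Hahn--Banach-type separation---leveraging the strict positivity of $W$---to extract a perturbation direction whose effect on $\phi_P$ falls outside the positive cone $\{\sum_j w_j(M_j - m_{P^*,j}): w_j\geq 0\}$.
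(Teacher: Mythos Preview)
Your approach is genuinely different from the paper's, and it is worth spelling out the contrast.

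\textbf{What the paper does.} The paper does not use Fano. It adapts the classical Cover--Devroye--Gy\"orfi ``no rate'' argument for nonparametric regression/classification. The construction has two layers. First, a key lemma produces a single alternative conditional density $p^\dagger(x\mid z)$ on a small ball $B\ni z_0$ such that (i) $\int M_j(x)p^\dagger(x\mid z)\,dx=m_j(z)$ for all $j$ (so moments are preserved), yet (ii) the weighted mean $\iota(z)=\E[W U\mid Z=z]/\E[W\mid Z=z]$ shifts by an \emph{exact} constant $\kappa>0$ for all $z\in B$. Second, $B$ is partitioned into $m$ concentric shells, and on shell $k$ one places either $p^*$ or $p^\dagger$ according to a binary label $c_k\in\{0,1\}$. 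Estimating $\phi_P$ then entails recovering the labels $(c_k)$, and a Bayes-risk argument (randomising $c$, using Fatou) combined with a sequence lemma of Gy\"orfi et al.\ yields the arbitrarily-slow-rate conclusion. The $\TV\to 0$ requirement is obtained by a separate diagonalisation over the ball radius. Compared to your Fano route, this avoids the KL calculus and the need for arbitrarily many $z$-orthonormal profiles, at the cost of a more hand-crafted two-step perturbation.

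\textbf{On the gap you flag.} You correctly locate the delicate point: when $\phi_{P^*}(\cdot,z_0)$ already lies in $\mathrm{span}\{1,M_1,\ldots,M_J\}$, any $x$-perturbation orthogonal to that span leaves $\phi_P$ unchanged to first order, and your Step~1 stalls. The paper handles this not by either of your suggestions (i)--(ii), but by a two-case analysis inside the key lemma. Writing $r_0=W(\cdot,z_0)U(\cdot,z_0)$ and $W_0=W(\cdot,z_0)$, the paper argues that the hypothesis forces either \emph{Case~1}: $r_0\notin\mathrm{span}\{W_0,1,M_1-m_1,\ldots,M_J-m_J\}$, or \emph{Case~2}: $r_0$ lies in that span (with nonzero constant part) \emph{and} $W_0\notin\mathrm{span}\{1,M_1-m_1,\ldots\}$. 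In Case~2, the crucial point is that the $x$-perturbation---built from $J+2$ bump functions around carefully chosen points $x_1,\ldots,x_{J+2}$ with weights solving a linear system that kills the $M_j$-moments---still shifts $\iota$ because its inner product with $W_0$ is nonzero. This is the mechanism your additive $\eta_1(x)$ misses; the remedy is not to deform $m_{P,j}$ or invoke Hahn--Banach, but to exploit that $W$ itself lies outside the ``robust'' span. If you want to repair your Fano argument, the cleanest fix is to replace your abstract $\eta_1$ by the paper's bump-function construction, which works in both cases.
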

In words, Theorem~\ref{thm:slow-rates} says the following: suppose that (essentially) all we are willing to assume about the joint distribution of $(X, Z)$ is that it lies in a class $\mathcal{P}_{XZ}$ where certain conditional means of functions $M_j$ of $X$ given $Z$ are estimable, for $J=1,\ldots,J$; here `estimable' may be defined in any user-chosen way and can mean that the conditional means are sufficiently smooth, or obey any chosen structural conditions such as being additive or linear, for example. Then any choice of $\phi_P(X, Z)$ that is not a linear combination of $M_j(X) - \E_P(M_j(X) \given Z)$ weighted by some user-chosen weight functions $w_j(Z)$ may not result in sufficiently good estimation of $\phi_P$: for any $P^* \in \mathcal{P}_{XZ}$ (however favourable for estimation of $\phi_{P^*}$) there exist distributions $P$ in $\mathcal{P}_{XZ}$ that are arbitrarily close to $P^*$ in total variation distance where estimation of $\phi_P$ fails. In summary, unless  $\phi_P$ takes the form
\begin{equation}\label{eq:w-M-m}
	\phi_P : (x,z) \mapsto \sum_{j=1}^J w_{j}(z)\big(M_j(x)-m_{P,j}(z)\big),
\end{equation}
for some weight functions $w_{1},\ldots,w_{J} : \cZ\to\R$ (potentially depending on $P$), estimation of $\theta_P$ cannot be expected to be `robust' in the sense indicated above. Conversely, we explain in the next section that taking $\phi_P$ of this form and calculating $\hat{\theta}$ as the solution of the corresponding estimating equation \eqref{eq:est_eqn} with
\begin{equation}\label{eq:robust-estimating-eqns}
	\psi(S;\theta,\eta) = \sum_{j=1}^J w_j(Z)\big(M_j(X)-m_j(Z)\big)\frac{Y-\mu(X,Z;\theta,f(Z))}{\frac{\partial}{\partial\gamma}|_{\gamma=f(Z)}\mu(X,Z;\theta,f(Z))},
\end{equation}
does yield an estimator that is $\sqrt{n}$-consistent, uniformly over a class of distributions where the primary restrictions on the joint distribution of $(X, Z)$ are of the form given above. Here $\eta=(f,m_1,\ldots,m_J)$ collects the relevant nuisance functions. 

Clearly, there is a choice to be  made in the selection of the weight functions $w_j$ above. In Section~\ref{sec:rrfs} we set out a scheme for choosing these in a data-driven way to minimise the variance of the resulting $\hat{\theta}$ among all estimators of the form given above. We call such an estimator a \emph{\textbf{ro}bust \textbf{s}emiparametric \textbf{e}fficient} (ROSE) estimator.

We remark that this estimation strategy bears some similarities with the the generalised method of moments~\citep{gmm}, a popular practical methodology to estimate a finite dimensional parameter characterised by a set of moment equations. The method involves the choice of a weight matrix that is somewhat analogous to our weight functions above. The semiparametric model~\eqref{eq:semipara-model} we study instead specifies a finite dimensional parameter $\theta_P$ and an infinite dimensional parameter $f_P$ through a single \emph{conditional} moment equation, which is equivalent to an infinite-dimensional class of moment equations. It can be shown that solving estimating equations as in~\eqref{eq:robust-estimating-eqns} can be interpreted as an infinite dimensional extension of the generalised method of moments; we discuss this further in Appendix~\ref{appsec:GMMs}.

\section{Robust estimation}\label{sec:robust-eff-est}
The previous section motivated the use of a particular set of estimating equations involving user-chosen weight functions \eqref{eq:w-M-m} for estimating $\theta_P$ in the partially parametric model through a negative result indicating that other forms of estimating equations do not lead to robust estimates. In this section we prove a positive result demonstrating that these estimating equations do indeed lead to robust estimates of $\theta_P$ under mild conditions on the way the weights are determined based on the data. 
In Section~\ref{sec:theta-algorithms} we present our estimation strategy and in Section~\ref{sec:asymp_norm} we prove that it results in the desired uniform asymptotic properties. 
Section~\ref{sec:rrfs} then introduces a practical approach for estimating asymptotically optimal weights using a random forest-based procedure. 
We continue to present our results and methodology in the context of the partially parametric model~\eqref{eq:semipara-model}, though we explain in Appendix~\ref{appsec:additional-models} that they are applicable in a broader class of models that includes instrumental variables models, for example.

\subsection{Uniformly robust \texorpdfstring{$\theta_P$}{TEXT} estimators}\label{sec:theta-algorithms}
As indicated in Section~\ref{sec:hardness}, in order to estimate the parameter $\theta_P$ in the partially parametric model we can aim to solve a set of estimating equations of the form \eqref{eq:estimating_eqn} with each summand based on \eqref{eq:robust-estimating-eqns}. Here the nuisance functions to be estimated are $f_P$, and $(m_{P,j})_{j\in[J]}$, which we collect together in $\eta_P$, and weight
functions $w := (w_1,\ldots, w_J)$ are to be chosen. 
We work with the DML2 approach of~\citet{chern} which allows us to circumvent Donsker condition assumptions that restrict the learners that can be used to estimate $\eta_P$~\citep[\S25.8]{vandervaart}, though alternative approaches including DML1~\citep{chern} or rank transformed subsampling~\citep{guo} are also possible. This works as follows. We partition $[n]$ into $K$ folds $(\cI_k)_{k\in[K]}$ each of size at least $\floor{n/K}$, and for each $k \in [K]$, compute nuisance function estimates $\hat{\eta}^{(k)}$ and weights $\hat{w}^{(k)} = (\hat{w}^{(k)}_1,\ldots,\hat{w}^{(k)}_J)$ using data indexed by $\mathcal{I}_k^c$. We then solve for $\theta$ the estimating equation
\begin{gather}
    \sum_{k=1}^K \sum_{i\in\cI_k}\psi\big(S_i;\theta,\hat{\eta}^{(k)}(Z),\hat{w}^{(k)}(Z)\big) = 0, 
    \label{eq:DML-psi}
\end{gather}
where
\begin{gather}
    \psi(S;\theta,\eta(Z),w(Z)) := \sum_{j=1}^J w_j(Z)\psi_j(S;\theta,\eta(Z)), 
    \label{eq:w-psi}
    \\
    \psi_j(S;\theta,\eta(Z)) := \big(M_j(X)-m_j(Z)\big)\varepsilon(S;\theta,f(Z)),
    \label{eq:psi-j}
    \\
    \varepsilon(S;\theta,f(Z)) := \Big(\frac{\partial}{\partial\gamma}\Big|_{\gamma=f(Z)}\mu(X,Z;\theta,\gamma)\Big)^{-1} \big(Y-\mu(X,Z;\theta,f(Z))\big), \label{eq:eps-rest-mom-model}
\end{gather}
and $\eta = (m_1,\ldots,m_J,f)$ collects the relevant nuisance functions. 
It will also help to define the function $\partial_\theta\psi_j : \mathcal{S}\times\Theta\times\R^p$ to be any function (sufficiently regular; see Assumption~\ref{ass:data} to follow) satisfying
\begin{equation*}
    \E_P\big[\partial_\theta\psi_j(S;\theta_P,\eta_P(Z))\given X,Z\big] = \E_P\big[\nabla_{\theta}\psi_j(S;\theta_P,\eta_P(Z))\given X,Z\big].
\end{equation*}
A trivial choice would be $\partial_\theta=\nabla_\theta$, though alternatives may yield finite sample improvements and in our numerical experiments presented in Section~\ref{sec:numerical-results}, we use
\begin{equation} \label{eq:partial_theta_def}
    \partial_\theta\psi_j(S;\theta,\eta(Z)) = -\big(M_j(X)-m_j(Z)\big) \frac{\frac{\partial}{\partial\theta}\mu(X,Z;\theta,f(Z))}{\frac{\partial}{\partial\gamma}\big|_{\gamma=f(Z)}\mu(X,Z;\theta,\gamma)}.
\end{equation}
In practice, we solve the estimating equation above using Fisher scoring with the update
\[
\hat{\theta} \leftarrow \hat{\theta} - \Big(\sum_{k=1}^K\sum_{i\in\cI_k}\partial_\theta\psi(S_i;\hat{\theta},\hat{\eta}^{(k)},\hat{w}^{(k)})\Big)^{-1} \Big(\sum_{k=1}^K\sum_{i\in\cI_k}\psi(S_i;\hat{\theta},\hat{\eta}^{(k)},\hat{w}^{(k)})\Big).
\]
The procedure to calculate the estimator $\hat{\theta}$ is summarised in Algorithm~\ref{alg:main}, which also gives  constructions for a sandwich estimate of the variance of $\hat{\theta}$ and a two-sided confidence interval for $\theta_P$. Note that the algorithm is for a generic method for estimating the weight functions; in Section~\ref{sec:rrfs} we give a concrete procedure that we show is asymptotically optimal.

{
\RestyleAlgo{ruled}
\begin{algorithm}[htbp]\label{alg:main}
\KwIn{Data $(S_i)_{i\in[n]}$ associated with the $\psi$ function $\psi$~\eqref{eq:w-psi}; number of cross-fitting folds $K$; regression method(s) for estimating nuisance function(s) $\eta_P$; method for computing weight functions; significance level $\alpha \in (0, 1)$.}
{\bf Notation:} For notational convenience we will omit the dependence of $Z_i$ in $(\eta(Z_i),w(Z_i))$ i.e.~we will notate $\psi(S_i;\theta,\eta(Z_i),w(Z_i))$ as $\psi(S_i;\theta,\eta,w)$ etc.
\vspace{0.4em}
\\
Partition $[n]$ into $K$ disjoint sets $(\cI_k)_{k\in[K]}$ of approximately equal size.

\For{$k\in[K]$}{
    Compute estimator(s) $\hat{\eta}^{(k)}$ for nuisance function(s) $\eta_P$, using data $(S_i)_{i\in\cI_k^c}$.
    
Compute weight function(s) $\hat{w}^{(k)}$ using data $(S_i)_{i\in\cI_k^c}$.
        
        % Calculate $\hat{\theta}^{(k)}$ as the solution to the equation $\sum_{i\in\cI_k^c}\psi(S_i;\theta,\hat{\eta}^{(k)},\hat{w}^{(k)})=0$. 
        
    }
    Calculate 
    $\hat{\theta}$ as the solution to the equation $ \sum_{k=1}^K\sum_{i\in\cI_k}\psi(S_i;\theta,\hat{\eta}^{(k)},\hat{w}^{(k)})=0$.
    \\
    Calculate $\hat{V} := n\left(\sum_{k=1}^K\sum_{i\in\cI_k}\partial_\theta\psi(S_i;\hat{\theta},\hat{\eta}^{(k)},\hat{w}^{(k)})\right)^{-2}\left(\sum_{k=1}^K\sum_{i\in\cI_k}\psi^2(S_i;\hat{\theta},\hat{\eta}^{(k)},\hat{w}^{(k)})\right)$.
    \\
    Calculate $\hat{C}(\alpha) := \big[\hat{\theta}-n^{-\frac{1}{2}}\hat{V}^{\frac{1}{2}}\Phi^{-1}\big(1-\frac{\alpha}{2}\big),
    \;\; 
    \hat{\theta}+n^{-\frac{1}{2}}\hat{V}^{\frac{1}{2}}\Phi^{-1}\big(1-\frac{\alpha}{2}\big)\big]$.

\KwOut{Estimator $\hat{\theta}$ for $\theta_P$; sandwich estimator of its asymptotic variance $\hat{V}$; $(1-\alpha)$-level confidence interval $\hat{C}(\alpha)$ for $\theta_P$.}
\caption{Construction of a robust weighted $\theta$ estimator}
\end{algorithm}
}

\subsection{Asymptotic properties}\label{sec:asymp_norm}
We now show that the estimator $\hat{\theta}$ in Algorithm~\ref{alg:main} is asymptotically Gaussian uniformly over a class of distributions whose primary restriction is that the nuisance functions contained in $\eta_P$ are estimated sufficiently well. This sort of result is relatively standard in the semiparametric literature, though one interesting feature here is that the weight functions computed only need to converge in probability to some population level quantities, rather than having to estimate any pre-defined population level quantity at some rate.

We make use of the following assumptions on the model (implied through the function $\psi$ in~\eqref{eq:w-psi}) and the estimators of the nuisance functions. We also assume for simplicity that $K$ divides $n$ which permits us to use the shorthand $\hat{\eta}$ and $\hat{w}$ to denote any of the $K$-fold nuisance function and weight estimators respectively (that is any of $\big(\hat{\eta}^{(k)}\big)_{k\in[K]}$ and $\big(\hat{w}^{(k)}\big)_{k\in[K]}$), which will all have the same distribution.

\begin{assumption}[Model assumptions and quality of nuisance estimators]\label{ass:data}
The following assumptions are made on the function $\psi$~\eqref{eq:w-psi} (assumed to be twice differentiable with respect to the target and nuisance arguments). Denote by $S=(Y, X, Z)$ an observation independent of our data, generated from a law $P$ contained within the family of probability distributions $\cP$ that satisfies the properties below.
\begin{itemize}
\item[(\mylabel{ass:unique-solution}{M1})] (\emph{Unique Solution of $\psi$}) 
    For some compact target parameter space $\Theta \subset \R$, the solution $\theta_P\in\interior\Theta$ satisfies $\E_P[\psi_j(S;\theta_P,\eta_P(Z))]=0$ for each $j\in[J]$. Further, this solution is unique in the sense that for all $\epsilon>0$,
    \begin{equation*}
        \inf_{P\in\cP}\inf_{\underset{|\theta-\theta_P|\geq\epsilon}{\theta\in\Theta}}|\E_P[\psi_j(S;\theta,\eta_P(Z))]| > 0.
    \end{equation*}
\item[(\mylabel{ass:consistency-and-DR}{M2})] (\emph{Nuisance Function Estimation}) The nuisance function estimator $\hat{\eta}$ of $\eta_P$ (for each fold, trained on data independent of $Z$) converges at the rates
\begin{equation*}
    \E_P\Big[\|\hat{\eta}(Z)-\eta_P(Z)\|_2^2\biggiven\hat{\eta}\Big] = O_\cP(n^{-\lambda}), \tag{\emph{Consistency}}
\end{equation*}
for some $\lambda>0$, and for each $j \in [J]$,
\begin{equation*}
    \sup_{r\in[0,1]}\E_P\Big[\{\hat{\eta}(Z)-\eta_P(Z)\}^\top  \hat{H}_{P,j}(Z;r) \{\hat{\eta}(Z)-\eta_P(Z)\}\biggiven\hat{\eta}\Big] = o_\cP\big(n^{-\frac{1}{2}}\big), \tag{\emph{DR}}
\end{equation*}
where the double-robust rate Hessian matrix is given by
\begin{equation}\label{eq:hessian}
    \hat{H}_{P,j}(Z;r) := \E_P\big[\nabla_\gamma^2\psi_j(S;\theta_P,\gamma)|_{\gamma=\eta_P(Z)+r\{\hat{\eta}(Z)-\eta_P(Z)\}}\given 
    \hat{\eta}, Z\big].
\end{equation}
\item[(\mylabel{ass:bounded-moments}{M3})] (\emph{Bounded Moments}) There exists some $\delta>0$ such that for each  $j \in [J]$
\begin{equation*}
    \sup_{\theta\in\Theta}\sup_{r\in[0,1]}\E_P\Big[\big\|D\psi_j(S;\theta,\eta_P(Z)+r\{\hat{\eta}(Z)-\eta_P(Z)\})\big\|_2^{2+\delta}\biggiven\hat{\eta}\Big] = O_\cP(1),
\end{equation*}
for operators $D\in\{\text{id}, \nabla_\theta, \nabla_\theta^2, \partial_\theta\}$.
\item[(\mylabel{ass:bounded-w}{M4})] (\emph{Bounded Weights}) The estimated weights $\hat{w}$ satisfy $\norm{\hat{w}_j}_{\infty}\leq c_w$ almost surely for some finite constant $c_w>0$ and each $j \in [J]$.
\end{itemize}
\end{assumption}

Note that the regularity assumption~\ref{ass:bounded-moments} is stronger than required but presented for notational ease; see Appendix~\ref{appsec:regularity-conditions} for weaker regularity conditions. 
A sufficient condition for the double-robust rates condition of Assumption~\ref{ass:consistency-and-DR} to hold is for both
\begin{equation}\label{eq:C_DR}
    \sup_{r\in[0,1]}\Lambda_{\max}\Big\{\E_P\Big[\nabla_\gamma^2\psi_j(S;\theta_P,\gamma)|_{\gamma=\eta_P(Z)+r\{\hat{\eta}(Z)-\eta_P(Z)\}}\biggiven \hat{\eta},Z\Big]\Big\} \leq C_{\text{DR}},
\end{equation}
almost surely for some constant $C_{\text{DR}}>0$ for each $j \in [J]$, and the nuisance function estimators $\hat{\eta}$ to converge at the relaxed rates
\begin{equation}\label{eq:1/4-rates}
    \E_P\big[\|\hat{\eta}(Z)-\eta_P(Z)\|_2^2\given\hat{\eta}\big] = o_\cP\big(n^{-\frac{1}{2}}\big).
\end{equation}
In some settings, such as the partially linear model discussed in Section~\ref{sec:PLMmodel}, the Hessian~\eqref{eq:hessian} takes a relatively simple form (e.g.~certain diagonal terms being precisely zero), allowing for weaker rate requirements than in~\eqref{eq:1/4-rates}; see Appendix~\ref{appsec:DR} for further details on interpreting Assumption~\ref{ass:consistency-and-DR} in these contexts.

Equipped with Assumption~\ref{ass:data} defining the family $\cP$ of probability distributions our convergence results will be uniform over, Theorem~\ref{thm:theta_est_asymp_norm} presents the asymptotic properties of our estimator.

\begin{theorem}[Uniform asymptotic Gaussianity of the weighted estimator]\label{thm:theta_est_asymp_norm}
    Let Assumption~\ref{ass:data} hold for a class of distributions  $\mathcal{P}$.
    Suppose that for each $j \in [J]$,
    \[
    \E_P\big[(\hat{w}_j(Z) - w^*_{P,j}(Z))^2 \given \hat{w}\big] = o_{\mathcal{P}}(1)
    \]
    for some measurable functions $w^*_{P,1},\ldots,w^*_{P,J} : \mathcal{Z}\to \R$
    satisfying the non-singularity condition
    \begin{equation} \label{eq:invertible}
    	\inf_{P\in\cP}\sum_{j=1}^J\E_P[w_j^*(Z)\nabla_\theta\psi_j(S;\theta_P,\eta_P(Z))]>0.
    \end{equation}
    Then the estimator $\hat{\theta}$ (Algorithm~\ref{alg:main}) is uniformly asymptotically Gaussian:
    \begin{equation*}
        \lim_{n\to\infty}\supP\sup_{t\in\R}\left|\PP_P\left( \sqrt{n/V^*_P}\big(\hat{\theta}_{\hat{w}}-\theta_P\big) \leq t \right)-\Phi(t)\right| = 0,
    \end{equation*}
    where the asymptotic variance is given by
    \begin{equation}\label{eq:V(w)}
        V^*_P := 
        \frac{\E_P\Big[\big(\sum_{j=1}^J w^*_{P,j}(Z)\psi_j(S;\theta_P,\eta_P(Z))\big)^2\Big]}{\big(\sum_{j=1}^J\E_P\big[w^*_{P,j}(Z)\partial_\theta\psi_j(S;\theta_P,\eta_P(Z))\big]\big)^2}.
    \end{equation}
    Moreover, the above holds if $V^*_P$ is replaced by its estimator $\hat{V}$ in Algorithm~\ref{alg:main}.
\end{theorem}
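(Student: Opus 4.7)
The plan is to follow the standard DML proof architecture while exploiting the particular structure that makes $\hat{w}$ require only consistency rather than a rate. The proof splits into consistency of $\hat{\theta}$, asymptotic linearization, and handling the sandwich variance estimator $\hat{V}$. First I would establish $\hat{\theta} - \theta_P = o_{\cP}(1)$: combining the uniform identification in (\ref{ass:unique-solution}), bounded weights (\ref{ass:bounded-w}), bounded moments (\ref{ass:bounded-moments}), and nuisance consistency in (\ref{ass:consistency-and-DR}), a uniform law of large numbers (using cross-fitting so $\hat{\eta}^{(k)}, \hat{w}^{(k)}$ are independent of fold-$k$ data) gives $n^{-1}\sum_{k,i}\psi(S_i;\theta,\hat{\eta}^{(k)},\hat{w}^{(k)}) \to \E_P[\psi(S;\theta,\eta_P,w^*_P)]$ uniformly in $\theta$; the well-separated zero property in (\ref{ass:unique-solution}) together with~\eqref{eq:invertible} then yield consistency.

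Next, Taylor expand the estimating equation around $\theta_P$ to write
\begin{equation*}
\sqrt{n}(\hat{\theta}-\theta_P) = -\Big(\tfrac{1}{n}\sum_{k,i}\nabla_\theta\psi(S_i;\tilde{\theta},\hat{\eta}^{(k)},\hat{w}^{(k)})\Big)^{-1}\tfrac{1}{\sqrt{n}}\sum_{k,i}\psi(S_i;\theta_P,\hat{\eta}^{(k)},\hat{w}^{(k)}).
\end{equation*}
The denominator converges uniformly to $\sum_j\E_P[w^*_{P,j}(Z)\nabla_\theta\psi_j(S;\theta_P,\eta_P(Z))]$ (bounded away from zero by~\eqref{eq:invertible}). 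The crux is the numerator: I will show it equals $n^{-1/2}\sum_i\sum_j w^*_{P,j}(Z_i)\psi_j(S_i;\theta_P,\eta_P(Z_i)) + o_{\cP}(1)$. Conditioning within each fold on the complementary data and decomposing
\begin{equation*}
\hat{w}_j\psi_j(\cdot;\hat{\eta}) - w^*_{P,j}\psi_j(\cdot;\eta_P) = \underbrace{(\hat{w}_j{-}w^*_{P,j})(\psi_j(\cdot;\hat{\eta}){-}\psi_j(\cdot;\eta_P))}_{\text{(I)}} + \underbrace{(\hat{w}_j{-}w^*_{P,j})\psi_j(\cdot;\eta_P)}_{\text{(II)}} + \underbrace{w^*_{P,j}(\psi_j(\cdot;\hat{\eta}){-}\psi_j(\cdot;\eta_P))}_{\text{(III)}},
\end{equation*}
term (II) has conditional mean zero given $Z$ since $\E_P[\psi_j(S;\theta_P,\eta_P(Z))\given Z]=0$, and its variance vanishes by the $L^2$-consistency of $\hat{w}$, so Chebyshev gives $o_{\cP}(1)$ after $\sqrt{n}$-scaling. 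Term (I) is controlled by Cauchy--Schwarz applied to the weight error and a nuisance-dependent remainder, both of which are small. Term (III) is the classical DML bias: by the influence-function form~\eqref{eq:IF} the linear part of the Taylor expansion in $\hat{\eta}-\eta_P$ has zero mean (Neyman orthogonality), and the quadratic remainder is precisely what the Hessian rate condition~\eqref{eq:hessian} in (\ref{ass:consistency-and-DR}) bounds by $o_{\cP}(n^{-1/2})$.

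Having reduced to the oracle score $n^{-1/2}\sum_i\sum_j w^*_{P,j}(Z_i)\psi_j(S_i;\theta_P,\eta_P(Z_i))$ of i.i.d.\ mean-zero terms, a uniform Lindeberg--Lyapunov CLT (justified by the $(2+\delta)$-moment bounds in (\ref{ass:bounded-moments}) combined with uniform boundedness of $w^*_P$, which is inherited from (\ref{ass:bounded-w}) via $L^2$-consistency) yields the stated Gaussian limit with variance $V^*_P$. Finally, for the sandwich estimator $\hat{V}\to V^*_P$ uniformly: the denominator of $\hat{V}$ was already handled in the linearization, and the numerator $n^{-1}\sum_{k,i}\psi^2(S_i;\hat{\theta},\hat{\eta}^{(k)},\hat{w}^{(k)})$ converges uniformly to $\E_P[\psi^2(S;\theta_P,\eta_P,w^*_P)]$ by another uniform LLN argument using the consistency of $\hat{\theta},\hat{\eta},\hat{w}$ together with (\ref{ass:bounded-moments}) and (\ref{ass:bounded-w}); Slutsky then yields the corresponding Gaussian limit with estimated variance.

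The main obstacle is Step~3, especially propagating uniformity over $\cP$ through the interaction term (I) and the quadratic remainder in (III). The elegance of the statement lies in term (II): because $\psi_j$ is conditionally mean-zero given $Z$ at the truth, $\hat{w}$ enters the score bias only through the product $(\hat{w}-w^*_P)(\hat{\eta}-\eta_P)$, which is higher-order even when $\hat{w}$ is only consistent and need not approach any \emph{specific} population target at a rate.
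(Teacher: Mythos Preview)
Your proposal is correct and follows essentially the same DML architecture as the paper's proof: uniform consistency via a well-separated zero argument, Taylor linearisation in $\theta$, a decomposition of the score remainder exploiting Neyman orthogonality for the $\hat{\eta}$-dependence and the conditional mean-zero property $\E_P[\psi_j(S;\theta_P,\eta_P(Z))\given Z]=0$ for the $\hat{w}$-dependence, followed by a uniform Lyapunov CLT and a separate $\hat{V}$-consistency step. The only cosmetic difference is that you split the remainder as (I)+(II)+(III) by first separating $\hat{w}$ from $w^*_P$, whereas the paper keeps $\hat{w}$ intact and instead splits $\hat{w}[\psi_j(\cdot;\hat{\eta})-\psi_j(\cdot;\eta_P)]$ into its conditional mean (handled by DR) and a centred empirical-process piece; your term (III) implicitly contains both of these pieces, so you should make explicit that its centred part is $o_{\cP}(1)$ via the $L^2$ nuisance consistency in (\ref{ass:consistency-and-DR}), not just its bias.
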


Theorem~\ref{thm:theta_est_asymp_norm} justifies the uniform asymptotic validity of the confidence interval construction in Algorithm~\ref{alg:main}. 
The variance $V^*_P$ is in terms of the target $w^*_P$ of the weight functions. Recall from Section~\ref{sec:hardness} that a ROSE estimator is one which attains the minimal possible such variance. In the next section, we introduce a data-driven choice of weight functions that achieve this asymptotically.

\section{ROSE random forests}\label{sec:rrfs}

\subsection{Robust weight estimation}

Theorem~\ref{thm:theta_est_asymp_norm} derives in particular the form of the asymptotic variance of the estimator $\hat{\theta}$ and reveals how this depends on the weight functions (specifically their probabilistic limits) used in its construction.
We can in fact seek to minimise this variance over weight functions, thereby viewing it as a sort of loss function over the weights:
\begin{equation}\label{eq:sand-loss}
	L_{P,\SL}(w) 
	:=\; \frac{\E_P\left[\big(\sum_{j=1}^J w_{j}(Z)\psi_j(S;\theta_P,\eta_P(Z))\big)^2\right]}{\big(\sum_{j=1}^J\E_P\big[w_{j}(Z)\partial_\theta\psi_j(S;\theta_P,\eta_P(Z))\big]\big)^2}.
\end{equation}
This quantity, which is a population version of the sandwich estimator of the variance~\citet{huber}, when viewed as a function of $w$ is termed the \emph{sandwich loss} (SL) in \citet{sandwich-boosting}. 
The weighting scheme $w_P^{\rose} = (w_{P,1}^{\rose},\ldots,w_{P,J}^{\rose})$ that minimises this quantity, is (up to an arbitrary positive constant of proportionality) given by
\begin{gather}\label{eq:w_j}
    w_{P,j}^{\rose}(Z) := e_j^\top 
    \E_P\big[\bm{\psi}(S;\theta_P,\eta_P(Z)) \bm{\psi}(S;\theta_P,\eta_P(Z))^\top \given Z\big]^{-1}
    \E_P\big[\bm{\partial_\theta\psi}(S;\theta_P,\eta_P(Z)) \given Z\big],
    \\
    \bm{\psi} = (\psi_1,\psi_2,\ldots,\psi_J),
    \quad
    \text{where} \;\;\bm{\partial_\theta\psi} = (\partial_\theta\psi_1,\partial_\theta\psi_2,\ldots,\partial_\theta\psi_J),
    \notag
\end{gather}
provided the matrix in the display is invertible. This may be seen by an application of the Cauchy--Schwarz inequality; see Appendix~\ref{appsec:rose-weights}. If we could estimate the weight functions $w_P^{\rose}$ consistently, then by Theorem~\ref{thm:theta_est_asymp_norm} the resulting estimator $\hat{\theta}$ would enjoy the minimal asymptotic variance among all of the robust estimators we are considering. One approach to doing this, taken by \citet{sandwich-boosting} and which we follow here, is by minimising an empirical version of the population sandwich loss  \eqref{eq:sand-loss}. 
In Section~\ref{sec:rose-forests} we introduce an adapted random forest algorithm that implements a novel splitting rule to directly minimise the loss~\eqref{eq:sand-loss}, to produce a ROSE estimator.
One alternative involves estimating the function $w_{P}^{\rose}$ directly. In the case where $J=1$, this can be achieved through a weighted least squares regression. Whilst asymptotically such an approach should be equivalent to minimising the empirical sandwich loss, since the latter approach targets exactly the quantity we wish to minimise, and as overall the estimation problem is challenging, it can have better performance in finite samples, as we see empirically in Section~\ref{sec:sim2}. Another approach to estimate $w_{P}^{\rose}$ that explicitly takes account of the complexity of the estimation problem involves placing modelling assumptions on $w_{P}^{\rose}$. While this is a popular approach in related settings, we explain below why this can lead to unfavourable performance.

\subsubsection{On local efficiency}\label{sec:GEE}

One modelling assumption popular in the partially linear model in the case where $J=1$ and $M_1(X)=X$ \citep{emmenegger} is that
 $\Var_P(\varepsilon(S;\theta_P,f_P(Z))\given X,Z)\allowbreak=\allowbreak\Var_P(\varepsilon(S;\theta_P,f_P(Z))\given Z)$. Under this condition, $w_P^{(\mathrm{rose})}$ collapses to the simpler form
\begin{equation}\label{eq:w-GEE}
    w_P^{(\mathrm{loceff})}(Z) := \E_P\big[\left\{\partial_\theta\psi_1(S;\theta_P,\eta_P(Z))\right\}^{-1}\{\psi_1^2(S;\theta_P,\eta_P(Z))\}\given Z\big]^{-1},
\end{equation}
which can be estimated via a single (unweighted) least squares regression~\citep[\S4.6]{tsiatis}. Under the postulated working submodel a form of efficiency is therefore maintained, and outside this submodel, the estimator is still valid (i.e.\ $\sqrt{n}$-consistent). This property is sometimes known as `local efficiency'~\citep{tsiatis, locallyeff1, locallyeff2, targeted-learning} and the approach is particularly popular in the field of generalised estimating equations~\citep{liangzeger, geehardin, tsiatis, ziegler}. While local efficiency may appear attractive at first sight, the fact that outside the working submodel, the estimator can make no claim to any form of optimality, means that performance can deteriorate in cases where the working submodel is misspecified. The result below shows in particular that in such cases this approach can be arbitrarily worse than a simple unweighted estimator.

\begin{theorem}\label{thm:div-ratio-gplm}
    Consider the class of estimators given by Algorithm~\ref{alg:main} with $J=1$ and $M_1:x\mapsto x$ in the context of the semiparametric generalised partially linear model~\eqref{eq:gplm} with $\cY=\cX=\R$. Fix measurable functions $m_0, f_0 : \mathcal{Z} \to \R$, target parameter $\theta_0 \in \R$ and arbitrarily large constant $A>0$. Then for any marginal distribution for $Z$ where $Z$ is not almost surely constant, there exists a joint distribution $P_0$ for $(Y, X, Z)$ for which $\E_{P_0}[X\given Z]=m_0(Z)$, $\E_{P_0}[Y\given X,Z] = g^{-1}(X\theta_0+f_0(Z))$, and
    \begin{equation*}
	        \frac{V_{P_0,\mathrm{loceff}}}{V_{P_0,\mathrm{unw}}} \geq A
	        \quad\text{and}\quad
	        \frac{V_{P_0,\mathrm{unw}}}{V_{P_0,\mathrm{rose}}} \geq A.
	    \end{equation*}
	    Here  $V_{P,\mathrm{unw}}:=L_{P,\SL}(1)$, $V_{P,\mathrm{rose}}:=L_{P,\SL}(w_P^{\rose})$ and $V_{P,\mathrm{loceff}}:=L_{P,\SL}(w_P^{\loceff})$ are respectively the (scaled) population level asymptotic variances of the unweighted, ROSE (with $w_P^{\rose}$ as in~\eqref{eq:w_j}) and locally efficient (with $w_P^{\loceff}$ as in~\eqref{eq:w-GEE}) estimators of $\theta_P$.
\end{theorem}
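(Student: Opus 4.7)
The plan is to construct a single joint distribution $P_0$ parametrised by three scalars $(p,\epsilon,\beta)$ and calibrate these in terms of $A$ to make both variance ratios simultaneously at least $A$. The key first observation is that in the GPLM with $J=1$, $M_1(x)=x$, and taking $\partial_\theta$ as in~\eqref{eq:partial_theta_def}, direct computation gives $\psi_1(S;\theta_0,\eta_0)=(X-m_0(Z))\,\varepsilon$ and $\partial_\theta\psi_1(S;\theta_0,\eta_0)=-(X-m_0(Z))\,X$, where $\varepsilon=(Y-g^{-1}(X\theta_0+f_0(Z)))/(g^{-1})'(X\theta_0+f_0(Z))$. Writing $v_0(X,Z):=\Var_{P_0}(\varepsilon\given X,Z)$ and $A(Z):=\E_{P_0}[(X-m_0(Z))^2v_0(X,Z)\given Z]$, $B(Z):=\E_{P_0}[(X-m_0(Z))^2\given Z]$, the sandwich loss collapses to $L_{P_0,\SL}(w)=\E[w^2A]/\E[wB]^2$, giving $V_{P_0,\mathrm{unw}}=\E[A]/\E[B]^2$, $V_{P_0,\mathrm{rose}}=1/\E[B^2/A]$ (by Cauchy--Schwarz applied to the loss in $w$), and $V_{P_0,\mathrm{loceff}}=\E[w_0^2A]/\E[w_0B]^2$ with $w_0(Z):=w_{P_0}^{\loceff}(Z)=-1/\E_{P_0}[(X-m_0(Z))v_0(X,Z)/X\given Z]$ obtained from~\eqref{eq:w-GEE}.

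\textbf{Construction and leading-order computation.} Since $Z$ is not almost surely constant, pick a measurable $E\subset\cZ$ with $p:=\PP_{P_0}(Z\in E)\in(0,1)$ and a constant $u>0$ with $\PP_{P_0}(m_0(Z)^2=u^2)=0$ (so that $X$ avoids $0$ and $w_0$ avoids $0$). Conditionally on $Z\in E$ let $X=m_0(Z)+U$ with $U\sim\mathrm{Unif}\{-u,+u\}$ and take $v_0(X,Z)=1$; conditionally on $Z\in E^c$ let $U=0$ with probability $1-\epsilon$ and $U=\pm u$ with probability $\epsilon/2$ each, and set $v_0(X,Z)=0$ when $U=0$ and $v_0(X,Z)=\beta$ when $U=\pm u$. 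Generate $Y=g^{-1}(X\theta_0+f_0(Z))+(g^{-1})'(X\theta_0+f_0(Z))\,\tilde\varepsilon$ with $\tilde\varepsilon\given X,Z\sim\mathcal{N}(0,v_0(X,Z))$ (a point mass at $0$ when $v_0=0$); then both required conditional-mean constraints hold. One computes $A=B=u^2$ on $E$ and $A=\epsilon u^2\beta$, $B=\epsilon u^2$ on $E^c$, while the loceff formula gives $w_0|_E(Z)=(m_0(Z)^2-u^2)/u^2$ and $w_0|_{E^c}(Z)=(m_0(Z)^2-u^2)/(u^2\epsilon\beta)$, so $w_0|_{E^c}/w_0|_E=1/(\epsilon\beta)$. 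Substituting and noting that the shared $(m_0^2-u^2)$-factors enter numerator and squared-denominator of $V_{P_0,\mathrm{loceff}}$ in a matched way and contribute only $O(1)$ multiplicative constants bounded away from $0$ (by the choice of $u$), one arrives at the leading-order equivalents
\[
\frac{V_{P_0,\mathrm{loceff}}}{V_{P_0,\mathrm{unw}}}\;\asymp\;\frac{1}{(\epsilon\beta)^2}\,,\qquad\frac{V_{P_0,\mathrm{unw}}}{V_{P_0,\mathrm{rose}}}\;\asymp\;\frac{\epsilon\beta}{p}\,.
\]

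\textbf{Calibration and main obstacle.} Choosing for example $p=A^{-3/2}$, $\beta=A^{5/2}$, $\epsilon=A^{-3}$ (so $\epsilon\beta=A^{-1/2}$) and taking $A$ sufficiently large, both ratios become of order $A$; a further constant-factor tightening delivers the strict inequality $\geq A$, and any finite $A$ is handled by embedding in the construction for $\max(A,A_0)$. The heart of the argument is the simultaneous blow-up of both ratios: had $v_0$ depended only on $Z$, then $w^{\rose}=B/A=1/v_0(Z)$ would collapse (up to constants) to the loceff form and to the constant weight, pinning the product of the two target ratios at a fixed scale and precluding both from diverging. Our construction circumvents this by placing the heteroscedasticity of $v_0$ \emph{inside} $X$ on $E^c$ (vanishing on the common event $\{U=0\}$ and taking the large value $\beta$ on the rare event $\{U=\pm u\}$): this simultaneously produces a large $A/B$ gap (so $V_{\rose}\ll V_{\unw}$) and causes the loceff weight—which averages $v_0$ over $X$ weighted by $(X-m_0)/X$ rather than by the $B/A$-optimal $(X-m_0)^2$ information content—to overweight exactly the $E^c$ cells that $w^{\rose}$ correctly downweights, inflating $V_{\loceff}$. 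Two minor technical points—ensuring $X\ne 0$ almost surely (so $1/X$ in~\eqref{eq:w-GEE} is well-defined) and $m_0(Z)^2\ne u^2$ almost surely (so $w_0\ne 0$)—are arranged by a suitable, possibly $Z$-measurable, choice of $u$.
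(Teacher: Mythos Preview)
Your construction has the right flavour (a latent Bernoulli, $X=m_0$ on a high-probability event, and heteroscedastic $v_0$), but the calibration step contains a genuine gap. You treat $p=\PP(Z\in E)$ as a free parameter and set $p=A^{-3/2}$, yet the theorem must hold for \emph{every} non-degenerate marginal law of $Z$, including discrete ones (e.g.\ $Z$ Bernoulli) where only finitely many values of $p\in(0,1)$ are attainable. With $p$ fixed, your two remaining parameters $(\epsilon,\beta)$ are not enough: writing $q=1-p$, your construction gives (with $w_0\propto 1/\E[v_0\mid Z]$)
\[
\frac{V_{\mathrm{unw}}}{V_{\mathrm{rose}}}=\frac{(p+q\epsilon\beta)(p+q\epsilon/\beta)}{(p+q\epsilon)^2},\qquad
\frac{V_{\mathrm{loceff}}}{V_{\mathrm{unw}}}=\frac{(p+q/(\epsilon\beta))(p+q\epsilon)^2}{(p+q/\beta)^2(p+q\epsilon\beta)}.
\]
On the one-dimensional curve $r\mapsto V(r)$ (which is U-shaped with minimum at $r^\ast=1/\beta$) the three players sit at $r_{\mathrm{unw}}=1$, $r^\ast=1/\beta$, $r_{\mathrm{loceff}}=1/(\epsilon\beta)$, and since $\epsilon<1$ one always has $r_{\mathrm{loceff}}>r^\ast$. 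The only regime in which $V_{\mathrm{loceff}}>V_{\mathrm{unw}}$ is $r^\ast<1<r_{\mathrm{loceff}}$, i.e.\ $\beta>1$ and $\epsilon\beta<1$; but then both factors in the numerator of $V_{\mathrm{unw}}/V_{\mathrm{rose}}$ lie in $(p,1)$, so $V_{\mathrm{unw}}/V_{\mathrm{rose}}\le 1/p^2$, a bound that does not grow with $A$. (Numerically: $p=\tfrac12$, $\epsilon=10^{-6}$, $\beta=10^{4}$ gives $V_{\mathrm{loceff}}/V_{\mathrm{unw}}\approx 100$ but $V_{\mathrm{unw}}/V_{\mathrm{rose}}\approx 1.01$.) Thus the two target ratios cannot simultaneously diverge with only $(\epsilon,\beta)$ free. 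The paper's proof avoids this by keeping the partition $\cZ_1\uplus\cZ_2$ and its probabilities $q_1,q_2$ \emph{fixed}, and instead placing a tunable Bernoulli parameter $\zeta$ inside the conditional law of $X\mid Z$ (together with two exponents $\alpha,\beta$); three genuinely free scalars then suffice.

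A secondary point: your literal reading of~\eqref{eq:w-GEE} as $w_0=-1/\E[(X-m_0)v_0/X\mid Z]$ brings in $1/X$ and $(m_0(Z)^2-u^2)$ factors whose treatment (``contribute only $O(1)$ constants'') is not justified---e.g.\ nothing rules out $\E[m_0(Z)^2-u^2\mid Z\in E]=0$, which would kill the denominator of $V_{\mathrm{loceff}}$. The paper interprets $w_P^{\loceff}$ as the ROSE weight computed under the working submodel $v_0(X,Z)=v_0(Z)$, i.e.\ $w_P^{\loceff}(Z)\propto 1/\E_P[\{g'(\mu_0)\}^2\Var_P(Y\mid X,Z)\mid Z]$, which is consistent with the surrounding text and sidesteps these complications entirely.
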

The result above motivates direct minimisation of the sandwich loss. We introduce a specific scheme for doing this in the following.

\subsection{ROSE random forests}\label{sec:rose-forests}
Here we introduce the \emph{ROSE random forest}; a practical algorithm that directly minimises an empirical estimator for~\eqref{eq:sand-loss} to learn the optimal weights~\eqref{eq:w_j}. These estimated weight functions may then be used in Algorithm~\ref{alg:main} to arrive at a ROSE estimator for the target parameter; we refer to this as the ROSE random forest estimator. 
For simplicity, in this section we will consider the case where $J=1$. We present a generalisation for $J>1$ in Appendix~\ref{appsec:rose-forest-J>1}.

In order to arrive at an empirical version of \eqref{eq:sand-loss}, we require an estimate $\hat{\eta}$ of the nuisance parameters and a pilot estimator $\hat{\theta}_{\text{init}}$ of $\theta_P$. We compute the latter using Algorithm~\ref{alg:main}, taking the weights as equal to the constant $1$. Our theory will require that $\hat{\theta}_{\text{init}}$ and $\hat{\eta}$ are independent of the data used to construct the ROSE random forest weight function. Formally, this can be achieved through sample splitting. Since Theorem~\ref{thm:theta_est_asymp_norm} only requires consistent estimation of the weight functions, this sample-splitting would not affect the asymptotic variance of the final estimator and for example no additional cross-fitting would be necessary. In practice however, this independence is not needed and no such sample-splitting is used for any of our numerical experiments.

Given initial estimators $\hat{\theta}_{\text{init}}$ and $\hat{\eta}$, we define the empirical sandwich loss (in terms of data indexed by $i\in\cI$) for the weight function $w = w_1$ as
\begin{equation*}
    \hat{L}_{\SL}(w_1) := 
        \bigg(\sum_{i\in\cI}w_1(Z_i)\partial_\theta\psi_1(S_i;\hat{\theta}_{\text{init}},\hat{\eta}(Z_i))\bigg)^{-2}
        \bigg(\sum_{i\in\cI}w_1^2(Z_i)\psi_1^2(S_i;\hat{\theta}_{\text{init}},\hat{\eta}(Z_i))\bigg).
\end{equation*}
To develop our ROSE forest procedure, let us suppose that we have partitioned the state space $\cZ$ for $Z$ into disjoint rectangular regions $R\in\cR$ i.e.~$\cZ = \uplus_{R\in\cR}R$, with the function $w_1$ being piecewise constant on this partition (i.e.~$w_1(Z_i)=w_{1,R}$ if $i\in I_R := \{i:Z_i\in R\}$). The sandwich loss above then takes the form
\begin{equation}\label{eq:emp-sand-loss}
    \hat{L}_{\SL}(w_1) 
 := \bigg(\sum_{R\in\cR}w_{1,R}\sum_{i\in I_R}\partial_\theta\psi_1(S_i;\hat{\theta}_{\text{init}},\hat{\eta}(Z_i))\bigg)^{-2}\bigg(\sum_{R\in\cR}w_{1,R}^2\sum_{i\in I_R}\psi_1^2(S_i;\hat{\theta}_{\text{init}},\hat{\eta}(Z_i))\bigg).
\end{equation}
Unlike standard CART splitting objectives does not take the form of a minimisation problem over additive goodness-of-fit metrics over disjoint regions. The weights that minimise the sandwich loss  (up to an arbitrary multiplicative scalar constant) can nevertheless be analytically calculated as
\begin{equation*}
    w_{1,R} = \bigg(\sum_{i\in I_R} \psi_1^2(S_i;\hat{\theta}_{\text{init}},\hat{\eta}(Z_i)) \bigg)^{-1}   \bigg(\sum_{i\in I_R} \partial_\theta\psi_1(S_i;\hat{\theta}_{\text{init}},\hat{\eta}(Z_i)) \bigg),
\end{equation*}
with associated minimum being
\begin{equation*}
    \underset{(w_{1,R})_{R\in\cR}}{\min} \hat{L}_{\SL}(w_1) = \Bigg(\sum_{R\in\cR}\frac{\big(\sum_{i\in I_R} \partial_\theta\psi_1(S_i;\hat{\theta}_{\text{init}},\hat{\eta}(Z_i)) \big)^2}{\sum_{i\in I_R} \psi_1^2(S_i;\hat{\theta}_{\text{init}},\hat{\eta}(Z_i)) }\Bigg)^{-1}.
\end{equation*}
This suggests taking the goodness-of-fit metric for splitting an index set $I$ into the disjoint index sets $I'$ and $I''$ to be
\begin{equation*}
    \frac{\big(\sum_{i\in I^{'}}\partial_\theta\psi_1(S_i;\hat{\theta}_{\text{init}},\hat{\eta}(Z_i)) \big)^2}{\sum_{i\in I^{'}} \psi_1^2(S_i;\hat{\theta}_{\text{init}},\hat{\eta}(Z_i)) }
    + \frac{\big(\sum_{i\in I^{''}}\partial_\theta\psi_1(S_i;\hat{\theta}_{\text{init}},\hat{\eta}(Z_i)) \big)^2}{\sum_{i\in I^{''}} \psi_1^2(S_i;\hat{\theta}_{\text{init}},\hat{\eta}(Z_i)) } 
    - \frac{\big(\sum_{i\in I}\partial_\theta\psi_1(S_i;\hat{\theta}_{\text{init}},\hat{\eta}(Z_i)) \big)^2}{\sum_{i\in I} \psi_1^2(S_i;\hat{\theta}_{\text{init}},\hat{\eta}(Z_i)) },
\end{equation*}
which can be interpreted as the increase in the reciprocal of the empirical variance of the final weighted $\theta$ estimator. Interestingly, despite the more involved form of the sandwich loss compared to standard CART minimisation objectives, since the above goodness-of-fit metric for a split on the index set $I$ is a function only of the data indexed by $I$, this splitting rule can be used to build computationally fast decision trees (Algorithm~\ref{alg:rosetree}) and random forests (Algorithm~\ref{alg:roseforest}) that have the same computational complexity as standard CART random forests~\citep{ranger, rpart}. The key point is that to compute the objective above for an adjacent split point is an $O(1)$ update as with CART trees; see also Appendix~\ref{appsec:rose-complexity} for further details. 
Note that in Algorithm~\ref{alg:rosetree} we use data indexed by $\mathcal{I}_{\text{split}}$ to find the split points minimising the goodness-of-fit metric, and that indexed by $\mathcal{I}_{\text{eval}}$ to find the optimal levels for the piecewise constant weight function given the partition defined by the split points. For our theory, we require these to be disjoint, though in practice we always take them to be equal.
\RestyleAlgo{ruled}
\begin{algorithm}[ht]
\KwIn{Data $\left(S_i\right)_{i\in\cI}$ with split and evaluation index sets $\cI_{\text{split}}, \cI_{\text{eval}}\subset\cI$; 
estimator $\hat{\eta}$ of nuisance functions; pilot estimator $\hat{\theta}_{\text{init}} = \hat{\theta}$ of $\theta_P$; 
maximum tree depth $D_{\max}$.}

    \While{(depth of tree) $< D_{\max}$} {
    
        (For higher dimensional data) Select $d^{\text{mtry}}$ variables at random from the $d$ variables. 
        
        Select the variable (among the $d^{\text{mtry}}$) and split-point pair that minimises the goodness of fit measure that splits the node corresponding to state space region $R$ into $R'\uplus R''$ 
            \begin{equation*}
                \frac{\Big(\sum \limits_{i\in I_R'}\partial_\theta\psi_1(S_i;\hat{\theta},\hat{\eta}(Z_i))\Big)^2}{\sum\limits_{i\in I_R'}\psi_1^2(S_i;\hat{\theta},\hat{\eta}(Z_i))} 
                + \frac{\Big(\sum \limits_{i\in I_R''}\partial_\theta\psi_1(S_i;\hat{\theta},\hat{\eta}(Z_i))\Big)^2}{\sum\limits_{i\in I_R''}\psi_1^2(S_i;\hat{\theta},\hat{\eta}(Z_i))}
                -\frac{\Big(\sum \limits_{i\in I_R}\partial_\theta\psi_1(S_i;\hat{\theta},\hat{\eta}(Z_i))\Big)^2}{\sum\limits_{i\in I_R}\psi_1^2(S_i;\hat{\theta},\hat{\eta}(Z_i))},
            \end{equation*}
            where $I_R := \{i\in\cI_{\text{split}} : Z_i\in R\}$, and analogously for $I_{R'}$ and $I_{R''}$.
        
        Split the node into two child nodes as per the optimal split point.
    
    }

    We notate such a `ROSE tree' $T$, with each leaf $l\in\cL$ taking the form of a rectangular subspace $l\subset\cZ$, such that for every $z\in\cZ$ there exists a unique $l(z)\in\cL$ such that $z\in l(z)$.
    
    \For{$i\in\cI$}{
        Define $\omega_{i}(z) := |\{j\in\cI_{\text{eval}} : Z_j\in l(z)\}|^{-1}\ind_{\{i\in\cI_{\text{eval}} : Z_i\in l(z)\}}$.
    }

\KwOut{ROSE decision tree weights $\omega_i : \R^{d}\to\R$ for each $i \in \mathcal{I}$.}
\caption{ROSE decision trees ($J=1$)}
\label{alg:rosetree}
\end{algorithm}

\RestyleAlgo{ruled}
\begin{algorithm}[ht]
\KwIn{Data $\left(S_i\right)_{i\in\cI}$; estimator $\hat{\eta}$ of nuisance functions; pilot estimator $\hat{\theta}_{\text{init}} = \hat{\theta}$ of $\theta_P$; $c_{\text{split}}>0$ proportion of data chosen for each tree in the forest; ROSE tree hyperparameters.}

\For{$b\in[B]$} {

Generate random subsets of the data $\cI_{\text{split},b}, \cI_{\text{eval},b}\subset\cI$ with $|\cI_{\text{split},b}|, |\cI_{\text{eval},b}| \approx c_{\text{split}}|\cI|$.

    Grow a ROSE decision tree $T_{j,b}$ as in Algorithm~\ref{alg:rosetree}, outputting weights $\{\omega_{ib}\}_{i \in \mathcal{I}}$.

    Calculate $\tau_{b}^{(1)}(z) := \sum_{i\in\cI} \omega_{ib}(z)\psi_1^2(S_i;\hat{\theta},\hat{\eta}(Z_i))$.

    Calculate $\tau_{b}^{(2)}(z) := \sum_{i\in\cI} \omega_{ib}(z)\partial_\theta\psi_1(S_i;\hat{\theta},\hat{\eta}(Z_i))$.

}

Calculate
$\hat{w}_1^{\rose}(z) := \big(\sum_{b=1}^B \tau^{(1)}_{b}(z)\big)^{-1}\big(\sum_{b=1}^B \tau_{b}^{(2)}(z)\big)$.

\KwOut{ROSE random forest weight function $\hat{w}_1^{\rose}:\R^{d}\to\R$.}
\caption{ROSE random forests ($J=1$)}
\label{alg:roseforest}
\end{algorithm}

\subsection{Consistency of ROSE random forests}\label{sec:forest-consistency}

We now present a consistency result for ROSE forests showing that they converge to the optimal weightings~\eqref{eq:w_j}. Thus using ROSE forests to compute the weights, the resulting estimator of $\theta_P$ in the partially linear model will enjoy asymptotic normality with an asymptotic  variance equal to the minimum attainable variance when restricting to our robustness class of estimators~\eqref{eq:w-psi}. 
We require the following assumptions on the data generating mechanism. 
\begin{assumption}[Additional model assumptions]\label{ass:data-2}
In addition to Assumption~\ref{ass:data}, the following assumptions are made on the function $\psi$~\eqref{eq:w-psi}. As in Section~\ref{sec:asymp_norm}, we denote by $S=(Y,X,Z)$ an observation independent of our data, generated from a law $P$ contained within the family of probability distributions $\cP$ that satisfies the properties below.
\begin{itemize}
\item[(\mylabel{ass:bounded-density}{M5})] (\emph{Bounded Density}) Suppose $\cZ=[0,1]^{d}$ and $Z$ has density bounded away from zero and infinity.
\item[(\mylabel{ass:bounded-cond-moments}{M6})] (\emph{Bounded Conditional Moments}) There exist finite constants $c_1,c_2,C_1,C_2>0$, independent of $P$, such that
\begin{gather*}
    c_1 \leq \E_P\left[\psi_1(S;\theta_P,\eta_P(Z))^2 \given Z\right] \leq C_1 ,
    \quad
    c_2 \leq \E_P\big[\partial_\theta\psi_1(S;\theta_P,\eta_P(Z)) \given Z\big] \leq C_2 ,
\end{gather*}
almost surely.
\item[(\mylabel{ass:holder}{M7})] (\emph{H\"{o}lder Continuity}) 
There exist constants $(\beta,L)\in(0,1]\times\R_+$ that do not depend on $P$ such that
\begin{gather*}
    \big|\E_P[\psi_1^2(S;\theta_P,\eta_P(Z))\given Z=z]-\E_P[\psi_1^2(S;\theta_P,\eta_P(Z))\given Z=z']\big|\leq L\|z-z'\|^{\beta},
    \\
    \big|\E_P[\partial_\theta\psi_1(S;\theta_P,\eta_P(Z))\given Z=z]-\E_P[\partial_\theta\psi_1(S;\theta_P,\eta_P(Z))\given Z=z']\big|\leq L\|z-z'\|^{\beta},
\end{gather*}
for all $z,z'\in\cZ$.
\end{itemize}
\end{assumption}
Assumptions~\ref{ass:bounded-cond-moments} and \ref{ass:holder} are relatively mild conditions on the (scaled) influence function $\psi_1$. For example, in the partially linear model with $M_1(X) = X$, if the errors from regressing $Y$ on $(X, Z)$ and $X$ on $Z$ are conditionally independent given $Z$, Assumption~\ref{ass:bounded-cond-moments} will hold when $\Var(X \given Z)$ and $\Var(Y \given X, Z)$ are bounded from above and below almost surely.
Assumption~\ref{ass:bounded-density} is restrictive but standard in the literature on random forest as are the following conditions on the generation of the random ROSE random forest \citep{biau, meinshausen, wagerwalther, grfs, distrandfor}; empirical evidence suggests they are far from necessary.
\begin{assumption}[Properties of the ROSE random forest]\label{ass:forest}
The splits for each ROSE decision tree (Algorithm~\ref{alg:rosetree}) making up ROSE random forests (Algorithm~\ref{alg:roseforest}) satisfy the following properties:
\begin{itemize}
\item[(\mylabel{P1}{P1})] (\emph{Honesty}) For each tree, the split and evaluation index sets $\cI_{\text{split}}$ and $\cI_{\text{eval}}$ are disjoint, and both are disjoint from the data used to construct the nuisance function estimate $\hat{\eta}$ and pilot estimate $\hat{\theta}_{\text{init}}$.
\item[(\mylabel{P2}{P2})] (\emph{Node Size}) Let $k(l):=\left|\left\{i\in\cI_{\text{eval}}:Z_i\in l \right\}\right|$ be the number of (evaluation labeled) training observations in a leaf $l$. 
For each tree, the proportion of observations in any given node vanishes in $n$: $n^{-1}\max_{l}k(l)=o(1)$. Also, the minimal number of observations in any given node grows in $n$: $1/\min_{l}k(l)=o(1)$.
\item[(\mylabel{P3}{P3})] (\emph{$\alpha$-regularity}) Each tree split leaves at least a fraction $\alpha>0$ of the available training data on each side.
\item[(\mylabel{P4}{P4})] (\emph{Random Split}) For each tree split, the probability that the split occurs along the $q$th feature is bounded below by $\pi/d$ for some $\pi>0$, for all $q\in[d]$.
\end{itemize}
\end{assumption}

Equipped with Assumption~\ref{ass:data}~and~\ref{ass:data-2}, defining the family $\cP$ of the probability distributions our consistency result will be uniform over, and Assumption~\ref{ass:forest} dictating the properties of the forest, Theorem~\ref{thm:rose_forest_consistency} presents asymptotic consistency of our ROSE random forests. 

\begin{theorem}[Consistency of ROSE random forests]\label{thm:rose_forest_consistency}
Let Assumptions~\ref{ass:data} and~\ref{ass:data-2} hold for a class of distributions  $\mathcal{P}$. Then, in the case $J=1$, the ROSE random forest estimator $\hat{w}_1^{\rose}$ (Algorithms~\ref{alg:rosetree} and~\ref{alg:roseforest}, satisfying Assumption~\ref{ass:forest}) is uniformly consistent in estimating the optimal weights~\eqref{eq:w_j}:
    \begin{equation*} 
        \E_P\Big[\big(\hat{w}_1^{\rose}(Z)-w_{P,1}^{\rose}(Z)\big)^2 \Biggiven \hat{w}_1^{\rose}\Big]
        = o_\cP(1).
    \end{equation*}
\end{theorem}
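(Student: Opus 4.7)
The plan is to reduce the statement to uniform $L^2$-consistency of the forest averages appearing in the numerator and denominator of $\hat w_1^{\rose}$. Writing $N(z):=B^{-1}\sum_b\tau_b^{(2)}(z)$, $D(z):=B^{-1}\sum_b\tau_b^{(1)}(z)$, $N_P(z):=\E_P[\partial_\theta\psi_1(S;\theta_P,\eta_P(Z))\mid Z=z]$ and $D_P(z):=\E_P[\psi_1^2(S;\theta_P,\eta_P(Z))\mid Z=z]$, we have $w_{P,1}^{\rose}=N_P/D_P$ (with $J=1$ the matrix in \eqref{eq:w_j} is scalar), and Assumption~\ref{ass:bounded-cond-moments} gives $D_P\geq c_1$ and $|w_{P,1}^{\rose}|\leq C_2/c_1$ uniformly over $\cP$. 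The identity
\begin{equation*}
\hat w_1^{\rose}(z)-w_{P,1}^{\rose}(z) \;=\; \frac{N(z)-N_P(z)}{D(z)} \;-\; \frac{w_{P,1}^{\rose}(z)\bigl(D(z)-D_P(z)\bigr)}{D(z)},
\end{equation*}
combined with a continuous-mapping argument (using that $D$ is bounded below in $P$-probability uniformly, which is a consequence of its $L^2$-consistency), reduces the claim to showing $\E_P[(N(Z)-N_P(Z))^2\mid\hat w_1^{\rose}]=o_\cP(1)$ and the analogous bound for $D$.

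For each of $N$ and $D$, I would introduce oracle forest estimators $N^{\mathrm{or}}$, $D^{\mathrm{or}}$ obtained by replacing $(\hat\theta_{\mathrm{init}},\hat\eta)$ by $(\theta_P,\eta_P)$ in the summands defining $\tau_b^{(1)}$ and $\tau_b^{(2)}$. The plug-in error $N-N^{\mathrm{or}}$ is controlled by Taylor expanding $\partial_\theta\psi_1$ (respectively $\psi_1^2$ for $D$) in $(\theta,\eta)$ around $(\theta_P,\eta_P)$; since the ROSE tree weights $\omega_{ib}(z)$ sum to one, averaging over $z\sim P_Z$ reduces the $L^2$-error to weighted means of Taylor remainders bounded via the $L^{2+\delta}$ derivative control in Assumption~\ref{ass:bounded-moments}, the nuisance rate $\E_P[\|\hat\eta(Z)-\eta_P(Z)\|_2^2\mid\hat\eta]=O_\cP(n^{-\lambda})$ of Assumption~\ref{ass:consistency-and-DR}, and the pilot estimator rate $\hat\theta_{\mathrm{init}}-\theta_P=O_\cP(n^{-1/2})$ obtained by applying Theorem~\ref{thm:theta_est_asymp_norm} to the unweighted variant ($w\equiv 1$). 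Honesty~\ref{P1} ensures that $(\hat\eta,\hat\theta_{\mathrm{init}})$ are independent of the data used to grow and evaluate the trees, providing the conditional independence structure required to pass the rate bounds through the forest averages.

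For the oracle forest, I would invoke an honest random-forest consistency argument in the spirit of \citet{meinshausen} and \citet{wagerwalther}: properties~\ref{P1}--\ref{P4} together with Assumption~\ref{ass:bounded-density} force leaf diameters to shrink in probability and leaf sizes to diverge; the uniform H\"older continuity in Assumption~\ref{ass:holder} controls the within-leaf bias, and the moment bounds in Assumptions~\ref{ass:bounded-cond-moments}--\ref{ass:bounded-moments} control the within-leaf variance. The main obstacle is threading uniformity in $P$ through this step: the standard proofs are pointwise in $P$, and we need the diameter-shrinkage rate to depend only on $(\alpha,\pi,d)$ and the uniform density bounds, and the bias-variance bounds to depend only on $(c_1,C_1,c_2,C_2,\beta,L)$. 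A further subtlety is that ROSE splits are data-adaptive in the $\psi$-values rather than directly in the labels as in CART, but honesty~\ref{P1} decouples the split structure from the evaluation data, so the standard bias-variance decomposition of forest error applies without any modification once uniform leaf-diameter control is established. Combining the oracle and plug-in bounds for both $N$ and $D$ and substituting into the ratio identity above completes the proof.
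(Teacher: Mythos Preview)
Your proposal is correct and follows essentially the same route as the paper's proof: the same numerator/denominator decomposition of the ratio, the same split of the forest error into a plug-in piece (handled via Taylor expansion together with the nuisance and pilot-estimator rates, using honesty \ref{P1} for the required independence), and an oracle piece whose bias is controlled by H\"older continuity (Assumption~\ref{ass:holder}) plus leaf-diameter shrinkage (the paper's Lemma~\ref{lem:diam}, derived from \ref{P2}--\ref{P4} and Assumption~\ref{ass:bounded-density}) and whose variance is controlled by the moment bounds. The only presentational difference is in how the ratio step is executed: where you invoke a ``continuous-mapping argument'' from $L^2$-consistency of $D$, the paper makes this explicit by first using the a~priori bound $|\hat w_1^{\rose}|,|w_{P,1}^{\rose}|\leq c_w$ to reduce the conditional $L^2$ error to a conditional $L^1$ error, and then splitting on the event $\{|\tilde\Delta(Z)|<(1-\kappa)|U(Z)|\}$ (equivalently $D(Z)\geq\kappa D_P(Z)$), bounding the complement by Markov. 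This is exactly the event-splitting you would need to make your continuous-mapping step rigorous, so the two arguments are the same in substance.
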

Combining Theorems~\ref{thm:theta_est_asymp_norm}~and~\ref{thm:rose_forest_consistency}, we see that the ROSE random forest estimator for $\theta_P$, i.e.~the estimator of Algorithm~\ref{alg:main} with ROSE random forest weights formed as in Algorithms~\ref{alg:rosetree}~and~\ref{alg:roseforest}, is uniformly asymptotically Gaussian, and achieves minimal asymptotic variance over all robust estimators given by~\eqref{eq:robust-estimating-eqns}. In Appendix~\ref{appsec:rose-forest-J>1} we extend the ROSE random forests of Algorithms~\ref{alg:rosetree} and~\ref{alg:roseforest} to accommodate $J>1$ additional nuisance functions as in~\eqref{eq:robust-estimating-eqns}.

\section{Numerical experiments}\label{sec:numerical-results}
We study ROSE random forest performance on a range of simulated and real data examples, comparing with three alternative estimators: a simple `unweighted estimator'; a semiparametric efficient estimator; and a locally efficient estimator (as defined in Section~\ref{sec:GEE}). 
For all of these estimators we employ the DML2 cross-fitting framework~\citep{chern} (as in Algorithm~\ref{alg:main}). 
Except for the case of ROSE random forests, all nuisance functions expressed as conditional expectation are fit using random forests with the~\texttt{ranger} package~\citep{ranger} (see Appendix~\ref{appsec:sims} for details) and all conditional probabilities with probability forests using the \texttt{grf} package~\citep{grf-code}. In the case of the semiparametric efficient estimator, the ratio of conditional expectations that arises (see Appendix~\ref{appsec:semi-eff-class} and~\eqref{eq:h}) is estimated via a weighted random forest fitted with the~\texttt{ranger} package. Section~\ref{sec:sim1} compares these estimators in a setting where their asymptotic properties differ. 
In Section~\ref{sec:sim2} we directly compare finite sample properties of ROSE random forests with classical CART-based random forests. Section~\ref{sec:sim3} considers a model where we consider robust estimators with $J=2$ additional nuisance functions. Finally, a real data example is explored in Section~\ref{sec:real-data}.

\subsection{Simulation 1: Asymptotic benefits of ROSE random forests}\label{sec:sim1}

We consider two generalised partially linear models. In each case we take $n=20\,000$ i.i.d.~instances of the model $Z\sim N_{10}\big({\bf 0},(0.9^{|j-k|})_{(j,k)\in[10]^2}\big)$ with:
\medskip
\\
\noindent\emph{Simulation 1a: Partially Linear Model}
\begin{gather*}
    p(Z) := \text{expit}(3Z_1) \vee 0.01,
    \quad
    B\given Z \sim \text{Ber}(p(Z)),
    \quad
    X\given (Z,B) \sim N\bigg(\sum_{j=1}^5\text{expit}(Z_j) \;,\; \frac{B}{p(Z)}\bigg),
    \\
    Y \given (X,Z,B) \sim N\bigg(\theta_0X + \sum_{j=1}^5\text{expit}(Z_j) \;,\; \frac{B}{\sqrt{p(Z)}}\bigg);
\end{gather*}
\smallskip
\\
\noindent\emph{Simulation 1b: Generalised Partially Linear Model (square root link)}
\begin{gather*}
    p(Z) := \text{expit}(3Z_1) \vee 0.01,
    \quad
    B\given Z \sim \text{Ber}(p(Z)),
    \quad
    X\given(Z,B) \sim \Gamma\bigg( \sum_{j=1}^5\text{expit}(Z_j) \;,\; \frac{B}{p(Z)} \bigg),
    \\
    Y \given (X,Z,B) \sim \Gamma\big( \mu(X,Z) \;,\; \sigma^2(X,Z,B) \big),
    \\
    \sqrt{\mu(X,Z)} := \theta_0 X + \sum_{j=1}^5\text{expit}(Z_j),
    \quad
    \sigma^2(X,Z,B) := 0.01 \mu^2(X,Z) \frac{B}{\sqrt{p(Z)}}.
\end{gather*}
Here $\Gamma(\mu,\sigma^2)$ denotes the Gamma distribution with mean $\mu$ and variance $\sigma^2$, and $\theta_0=1$ is the target parameter of interest in both cases.

Table~\ref{tab:sim1} presents the bias--variance decomposition of the mean squared error for each estimator, alongside coverage of nominal $95\%$ confidence intervals, and Figure~\ref{fig:sim1_app_hist} plots density function estimates for each estimator. In both simulations, the semiparametric efficient estimator is substantially biased and yields suboptimal mean squared error and poor coverage. The former issue is most pronounced in Simulation 1a, where it has a larger mean squared error than the unweighted estimator, and the latter issue is most severe in Simulation 1b, where it has only 5\% coverage with nominal 95\% confidence intervals. In contrast, the other three estimators (all of which lie within our robustness class given by \eqref{eq:robust-estimating-eqns}) have negligible bias and valid coverage. We see that the variance and mean squared error of the ROSE random forests estimator is substantially smaller than the other competing robust estimators, supporting the conclusion of Theorem~\ref{thm:rose_forest_consistency}. 

\begin{table}[ht]
	\begin{center}
		\begin{tabular}{c|cccc}
			\toprule
                \multirow{3}{*}{Method} & \multicolumn{4}{c}{Simulation 1}
			\\
                & \begin{tabular}{@{}c@{}c@{}} Squared Bias\\$(\times 10^{-4})$\end{tabular} & \begin{tabular}{@{}c@{}c@{}}Variance\\$(\times 10^{-4})$\end{tabular} &\begin{tabular}{@{}c@{}c@{}} MSE (ratio to\\unweighted) \end{tabular}
                &
                \begin{tabular}{@{}c@{}c@{}} Coverage \\($95\%$) \end{tabular}
                \\
			\midrule
            \multicolumn{5}{l}{\,\emph{Gaussian Partially Linear Model (Example 1a)}} 
            \\
            \midrule
			Unweighted &   0.023   &    1.328  & 1 & 94.5\%  \\
			{\bf ROSE Random Forest} &   0.013   &    0.894  & {\bf 0.671}  & 94.4\%  \\
			Locally Efficient Random Forest &   0.052   &   4.010 & 3.986 & 94.5\% \\
            Semiparametric Efficient & 3.039 & 1.518 & 3.373   & 64.7\%  \\
			\midrule
            \multicolumn{5}{l}{\,\emph{Gamma Generalised Partially Linear Model (Example 1b)}} 
            \\
            \midrule
			Unweighted & 0.005 & 2.532 & 1 &  91.3\%  \\
            {\bf ROSE Random Forest} & 0.001 & 0.499 & {\bf 0.197} &  94.2\%  \\
            Locally Efficient Random Forest & 0.014 & 12.235 & 4.827 & 82.1\%   \\
            Semiparametric Efficient & 1.843 & 0.135 & 0.780 & 5.0\% \\
			\bottomrule
		\end{tabular}                                                                      
		\caption{Results of Simulation 1 ($4000$ simulations). Note that in the case of Example 1b, the squared bias, variance and MSE estimators use 1\% trimmed means.}
        \label{tab:sim1}
	\end{center}
\end{table}

\begin{figure}[!ht]
   \centering
   \includegraphics[width=0.95\textwidth]{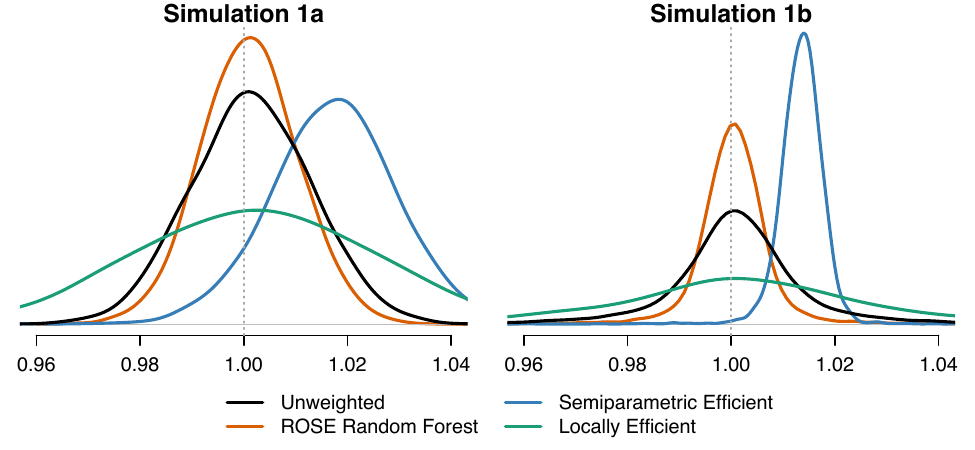}
    \caption{Kernel density estimates for the
    four $\hat{\theta}$ estimators for Simulations~1a (partially linear model) and~1b (generalised partially linear model) in Section~\ref{sec:sim1} (4000 repetitions, each of sample size $n=20\,000$). For reference we also indicate the true parameter $\theta_0=1$ with a dotted grey line.}
   \label{fig:sim1_app_hist}
\end{figure}

\subsection{Simulation 2: Finite sample benefits of ROSE random forests}\label{sec:sim2}

We explore the finite sample properties of ROSE random forests through the toy example
\begin{gather*}
    Y = X\theta_0 + \varepsilon,
    \qquad
    (Z_1,Z_2,Z_3,X)\sim N_3({\bf 0},4 I_4),
    \\
    \varepsilon\given(X,Z) \sim N(0,\sigma_Y^2(Z)),
    \qquad
    \sigma_Y(Z) = 1+2\;\text{expit}(Z_1+Z_2)+2\;\text{expit}(Z_2+Z_3),
\end{gather*}
where $\theta_0=1$ is the target parameter and $\text{expit}(z):=e^z/(1+e^z)$. As $\Var(Y\given X,Z)=\Var(Y\given Z)$ here, the locally efficient estimator that involves regressing squared residuals from regressing $Y$ on $(X,Z)$ onto $Z$ is semiparametrically efficient, and we implement this with CART-based random forests. Moreover, this estimator is asymptotically equivalent to ROSE random forests. To isolate the finite sample properties of ROSE forests, for all estimators we permit oracular knowledge of $f_0=m_0=h_0\equiv0$ (adopting the notation of Section~\ref{sec:PLMmodel}), and consequently all estimators have negligible bias. 
We take samples of size $n\in\{100\cdot 2^r \,:\,r=1,\ldots,8\}$ and employ cross-fitting (DML2) with $K=2$ folds for weight estimation. Hyperparameter tuning is used to determine an optimal tree depth with the criterion for ROSE and CART-forests being the sandwich loss and the residual sum of squares respectively; for details see Appendix~\ref{appsec:sim2}.

\begin{figure}[ht]
   \centering
   \includegraphics[width=0.85\textwidth]{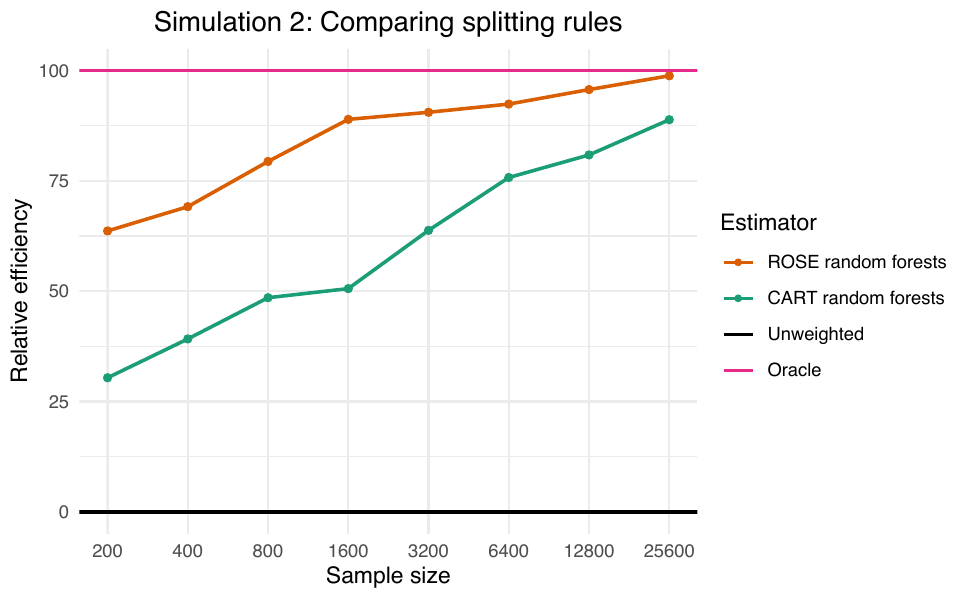}
   \caption{The ratio $\frac{\text{MSE}(\hat{\theta}_{\mathrm{unw}})-\text{MSE}(\hat{\theta})}{\text{MSE}(\hat{\theta}_{\mathrm{unw}})-\text{MSE}(\hat{\theta}_{\textrm{ora}})}$ measures the `relative accuracies' of an estimator $\hat{\theta}$ on a scale where $0\%$ reflects equivalent accuracy to the unweighted estimator $\hat{\theta}_{\text{unw}}$ and 100\% corresponds to oracle estimation $\hat{\theta}_{\textrm{ora}}$. Relative accuracies presented for ROSE and CART random forests in the empirical setting of Simulation~\ref{sec:sim2}. The MSE denotes the mean squared error over 1000 simulations.}
   \label{fig:sim2}
\end{figure}

Figure~\ref{fig:sim2} presents the performances the of ROSE and CART-based random forests estimators. We compare each of these to an oracle estimator $\hat{\theta}_{\textrm{ora}}$ given the true optimal weights by measuring their performance in terms of a relative accuracy formed through a ratio of differences in mean-squared error to the unweighted estimator.
We see that the ROSE random forest estimator outperforms the CART random forest estimator over all sample sizes, with $63.7\%$ versus $30.4\%$ `relative accuracy' at the smaller sample size of $n=200$, and $98.8\%$ versus $80.9\%$ `relative accuracy' at the larger sample size $n=25\,600$. 
Such benefits are likely a result of both the splitting criterion used in ROSE random forests and also the sandwich loss cross-validation criterion used for hyperparameter tuning. 
Note that we consider a relatively small class of hyperparameters for tuning here, but expect that performance difference between the two forms of cross-validation used may well widen the more hyperparameters are tuned for (see Appendix~\ref{appsec:sim2} for further discussions).

\subsection{Simulation 3: Effects of increasing sample size}\label{sec:sim3}

Consider $n$ i.i.d.~copies of the partially linear model
\begin{gather*}
    Z \sim N_3({\bf 0}, I_3),
    \quad
    X \given Z \sim 
    \begin{cases}
        \delta_0 &\,\text{with probability } 0.1+0.8\,\text{expit}(Z_1)
        \\
        \Gamma\big(\sum_{j=1}^3\text{expit}(Z_j) \;,\; 0.1\big) &\,\text{with probability } 0.9-0.8\,\text{expit}(Z_1)
    \end{cases},
    \\
    Y \given (X,Z) \sim \Gamma\Big( \theta_0 X + \sum_{j=1}^3\text{expit}(Z_j) \;,\; 0.9\,\ind_{\{0\}}(X) + 0.4\,\ind_{(0,1.5]}(X) + 0.1\,\ind_{(1.5,\infty)}(X) \Big),
\end{gather*}
where $\theta_0=1$ and $\delta_0$ denotes a point mass at zero, so the distribution of $X \given Z$ is `zero-inflated'. As a result, in addition to the nuisance function $\E(X\given Z)$, another natural estimable quantity is $\PP(X=0\given Z)$. We study a ROSE random forest with $J=2$ taking $M_1(X):=X$ and $M_2(X):=\ind_{\{X=0\}}$, and so $m_1(Z)=\E[X\given Z]$ and $m_2(Z) = \PP(X=0\given Z)$. We consider sample sizes $n\in\{2^r\times 10\,000 : r\in\{0,1,2,3\}\}$ i.e.~ranging from $10\,000$ to $80\,000$, and compare the unweighted estimator, the semiparametric efficient estimator and the two ROSE estimators with $J=1$ (i.e.~$M_1(X)=X$ only) and $J=2$.

\begin{table}
	\begin{center}
		\begin{tabular}{c|cccc}
			\toprule
                \multirow{3}{*}{Method} & \multicolumn{4}{c}{Simulation 3}
			\\
                & \begin{tabular}{@{}c@{}c@{}} Squared Bias\\$(\times 10^{-5})$\end{tabular} & \begin{tabular}{@{}c@{}c@{}}Variance\\$(\times 10^{-5})$\end{tabular} &\begin{tabular}{@{}c@{}c@{}} MSE (ratio to\\unweighted) \end{tabular}
                &
                \begin{tabular}{@{}c@{}c@{}} Coverage \\$(95\%)$ \end{tabular}
                \\
            \midrule
			Unweighted & 0.001  & 0.997   & 1 &  95.2\%  \\
			ROSE Random Forest ($J=1$) &  0.003    &  0.955    &  0.960 & 95.3\%  \\
			{\bf ROSE Random Forest ($J=2$)} &  0.001    &  0.876  & {\bf 0.879} & 95.4\% \\
            Semiparametric Efficient & 0.286 & 0.786 & 1.075 & 90.4\%  \\
			\bottomrule
		\end{tabular}                                                                      
		\caption{Results of Simulation 3 for sample size $n=80\,000$ (4000 simulations).}\label{tab:sim3}
	\end{center}
\end{table}

\begin{figure}[ht]
   \centering
   \includegraphics[width=1\textwidth]{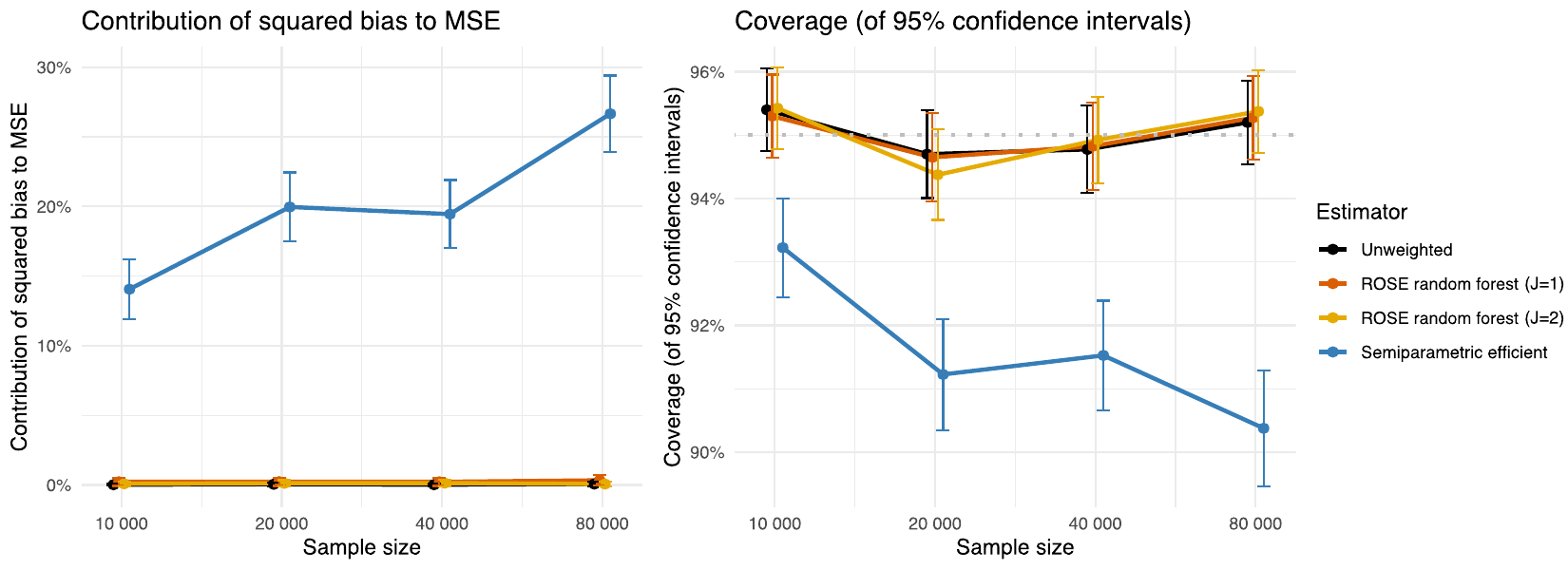}
   \caption{Contribution of squared bias to mean squared error and coverage of confidence intervals for Simulation 3.}
   \label{fig:sim3}
\end{figure}

Table~\ref{tab:sim3} gives the results in the case $n=80\,000$; for the results at other sample sizes see Appendix~\ref{appsec:sim3}. 
We see that the semiparametric efficient estimator is biased, even at this large sample size, resulting in undercoverage and mean squared error worse than the unweighted estimator. The ROSE random forest estimator with $J=1$ improves on the unweighted estimator, though only modestly. However, the $J=2$ ROSE random forest estimator gives genuine improvements, and illustrates the benefits from (correctly) assuming that we can additionally estimate $m_2(Z)=\PP(X=0\given Z)$ at sufficiently fast rates.

Figure~\ref{fig:sim3} plots for each estimator at each sample size the contribution of squared bias to the mean square error, and coverage of nominal 95\% confidence intervals. As the unweighted and two ROSE random forest estimators ($J\in\{1,2\}$) are all robust, they have negligible squared bias and valid coverage. On the other hand, the squared bias of the semiparametric efficient estimator contributes significantly to its mean squared error, and moreover this contribution tends to increase with sample size. This results in  decreasing coverage of confidence intervals, and decreasing relative efficiency (see Appendix~\ref{appsec:sim3}).

\subsection{Real-world data example: Effect of temperature on bike rental demand}\label{sec:real-data}

We apply ROSE random forests to the Seoul Bike Rental Demand dataset \citep{seoulbikes}, which can be accessed from the UCI Machine Learning Repository. The dataset consists of hourly count data of rental bike usage in Seoul over a period of one year (2018). We aim to estimate the (linear) effect $\theta_0$ of temperature ($X=\text{Temperature}$) on bike demand ($Y=\text{Bike Count}$) via the generalised partially linear model with $\log$ link
\begin{equation*}
    \log\big(\E[Y\given X,Z]\big) = \theta_0 X + f_0(Z),
\end{equation*}
for some $f_0$ and for temperatures in the range $\text{Temperature}\in[-10\text{\textdegree C},20\text{\textdegree C}]$, controlling for other weather and time effects with $Z=(\text{Time, Date, Holiday, Rain, Snow})$. The dataset consists of 5403 observations. We compare the unweighted estimator, the ROSE random forest estimator ($J=1$ with $M_1(x)=x$), and the semiparametricaly efficient estimator.
We use the DML2 cross-fitting framework with $K=10$ folds, and with all nuisance functions fit using random forests in the \texttt{ranger} package~\citep{ranger}. 

Figure~\ref{fig:bikesSplot} shows for each method, 50 estimators $\hat{\theta}_s$ and their nominal 95\% confidence intervals, each corresponding to a random partition of the data used for cross-fitting. Clearly the semiparametric efficient estimator performs poorly here; 26\% of its confidence intervals do not even overlap.
In contrast, all the robust unweighted and ROSE random forest confidence intervals overlap, with the ROSE random forest estimator enjoying a two thirds reduction in variance over the unweighted estimator.

\begin{figure}[ht]
   \centering
   \includegraphics[width=1\textwidth]{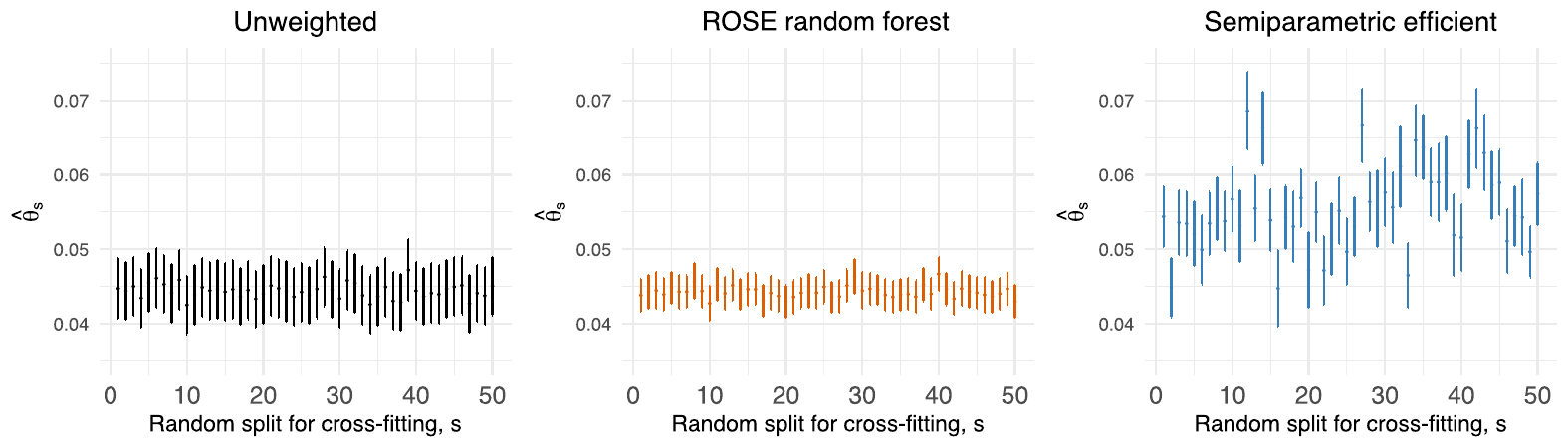}
   \caption{Confidence intervals (nominal 95\%) for the three semiparametric estimators for 50 different random $K=10$ fold cross-fitting sample splits of the data.}
   \label{fig:bikesSplot}
\end{figure}

\section{Discussion}\label{sec:discussion}
One of the great achievements of classical semiparametric theory has been the development of lower bounds for the estimation of finite-dimensional quantities in semiparametric problems. In practice however, these bounds may be unrealistic: indeed our empirical results show that the semiparametric efficient estimator may perform poorly even at large sample sizes running into the tens of thousands. In this work, we have attempted to introduce a new form of efficiency---robust semiparametric efficiency---in an attempt to close the gap between theory and practice. Our notion of robust efficiency involves requiring locally uniformly consistent estimation within a class of distributions $\mathcal{P}$ characterised  by user-specified conditional moments being estimable. We show that our ROSE random forest procedure achieves this robust efficiency in that it attains the minimal variance among the class of estimators deemed to be robust in the sense above.

The requirement of locally uniform consistency that forms the basis of our formulation of robust efficiency bears some resemblance to the imposition of regularity in (semi)parametric efficiency that excludes so-called `superefficient' estimators. 
For instance, the well-known Hodges' estimator~\citep{le-cam-thesis}, in contrast to the sample mean, enjoys smaller pointwise asymptotic mean squared error at a single point distribution within a parametric family of distributions, but its mean squared error uniformly over the model class diverges~\citep[\S8.1]{vandervaart}. In a similar vein, whilst a semiparametric efficient estimator may enjoy smaller pointwise mean squared error at a single distribution, uniformly over the class of distributions we introduce, its mean squared error can diverge.

Our work offers a number of interesting directions for future work. 
It would be interesting to extend the framework introduced to other semiparametric models such as
quantile regression models. In our current setting, we assume differentiability of $\psi$ with respect to our parameter of interest $\theta$ and consequently the form of sandwich loss estimator~\eqref{eq:sand-loss} is  relatively simple. Such an assumption is violated in the partially linear quantile regression model for example, where the $\tau$th conditional quantile $q_{Y\given(X,Z)}(\tau)$ of $Y$ given $(X, Z)$  takes the form $\theta_0 X + f_0(Z)$. 
The denominator of the corresponding sandwich loss would involve the conditional density of $Y$ given $(X,Z)$. 
How best to practically estimate such a more involved empirical sandwich loss is less clear but is certainly worth further investigation.

In the setting we have studied, the target of inference is a single scalar quantity. An interesting avenue of further work would be to consider settings where the target of interest takes the form of a function; a prototypical example being the heterogeneous partially linear model
\begin{equation*}
    \E[Y\given X,Z] = \theta_0(Z) X + f_0(Z),
\end{equation*}
where now $\theta_0$ is a target function. 
In such a setting the notion of `optimality' of an estimator within a robustness class could be that of a minimal pointwise asymptotic variance of $\theta_0$, or a minimal mean squared error over a certain distribution for $Z$, for example.

There are also a number of practical questions that naturally follow from this work. Firstly, the hardness result of Theorem~\ref{thm:slow-rates} requires a priori specification of a set of nuisance functions $(m_{P,j})_{j\in[J]}$ that are estimable at sufficiently fast polynomial rates. A richer class of such functions further reduces the variance of the optimal estimator within the resulting classes (see e.g.~Section~\ref{sec:sim3}) at a cost in robustness, exhibited through implicit assumptions on the regularity additional nuisance functions being introduced. An interesting extension would be to study the possibility of diagnosing regularisation biases that can ensue through a breakdown of robustness, ideally leading towards a data-driven way to choose a set of nuisance functions $(m_{P,j})_{j\in[J]}$ for ROSE estimation.

\newpage 

\appendix

{
\Large\bf
\begin{center}
Supplementary material for ‘ROSE Random Forests for Robust Semiparametric Efficient Estimation’, by Elliot H. Young and Rajen D. Shah
\end{center}
}

\section{Supplementary Information for Section~\ref{sec:robust-class}}

This section collects a number of supplementary remarks relating to Section~\ref{sec:robust-class}. Appendix~\ref{appsec:NO} briefly outlines Neyman orthogonality; a favourable property our influence functions satisfy. In Appendix~\ref{appsec:semi-eff-class} we derive the form of the semiparametric efficient influence function for the partially parametric model. 
Our estimation strategy is interpreted from the perspective of an infinite dimensional extension of the generalised method of moments in Appendix~\ref{appsec:GMMs}. Our robust estimation strategy is also briefly explored within a broader class of models in Appendix~\ref{appsec:additional-models}, where we study as an example instrumental variable models.

\subsection{Neyman orthogonality}\label{appsec:NO}

All influence functions of the partially parametric model as in~\eqref{eq:IF} satisfy a so-called Neyman orthogonality~\citep{app-neyman, neyman2} property of the following form:
\begin{definition}[Neyman Orthogonality]\label{def:NO}
    Consider a function $\tilde{\psi}:\mathcal{S}\times\Theta\times\R^{p_\alpha}\times\R^{p_\beta}$ where $p_\alpha,p_\beta\in\mathbb{N}$. Then the function $\tilde{\psi}$ is Neyman orthogonal at the law $P$ with respect to the nuisance functions $\rho:\cZ\to\R^{p_\alpha}$ and $\zeta:\cX\times\cZ\to\R^{p_\beta}$ if
    \begin{align*}
        \E_P\big[\nabla_\alpha\tilde{\psi}(S;\theta_P,\alpha,\zeta(X,Z))|_{\alpha=\rho(Z)} \given Z\big] &= 0,
        \\
        \E_P\big[\nabla_\beta\tilde{\psi}(S;\theta_P,\rho(Z),\beta)|_{\beta=\zeta(X,Z)} \given X,Z\big] &= 0,
    \end{align*}
    almost surely.
\end{definition}
Note in particular that in the above we adopt a definition of Neyman orthogonality in terms of Euclidean derivatives as in e.g.~\citet{app-mackey}, as opposed to G\^{a}teaux derivatives as in e.g.~\citet{app-chern}. We see functions $\psi$ in the class given by~\eqref{eq:IF} are indeed Neyman orthogonal at $P$ with respect to the nuisance functions $(f_P,\phi_P)$ for any $\psi_P:\cX\times\cZ\to\R$ satisfying $\E_P[\phi_P(X,Z)\given Z]=0$.

\subsection{On the semiparametric efficient influence function}\label{appsec:semi-eff-class}

\begin{proposition}
    The function $\phi_P$ corresponding to the efficient influence function in the partially parametric model takes the form
    \begin{equation*}
        (x,z)\mapsto
        W_P(x,z) \bigg(U_P(x,z)-\frac{\E_P[W_P(X,Z)U_P(X,Z)\given Z=z]}{\E_P[W_P(X,Z)\given Z=z]}\bigg),
    \end{equation*}
    where
    \begin{gather*}
        W_P(x,z) = \E_P[\varepsilon^2(Y,X,Z;\theta_P,f_P(Z))\given X=x,Z=z]^{-1},
        \\
        U_P(x,z) = \E_P[\nabla_\theta\varepsilon(Y,X,Z;\theta_P,f_P(Z))\given X=x,Z=z],
        \\
        \varepsilon(x,y,z;\theta_P,f_P(z)) := \frac{y-\mu(x,z;
        \theta_P,f_P(z))}{\frac{\partial}{\partial\gamma}\big|_{\gamma=f_P(z)}\mu(x,z;\theta_P,\gamma)},
    \end{gather*}
    with $W_P$ given up to a constant of proportionality.
\end{proposition}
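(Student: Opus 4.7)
The plan is to leverage the parametrisation of influence functions already furnished by Theorem~\ref{thm:IF}, reducing the problem to a variance-minimisation over $\phi_P$ subject to the mean-zero-given-$Z$ constraint. Writing $\psi(S;\theta_P,f_P,\phi_P)=\varepsilon(S;\theta_P,f_P(Z))\phi_P(X,Z)$, the asymptotic variance associated with an estimator solving $\sum_i\psi(S_i;\theta,\hat\eta)=0$ is (up to overall scale)
\begin{equation*}
    \sigma^2(\phi_P) \;=\; \frac{\E_P[\psi^2]}{(\E_P[\nabla_\theta\psi])^2}.
\end{equation*}
The efficient influence function is thus the one whose $\phi_P$ minimises $\sigma^2(\phi_P)$ over the linear space $\mathcal{T}:=\{\phi:\E_P[\phi(X,Z)\given Z]=0\}$.

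The first step is to rewrite numerator and denominator as pointwise functionals of $\phi_P$ via the tower property. Using $\E_P[Y\given X,Z]=\mu$ to discard the $(Y-\mu)$ term arising in $\nabla_\theta\varepsilon$, one obtains
\begin{equation*}
    \E_P[\nabla_\theta\varepsilon\given X,Z] \;=\; U_P(X,Z),\qquad \E_P[\varepsilon^2\given X,Z] \;=\; V_P(X,Z) \;=\; W_P(X,Z)^{-1},
\end{equation*}
so that $\E_P[\psi^2]=\E_P[V_P\phi_P^2]$ and $\E_P[\nabla_\theta\psi]=\E_P[U_P\phi_P]$. Since $\sigma^2(\phi_P)$ is invariant under scaling $\phi_P\mapsto c\phi_P$, we may equivalently minimise $\E_P[V_P\phi_P^2]$ subject to $\E_P[U_P\phi_P]=1$ and $\phi_P\in\mathcal{T}$.

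Next I would solve this constrained quadratic problem. Either through Lagrange multipliers $\lambda\in\R$ for $\E_P[U_P\phi_P]=1$ and $a:\cZ\to\R$ for the infinite family of constraints $\E_P[\phi_P\given Z=z]=0$, or through a projection argument in the $V_P$-weighted inner product $\langle f,g\rangle_V=\E_P[V_Pfg]$ on $L^2(P_{XZ})$, the first-order conditions read
\begin{equation*}
    V_P(X,Z)\phi_P(X,Z) \;=\; \tfrac{\lambda}{2}U_P(X,Z) \;-\; \tfrac{1}{2}a(Z).
\end{equation*}
Dividing by $V_P$ and imposing the constraint $\E_P[\phi_P\given Z]=0$ yields
\begin{equation*}
    a(Z) \;=\; \lambda\,\frac{\E_P[W_P U_P\given Z]}{\E_P[W_P\given Z]},
\end{equation*}
and substituting back recovers
\begin{equation*}
    \phi_P(X,Z) \;\propto\; W_P(X,Z)\bigg(U_P(X,Z)-\frac{\E_P[W_P(X,Z)U_P(X,Z)\given Z]}{\E_P[W_P(X,Z)\given Z]}\bigg),
\end{equation*}
as claimed. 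The second-order condition (positive definiteness of $V_P$) guarantees this critical point is the minimum.

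The main obstacle I anticipate is not any single step but the careful handling of the infinite-dimensional constraint $\E_P[\phi_P\given Z]=0$. The projection viewpoint clarifies it: one must identify the $V_P$-orthogonal complement of $\mathcal{T}$, which (unlike in the unweighted $L^2$-inner product) is not $\{g:g=g(Z)\}$ but rather $\{W_P(X,Z)c(Z):c:\cZ\to\R\}$, and this is precisely why the correction term inside the parentheses involves the ratio $\E_P[W_PU_P\given Z]/\E_P[W_P\given Z]$ rather than the unweighted conditional means. Getting this orthogonality structure right is the conceptual core; once it is in place, the identification of $W_P\propto \E_P[\varepsilon^2\given X,Z]^{-1}$ and of $U_P$ as the stated conditional expectation follows directly from the definitions.
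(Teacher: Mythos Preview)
Your proposal is correct and arrives at the right answer, but the paper takes a more direct route. Where you set up a constrained quadratic minimisation with Lagrange multipliers (a scalar $\lambda$ for the normalisation and a function-valued $a(Z)$ for the mean-zero-given-$Z$ constraint), the paper simply observes that the constraint $\E_P[\phi_P\given Z]=0$ lets one freely subtract any function of $Z$ inside the denominator,
\[
\E_P[\phi_P U_P] \;=\; \E_P\bigl[\phi_P\bigl(U_P - \varphi_P(Z)\bigr)\bigr],
\]
and then applies Cauchy--Schwarz directly to the pair $\bigl(\phi_P V_P^{1/2},\; (U_P-\varphi_P)V_P^{-1/2}\bigr)$. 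With the particular choice $\varphi_P(Z)=\E_P[W_P U_P\given Z]/\E_P[W_P\given Z]$, the equality case of Cauchy--Schwarz delivers the claimed $\phi_P$ in one line.

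The two approaches are equivalent in content: your identification of the $V_P$-weighted orthogonal complement of $\mathcal{T}$ as $\{W_P(X,Z)c(Z)\}$ is exactly what makes the paper's choice of $\varphi_P$ the right subtraction. Your route is more constructive and generalises more obviously to settings with additional structure or constraints; the paper's is shorter and avoids the need to justify first-order conditions in an infinite-dimensional space.
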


\begin{proof}
Recall the notation $S:=(Y,X,Z)$. Then the asymptotic variance of the estimator derived from the (scaled) influence function~$\psi$ as in~\eqref{eq:IF} is
\begin{equation*}
    \frac{\E_P\left[\psi^2(S;\theta_P,f_P(Z),\phi_P(X,Z))\right]}{\left(\E_P\left[\nabla_{\theta}\psi(S;\theta_P,f_P(Z),\phi_P(X,Z))\right]\right)^2},
\end{equation*}
where
\begin{gather*}
    \psi(S;\theta_P,f_P(Z),\phi_P(X,Z)) = \varepsilon(S;\theta_P,f_P(Z))\phi_P(X,Z)
    \\
    \varepsilon(S;\theta_P,f_P(Z)) := \Big(\frac{\partial}{\partial\gamma}\Big|_{\gamma=f(Z)}\mu(X,Z;\theta,\gamma)\Big)^{-1} \big(Y-\mu(X,Z;\theta,f(Z))\big),
\end{gather*}
and $\E_P[\phi_P(X,Z)\given Z]=0$. 
Then
\begin{align*}
    &\quad\;
    \frac{\E_P\left[\psi^2(S;\theta_P,f_P(Z),\phi_P(X,Z))\right]}{\left(\E_P\left[\nabla_{\theta}\psi(S;\theta_P,f_P(Z),\phi_P(X,Z))\right]\right)^2}
    \\
    &=
    \frac{\E_P[\phi_P^2(X,Z)\varepsilon^2(S;\theta_P,f_P(Z))]}{(\E_P[\phi_P(X,Z)\nabla_\theta\varepsilon(S;\theta_P,f_P(Z))])^2}
    \\
    &=
    \frac{\E_P[\phi_P^2(X,Z)\E_P[\varepsilon^2(S;\theta_P,f_P(Z))\given X,Z]]}{(\E_P[\phi_P(X,Z)\E[\nabla_\theta\varepsilon(S;\theta_P,f_P(Z))\given X,Z]])^2}
    \\
    &=
    \frac{\E_P[\phi_P^2(X,Z)\E_P[\varepsilon^2(S;\theta_P,f_P(Z))\given X,Z]]}{(\E_P[\phi_P(X,Z)\{\E_P[\nabla_\theta\varepsilon(S;\theta_P,f_P(Z))\given X,Z]-\varphi_P(Z;\theta_P,f_P(Z))\}])^2}
    \\
    &\geq
    \E_P\left[\frac{\big(\E_P\left[\nabla_\theta\varepsilon(S;\theta_P,f_P(Z))\given X,Z\right] - \varphi_P(Z;\theta_P,f_P(Z))\big)^2}{\E_P\left[\varepsilon^2(S;\theta_P,f_P(Z))\given X,Z\right]}\right]^{-1},
\end{align*}
where
\begin{multline*}
    \varphi_P(Z;\theta_P,f_P(Z)) 
    \\
    := \E_P\left[\frac{1}{\E_P\left[\varepsilon^2(S;\theta_P,f_P(Z)) \given X,Z\right]} \,\Big|\, Z\right]^{-1} \E_P\left[\frac{\E_P\left[\nabla_\theta\varepsilon(S;\theta_P,f_P(Z))\given X,Z\right]}{\E_P\left[\varepsilon^2(S;\theta_P,f_P(Z))\given X,Z\right]} \,\Big|\, Z\right],
\end{multline*}
and recalling that $\E_P[\phi_P(X,Z)\given Z]=0$. 
Note the final inequality follows as a result of the Cauchy--Schwarz inequality. Equality in the above holds when
\begin{gather*}
    \phi_P(X,Z) = W_P(X, Z)\Big( U_P(X,Z) - \E_P\big[W_P(X,Z)\given Z\big]^{-1}\E_P\big[W_P(X,Z)U_P(X,Z)\given Z\big] \Big),
    \\
    W_P(X,Z) = C_P^{-1}\E_P\big[\varepsilon^2(S;\theta_P,f_P(Z))\given X,Z\big]^{-1},
    \qquad
    U_P(X,Z) = \E_P\big[\nabla_\theta\varepsilon(S;\theta_P,f_P(Z))\given X,Z\big],
\end{gather*}
for any constant $C_P$. 
This gives the function $\phi_P$ corresponding to the efficient influence function of the partially parametric model. Precisely the efficient influence function takes this form~\eqref{eq:IF} with specifically $C_P = \E_P\big[\Cov_P\big(\nabla_\theta\mu(X,Z;\theta_P,f_P(Z)),\,\phi_P(X,Z)\given Z\big)\big]$ in the above. 
\end{proof}

\subsection{Reparameterisations of the nuisance functions in the partially parametric model}\label{appsec:nuisance-repara}

In the partially parametric model defined through the conditional moment specification
\begin{equation*}
    \E_P[Y\given X,Z] = \mu(X,Z;\theta_P,f_P(Z)),
\end{equation*}
to estimate the nuisance function $f_P$ it may help in some settings to reparameterise the above model in terms of nuisance functions that are easier to estimate with `off-the-shelf' regression packages (e.g.~nuisance functions expressible as a single conditional expectation). For example the partially linear model
\begin{equation*}
    \E_P[Y\given X,Z] = X\theta_P + f_P(Z),
\end{equation*}
can be reparametrised as
\begin{equation*}
    \E_P[Y\given X,Z] = \E_P[Y\given Z] + \big(X-\E_P[X\given Z]\big)\theta_P,
\end{equation*}
in terms of the two nuisance functions $\big(\E_P[Y\given Z],\, \E_P[X\given Z]\big)$. Similarly, the generalised partially linear model
\begin{equation*}
    g\big(\E_P[Y\given X,Z]\big) = X\theta_P + f_P(Z),
\end{equation*}
can be reparametrised as
\begin{equation*}
    g\big(\E_P[Y\given X,Z]\big) = \E_P\big[ g\big(\E_P[Y\given X,Z]\big)\given Z\big] + \big(X-\E_P[X\given Z]\big)\theta_P,
\end{equation*}
in terms of the two nuisance functions $\big(\E_P\big[g(\E_P[Y\given X,Z])\given Z\big],\, \E_P[X\given Z]\big)$. Under such reparametrisations it is then natural in our robustness framework to take $M_1(X)=X$ as to estimate $f_P$ with this approach we would in any case hope that the regression function $\E_P(X \given Z=z)$ can be estimated well.

\subsection{Connection to the generalised method of moments}\label{appsec:GMMs}

Here we describe an alternative interpretation of the robust estimating equations~\eqref{eq:robust-estimating-eqns} we consider in terms of an infinite dimensional extension of the generalised method of moments \citep{app-gmm}.

To this end, we introduce the following notation. We take $S = (Y,X,Z)$ following law $P$, and take $T := (X,Z)$, $\xi_j(T) := M_j(X)-m_{P,j}(Z)$ and $\varepsilon(S;\theta) := \big({\frac{\partial}{\partial\gamma}\mu(T;\theta,\gamma)|_{\gamma=f_P(Z)}}\big)^{-1}\allowbreak({Y-\mu(T;\theta,f_P(Z))})$, notationally dropping the dependence in $P$ on both terms $\xi_j$ and $\varepsilon$.

The population version of the method of moments solves the minimisation problem
\begin{equation}\label{eq:GMM-pop-min}
    \underset{\theta\in\Theta}{\text{minimise}} \; \E_P[\mathbf{G}(S;\theta)]^\top  \,\mathbf{M}\, \E_P[\mathbf{G}(S;\theta)],
\end{equation}
for some $L$ dimensional vector of functions $\mathbf{G}:\mathcal{S}\times\Theta\to\R^L$ and fixed, symmetric, positive semidefinite matrix $\mathbf{M}\in\R^{L\times L}$. Now consider functions $\mathbf{G}$ of the form
\begin{align*}
    \mathbf{G}(S;\theta)
    &=
    \Big(
        \sum_{j=1}^J w_j^{(1)}(Z) \xi_j(T) \varepsilon(S;\theta)
        ,\ldots,
        \sum_{j=1}^J w_j^{(L)}(Z) \xi_j(T) \varepsilon(S;\theta)
    \Big)
    \\
    &=
    \sum_{j=1}^J \big(w_j^{(1)}(Z),\ldots,w_j^{(L)}(Z)\big) \xi_j(T)\varepsilon(S;\theta).
\end{align*}
The minimisers of~\eqref{eq:GMM-pop-min} therefore coincide with solutions to the equation
\begin{equation*}
    \sum_{l=1}^L\sum_{l'=1}^L \mathbf{M}_{ll'} \, \E_P\bigg[\sum_{j=1}^J w_j^{(l)}(Z) \xi_j(T) \varepsilon(S;\theta)\bigg] \E_P\bigg[\sum_{j'=1}^J w_{j'}^{(l')}(Z) \xi_{j'}(T) \nabla_\theta\varepsilon(S;\theta)\bigg]
    = 0,
\end{equation*}
which can be reposed as
\begin{equation}\label{eq:GMM-w-xi-ep}
    \E_P\bigg[ \sum_{j=1}^J w_j(Z) \xi_j(T) \varepsilon(S;\theta) \bigg]
    = 0,
\end{equation}
where
\begin{equation*}
    w_j(Z) := \sum_{l=1}^L \sum_{l'=1}^L  \mathbf{M}_{ll'} w_j^{(l)}(Z) 
    \underbrace{\E_P\bigg[
        \sum_{j'=1}^J w_{j'}^{(l')}(Z) \xi_{j'}(T) \nabla_\theta\varepsilon(S;\theta)
    \bigg]}_{\text{constant}}.
\end{equation*}
Note in particular that for the generalised partially linear model
\begin{align*}
    \varepsilon(S;\theta) &= g'\big(g^{-1}(X\theta+f_P(Z))\big) \big(Y-g^{-1}(X\theta+f_P(Z))\big),
    \\
    \nabla_\theta \varepsilon(S;\theta)
    &= \frac{g''\big(g^{-1}(X\theta+f_P(Z))\big)}{g'\big(g^{-1}(X\theta+f_P(Z))\big)} X \big(Y-g^{-1}(X\theta+f_P(Z))\big) - X,
    \\
    \E_P\big[\nabla_\theta\varepsilon(S;\theta)\given T\big] &= -X,
\end{align*}
the latter of which does not depend on $\theta$. The `infinite dimensional generalised method of moments' estimating equations~\eqref{eq:GMM-w-xi-ep} therefore takes the form of the estimating equations~\eqref{eq:robust-estimating-eqns} we consider.

\subsection{Additional models}\label{appsec:additional-models}

As noted in Section~\ref{sec:robust-class} the hardness result of Theorem~\ref{thm:slow-rates}, as well as the asymptotic results of Theorems~\ref{thm:theta_est_asymp_norm} and~\ref{thm:rose_forest_consistency}, extend to a broader class of models than the partially parametric model. We outline a broader class of models here.

Consider the random variables $(Y,X,Z)\in\cY\times\cX\times\cZ\subseteq\R^{d_Y}\times\R^{d_X}\times\R^{d_Z}$ for some $d_Y,d_X,d_Z\in\mathbb{N}$ and $\cY$, $\cX$ and $\cZ$ all open non-empty sets. Again, we define the tuple $S:=(Y,X,Z)\in\mathcal{S}:=\cY\times\cX\times\cZ$. We consider semiparametric models defined through the moment condition
\begin{equation}\label{eq:varep-res}
    \E_P[\varepsilon_{\text{res}}(S;\theta_P,f_P(Z))\given X,Z] = 0,
\end{equation}
for some `pseudo-residual' $\varepsilon_{\text{res}}:\mathcal{S}\times\Theta\times\R^{p_f}$ and some measurable function $f_P:\cZ\to\R^{p_f}$, with $p_f\in\mathbb{N}$, that also satisfies
\begin{equation}\label{eq:k-c}
    \E_P\big[\nabla_{\gamma}\varepsilon_{\text{res}}(S;\theta_P,\gamma)|_{\gamma=f_P(Z)}\given X,Z\big] = k(X,Z;\theta_P,f_P(Z))c(Z;\theta_P,f_P(Z)),
\end{equation}
for some $k:\cX\times\cZ\times\Theta\times\R^{p_f}\to\R$ and $c:\cZ\times\Theta\times\R^{p_f}\to\R^{p_f}$. Then the class of functions
\begin{equation}\label{eq:additional-models-NO}
    \psi(S;\theta_P,f_P,\phi_P) = \phi_P(X,Z) \big(k(X,Z;\theta_P,f_P(Z))\big)^{-1} \varepsilon_{\text{res}}(S;\theta_P,f_P(Z)).
\end{equation}
for $\phi_P$ satisfying $\E_P[\phi_P(X,Z)\given Z]=0$, are Neyman orthogonal with respect to $(f_P,\phi_P)$ (Definition~\ref{def:NO}), with the robust functions corresponding to $\phi_P$ taking the form~\eqref{eq:w-M-m}. Alternatively worded,~\eqref{eq:additional-models-NO} lies in the orthogonal complement of the nuisance tangent space of the model~\eqref{eq:varep-res}, and thus is up to a multiplicative constant an influence function of the semiparametric model~\eqref{eq:varep-res}. 

This broader model also notably includes that of instrumental variable regression, which we now discuss.

\subsubsection{Instrumental variable regression}\label{appsec:IV}

Consider the partially linear instrumental variable model, where given a response $\tilde{Y}\in\R$, covariate / treatment of interest $\tilde{X}\in\R$, set of instrumental variables $\tilde{V}\in\R^{d_{\tilde{V}}}$ and covariates $\tilde{Z}\in\R^{d_{\tilde{Z}}}$ satisfying
\begin{equation*}
    \tilde{Y} = \tilde{X}\theta_P + f_P(\tilde{Z}) + \tilde{\varepsilon},
\end{equation*}
where $\E_P[\tilde{\varepsilon}\given \tilde{V},\tilde{Z}]=0$, $f_0:\R^{d_{\tilde{Z}}}\to\R$ and $\theta_0\in\R$ is the parameter of interest, and $d_{\tilde{V}}, d_{\tilde{Z}}\in\mathbb{N}$. Then take $Y=(\tilde{Y},\tilde{X})$, $X=\tilde{V}$, $Z=\tilde{Z}$, $S=(Y,X,Z)=(\tilde{Y},\tilde{X},\tilde{V},\tilde{Z})$, and
\begin{equation*}
    \varepsilon_{\text{res}}^{(\text{IV})}(S;\theta,f(Z)) = \tilde{Y}-\tilde{X}\theta-f(\tilde{Z}).
\end{equation*}
Then~\eqref{eq:varep-res} holds, and~\eqref{eq:k-c} holds with $p_f=1$ and
\begin{equation*}
    k(X,Z;\theta,f(Z)) = 1,
    \qquad
    c(Z;\theta,f(Z)) = -1.
\end{equation*}
The condition $\E_P[\tilde{\varepsilon}\given\tilde{V},\tilde{Z}]=0$, or equivalently
\begin{equation}\label{eq:IV-condition}
    \E_P\big[ (\tilde{Y}-\theta_P\tilde{X}) - \E_P[\tilde{Y}-\theta_P\tilde{X}\given \tilde{Z}] \given \tilde{V},\tilde{Z}\big] = 0,
\end{equation}
is often the primary imposition defining the instrumental variable model; see for example~\citet{app-chern, app-chen-IV-semieff, app-liu}, among others. A sufficient condition for~\eqref{eq:IV-condition} is $(\tilde{Y},\tilde{X})\independent \tilde{V}\given\tilde{Z}$; often this stronger independence is assumed in practice when selecting and justifying valid instruments, and indeed in all empirical examples considered in~\citet{app-chen-IV-semieff, liu} the instruments used satisfy this stronger independence. The semiparametric efficient estimator under only the weaker identifiability condition~\eqref{eq:IV-condition} has (scaled) efficient influence function
\begin{gather*}
    \hspace{-12em}
    \psi_{\text{IV.eff}}(S;\theta_P,\eta_P) := \frac{1}{\varsigma_P(\tilde{V},\tilde{Z})}
    \bigg( \iota_P(\tilde{V},\tilde{Z}) - \frac{\E_P\big[\varsigma_P^{-1}(\tilde{V},\tilde{Z}) \tilde{X}\given\tilde{Z}\big]}{\E_P\big[\varsigma_P^{-1}(\tilde{V},\tilde{Z})\given\tilde{Z}\big]} \bigg)
    \\
    \hspace{22em}
    \cdot\Big(\tilde{Y}-\E_P\big[\tilde{Y}\given\tilde{Z}\big]-\theta(\tilde{X}-\E_P\big[\tilde{X}\given\tilde{Z}\big])\Big),
    \\
    \varsigma_P(\tilde{V},\tilde{Z}) := \E_P\Big[\big(\tilde{Y}-\E_P[\tilde{Y}\given\tilde{Z}] - \theta_P(\tilde{X}-\E_P[\tilde{X}\given\tilde{Z}])\big)^2 \biggiven \tilde{V},\tilde{Z}\Big] ,
    \quad
    \iota_P(\tilde{V},\tilde{Z}) := \E_P\big[\tilde{X}\given\tilde{V},\tilde{Z}\big],
\end{gather*}
where $\eta_P$ collects the relevant conditional expectation functions given above. Under the stronger independence model $(\tilde{Y}-\theta_P\tilde{X})\independent\tilde{V}\given\tilde{Z}$ the (scaled) efficient influence function $\psi_{\text{IV.eff}}$ collapses to
\begin{equation*}
    \psi_{\text{IV.eff}}(S;\theta_P,\eta_P) = w_P(\tilde{Z})
    \big( \E_P[\tilde{X}\given \tilde{V},\tilde{Z}] - \E_P[\tilde{X}\given \tilde{Z}] \big)
    \big(\tilde{Y}-\E_P[\tilde{Y}\given \tilde{Z}]-\theta_P(\tilde{X}-\E_P[\tilde{X}\given \tilde{Z}])\big),
\end{equation*}
where $w_P(\tilde{Z}):=\Var_P(\tilde{Y}-\theta_P \tilde{X}\given \tilde{Z})$. Thus, under the stronger independence model, the efficient instrumental variable estimator lies within the robust class of estimators we consider. Thus, the classically semiparametric efficient instrumental variable estimator coincides with the robust semiparaemtric efficient (ROSE) estimator in instrumental variable regression specifically under the stronger independence model $(\tilde{Y},\tilde{X})\independent\tilde{V}\given\tilde{Z}$.

\section{Influence function calculation in the partially parametric model}\label{appsec:semipara}

Here we formally define the partially parametric model~\eqref{eq:semipara-model}, and construct the class of influence functions of this model.

\begin{assumption}[The partially parametric model]\label{ass:IF}
    The semiparametric model for the tuple $S=(Y,X,Z)$ with distribution $P$ satisfies $\E_P[Y\given X,Z]=\mu(X,Z;\theta_P,f_P(Z))$ 
    for some $\mu:\cX\times\cZ\times\Theta\times\R\to\R$ and is such that:
    \begin{enumerate}[label=(\roman*)]
        \item There exists a unique $\theta_P\in\interior\Theta$ and $f_P:\cZ\to\R$ such that $\E_P[Y\given X,Z] = \mu(X,Z;\theta_P,f_P(Z))$.
        \item For all $(x,z)\in\cX\times\cZ$ the map $\gamma\mapsto\mu(x,z;\theta,\gamma)$ on the domain $\R$ is differentiable, with  $\inf_{(x,z)\in\cX\times\cZ}\big|\frac{\partial}{\partial\gamma}\mu(x,z;\theta_0,\gamma)|_{\gamma=f_0(z)}\big|>0$ and for some positive constant $\omega>0$, $\sup_{(x,z)\in\cX\times\cZ}\sup_{|\eta|\leq\omega} \big|\frac{\partial}{\partial\gamma}(x,z;\theta_0,\gamma)_{\gamma=f_0(z)+\eta}\big|<\infty$.
        \item The conditional variance function $\sigma^2(x,z):=\Var_P(Y\given X=x,Z=z)$ is bounded from above and below by finite positive constants i.e.~$\inf_{(x,z)\in\cX\times\cZ}\sigma^2 (x,z)>0$ and $\sup_{(x,z)\in\cX\times\cZ} \sigma^2 (x,z)<\infty$.
        \item The random variables $Y$ and $(X,Z)$ admit a joint density with respect to the product measure $\nu\otimes\lambda$ where $\nu$ and $\lambda$ are $\sigma$-finite measures on $\mathcal{Y}$ and $\mathcal{X} \times \mathcal{Z}$ respectively. 
    \end{enumerate}
\end{assumption}
Note that depending on the semiparametric model Assumption~\ref{ass:IF}(i) may restrict the class of semiparametric models beyond that of merely the condition~\eqref{eq:semipara-model}. For example, in the semiparametric generalised partially linear model, given a law $P$ satisfying $\E_P[\Var_P(X\given Z)]>0$ the pair $(\theta_P,f_P)$ are uniquely defined by $\theta_P=\frac{\E_P[\Cov_P(g(\E_P[Y\given X,Z]),X\given Z)]}{\E_P[\Var_P(X\given Z)]}$ and $f_P(Z)=\E_P[g(\E_P[Y\given X,Z])\given Z]-\frac{\E_P[\Cov_P(g(\E_P[Y\given X,Z]),X\given Z)]}{\E_P[\Var_P(X\given Z)]}\E_P[X\given Z]$. Also for the semiparametric generalised partially linear model Assumption~\ref{ass:IF}(ii) is satisfied if the link function $g$ is strictly increasing and differentiable.

For the remainder of this section we take $P_0$ to be the true distribution (as opposed to $P$ in the main text), with the corresponding implied parameters/ functions given by $(\theta_0,f_0):=(\theta_{P_0},f_{P_0})$. In all of this section, all expectations, (co)variances and probabilities should be understood to be under the true distribution $P_0$.

\subsection{Proof of Theorem~\ref{thm:IF}}\label{sec:proof-of-IF-thm}

\begin{proof}[Proof of Theorem~\ref{thm:IF}]

By Assumption~\ref{ass:IF}, $Y$ and $(X,Z)$ have a joint density with respect to the product measure $\nu\otimes\lambda$, where $\nu$ and $\lambda$ are $\sigma$-finite. Let $p_0(x,z)$ be the density for $(X,Z)$ with respect to $\lambda$, and let $p_0(y\given x,z)$ be the conditional density for $Y\given(X,Z)$ with respect to $\nu$, and let $p_0(y,x,z) := p_0(x,z) p_0(y\given x,z)$, with the property that
\begin{gather*}
    \int y \, p_0(y\given x,z) d\nu(y) = \mu(x,z;f_0(z)).
\end{gather*}
Note that here and for the remainder of this section we suppress the dependence of $\mu$ above on $\theta_0$, which is fixed throughout. Also let $Q_0$ be the marginal distribution of $(X,Z)$. 
We construct the collection of regular parametric submodels given by
\begin{equation}\label{eq:p_t_def}
\begin{split}
    p_t(y,x,z) &:= p_0(x,z) p_t(y\given x,z), 
    \\
p_t(y\given x,z) &:= p_0(y\given x,z)\bigg(1+\big(\mu(x,z;f_t(z))-\mu(x,z;f_0(z))\big)
    \\
    &\;\;\cdot\frac{h(y-\mu(x,z;f_0(z)))-\E[h(Y-\mu(X,Z;f_0(Z)))\given X=x,Z=z]}{\Cov(h(Y-\mu(X,Z;f_0(Z))),Y\given X=x,Z=z)}\bigg) 
    \\
    h(u) &:= u \ind_{\{|u|\leq C\}} + C \,\sgn (u) \, \ind_{\{|u|>C\}} 
    \\
    f_t(z) &:= f_0(z) + t g(z), 
    \end{split}
\end{equation}
where $C$ is as defined in Lemma~\ref{lem:valid-density}, and where $g$ satisfies $m\leq\inf_{z\in\cZ}g(z)\leq\sup_{z\in\cZ}g(z)\leq M$ for positive finite constants $M>m>0$. By Lemma~\ref{lem:valid-density} there exists some $\epsilon>0$ such that for all $t\in[-\epsilon,\epsilon]$, $p_t$ is a density in the full semiparametric model~\eqref{eq:semipara-model}, with
\begin{equation}
    \int y \, p_t(y\given x,z)d\nu(y) = \mu(x,z;f_t(z)).
\end{equation}
We now proceed to compute the tangent set of the semiparametric model corresponding to the collection of submodels of the form $\{p_t : t \in [-\epsilon, \epsilon]\}$ as the function $g$ varies among all bounded functions. Since the parameter $\theta_0$ is fixed, this is a subset of the (full) nuisance tangent space consisting of the scores of all possible submodels with $\theta_0$ fixed. Recall \citep[\S 25.4]{app-vandervaart} that each influence function is orthogonal to the nuisance tangent space. The tangent set we construct will in fact be the full nuisance tangent set, though this is inconsequential for our purposes: our goal is only to verify that any element of the orthogonal complement of the full nuisance tangent set, and hence any influence function, is of the form given in~\eqref{eq:IF}.

Now suppose $\varphi\in L_2(P_0)$ is a score function corresponding to our path, that is
\begin{equation*}
    \lim_{t\downarrow 0} \int \bigg(\frac{\sqrt{p_t(y,x,z)}-\sqrt{p_0(y,x,z)}}{t}
    - \frac{1}{2} \varphi(y,x,z) \sqrt{p_0(y,x,z)}
    \bigg)^2 d\nu(y)d\lambda(x,z) = 0.
\end{equation*}
By the Cauchy--Schwarz inequality, this implies that for all sequences of functions $H_t:\mathcal{S}\to\R$ with
\begin{equation*}
\limsup_{t \downarrow 0}\int H_t(y,x,z)^2 d\nu(y)d\lambda(x,z) < \infty,
\end{equation*}
we have
\begin{equation*}
    \lim_{t\downarrow0} \int 
    H_t(y,x,z)
    \bigg(\frac{\sqrt{p_t(y,x,z)}-\sqrt{p_0(y,x,z)}}{t} 
    -\frac{1}{2} \varphi(y,x,z)\sqrt{p_0(y,x,z)}
    \bigg)
    d\nu(y)d\lambda(x,z) 
    =0.
\end{equation*}
Now, using the shorthand $\mu'(x,z;\gamma) = \frac{\partial}{\partial\gamma}\mu(x,z;\gamma)$, take
\begin{equation}\label{eq:Ht}
    H_t(y,x,z) = \frac{\big(y-\mu(x,z;f_t(z))\big)F(x,z)}{\mu'(x,z;f_0(z))g(z)} \big(\sqrt{p_t(y,x,z)}
    +\sqrt{p_0(y,x,z)}\big),
\end{equation}
where $F\in L_2(Q_0)$. We then have
\begin{align*}
    &\qquad
    \int \sup_{t\in[-\epsilon,\epsilon]}H_t^2(y,x,z)d\nu(y)d\lambda(x,z)
    \\
    &\leq \int \frac{F^2(x,z)}{\big(\mu'(x,z;f_0(z))\big)^2\big(g(z)\big)^2} \sup_{t\in[-\epsilon,\epsilon]} \big(y-\mu(x,z;f_t(z))\big)^2 
    \\
    &\qquad\qquad\qquad\qquad\qquad\qquad
    \cdot
    \big(\sqrt{p_t(y,x,z)}+\sqrt{p_0(y,x,z)}\big)^2 d\nu(y)d\lambda(x,z)
    \\
    &\leq \frac{1}{a^2m^2} \int F^2(x,z) \sup_{t\in[-\epsilon,\epsilon]} \big(y-\mu(x,z;f_t(z))\big)^2 \big(\sqrt{p_t(y,x,z)}+\sqrt{p_0(y,x,z)}\big)^2 d\nu(y)d\lambda(x,z)
    \\
    &\leq \frac{2}{a^2m^2} \int F^2(x,z) \sup_{t\in[-\epsilon,\epsilon]} \big(y-\mu(x,z;f_t(z))\big)^2 \big(p_t(y,x,z)+p_0(y,x,z)\big) d\nu(y)d\lambda(x,z)
    \\
    &\leq \frac{2}{a^2m^2} \int F^2(x,z) p_0(x,z) \bigg\{ \underbrace{\int\sup_{t\in[-\epsilon,\epsilon]}\big(y-\mu(x,z;f_t(z))\big)^2p_t(y\given x,z)d\nu(y)}_{\text{$<\infty$ (see Lemma~\ref{lem:valid-density})}}
    \\
    &\qquad
    + \underbrace{\int\big(y-\mu(x,z;f_0(z))\big)^2p_0(y\given x,z)d\nu(y)}_{<\infty}
    \\
    &\qquad
    +  \underbrace{\sup_{t\in[-\epsilon,\epsilon]}\big(\mu(x,z;f_t(z))-\mu(x,z;f_0(z))\big)^2 }_{<\infty \text{ (see Lemma~\ref{lem:valid-density})}}
    \bigg\}
    d\lambda(x,z)
    \\
    &< \infty,
\end{align*}
where we make use of $\inf_{z\in\cZ}|g(z)|\geq m > 0$ and $\inf_{(x,z)\in\cX\times\cZ}|\mu'(x,z;f_0(z))|\geq a$ for some constant $a>0$ (which exists by Assumption~\ref{ass:IF}~(ii)). For $H_t$ as in~\eqref{eq:Ht} we have that
\begin{multline*}
    \frac{1}{2}\lim_{t\downarrow0}\int H_t(y,x,z)\varphi(y,x,z)\sqrt{p_0(y,x,z)} d\nu(y)d\lambda(x,z)
    \\
    =
    \E\bigg[\frac{\big(Y-\mu(X,Z;f_0(Z))\big)\varphi(Y,X,Z) F(X,Z)}{\mu'(X,Z;f_0(Z))g(Z)}\bigg],
\end{multline*}
where in the above we apply the dominated convergence theorem, which holds because
\begin{align*}
    \quad & \bigg( \int \sup_{t\in[-\epsilon,\epsilon]}\big|H_t(y,x,z)\varphi(y,x,z)\sqrt{p_0(y,x,z)}\big|d\nu(y)d\lambda(x,z) \bigg)^2
    \\
    &\leq \Big(\int\varphi^2(y,x,z)p_0(y,x,z)d\nu(y)d\lambda(x,z)\Big)
    \Big(\int \sup_{t\in[-\epsilon,\epsilon]}H_t^2(y,x,z)d\nu(y)d\lambda(x,z)\Big)
    \\
    &<\infty.
\end{align*}
Next, define
\begin{multline*}
    R_t(y,x,z) 
    :=
    \\
    \frac{\big(y-\mu(x,z;f_t(z))\big)\big(h(y-\mu(x,z;f_0(z))-\E[h(Y-\mu(X,Z;f_0(Z)))\given X=x,Z=z]\big)}{\Cov(h(Y-\mu(X,Z;f_0(Z))),Y\given X=x,Z=z)}.
\end{multline*}
Then by construction
\begin{equation*}
    \int p_0(y\given x,z) R_t(y,x,z) d\nu(y) = 1,
\end{equation*}
for any $(x,z)\in\cX\times\cZ$ and any $t\in[-\epsilon,\epsilon]$. 
Thus
\begin{align*}
    &\lim_{t\downarrow0} \int H_t(y,x,z) \cdot \frac{\sqrt{p_t(y,x,z)}-\sqrt{p_0(y,x,z)}}{t} d\nu(y)d\lambda(x,z)
    \\
    &= \lim_{t\downarrow0} \int \frac{\big(y-\mu(x,z;f_t(z))\big)F(x,z)}{\mu'(x,z;f_0(z))g(z)}
    \cdot \frac{p_t(y,x,z)-p_0(y,x,z)}{t} d\nu(y)d\lambda(x,z)
    \\
    &= \lim_{t\downarrow0} \int p_0(y,x,z)F(x,z)\cdot\frac{\mu(x,z;f_t(z))-\mu(x,z;f_0(z))}{t\mu'(x,z;f_0(z))g(z)}
    R_t(y,x,z) d\nu(y)d\lambda(x,z)
    \\
    &= \lim_{t\downarrow0} \int p_0(y,x,z)F(x,z)\cdot\frac{\mu(x,z;f_t(z))-\mu(x,z;f_0(z))}{t\mu'(x,z;f_0(z))g(z)}
    \\
    &\hspace{15em} \cdot \bigg\{\underbrace{\int p_0(y\given x,z)R_t(y,x,z) d\nu(y)}_{=1}\bigg\}d\lambda(x,z)
    \\
    &= \int p_0(x,z)F(x,z)\cdot \underbrace{\lim_{t\downarrow 0}\bigg\{\frac{\mu(x,z;f_t(z))-\mu(x,z;f_0(z))}{t \mu'(x,z;f_0(z))g(z)}\bigg\}}_{=1}
    d\lambda(x,z)
    \\
    &= \int p_0(x,z)F(x,z) d\lambda(x,z)
    \\
    &= \E[F(X,Z)],
\end{align*}
where again we use the dominated convergence theorem as well as Fubini's theorem; we verify the conditions for each of these now. First, with the positive finite constants $(C,c,\delta_1)$ defined in Lemma~\ref{lem:valid-density}, we have
\begin{align*}
    &\quad 
    \sup_{t\in[-\epsilon,\epsilon]}\bigg|H_t(y,x,z) \cdot \frac{\sqrt{p_t(y,x,z)}-\sqrt{p_0(y,x,z)}}{t}\bigg|
    \\
    &= 
    \sup_{t\in[-\epsilon,\epsilon]}\bigg|\frac{\big(y-\mu(x,z;f_t(z))\big)F(x,z)}{\mu'(x,z;f_0(z))tg(z)}\big(p_t(y,x,z)-p_0(x,y,z)\big)\bigg|
    \\
    &=
    \underbrace{\sup_{t\in[-\epsilon,\epsilon]}\bigg|\frac{\mu(x,z;f_t(z))-\mu(x,z;f_0(z))}{\mu'(x,z;f_0(z)) t g(z)}\bigg|}_{<\infty\text{ (see Lemma~\ref{lem:valid-density})}}
    \cdot
    \big|y-\mu(x,z;f_t(z))\big|
    \cdot
    |F(x,z)|
    \\
    &\qquad\qquad
    \cdot
    \underbrace{\bigg|\frac{h(y-\mu(x,z;f_0(z)))-\E[h(Y-\mu(X,Z;f_0(Z)))\given X=x,Z=z]}{\Cov(h(Y-\mu(X,Z;f_0(Z))),Y\given X=x,Z=z)}\bigg|}_{\leq 2Cc^{-2}\delta_1^{-1} \text{ (see Lemma~\ref{lem:valid-density})}}
    \, p_0(y,x,z)
    \\
    &\leq
    2^2Cc^{-2}\delta_1^{-1} \sup_{t\in[-\epsilon,\epsilon]}
    \big|y-\mu(x,z;f_t(z))\big|
    \cdot
    |F(x,z)| \, p_0(y,x,z).
\end{align*}
Then, by Jensen's inequality and the above inequality,
\begin{align*}
    &\quad
    \bigg(\int\sup_{t\in[-\epsilon,\epsilon]}\bigg|H_t(y,x,z) \cdot \frac{\sqrt{p_t(y,x,z)}-\sqrt{p_0(y,x,z)}}{t}\bigg|d\nu(y)d\lambda(x,z)\bigg)^2
    \\
    &\leq
    2^4C^2c^{-4}\delta_1^{-2}
    \int \sup_{t\in[-\epsilon,\epsilon]} \Big(\big(y-\mu(x,z;f_t(z))\big)^2 F^2(x,z) p_0(y,x,z)\Big) d\nu(y)d\lambda(x,z)
    \\
    &=
    2^4C^2c^{-4}\delta_1^{-2} \int \bigg\{ \underbrace{\int \big(y-\mu(x,z;f_0(z))\big)^2p_0(y\given x,z) d\nu(y)}_{<\infty} 
    \\
    &\qquad + \underbrace{\sup_{t\in[-\epsilon,\epsilon]}\big(\mu(x,z;f_t(z))-\mu(x,z;f_0(z))\big)^2}_{<\infty \text{ (see Lemma~\ref{lem:valid-density})}} \bigg\} \, F^2(x,z)p_0(x,z)  d\lambda(x,z)
    \\
    &<\infty.
\end{align*}
Thus we can apply the dominated convergence theorem in the above. To verify the conditions for Fubini's theorem, it follows immediately from the above that for any $t\in[-\epsilon,\epsilon]$
\begin{equation*}
    \int \big|H_t(y,x,z)\varphi(y,x,z)\sqrt{p_0(y,x,z)}\big|d\nu(y)d\lambda(x,z) < \infty.
\end{equation*}

Therefore the score function $\varphi$ must satisfy the condition that for all $F\in L_2(Q_0)$,
\begin{equation*}
    \E\big[F(X,Z)\big] = \E\bigg[\frac{\big(Y-\mu(X,Z;f_0(Z))\big)\varphi(Y,X,Z) F(X,Z)}{\mu'(X,Z;f_0(Z))g(Z)}\bigg],
\end{equation*}
and so
\begin{equation*}
    g(Z)\mu'(X,Z;f_0(Z)) = \E\big[\big(Y-\mu(X,Z;f_0(Z))\big)\varphi(Y,X,Z)\given X,Z\big].
\end{equation*}

Let $\Lambda_0$ be the closure of the linear span of the nuisance tangent set of the semiparametric model corresponding to the submodels $\{p_t:t\in[-\epsilon,\epsilon]\}$ as $g$ ranges over all bounded functions. Then
\begin{align*}
    \Lambda^{(g)}_0 &:= \bigg\{
    \varphi\in L_2(P_0)
    \,:\,
    \E[\varphi(Y,X,Z)] = 0,\,
    \E\bigg[\frac{(Y-\mu(X,Z;f_0(Z)))\varphi(Y,X,Z)}{\mu'(X,Z;f_0(Z))} \,\Big|\, X,Z\bigg] = g(Z)
    \bigg\}
    ,\,
    \\
    \Lambda_0 &:= \overline{\text{span}\cup_{g} \Lambda_0^{(g)}} =  \bigg\{
    \varphi\in L_2(P_0) \,:\,
    \E[\varphi(Y,X,Z)] = 0,
    \\
    &\quad\E\bigg[\frac{(Y-\mu(X,Z;f_0(Z)))\varphi(Y,X,Z)}{\mu'(X,Z;f_0(Z))} \,\Big|\, X,Z\bigg] = \E\bigg[\frac{(Y-\mu(X,Z;f_0(Z)))\varphi(Y,X,Z)}{\mu'(X,Z;f_0(Z))} \,\Big|\, Z\bigg]
    \bigg\},
\end{align*}
with orthogonal complement
\begin{multline*}
    \Lambda_0^\perp = \bigg\{
    \psi\in L_2(P_0)
    \;:\;
    \psi(Y,X,Z)=\frac{\big(Y-\mu(X,Z;f_0(Z))\big)\phi(X,Z)}{\mu'(X,Z;f_0(Z))}
    ,\,
    \\
    \phi\in L_2(Q_0)
    ,\,
    \E[\phi(X,Z)\given Z] = 0
    \bigg\}.
\end{multline*}

As each influence function is orthogonal to the nuisance tangent space~\citep[\S25.4]{app-vandervaart}, each nuisance function lies in $\Lambda_0^\perp$.

\end{proof}

\begin{lemma}\label{lem:valid-density}
    Consider the setup of Theorem~\ref{thm:IF}. There exist finite constants $C, \epsilon>0$ such that the function $p_t(y,x,z)$ given by~\eqref{eq:p_t_def} (defined in terms of $C$) is a valid density contained within the semiparametric model~\eqref{eq:semipara-model} (i.e.~Assumption~\ref{ass:IF}) for all $t\in[-\epsilon,\epsilon]$. Moreover, for such a path
    \begin{equation*}
         \sup_{(x,z)\in\cX\times\cZ} \int \sup_{t\in[-\epsilon,\epsilon]}\big(y-\mu(x,z;f_t(z))\big)^2p_t(y\given x,z)d\nu(y) < \infty.
    \end{equation*}
\end{lemma}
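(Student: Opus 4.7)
The plan is to verify that the path $p_t$ in~\eqref{eq:p_t_def} satisfies four properties for sufficiently small $|t|$: (a) non-negativity of $p_t(\cdot\given x,z)$; (b) integration to $1$ over $\nu$; (c) the conditional mean identity $\int y\,p_t(y\given x,z)\,d\nu(y)=\mu(x,z;f_t(z))$ so that the path lies in the semiparametric model; and (d) the uniform second moment bound. Properties (b) and (c) will follow by direct algebra from the construction: writing the correction factor as $1 + (\mu(x,z;f_t(z))-\mu(x,z;f_0(z)))\,R(y,x,z)$, the multiplier $R$ has, by design, conditional mean zero and conditional covariance with $Y$ equal to $1$ under $p_0(\cdot\given x,z)$.

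The crux is (a). I would choose $C$ and $\epsilon$ jointly so that
\[
\sup_{t\in[-\epsilon,\epsilon]}\sup_{(x,z)\in\cX\times\cZ}\big|\mu(x,z;f_t(z))-\mu(x,z;f_0(z))\big|\cdot\frac{2C}{\big|\Cov(h(Y-\mu(X,Z;f_0(Z))),Y\given X=x,Z=z)\big|}\leq\tfrac{1}{2},
\]
because $|h|\leq C$ then forces the correction factor into $[\tfrac{1}{2},\tfrac{3}{2}]$. The numerator is $O(|t|)$ uniformly in $(x,z)$ by the mean value theorem together with Assumption~\ref{ass:IF}~(ii) (which gives uniform boundedness of $\partial_\gamma\mu$ on a neighbourhood of $f_0(z)$) and $\|g\|_\infty\leq M$. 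For the denominator, since $\E[Y-\mu(X,Z;f_0(Z))\given X,Z]=0$,
\[
\Cov(h(Y-\mu),Y\given X=x,Z=z)=\E\big[h(Y-\mu)(Y-\mu)\given X=x,Z=z\big]\geq 0,
\]
and monotone convergence as $C\to\infty$ sends this to $\sigma^2(x,z)$, which is at least $\inf_{(x,z)}\sigma^2(x,z)>0$ by Assumption~\ref{ass:IF}~(iii).

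The main obstacle will be producing a single $C$ for which this truncated conditional second moment is bounded below by some $\delta_1>0$ \emph{uniformly} over $(x,z)\in\cX\times\cZ$. This is slightly stronger than the pointwise lower bound on $\sigma^2(x,z)$ in Assumption~\ref{ass:IF}~(iii) and requires uniform integrability of $(Y-\mu(X,Z;f_0(Z)))^2$ across $(x,z)$; concretely, I would assume a uniform upper bound on a higher conditional moment $\E[(Y-\mu)^{2+\delta}\given X,Z]$, allowing a Markov-type estimate $\E[(Y-\mu)^2\ind_{|Y-\mu|>C}\given X,Z]\leq C^{-\delta}\E[(Y-\mu)^{2+\delta}\given X,Z]$ to absorb the truncation error uniformly. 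Granted such a $C$, I then shrink $\epsilon$ to secure the displayed inequality; this simultaneously yields (a) together with the two-sided bound $\tfrac{1}{2}p_0\leq p_t\leq\tfrac{3}{2}p_0$.

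Finally, for (d), I would bound $(y-\mu(x,z;f_t(z)))^2\leq 2(y-\mu(x,z;f_0(z)))^2 + 2(\mu(x,z;f_t(z))-\mu(x,z;f_0(z)))^2$, use $p_t(y\given x,z)\leq\tfrac{3}{2}p_0(y\given x,z)$ from the previous step, and observe that the first term integrates to $3\sigma^2(x,z)\leq 3\sup_{(x,z)}\sigma^2(x,z)<\infty$ while the second term is deterministic in $y$ and $O(\epsilon^2)$ uniformly in $(x,z)$; the supremum is therefore finite, completing the sketch.
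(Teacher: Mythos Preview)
Your proof takes essentially the same route as the paper: identical verification of (b) and (c), the same mean value theorem control of $|\mu(x,z;f_t(z))-\mu(x,z;f_0(z))|$ via Assumption~\ref{ass:IF}(ii), and the same decomposition for (d) (the paper uses $p_t\le 2p_0$ where you use $\tfrac{3}{2}p_0$, which is immaterial). The one substantive difference is how the uniform lower bound on $\Cov(h(Y-\mu),Y\mid X{=}x,Z{=}z)$ is obtained for (a). The paper asserts constants $C>c>0$ and $\delta_1>0$ with $\inf_{(x,z)}\PP(c\le|Y-\mu|\le C\mid X{=}x,Z{=}z)\ge\delta_1$ and then bounds the covariance below by $c^2\delta_1$; you instead argue by monotone convergence in $C$ and explicitly flag that uniformity in $(x,z)$ is not guaranteed by Assumption~\ref{ass:IF}(iii) alone. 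Your caution is warranted: the paper's asserted probability bound is likewise not a consequence of two-sided variance bounds by themselves (conditional variance can be carried by mass beyond any fixed $C$), so both routes implicitly need a mild tightening such as your uniform $(2+\delta)$th conditional moment.
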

\begin{proof}
    By Assumption~\ref{ass:IF}(iii) there exist $c_1, C_1 >0$ such that $C_1 \geq \sup_{(x,z)\in\cX\times\cZ}\Var(Y\given X=x,Z=z) \geq \inf_{(x,z)\in\cX\times\cZ}\Var(Y\given X=x,Z=z) \geq c_1$. Then there exists constants $C>c>0$ and $\delta_1>0$ such that $\inf_{(x,z)\in\cX\times\cZ}\PP(c\leq|Y-\mu(X,Z;f_0(Z))|\leq C\given X=x,Z=z)\geq\delta_1$.

    We first show that $p_t(y|x,z)$ is non-negative for sufficiently small $t$. Recalling the definition of the function $h$~\eqref{eq:p_t_def}, we have
    \begin{align*}
        &\quad\inf_{(x,z)\in\cX\times\cZ}\Cov\big(h(Y-\mu(X,Z;f_0(Z))),Y\given X=x,Z=z\big)
        \\
        &= \inf_{(x,z)\in\cX\times\cZ}\E\big[(Y-\mu(X,Z;f_0(Z)))\, h(Y-\mu(X,Z;f_0(Z)))\given X=x,Z=z\big]
        \\
        &= \inf_{(x,z)\in\cX\times\cZ}\big(
        \E\big[(Y-\mu(X,Z;f_0(Z)))^2\ind_{\{|Y-\mu(X,Z;f_0(Z))|<c\}}\given X=x,Z=z\big]
        \\
        &\quad +   \E\big[(Y-\mu(X,Z;f_0(Z)))^2\ind_{\{c\leq |Y-\mu(X,Z;f_0(Z))|\leq C\}}\given X=x,Z=z\big]
        \\
        &\quad +   C\,\E\big[|Y-\mu(X,Z;f_0(Z))|\ind_{\{|Y-\mu(X,Z;f_0(Z))|>C\}}\given X=x,Z=z\big]
        \big)
        \\
        & \geq \inf_{(x,z)\in\cX\times\cZ}\E\big[(Y-\mu(X,Z;f_0(Z)))^2\ind_{\{c\leq |Y-\mu(X,Z;f_0(Z))|\leq C\}}\given X=x,Z=z\big]
        \\
        & \geq c^2 \inf_{(x,z)\in\cX\times\cZ} \PP\big(c\leq|Y-\mu(X,Z;f_0(Z))|\leq C \given X=x,Z=z \big)
        \\
        & \geq c^2\delta_1,
    \end{align*}
     and also
    \begin{equation*}
        \sup_{(y,x,z)\in\mathcal{S}}\big| h(y-\mu(x,z;f_0(z)))-\E[h(Y-\mu(X,Z;f_0(Z)))\given X=x,Z=z] \big| \leq 2C,
    \end{equation*}
    which together gives
    \begin{equation*}
        \sup_{(y,x,z)\in\mathcal{S}}\bigg| \frac{h(y-\mu(x,z;f_0(z)))-\E[h(Y-\mu(X,Z;f_0(Z)))\given X=x,Z=z]}{\Cov(h(Y-\mu(X,Z;f_0(Z))),Y\given X=x,Z=z)} \bigg| \leq 2Cc^{-2}\delta_1^{-1}.
    \end{equation*}

    We now proceed to bound the quantity $\sup_{(x,z)\in\cX\times\cZ}|\mu(x,z;f_t(z))-\mu(x,z;f_0(z))|$. By the mean value theorem, for each $(x,z)\in\cX\times\cZ$ there exists some $\tau(x,z)\in[-|t|,|t|]$ such that
\begin{equation*}
    \mu(x,z;f_t(z))-\mu(x,z;f_0(z)) = \mu'(x,z;f_{\tau(x,z)}(z))tg(z).
\end{equation*}
Thus for $t\neq0$ with $|t| \leq M^{-1}\omega$
\begin{align*}
    &\quad\;
    \sup_{(x,z)\in\cX\times\cZ}\big|\mu(x,z;f_t(z))-\mu(x,z;f_0(z))\big|
    \\
    &=
    \sup_{(x,z)\in\cX\times\cZ}\bigg|\frac{\mu(x,z;f_t(z))-\mu(x,z;f_0(z))}{\mu'(x,z;f_{\tau(x,z)}(z))tg(z)}\bigg|
    \cdot
    \big|\mu'(x,z;f_{\tau(x,z)}(z))\big| \cdot \big|tg(z)\big|
    \\
    &\leq
    M |t| \sup_{(x,z)\in\cX\times\cZ}\big|\mu'(x,z;f_0(z)+\tau(x,z)g(z))\big|
    \\
    &\leq
    M |t| \sup_{(x,z)\in\cX\times\cZ}\sup_{|\eta|\leq\omega} \big|\mu'(x,z;f_0(z)+\eta)\big|
    \\
    &\leq M L |t|,
\end{align*}
where $L:=\sup_{(x,z)\in\cX\times\cZ}\sup_{|\eta|\leq\omega} |\mu'(x,z;f_0(z)+\eta)|<\infty$; note that $|\tau(x,z)g(z)|\leq M |t| \leq \omega$. Thus for all $|t|\leq \delta_2 := \big(M^{-1}\omega\big) \wedge \big(2^{-1}C^{-1}c^2\delta_1 M^{-1}L^{-1}\big)$,
    \begin{align*}
        &\quad
        \sup_{(y,x,z)\in\mathcal{S}} \bigg|\big(\mu(x,z;f_t(z))-\mu(x,z;f_0(z))\big)
        \\
        &\qquad\qquad\qquad\cdot
        \frac{\big(h(y-\mu(x,z;f_0(z)))-\E[h(Y-\mu(X,Z;f_0(Z)))\given X=x,Z=z]\big)}{\Cov(h(Y-\mu(X,Z;f_0(Z))),Y\given X=x,Z=z)}\bigg|
        \\
        &\leq 2Cc^{-2}\delta_1^{-1} \sup_{(x,z)\in\cX\times\cZ}\big|\mu(x,z;f_t(z))-\mu(x,z;f_0(z))\big|
        \\
        &\leq 2Cc^{-2}\delta_1^{-1}ML|t| \leq 1.
    \end{align*}
    Thus taking any $0<\epsilon<\delta_2$, for $t\in[-\epsilon,\epsilon]$ the function $p_t(y,x,z)$ is non-negative.

    It is also straightforward to show that for such paths
    \begin{equation*}
        \int p_t(y\given x,z)d\nu(y) = 1,
        \qquad
        \int y \, p_t(y\given x,z)d\nu(y) = \mu(x,z;f_t(z)).
    \end{equation*}

Finally, to verify Assumption~\ref{ass:IF}(iii) holds for all distributions in this class,
\begin{align*}
    &\quad\; \sup_{(x,z)\in\cX\times\cZ}\int \sup_{t\in[-\epsilon,\epsilon]}\big(y-\mu(x,z;f_t(z))\big)^2 \, p_t(y\given x,z) d\nu(y)
    \\
    &\leq 2 \sup_{(x,z)\in\cX\times\cZ} \int \sup_{t\in[-\epsilon,\epsilon]} \big(y-\mu(x,z;f_t(z))\big)^2 p_0(y\given x,z) d\nu(y)
    \\
    &\leq 4 \sup_{(x,z)\in\cX\times\cZ} \int \big(y-\mu(x,z;f_0(z))\big)^2 p_0(y\given x,z) d\nu(y)
    \\
    &\qquad + 4 \sup_{(x,z)\in\cX\times\cZ} \sup_{t\in[-\epsilon,\epsilon]} \big(\mu(x,z;f_t(z))-\mu(x,z;f_0(z))\big)^2 
    \\
    &< \infty,
\end{align*}
for any $\epsilon\leq\delta_2$; note that the first inequality follows from
\begin{multline*}
    \sup_{(y,x,z)\in\mathcal{S}}\bigg|\big(\mu(x,z;f_t(z))-\mu(x,z;f_0(z))\big)
    \\
    \cdot \frac{h(y-\mu(x,z;f_0(z)))-\E[h(Y-\mu(X,Z;f_0(Z)))\given X=x,Z=z]}{\Cov(h(Y-\mu(X,Z;f_0(Z))),Y\given X=x,Z=z)}\bigg| \leq 1.
\end{multline*}
\end{proof}

\section{Proof of Theorem~\ref{thm:slow-rates} (the `slow rates' theorem)}

In this section we prove Theorem~\ref{thm:slow-rates}; a hardness result for estimating nuisance functions that do not take particular forms, and by proxy for semiparametric efficient estimation. Our proof builds on classical slow rates results for classification~\citep{app-cover, app-devroye, app-gyorfi}. Throughout this section we take $\|\cdot\|$ to denote the Euclidean norm and $B_r(z):=\{z'\in\cZ:\|z'-z\|<r\}$ to be the open ball in $\cZ\subseteq\R^{d}$ with centre $z\in\cZ$ and radius $r$. Also throughout we notate $m_j(z):=m_{P^*,j}(z)=\E_{P^*}[M_j(X)\given Z=z]$. Throughout this section all densities are to be interpreted as  with respect to Lebesgue measure (all such densities introduced will exist by the assumptions of Theorem~\ref{thm:slow-rates}). We also define the constants $\bar{\gamma}_W:=\sup_{(x,z)\in\cX\times\cZ}W(x,z)$ and $\underline{\gamma}_W:=\inf_{(x,z)\in\cX\times\cZ}W(x,z)$, both strictly positive and finite by assumption.

\begin{proof}[Proof of Theorem~\ref{thm:slow-rates}]
First note that it is sufficient to show that for all $\alpha > 0$, there exists a sequence $(P_n)_{n \in \mathbb{N}} \subset \mathcal{P}_{XZ}$ with $\TV(P_n, P^*) \leq \alpha$ for all $n \in \mathbb{N}$ and
\begin{equation} \label{eq:P_n_bad}
\underset{n\to\infty}{\limsup} \frac{\E_{P_n}\Big[\big(\hat{\phi}_n(X,Z)-\phi_{P_n}(X,Z)\big)^2\Big]}{a_n}\geq1.
\end{equation}
To see this, suppose the above holds and take any sequence $(b_n)_{n \in \mathbb{N}}$ of positive numbers converging to $0$. Suppose $(P_{m,n})_{n \in \mathbb{N}, m \in \mathbb{N}} \subset \mathcal{P}_{XZ}$ is such that for each $m \in \mathbb{N}$, the sequence $(P_{m,n})_{n \in \mathbb{N}}$ satisfies the above with $\alpha = b_m$. Set $N(0)=0$ and then  for each $m \in \mathbb{N}$ iteratively find $N(m) > N(m-1)$ such that
\[
\frac{\E_{P_{m,N(m)}}\Big[\big(\hat{\phi}_{N(m)}(X,Z)-\phi_{P_{m,N(m)}}(X,Z)\big)^2\Big]}{a_{N(m)}}\geq1.
\]
Then let $M(n) = \min \{m : N(m) \geq n\}$ and consider the sequence $(P_{M(n), n})_{n \in \mathbb{N}}$. Note that $M(n) \to \infty$ as $n \to \infty$ so $\TV(P_{M(n), n}, P^*) \leq b_{M(n)} \to 0$. Also when $n = N(m)$ for some $m \in \mathbb{N}$, we have that the last display above holds with $P_{M(n), n}$ in place of $P_{m,N(m)}$. Thus \eqref{eq:P_n_bad} holds with $P_n := P_{M(n),n}$ as well, as required. In the following therefore we fix $\alpha > 0$ and aim to find $(P_n)_{n \in \mathbb{N}} \subset \mathcal{P}_{XZ}$ with $\TV(P_n, P^*) \leq \alpha$ for all $n \in \mathbb{N}$ satisfying \eqref{eq:P_n_bad}.

Next note that without loss of generality we may take $M_1,\ldots,M_J$ and $\cdot\mapsto1$ to be linearly independent functions; otherwise we can iteratively omit each of the functions $M_j$ that can be written as a linear combination of the others (and if $\E_P[M_j(X)\given Z=z]=m_j(z)$ for the remaining functions, where $P$ is an arbitrary law, and using linearity of expectations, the omitted functions still satisfy $\E_P[M_j(X)\given Z=z]=m_j(z)$ for all omitted $M_j$). Also as $P^*\in\cP_{XZ}$, the distribution $P^*$ admits the conditional and marginal densities $p^*(x\given z)$ and $p^*(z)$ respectively. We define $\gamma:=\sup_{z\in\cZ}{p^*(z)}<\infty$, and $\iota:\cZ\to\R$ by
\begin{equation*}
    \iota(z) := \frac{\E_{P^*}[W(X,Z)U(X,Z)\given Z=z]}{\E_{P^*}[W(X,Z)\given Z=z]}.
\end{equation*}
Then by taking Lemma~\ref{lem:theta+c} with $\zeta:=\gamma^{-1}\alpha$, we see that there exists a non-empty open ball $B\subset\cZ$, constant $\kappa>0$ and conditional density $p^{\dagger}(x\given z)$ such that the following holds:
\begin{equation*}
    \frac{\int_\cX p^{\dagger}(x\given z)W(x,z)U(x,z)dx}{\int_\cX p^{\dagger}(x\given z)W(x,z)dx}-\frac{\int_\cX p^*(x\given z)W(x,z)U(x,z)dx}{\int_\cX p^*(x\given z)W(x,z)dx}=\kappa,
\end{equation*}
for all $z\in B$ and
\begin{equation*}
    \int_{B}\int_{\cX}|p^{\dagger}(x\given z)-p^*(x\given z)|dxdz \leq \zeta.
\end{equation*}
Let ${v}:=\int_{B}dz=\Vol(B)$ and let $z_0$ be the centre of the ball $B$. For each $m\in\mathbb{N}$ we construct a class of distributions on $(X,Z)$ parametrised by some $(m,c)_{m\in\mathbb{N}, c\in\{0,1\}^{\mathbb{N}}}$. 
In the following, we fix an $m\in\mathbb{N}$ and where clear omit the dependence on $m$. We will first define a marginal distribution on $Z$ given in terms of some $q_m\in\R_+^m$ that satisfies $\sum_{k=1}^m q_{m,k}=\frac{{v}}{2}$.
Given an arbitrary $q\in\R_+^{m}$ satisfying $\sum_{k=1}^{m}q_k=\frac{{v}}{2}$ we fix a partition $(S_k, S'_k)_{k\in [m]}$ of $B$, defined as follows. For $k \in [m]$, define $r_k>r'_k>0$ to be such that
\begin{align*}
\Vol B_{r_k}(z_0) = 2\sum_{l=k}^m q_l \qquad \text{and} \qquad \Vol B_{r'_k}(z_0) = q_k + 2\sum_{l=k+1}^m q_l
\end{align*}
and set $r_{m+1} :=0$.
Note that then $B_{r_1}(z_0) =B$.
Next define for $k \in [m]$ the (thick) shells
\begin{gather*}
	S'_k := B_{r_k}(z_0)\backslash B_{r'_{k}}(z_0)
	\qquad \text{and} \qquad
	S_k := B_{r'_k}(z_0)\backslash B_{r_{k+1}}(z_0).
\end{gather*}
Note then $(S_k, S'_k)_{k\in [m]}$ forms a partition of $B$ with $\Vol S_k = \Vol S'_k=q_k$.
Also define $S:=\cup_{k \in[m]} S_k$. Writing $\beta:={v}^{-1}\int_{B}p^*(z)dz$ and \[
\delta:=\frac{\alpha}{2\beta{v}}\wedge\frac{1}{2},
\] we then define the marginal density
\begin{equation}\label{eq:p_0(z)}
    p_0(z) := p^*(z)\ind_{\cZ\backslash B}(z) + \big((1-\delta)p^*(z)+\delta \beta\big)\ind_{B}(z).
\end{equation}
Now fix some $c\in\{0,1\}^{m}$ and define the conditional density
\begin{gather}
    p_0(x\given z) := p^*(x\given z) + (p^{\dagger}-p^*)(x\given z) H(z), \label{eq:p_0(x|z)}
\end{gather}
where $p^{\dagger}$ is as defined in Lemma~\ref{lem:theta+c} and
\begin{gather*}
    H(z) :=
    \begin{cases}
        0 &\;\text{if $z\in\cZ\backslash B$}
        \\
        c_k &\;\text{if $z\in S_k$}
        \\
        c_k &\;\text{if $z\in S_k'$ and $c_k=c_{k-1}$}
        \\
        \Big(1+\exp\Big(\frac{r_k+r'_k-2\|z-z_0\|}{(\|z-z_0\|-r'_k)(r_k-\|z-z_0\|)}\cdot(c_{k-1}-c_k)\Big)\Big)^{-1} &\;\text{if $z\in S_k'$ and $c_k\neq c_{k-1}$} 
    \end{cases}, \notag
\end{gather*}
where additionally we take $c_0:=0$. 
Together this defines a joint density $p_0(x,z) := p_0(x\given z)p_0(z)$, which we claim satisfies the following properties:
\begin{enumerate}[label=(\roman*)]
    \item\label{item:TV} $\frac{1}{2}\int_{\cX\times\cZ}|p_0(x,z)-p^*(x,z)|d(x,z)\leq\alpha$.
    \item\label{item:equicontinuous} The functions $z \mapsto p_0(x\given z)$ for $x\in\cX$ are equicontinuous.
    \item\label{item:boundedness} $p_0(z)$ and $p_0(x\given z)$ are both bounded away from zero and infinity. 
    \item\label{item:iota+kappa} 
    \begin{equation*}
        \frac{\int_{\cX}p_0(x\given z)W(x,z)U(x,z)dx}{\int_{\cX}p_0(x\given z)W(x,z)dx} = \iota(z) + \kappa c_k,
    \end{equation*}
    for any $k\in[m]$ and $z\in  S_k$, and where $\kappa>0$ is as defined in Lemma~\ref{lem:theta+c}.
    \item\label{item:m_j} $\int_{\cX}p_0(x\given z)M_j(x)dx=m_j(z)$ for all $z\in\cZ$.
\end{enumerate}
We verify each of these claims in turn.

To show~\ref{item:TV}, first note that
\begin{align*}
p_0(x,z)-p^*(x,z) &= \{p_0(x\given z)p_0(z)-p^*(x\given z)p^*(z)\}\ind_B(z) \\
&=  [p_0(x\given z)\{(1-\delta)p^*(z) + \delta \beta \}-p^*(x\given z)p^*(z)]\ind_B(z)\\
&=  [(1-\delta)p^*(z) \{p_0(x\given z) - p^*(x \given z)\}  - \delta p^*(x, z) +\delta \beta p_0(x \given z)]\ind_B(z).
\end{align*}
 Thus using Lemma~\ref{lem:theta+c}, we have
 \begin{align*}
     2	\text{TV}(P_0,P^*) &= \int_{\cX\times\cZ}|p_0(x,z)-p^*(x,z)|\, d(x,z)
 	\\
 	&\leq (1-\delta) \int_B p^*(z) \int_{\mathcal{X}} |p_0(x \given z) - p^*(x \given z)| \,dx \,dz 
  \\
  &\qquad\qquad\qquad + \delta \int_{B}p^*(z)\underbrace{\int_{\cX}p^*(x\given z)\,dx}_{=1}\,dz + \delta \beta {v} \\
 	&\leq (1-\delta)\gamma\underbrace{\int_B\int_{\mathcal{X}} |p_0(x \given z) - p^*(x \given z)| \,dx \,dz}_{\leq\zeta\text{ (see Lemma~\ref{lem:theta+c})}}
    + \delta \underbrace{\int_{B}p^*(z)\,dz}_{=\beta{v}} + \delta \beta {v}
    \\
    &\leq (1-\delta)\gamma\zeta + 2\beta{v}\delta
    \\
    &\leq \gamma\zeta + 2\beta{v}\delta
    \\
    &\leq 2\alpha,
 \end{align*}
with the last inequality following by definition of the constants $\zeta:=\gamma^{-1}\alpha$ and $\delta := \frac{\alpha}{2\beta{v}}\wedge\frac{1}{2}$ above, repeated here for convenience.

Claim~\ref{item:equicontinuous} follows trivially by construction of~\eqref{eq:p_0(x|z)}, noting that function $$r\mapsto \bigg(1+\exp\bigg(\frac{l_1+l_2-2r}{(r-l_1)(l_2-r)}\bigg)\bigg)^{-1}\ind_{(l_1,l_2)}(r) + \ind_{[l_2,\infty)}(r),$$
for any $0<l_1<l_2$, is continuous.

Claim~\ref{item:boundedness} also follows by construction, as $p^*(x\given z)$ and $p^{\dagger}(x\given z)$ are both bounded away from zero and infinity and $H(z)$ is non-negative and bounded by 1.

Claim~\ref{item:iota+kappa} follows as for any $k\in[m]$ and $z\in S_k$ we have $H(z)=c_k$ and so $p_0(x\given z)=p^*(x\given z)\ind_{\{c_k=0\}} + p^{\dagger}(x\given z)\ind_{\{c_k=1\}}$ and thus alongside Lemma~\ref{lem:theta+c}(i),
\begin{equation*}
    \frac{\int_{\cX}p_0(x\given z)W(x,z)U(x,z)dx}{\int_{\cX}p_0(x\given z)W(x,z)dx}
    =
    \iota(z) + \kappa c_k.
\end{equation*}

To prove~\ref{item:m_j}, note that in the proof of Lemma~\ref{lem:theta+c} we show that
\begin{equation*}
    \int_{\cX}(p^{\dagger}- p^* )(x\given z) M_j(x) dx=0,
\end{equation*}
for any $z\in B$. Noting also that $\int_{\cX}p^*(x\given z)M_j(x)dx=m_j(z)$, it then follows
\begin{equation*}
    \int_{\cX}p_0(x\given z)M_j(x)dx
    =
    \int_{\cX}p^*(x\given z)M_j(x)dx
    +
    H(z)\int_{\cX}(p^{\dagger}-p^*)(x\given z)M_j(x)dx = m_j(z),
\end{equation*}
thus proving~\ref{item:m_j}.

We now introduce some additional notation. We will denote by $\nu^{(m,q)}$ the marginal distribution with density $p_0(z)$ as in~\eqref{eq:p_0(z)} and by $\lambda^{(m,c)}$ the conditional distribution with density $p_0(x\given z)$ as in~\eqref{eq:p_0(x|z)}. Together $\nu^{(m,q)}$ and $\lambda^{(m,c)}$ define a collection of joint distributions $\mu^{(m,c,q)}$ for $(X,Z)$, parametrised by $(m,c,q)$. 
For any such $(m,c,q)$ with $\sum_{k=1}^{m}q_{m,k}=\frac{{v}}{2}$ the distribution $\mu^{(m,c,q)}\in\cP_{XZ}$, due to~\ref{item:equicontinuous}, \ref{item:boundedness} and \ref{item:m_j}, and moreover $\text{TV}(\mu^{(m,c,q)},P^*)\leq\alpha$ by~\ref{item:TV}.

For each $m\in\mathbb{N}$ we now 
explicitly define a sequence $q_m = (q_{m,k})_{k\in[m]}$ with $\sum_{k=1}^m q_{m,k}=\frac{{v}}{2}$ as follows. Define the constant $\tau:=8^{-1}\underline{\gamma}_W^2\kappa^2\delta \beta$. Without loss of generality, we may assume that the sequence $(a_n)_{n\in\mathbb{N}}$ is decreasing and satisfies $\tau^{-1} \geq a_1$. Note that by Lemma~\ref{lem:seq}, for any positive non-increasing sequence $(a_n)_{n\in\mathbb{N}}$ that converges to zero such that $a_1\leq \tau^{-1}$ there exists a sequence $(p_k)_{k\in\mathbb{N}}$ satisfying $p_k\geq0$ for all $k\in\mathbb{N}$ with $\sum_{k=1}^{\infty} p_k=\frac{{v}}{2}$ and
    \begin{equation*}
        \frac{\underline{\gamma}_W^2\kappa^2\delta \beta}{8} \sum_{k=1}^{\infty} p_k\Big(1-\frac{2p_k}{{v}}\Big)^n \geq a_n,
    \end{equation*}
    for all $n\in\mathbb{N}$. Then define $(q_{m,k})_{m\in\mathbb{N},k\in[m]}$ by
    \begin{equation*}
        q_{m,k} := \frac{{v}}{2\sum_{k'=1}^m p_{k'}}p_k.
    \end{equation*}
    Note then by construction, $\sum_{k=1}^m q_{m,k}=\frac{{v}}{2}$ for all $m\in\mathbb{N}$, and
\begin{equation}\label{eq:q}
    \sup_{m\in\mathbb{N}}\frac{\tau\sum_{k=1}^m q_{m,k}(1-2{v}^{-1}q_{m,k})^n}{a_n}\geq1,
\end{equation}
for all $n\in\mathbb{N}$.

From hereon we consider the sequences $(q_{m,k})_{k\in[m]}$ defined above~\eqref{eq:q} as fixed. 
Then,
\begin{multline*}
    \limsup_{n\to\infty}\sup_{P\in\cP_{XZ}: \text{TV}(P,P^*)\leq\alpha}\frac{\E_P\Big[\big(\hat{\phi}_n(X,Z)-\phi_P(X,Z)\big)^2\Big]}{a_n}
    \\
    \geq
    \limsup_{n\to\infty}\sup_{m\in\mathbb{N}}\sup_{c\in\{0,1\}^\mathbb{N}}\frac{\E_{\mu^{(m,c,q)}}\Big[\big(\hat{\phi}_n(X,Z)-\phi_{\mu^{(m,c,q)}}(X,Z)\big)^2\Big]}{\tau\sum_{k=1}^m q_{m,k}(1-2{v}^{-1}q_{m,k})^n}
    \cdot
    \frac{\tau\sum_{k=1}^m q_{m,k}(1-2{v}^{-1}q_{m,k})^n}{a_n},
\end{multline*}
noting that $\text{TV}(\mu^{(m,c,q)},P^*)\leq\alpha$. 
We will apply Lemma~\ref{lem:limsup} with
\begin{equation*}
    x_{n,m} = \sup_{c\in\{0,1\}^{\mathbb{N}}}\frac{\E_{\mu^{(m,c,q)}}\Big[\big(\hat{\phi}_n(X,Z)-\phi_{\mu^{(m,c,q)}}(X,Z)\big)^2\Big]}{\tau\sum_{k=1}^m q_{m,k}(1-2{v}^{-1}q_{m,k})^n},
    \,
    y_{n,m} = \frac{\tau\sum_{k=1}^m q_{m,k}(1-2{v}^{-1}q_{m,k})^n}{a_n}.
\end{equation*}
As~\eqref{eq:q} has been proved, to prove Theorem~\ref{thm:slow-rates} via Lemma~\ref{lem:limsup} it suffices to show the following: 
for any subsequence $(m_n)_{n\in\mathbb{N}}$ of the natural numbers
\begin{equation}\label{eq:c}
    \limsup_{n\to\infty}\sup_{c\in\{0,1\}^{\mathbb{N}}}\frac{\E_{\mu^{(m_n,c,q)}}\Big[\big(\hat{\phi}_n(X,Z)-\phi_{\mu^{(m_n,c,q)}}(X,Z)\big)^2\Big]}{\tau\sum_{k=1}^{m_n}q_{m_n,k}(1-2{v}^{-1}q_{m_n,k})^n}\geq1.
\end{equation}
The remainder of the proof verifies the claim~\eqref{eq:c}. Fix a subsequence $(m_n)_{n\in\mathbb{N}}$ of the natural numbers. We will show that there exists some $c\in\{0,1\}^{\mathbb{N}}$ such that
\begin{equation}\label{eq:c-sub}
    \limsup_{n\to\infty}\frac{\E_{\mu^{(m_n,c,q)}}\Big[\big(\hat{\phi}_n(X,Z)-\phi_{\mu^{(m_n,c,q)}}(X,Z)\big)^2\Big]}{\tau\sum_{k=1}^{m_n}q_{m_n,k}(1-2{v}^{-1}q_{m_n,k})^n} \geq 1,
\end{equation}
as then~\eqref{eq:c} follows as
\begin{multline*}
    \limsup_{n\to\infty}\sup_{c\in\{0,1\}^{\mathbb{N}}}\frac{\E_{\mu^{(m_n,c,q)}}\Big[\big(\hat{\phi}_n(X,Z)-\phi_{\mu^{(m_n,c,q)}}(X,Z)\big)^2\Big]}{\tau\sum_{k=1}^{m_n}q_{m_n,k}(1-2{v}^{-1}q_{m_n,k})^n} 
    \\
    \geq
    \sup_{c\in\{0,1\}^{\mathbb{N}}}\limsup_{n\to\infty}\frac{\E_{\mu^{(m_n,c,q)}}\Big[\big(\hat{\phi}_n(X,Z)-\phi_{\mu^{(m_n,c,q)}}(X,Z)\big)^2\Big]}{\tau\sum_{k=1}^{m_n}q_{m_n,k}(1-2{v}^{-1}q_{m_n,k})^n} \geq 1.
\end{multline*}
We now proceed to prove~\eqref{eq:c-sub}.

Consider an arbitrary $c\in\{0,1\}^{\mathbb{N}}$. Let $D_n :=\{(X_1,Z_1),\ldots,(X_n,Z_n)\}$ denote $n$ i.i.d.~copies of data generated from the distribution $\mu^{(m_n,c,q)}$, and let $\mu_n$ be the joint distribution of $D_n$. From hereon, we will omit the dependence on $(m_n,q)$ in the laws $\nu^{(m_n,q)}$, $\lambda^{(m_n,c)}$ and $\mu^{(m_n,c,q)}$, denoting them by $\nu, \lambda^{(c)}$ and $\mu^{(c)}$ respectively (recall here we are considering $q$ and $(m_n)_{n\in\mathbb{N}}$ as fixed). We also omit the dependence on $c$ in the joint law $\mu_n$, and omit the dependence on $m_n$ in $q_{m_n,k}$, writing $q_k$ instead where clear. We will use the shorthand $\E^\star$ for the expectation conditional on $D_n$, and similarly $\PP^\star$ and $\Var^\star$. 
Consider an arbitrary sequence of estimators $(\hat{\phi}_n)_{n\in\mathbb{N}}$ where $\hat{\phi}_n:\cX\times\cZ\to\R$ is trained on $D_n$. 
Also recall that
\begin{equation*}
    \phi_{\lambda^{(c)}}(x,z) = W(x,z)\bigg(U(x,z) - \frac{\E_{\lambda^{(c)}}[W(X,Z)U(X,Z)\given Z=z]}{\E_{\lambda^{(c)}}[W(X,Z)\given Z=z]}\bigg).
\end{equation*}

Given $(\hat{\phi}_n)_{n\in\mathbb{N}}$ define the sequences of functions $(\tilde{\varphi}_n)_{n\in\mathbb{N}}$ and $(\hat{\varphi}_n^{(c)})_{n\in\mathbb{N}}$ by
    \begin{equation*}
        \tilde{\varphi}_n(x,z) := U(x,z) - \frac{\hat{\phi}_n(x,z)}{W(x,z)},
        \quad
        \text{and}
        \quad
        \hat{\varphi}_n^{(c)}(z) := \E^\star_{\lambda^{(c)}}\left[\tilde{\varphi}_n(X,Z)\given Z=z\right],
    \end{equation*}
    for each $n\in\mathbb{N}$. Note we denote the dependence on $D_n$ in such functions through the $n$ subscript. We also define
    \begin{equation*}
        \varphi^{(c)}(z) := \frac{\E_{\lambda^{(c)}}\left[W(X,Z)U(X,Z)\given Z=z\right]}{\E_{\lambda^{(c)}}\left[W(X,Z)\given Z=z\right]}.
    \end{equation*}
    Then
    \begin{align*}
        \E^\star_{\mu^{(c)}}\left[\big(\hat{\phi}_n(X,Z)-\phi_{\lambda^{(c)}}(X,Z)\big)^2\right] &= \E^\star_{\mu^{(c)}}\left[W(X,Z)^2\big( \tilde{\varphi}_n(X,Z) - \varphi^{(c)}(Z) \big)^2\right]
        \\
        & \geq \underline{\gamma}_W^2 \, \E^\star_{\mu^{(c)}}\left[\big(\tilde{\varphi}_n(X,Z)-\varphi^{(c)}(Z)\big)^2\right]
        \\
        & \geq \underline{\gamma}_W^2 \, \E^\star_{\nu}\left[\big(\hat{\varphi}_n^{(c)}(Z)-\varphi^{(c)}(Z)\big)^2\right].
    \end{align*}
    Note the penultimate inequality holds as
    \begin{align*}
        & \E^\star_{\mu^{(c)}}\left[\big(\tilde{\varphi}_n(X,Z)-\varphi^{(c)}(Z)\big)^2\right] - \E^\star_{\nu}\left[\big(\hat{\varphi}_n^{(c)}(Z)-\varphi^{(c)}(Z)\big)^2\right]
        \\
        =\,&
        \E^\star_{\mu^{(c)}}\left[\big(\tilde{\varphi}_n(X,Z)-\hat{\varphi}_n^{(c)}(Z)\big)^2\right] + \underbrace{2\E^\star_{\nu}\left[\E^\star_{\lambda^{(c)}}\big[\tilde{\varphi}_n(X,Z)-\hat{\varphi}_n(Z)\given Z\big]\big(\varphi_n^{(c)}(Z)-\hat{\varphi}^{(c)}(Z)\big)\right]}_{=0}
        \\
        =\,& \E^\star_{\nu}\big[\Var^\star_{\lambda^{(c)}}\big(\tilde{\varphi}_n(X,Z)\given Z\big)\big]
        \\
        \geq\,& 0.
    \end{align*}
    Further
    \begin{align*}
        \E^\star_{\nu}\left[\big(\varphi^{(c)}(Z)-\hat{\varphi}_n^{(c)}(Z)\big)^2\right]
        \geq\,& \E^\star_{\nu}\left[\big(\varphi^{(c)}(Z)-\hat{\varphi}_n^{(c)}(Z)\big)^2 \ind_{S}(Z) \right] \\
        =\,& \int_{S}\big(\varphi^{(c)}(z)-\hat{\varphi}_n^{(c)}(z)\big)^2\nu(dz)
        \\
        =\,& \sum_{k=1}^{m_n} \int_{ S_k}\big(\varphi^{(c)}(z)-\hat{\varphi}_n^{(c)}(z)\big)^2\nu(dz).
    \end{align*}

    Now define $\bar{c}_n\in\{0,1\}^{m_n}$ by
    \begin{equation*}
        \bar{c}_{n,k} = \begin{cases}
            1 & \text{if } \frac{1}{q_k}\int_{ S_k} \big(\hat{\varphi}_n^{(c)}(z)-\iota(z)\big)\nu(dz) \geq \frac{\kappa}{2}
            \\
            0 & \text{otherwise}
        \end{cases}
        \qquad
        (k\in[m_n])
        .
    \end{equation*}
    Then,
    \begin{align*}
        &\E^\star_{\nu}\Big[\big(\varphi^{(c)}(Z)-\hat{\varphi}_n^{(c)}(Z)\big)^2\Big]
        \\
        \geq& \sum_{k=1}^{m_n} \int_{ S_k}\big(\varphi^{(c)}(z)-\hat{\varphi}_n^{(c)}(z)\big)^2\nu(dz),
        \\
        \geq& \sum_{k=1}^{m_n}
        \ind_{(c_k\neq\bar{c}_{n,k})}\int_{ S_k}\big(\varphi^{(c)}(z)-\hat{\varphi}_n^{(c)}(z)\big)^2 \nu(dz)
        \\
        =& \sum_{k=1}^{m_n}
        \ind_{(c_k\neq\bar{c}_{n,k})}\int_{ S_k} \big(\iota(z)-\hat{\varphi}_n^{(c)} + \kappa c_k\big)^2 \nu(dz)
        \\
        =& \sum_{k=1}^{m_n}
        \bigg(\underbrace{\ind_{(c_k\neq\bar{c}_{n,k})}\int_{ S_k} \Big(\iota(z)-\hat{\varphi}_n^{(c)} + \frac{1}{2}\Big)^2 \nu(dz)}_{\geq 0}
        + \ind_{(c_k\neq\bar{c}_{n,k})} \kappa^2 \underbrace{\Big(c_k-\frac{1}{2}\Big)^2}_{=\frac{1}{4}} \int_{ S_k} \nu(dz)
        \\
        &\qquad
        + \underbrace{\ind_{(c_k\neq\bar{c}_{n,k})}\Big(c_k-\frac{1}{2}\Big)\int_{ S_k}\Big(\iota(z)-\hat{\varphi}_n^{(c)}(z)+\frac{1}{2}\Big) \nu(dz)}_{\geq 0}\bigg)
        \\
        \geq&
        \frac{\kappa^2}{4}\sum_{k=1}^{m_n}\ind_{(c_k\neq\bar{c}_{n,k})}\int_{ S_k} \big((1-\delta)p^*(z)+\delta \beta\big)dz
        \\
        \geq&
        \frac{\kappa^2\delta \beta}{4}\sum_{k=1}^{m_n}\ind_{(c_k\neq\bar{c}_{n,k})}\int_{ S_k}dz
        \\
        =& \frac{\kappa^2\delta \beta}{4}\sum_{k=1}^{m_n}q_k \ind_{(c_k\neq \bar{c}_{n,k})},
    \end{align*}
    where in the sixth line we make use of
    \begin{multline*}
        \ind_{(c_k\neq\bar{c}_{n,k})}\Big(c_k-\frac{1}{2}\Big)\int_{ S_k}\Big(\iota(z)-\hat{\varphi}_n^{(c)}(z)+\frac{1}{2}\Big) \nu(dz)
        \\
        =
        \frac{1}{2}\ind_{(c_k\neq\bar{c}_{n,k})}\bigg|\int_{ S_k}\big(\iota(z)-\hat{\varphi}_n^{(c)}(z)\big) \nu(dz) - \frac{q_k}{2}\bigg|
        \geq
        0
        .
    \end{multline*}
    Therefore
    \begin{equation*}
        \E^\star_{\nu}\Big[\big(\varphi^{(c)}(Z)-\hat{\varphi}_n^{(c)}(Z)\big)^2\Big]
        \geq
        \frac{\kappa^2\delta \beta}{4}\sum_{k=1}^{m_n}q_k\ind_{(c_k\neq\bar{c}_{n,k})}
        \geq 
        \frac{\kappa^2\delta \beta}{4}\sum_{k=1}^{m_n}q_k\ind_{(c_k\neq\bar{c}_{n,k}, \Omega_{n,k})},
    \end{equation*}
    for the event $\Omega_{n,k} := \{Z_i\not\in  S_k \,(\forall i\in[n])\}$, and so
    \begin{equation*}
        \E_{\mu_{n}, \nu}\left[\big(\varphi^{(c)}(Z)-\hat{\varphi}_n^{(c)}(Z)\big)^2\right] \geq \frac{\kappa^2\delta \beta}{4}\sum_{k=1}^{m_n} q_k\PP_{\mu_n,\nu}\left(c_k\neq\bar{c}_{n,k}, \, \Omega_{n,k}\right).
    \end{equation*}
    Now for each $n\in\mathbb{N}$ define the function $R_n:\{0,1\}^{m_n}\to\R_{\geq0}$ by
    \begin{equation*}
        R_n(c) := \sum_{k=1}^{m_n} q_k \PP_{\mu_n,\nu}\left(c_k\neq\bar{c}_{n,k}, \, \Omega_{n,k} \right).
    \end{equation*}
    Note therefore that
    \begin{equation*}
        R_n(c) \leq \sum_{k=1}^{m_n} q_k \PP_{\mu_n}(\Omega_{n,k}) = \sum_{k=1}^{m_n} q_k\Big(1-\frac{2 q_k}{{v}}\Big)^n.
    \end{equation*}
    Let $(C_k)_{k\in[m]}$ be a sequence of i.i.d.~Bernoulli random variables independent of $D_n$ with $\PP(C_1=1)=\PP(C_1=0)=\frac{1}{2}$. Then
        \begin{align*}
        \E\left[R_n(C)\right]
        & = \sum_{k=1}^{m_n} q_k \E\left[\PP\left(C_k\neq\bar{C}_{n,k}, \Omega_{n,k} \given Z_1,\ldots,Z_n\right)\right]
        \\
        & = \sum_{k=1}^{m_n} q_k \E\left[\ind_{\Omega_{n,k}}\PP\left(C_k\neq \bar{C}_{n,k} \given Z_1,\ldots,Z_n\right)\right]
        \\
        & = \frac{1}{2}\sum_{k=1}^{m_n} q_k\PP\left(\Omega_{n,k}\right),
        \\
        & = \frac{1}{2} \sum_{k=1}^{m_n} q_k\Big(1-\frac{2 q_k}{{v}}\Big)^n,
    \end{align*}
    noting that $C_k\,\independent\,\bar{C}_{n,k}\given Z_1,\ldots,Z_n$ on the event $\Omega_{n,k}$ for all $k\in[m_n]$. Thus
    \begin{equation*}
        \frac{R_n(c)}{\E\left[R_n(C)\right]} \leq 2.
    \end{equation*}
    We can therefore apply Fatou's Lemma; 
    \begin{equation*}
        \E\left[\limsup_{n\to\infty}\frac{R_n(C)}{\E\left[R_n(C)\right]}\right] \geq \limsup_{n\to\infty}\E\left[\frac{R_n(C)}{\E\left[R_n(C)\right]}\right] = 1.
    \end{equation*}
    Recalling that $R_n$ is a non-negative function, there exists some $c\in\{0,1\}^{\mathbb{N}}$ such that
    \begin{equation*}
        \limsup_{n\to\infty}\frac{R_n(c)}{\E\left[R_n(C)\right]} \geq 1.
    \end{equation*}
    Thus for any subsequence $(m_n)_{n\in\mathbb{N}}$ of the natural numbers
    \begin{multline*}
        \limsup_{n\to\infty}\sup_{c\in\{0,1\}^{\mathbb{N}}}\frac{\E_{\mu^{(m_n,c,q)}}\Big[\big(\hat{\phi}_n(X,Z)-\phi_{\mu^{(m_n,c,q)}}(X,Z)\big)^2\Big]}{\tau\sum_{k=1}^{m_n}q_{m_n,k}(1-2{v}^{-1}q_{m_n,k})^n} 
        \\
        \geq
        \sup_{c\in\{0,1\}^{\mathbb{N}}}\limsup_{n\to\infty}\frac{\E_{\mu^{(m_n,c,q)}}\Big[\big(\hat{\phi}_n(X,Z)-\phi_{\mu^{(m_n,c,q)}}(X,Z)\big)^2\Big]}{\tau\sum_{k=1}^{m_n}q_{m_n,k}(1-2{v}^{-1}q_{m_n,k})^n} \geq 1.
    \end{multline*}
    Alongside~\eqref{eq:q} i.e.
    \begin{equation*}
        \sup_{m\in\mathbb{N}}\frac{\tau\sum_{k=1}^m q_{m,k}(1-2{v}^{-1}q_{m,k})^n}{a_n}\geq1,
    \end{equation*}
    for all $n\in\mathbb{N}$, and applying Lemma~\ref{lem:limsup} completes the proof;
    \begin{align*}
        &\limsup_{n\to\infty}\sup_{P\in\cP_{XZ}: \text{TV}(P,P^*)\leq\alpha}\frac{\E_P\Big[\big(\hat{\phi}_n(X,Z)-\phi_P(X,Z)\big)^2\Big]}{a_n}
        \\
        \geq\,&
        \limsup_{n\to\infty}\sup_{m\in\mathbb{N}}\sup_{c\in\{0,1\}^\mathbb{N}}\frac{\E_{\mu^{(m,c,q)}}\Big[\big(\hat{\phi}_n(X,Z)-\phi_{\mu^{(m,c,q)}}(X,Z)\big)^2\Big]}{\tau\sum_{k=1}^m q_{m,k}(1-2{v}^{-1}q_{m,k})^n}
        \cdot
        \frac{\tau\sum_{k=1}^m q_{m,k}(1-2{v}^{-1}q_{m,k})^n}{a_n}
        \\
        \geq\,&1.
    \end{align*}

\end{proof}

\subsection{Additional lemmas for the proof of Theorem~\ref{thm:slow-rates}}

\begin{lemma}\label{lem:theta+c}
    Adopt the setup of Theorem~\ref{thm:slow-rates} where additionally we assume the functions $M_1,\ldots,M_J$ and $\cdot\mapsto1$ are linearly independent. Given any $\zeta>0$ there exists a conditional density $p^{\dagger}(x\given z)$ on $X\given Z$, a non-empty open ball $B\subset\cZ$, and a constant $\kappa>0$, such that the following hold:
    \begin{enumerate}[label=(\roman*)]
    \item\label{lemitem:kappa} \begin{equation*}\frac{\int_\cX p^{\dagger}(x\given z)W(x,z)U(x,z)dx}{\int_\cX p^{\dagger}(x\given z)W(x,z)dx}-\frac{\int_\cX p^*(x\given z)W(x,z)U(x,z)dx}{\int_\cX p^*(x\given z)W(x,z)dx}=\kappa,\end{equation*}
    for all $z\in B$.
    \item\label{lemitem:m_j} $\int_\cX M_j(x)p^{\dagger}(x\given z)dx=\int_\cX M_j(x)p^*(x\given z)dx$ for all $z\in\cZ, j\in[J]$.
    \item\label{lemitem:cts}  $0<\inf_{x \in \mathcal{X}, z \in \mathcal{Z}}p^{\dagger}(x|z) \leq \sup_{x \in \mathcal{X}, z \in \mathcal{Z}}p^{\dagger}(x|z) < \infty$ and the functions $\{z \mapsto p^{\dagger}(x | z) : x \in \mathcal{X}\}$ restricted to the domain $B$ are equicontinuous.
    \item\label{lemitem:zeta} $\int_B\int_{\cX}|p^{\dagger}(x\given z)-p^*(x\given z)|dxdz\leq\zeta$.
    \end{enumerate}
\end{lemma}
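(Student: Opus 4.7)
I will construct $p^{\dagger}$ as a local multiplicative perturbation of $p^*$ on a small open ball $B\subset\cZ$: set $p^{\dagger}(x|z)=p^*(x|z)\bigl(1+\lambda(z)h(x,z)\bigr)$ for $z\in B$ and $p^{\dagger}(x|z)=p^*(x|z)$ otherwise. The direction $h(\cdot,z)$ will be $L^2(p^*(\cdot|z)dx)$-orthogonal to $\{1,M_1,\dots,M_J\}$, which automatically preserves the normalisation and the conditional moments of $M_j$ (so (ii) is immediate); the scalar $\lambda(z)$ will be tuned so that the weighted-ratio shift in (i) is exactly $\kappa$. Properties (iii) and (iv) will follow by taking $\kappa$ and the radius of $B$ small enough.

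\textbf{Constructing the perturbation direction.} At a point $z^*\in\cZ$ supplied by the hypothesis of Theorem~\ref{thm:slow-rates}, project $\phi_{P^*}(\cdot,z^*)$ in $L^2(p^*(\cdot|z^*)dx)$ onto the orthogonal complement of $\mathrm{span}\{1,M_1,\dots,M_J\}$, and call the result $h^*$; by construction $\langle h^*,1\rangle_{z^*}=\langle h^*,M_j\rangle_{z^*}=0$ and $\langle h^*,\phi_{P^*}(\cdot,z^*)\rangle_{z^*}=\|h^*\|_{z^*}^2$, with the positivity $\|h^*\|_{z^*}^2>0$ being the step that directly invokes the hypothesis. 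To promote $h^*$ to a direction defined on a neighbourhood of $z^*$, define
\begin{equation*}
h(x,z) := h^*(x)-a_0(z)-\sum_{j=1}^J a_j(z)M_j(x),
\end{equation*}
where $(a_0(z),\dots,a_J(z))$ uniquely solves the linear system enforcing $\langle h(\cdot,z),1\rangle_z=\langle h(\cdot,z),M_k\rangle_z=0$ in $L^2(p^*(\cdot|z)dx)$; the relevant Gram matrix is invertible uniformly in $z$ by the WLOG linear independence of $\{1,M_1,\dots,M_J\}$, and the equicontinuity of $\{p^*(\cdot|z)\}_{z}$ makes $z\mapsto(a_0(z),\dots,a_J(z))$ continuous. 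Consequently $N(z):=\int\phi_{P^*}(x,z)h(x,z)p^*(x|z)dx$ is continuous in $z$ and remains strictly positive on some open ball $B$ around $z^*$.

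\textbf{Tuning $\lambda$ and verifying (i)--(iv).} A short algebraic manipulation of the ratios gives $\iota^{\dagger}(z)-\iota(z)=\lambda(z)N(z)/\bigl(A(z)+\lambda(z)E(z)\bigr)$, where $A(z):=\int Wp^*\,dx$ and $E(z):=\int Whp^*\,dx$. Setting this equal to $\kappa$ yields the explicit $\lambda(z)=\kappa A(z)/\bigl(N(z)-\kappa E(z)\bigr)$, which for sufficiently small $\kappa>0$ is bounded, continuous, and proportional to $\kappa$ on $B$. Property (i) then holds by construction and (ii) by the orthogonality of $h$ to $M_j$; (iii) holds because, for small $\kappa$, $1+\lambda(z)h(x,z)$ stays bounded away from $0$ and $\infty$ (using the boundedness of $h$ and $\lambda$), while equicontinuity of $p^{\dagger}(\cdot|\cdot)$ on $B$ is inherited from that of $p^*$ together with continuity of the $a_i$ and $\lambda$; (iv) follows from $\int_B\int_{\cX}|p^{\dagger}-p^*|\,dx\,dz\le|B|\sup_B|\lambda|\sup_B\int|h|p^*\,dx\lesssim|B|\kappa$, which is driven below $\zeta$ by taking $\kappa$ small. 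The principal obstacle is the very first move---translating the hypothesis, which is phrased in terms of $\phi_{P^*}$ not lying in a cone of admissible robust influence functions indexed by positive weight functions, into the pointwise $L^2$-residual statement $\|h^*\|_{z^*}^2>0$ at some point $z^*$. A secondary technical nuisance, which we handle by truncating $h^*$ at a large level and then re-orthogonalising against $\{1,M_1,\dots,M_J\}$, is to keep $h^*$ bounded when $M_j$ is unbounded on $\cX$ so that $p^{\dagger}$ remains uniformly bounded above and below.
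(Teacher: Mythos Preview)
Your overall architecture---perturb in a direction orthogonal to $\{1,M_1,\ldots,M_J\}$ so that (ii) is automatic, then tune a scalar to hit (i) exactly---matches the paper's, but the implementation is genuinely different: you use a \emph{multiplicative} perturbation $p^{\dagger}=p^*(1+\lambda h)$ with $h$ obtained via an $L^2\bigl(p^*(\cdot\mid z)\,dx\bigr)$-projection, whereas the paper uses an \emph{additive} perturbation whose direction is a fixed linear combination of $J+2$ compactly supported bump functions $k_i$ centred at carefully chosen points $x_1,\ldots,x_{J+2}$, with coefficients determined by a finite $(J+1)\times(J+2)$ linear system enforcing $\int\delta\,dx=\int M_j\,\delta\,dx=0$ in $L^2(dx)$.

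There is, however, a real gap in your scheme when the $M_j$ are unbounded. Two places break. First, your Gram-matrix construction of $(a_0(z),\ldots,a_J(z))$ requires $M_j\in L^2\bigl(p^*(\cdot\mid z)\,dx\bigr)$, but the theorem only assumes existence of $m_{P,j}$, i.e.\ $M_j\in L^1$. Second, and more seriously, the extension $h(x,z)=h^*(x)-a_0(z)-\sum_j a_j(z)M_j(x)$ is unbounded in $x$ whenever some $M_j$ is, because for $z\neq z^*$ the correction coefficients $a_j(z)$ are generically nonzero; this destroys both the positivity of $p^{\dagger}=p^*(1+\lambda h)$ and the equicontinuity required in (iii). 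Your proposed fix---truncate $h^*$ and re-orthogonalise---does not help: the re-orthogonalisation subtracts a combination of the $M_j$ and so reintroduces the unboundedness you just removed. The root cause is the multiplicative form: since the moment constraints $\int M_j\,p^*(x\mid z)h(x,z)\,dx=0$ are taken in a $z$-dependent weighted inner product, enforcing them throughout $B$ forces $h$ to carry $z$-dependent multiples of $M_j$.

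The paper sidesteps both issues by working additively with a \emph{$z$-independent}, compactly supported perturbation direction: the moment constraints then read $\int M_j\,\delta\,dx=0$ in plain $L^2(dx)$ and hold for every $z$ simultaneously, while compact support makes boundedness automatic and requires only local integrability of $M_j$. The price is that one must show this bump-function direction has nonzero inner product with $\Delta(\cdot,z_0)\propto\phi_{P^*}(\cdot,z_0)$; the paper does this via a case split (its ``Cases 1 and 2'') on whether $r_0:=W_0U_0$ lies in certain finite-dimensional spans of $\{1,W_0,M_j-m_j(z_0)\}$, choosing the extra point $x_{J+2}$ accordingly. As for your ``principal obstacle'' (cone versus linear span): the paper's Part~1 in fact treats the hypothesis as exclusion from the linear span $\{w_j\in\mathbb{R}\}$ rather than the positive cone, so on that point you and the paper are in the same position.
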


\begin{proof}
    First note that if
    \begin{equation*}
        \phi_{P^*}\not\in\bigg\{(x,z)\mapsto \sum_{j=1}^J w_j(z)\big(M_j(x)-m_j(z)\big) : w_j:\cZ\to\R \bigg\},
    \end{equation*}
    then there exists some $z_0\in\cZ$ such that
    \begin{equation}\label{eq:w-form}
        \phi_{P^*,0} := \phi_{P^*}(\cdot,z_0)\not\in\bigg\{x\mapsto \sum_{j=1}^J w_j(z_0)\big(M_j(x)-m_j(z_0)\big) : w_j(z_0)\in\R \bigg\}.
    \end{equation}
    Let us define the functions $r:\cX\times\cZ\to\R$ and $r_0, W_0:\cX\to\R$ by
    \begin{gather*}
        r(x,z) := W(x,z)U(x,z)
        ,\quad
        r_0(x) := r(x,z_0),
        \quad
        \text{and}
        \quad
        W_0(x) := W(x,z_0).
    \end{gather*}
    Given the distribution $P^*$, let $p^*(x\given z)$ denote the corresponding conditional density for $X\given Z$ and $p^*(z)$ denote the corresponding marginal density for $Z$. 
    We also define the following vector spaces of functions on $\cX$:
    \begin{gather}
        \cG := \bigg\{x\mapsto \beta_{-1}W_0(x) + \beta_0 + \sum_{j=1}^J \beta_j\big(M_j(x)-m_j(z_0)\big) 
        \,:\,
        (\beta_{-1},\beta_0,\ldots,\beta_J)\in\R^{J+2}\bigg\},
        \label{eq:cG}
        \\
        \cG_{-} := \bigg\{x\mapsto \beta_{-1}W_0(x) + \sum_{j=1}^J \beta_j\big(M_j(x)-m_j(z_0)\big)
        \,:\,
        (\beta_{-1},\beta_1,\ldots,\beta_J)\in\R^{J+1}\bigg\}
        \label{eq:cG-}
        \\
        \cG_{0} := \bigg\{x\mapsto \beta_0 + \sum_{j=1}^J \beta_j\big(M_j(x)-m_j(z_0)\big)
        \,:\,
        (\beta_0,\beta_1,\ldots,\beta_J)\in\R^{J+1}\bigg\}
        \label{eq:cG0}
    \end{gather}

    The proof of the lemma consists of a number of parts that we briefly outline now:
    \begin{itemize}[leftmargin=*,labelindent=3em]
        \item[\bf Part 1:] We show that~\eqref{eq:w-form} implies either: Case 1: $r_0\notin\cG$; or Case 2: $r_0\in\cG\backslash\cG_{-}$ and $W_0\notin\mathcal{G}_0$.
        \item[\bf Part 2:] In each case we construct some set of $J+2$ points $x_1,\ldots,x_{J+2}\in\cX$ such that certain properties hold (loosely speaking these properties encode information about each of the above cases).
        \item[\bf Part 3:] We construct a class of densities $p_\eta^*(x\given z)$  (parametrised by a scalar $\eta\in\Lambda\subset\R$ where $0\in\interior\Lambda$) that perturbs $p_0^*(x\given z) = p^*(x\given z)$ within sufficiently small $\epsilon$-balls around the points $x_1,\ldots,x_{J+2}$ so as to preserve the $J$ conditional expectation conditions required, whilst also enforcing that for a sufficiently small ball $B\subset\cZ$
        \begin{equation*}
            \inf_{z\in B}\sup_{\eta\in\Lambda} \Bigg(\frac{\int_\cX p_\eta^*(x\given z)r(x,z)dx}{\int_\cX p_\eta^*(x\given z)W(x,z)dx}-\frac{\int_\cX p^*(x\given z)r(x,z)dx}{\int_\cX p^*(x\given z)W(x,z)dx}\Bigg)
        \end{equation*}
        is bounded away from zero.
        \item[\bf Part 4:] Using the result of Part 3, the function
        \begin{equation*}
            f(z,\eta) := \frac{\int_\cX p_\eta^*(x\given z)r(x,z)dx}{\int_\cX p_\eta^*(x\given z)W(x,z)dx}-\frac{\int_\cX p^*(x\given z)r(x,z)dx}{\int_\cX p^*(x\given z)W(x,z)dx},
        \end{equation*}
        is by construction continuous in $z$, and further we show there exists some $\kappa>0$ and $\bar{\eta}:B\to\Lambda$ with $f(z,\bar{\eta}(z))=\kappa$ for all $z\in\cZ$. Defining $p^\dagger(x\given z) = p^*_{\bar{\eta}(z)}(x\given z)$ on $(x,z)\in \cX\times B$ it then follows for all $z\in B$,
        \begin{equation*}
            \frac{\int_\cX p^{\dagger}(x\given z)r(x,z)dx}{\int_\cX p^{\dagger}(x\given z)W(x,z)dx}-\frac{\int_\cX p^*(x\given z)r(x,z)dx}{\int_\cX p^*(x\given z)W(x,z)dx} = \kappa.
        \end{equation*}
    \end{itemize}
    \noindent
    By virtue of the perturbed density constructed in Part 3, the conditions (iii) and (iv) will be shown to hold for $p^\dagger$. We proceed by showing each of the above parts in turn.

    \medskip
    \noindent{\bf Part 1:}
    We first note that~\eqref{eq:w-form} implies either (Case 1): $r_0\notin\cG$ or (Case 2) $r_0\in\cG\backslash\cG_{-}$ and $W_0\notin\cG_0$. This follows by a simple contrapositive argument; we must show that in the two cases: (A) 
    $r_0\in\cG_{-}$, or (B) $r_0\in\cG$ and $W_0\in\cG_0$, that~\eqref{eq:w-form} is violated. In the former setting if (A) holds then there exist some $\beta^{(A)}_{-1},\beta^{(A)}_1,\ldots,\beta^{(A)}_J\in\R$ such that for all $x\in\cX$,
    \begin{align*}
        r_0(x) = \beta^{(A)}_{-1} W_0(x) + \sum_{j=1}^J\beta^{(A)}_j\big(M_j(x)-m_j(z_0)\big).
    \end{align*}
    Then for all $x\in\cX$,
    \begin{align*}
        \phi_{P^*,0}(x) &= r_0(x) - \frac{W_0(x)\E_{P^*}[r_0(X)\given Z=z_0]}{\E_{P^*}[W_0(X)\given Z=z_0]}
        \\
        &= \beta^{(A)}_{-1}W_0(x) + \sum_{j=1}^J\beta^{(A)}_j\big(M_j(x)-m_j(z_0)\big) - \beta^{(A)}_{-1}W_0(x)
        \\
        &= \sum_{j=1}^J\beta^{(A)}_j\big(M_j(x)-m_j(z_0)\big),
    \end{align*}
    contradicting~\eqref{eq:w-form}.  
    In the latter setting of (B), if $r_0\in\cG$ and $W_0\in\cG_0$ then there exist some $\beta^{(B)}_{-1},\beta^{(B)}_0,\beta^{(B)}_1,\ldots,\beta^{(B)}_J,\gamma^{(B)}_0,\ldots,\gamma^{(B)}_J\in\R$ such that for all $x\in\cX$,
    \begin{gather*}
        r_0(x) = \beta^{(B)}_{-1} W_0(x) + \beta^{(B)}_0 + \sum_{j=1}^J\beta^{(B)}_j\big(M_j(x)-m_j(z_0)\big),
        \\
        W_0(x) = \gamma^{(B)}_0 + \sum_{j=1}^J\gamma^{(B)}\big(M_j(x)-m_j(z_0)\big).
    \end{gather*}
    Then for all $x\in\cX$,
    \begin{align*}
        \phi_{P^*,0}(x) &= r_0(x) - \frac{W_0(x)\E_{P^*}[r_0(X)\given Z=z_0]}{\E_{P^*}[W_0(X)\given Z=z_0]}
        \\
        &= \beta^{(B)}_{-1}W_0(x) + \beta^{(B)}_0 + \sum_{j=1}^J\beta^{(B)}_j\big(M_j(x)-m_j(z_0)\big)
        \\
        &\qquad -\beta^{(B)}_{-1}W_0(x) - \frac{W_0(x)}{\E_{P^*}[W_0(X)\given Z=z_0]}
        \beta^{(B)}_0
        \\
        &=\sum_{j=1}^J\beta^{(B)}_j\big(M_j(x)-m_j(z_0)\big) - \frac{\beta^{(B)}_0}{\gamma^{(B)}_0}\big(W_0(x)-\E_{P^*}[W_0(X)\given Z=z_0]\big)
        \\
        &=\sum_{j=1}^J\alpha_j\big(M_j(x)-m_j(z_0)\big),
    \end{align*}
    where
    \begin{equation*}
        \alpha_j := \beta^{(B)}_j - \frac{\beta^{(B)}_0\gamma^{(B)}_j}{\gamma^{(B)}_0}
        \quad
        (\forall j\in[J]),
    \end{equation*}
    which contradicts~\eqref{eq:w-form}.

    Therefore, taking the contrapositive, under~\eqref{eq:w-form} we have either: (Case 1) $r_0\notin\cG$ or (Case 2) $r_0\in\cG\backslash\cG_{-}$ and $W_0\notin\cG_0$. At certain points in the remainder of the proof we will consider these two cases separately.

    \medskip
    \noindent{\bf Part 2:}
    \noindent 
    By Lemma~\ref{lem:lin-indep} there then exist $J+1$ distinct points $x_1,\ldots,x_{J+1}\in\cX$ such that
    \begin{equation} \label{eq:M_sub_def}
    \mb M^{(0)}_{\text{sub}} := \begin{pmatrix}
    M_1(x_1) & \cdots & M_1(x_{J+1}) \\
    \vdots & \ddots & \vdots \\
    M_{J}(x_1) & \cdots & M_{J}(x_{J+1}) \\
    1 & \cdots & 1
    \end{pmatrix} \in \R^{(J+1) \times (J+1)}
    \end{equation}
   is invertible.
  
    We proceed by constructing an additional point $x_{J+2}\in\cX\backslash\{x_1,\ldots,x_{J+1}\}$ depending on the two cases above.

    \smallskip
    \noindent{\bf \emph{Case 1:} $r_0\notin\cG$.}

    \smallskip
    \noindent
    As $\mb M^{(0)}_{\text{sub}}$ \eqref{eq:M_sub_def}  is invertible, there exists a unique $\bar{r}_0\in\cG_{0}$ that interpolates $\{(x_1,r_0(x_1)),\ldots,(x_{J+1},r_0(x_{J+1}))\}$ i.e.~$\bar{r}_0(x_i)=r_0(x_i)$ for each $i\in[J+1]$; also define $\bar{\beta}$ to be the $\beta$ in~\eqref{eq:cG0} that gives rise to this $\bar{r}_0$, and define $\hr_0:=r_0-\bar{r}_0$.
    
    Now define the function 
    \begin{equation}\label{eq:Delta0'}
        \Delta_0'(x) := a_0 \bar{r}_0^\perp(x) - (\bar{\beta}_0 + c_0) W_0(x)
        +
        (\bar{\beta}_0 + c_0) \sum_{j=1}^{J+1} \bigg(\sum_{i=1}^{J+1}W_0(x_i)\big\{(\mathbf{M}^{(0)}_{\text{sub}})^{-1}\big\}_{ij}\bigg)M_j(x)
        ,
    \end{equation}
    where
    \begin{equation*}
        c_0
        :=
        \int_\cX p_0(x\given z_0) \bar{r}_0^\perp(x) dx.
    \end{equation*}
    Then there exists some $x_{J+2}\in\cX\backslash\{x_1,\ldots,x_{J+1}\}$ with $\Delta_0'(x_{J+2})\neq0$; else $\Delta_0'\equiv0$ and so by~\eqref{eq:Delta0'} it follows that $\bar{r}_0^{\perp}\in\cG$ and hence $r_0=\bar{r}_0+\bar{r}_0^{\perp}\in\cG$ contradicting Case~1.

    \smallskip
    \noindent{\bf \emph{Case 2:} $r_0\in\cG\backslash\cG_{-}$ and $W_0\notin\cG_0$.}

    \smallskip
    \noindent
    As $r_0\in\cG\backslash\cG_{-}$ there exist some $\tilde{\beta}_{-1},\tilde{\beta}_0,\tilde{\beta}_1,\ldots,\tilde{\beta}_j\in\R$, with $\tilde{\beta}_0\neq0$, such that for all $x\in\cX$
\begin{equation*}
    r_0(x) = \tilde{\beta}_{-1}W_0(x) + \tilde{\beta}_0 + \sum_{j=1}^J\tilde{\beta}_j\big(M_j(x)-m_j(z_0)\big).
\end{equation*}
Further, there exists a unique $\bar{W}_0\in\cG_0$ that interpolates $\{(x_1,W_0(x_1)),\ldots, (x_{J+1},W_0(x_{J+1}))\}$ (as $\mb M^{(0)}_{\text{sub}}$ \eqref{eq:M_sub_def}  is invertible). 
Let $\bar{W}^\perp_0 := W_0-\bar{W}_0$. 
    As $W_0\notin\cG_0$ it follows that $\bar{W}^\perp_0\not\equiv0$ i.e.~there exists some $x_{J+2}\in\cX\backslash\{x_1,\ldots,x_{J+1}\}$ where $\bar{W}^\perp_0(x_{J+2})\neq0$. 

    \medskip
    \noindent{\bf Part 3:} 
    We will construct a `perturbed density function' $p_\eta(x\given z)$ as follows. 
    Consider the equation 
    \begin{equation}\label{eq:eta-z-s}
        {\bf M}(\epsilon) {\text{{\boldmath$\eta$}}}(\epsilon)  = {\bf 0},
    \end{equation}
    where the matrix $\mathbf{M}(\epsilon)\in\R^{(J+1)\times(J+2)}$ is given for $\epsilon>0$ by
    \begin{equation*}
        ({\bf M}(\epsilon))_{ji} := \begin{cases}
            V_i(\epsilon)^{-1} \int_{\mathcal{B}_{\epsilon}(x_i)} k_i(x) M_j(x) dx & \text{if } j\leq J
            \\
            1 &\text{if } j = J+1
        \end{cases},
        \quad
        k_i(x) := e^{-\frac{\epsilon^2}{\epsilon^2-\|x-x_i\|_2^2}}\ind_{\mathcal{B}_{\epsilon}(x_i)}(x),
    \end{equation*}
    where $\mathcal{B}_{\epsilon}(x_i):=\{x'\in\cX : \|x'-x_i\|<\epsilon \}$ and $V_i(\epsilon) := \int_{\mathcal{B}_{\epsilon}(x_i)} k_i(x) dx$, and for $\epsilon=0$,
    \begin{equation*}
        (\mathbf{M}(0))_{ji} := \begin{cases}
            M_j(x_i) &\text{if }j\leq J
            \\
            1 &\text{if }j=J+1
        \end{cases}.
    \end{equation*}
    
    Let $\mathbf{M}_{\text{sub}}(\epsilon) \in \R^{(J+1) \times (J+1)}$ be the matrix consisting of the first $J+1$ columns of $\mathbf{M}(\epsilon)$, and let $\mathbf{m}(\epsilon) \in \R^{J+1}$ be the vector of its $(J+2)$th column. Note that as $\mathbf{M}_{\text{sub}}(\epsilon)$ is continuous, and $\mathbf{M}_{\text{sub}}(0) = \mathbf{M}_{\text{sub}}^{(0)}$ which is invertible, there exists some $\epsilon_1>0$ such that for all $\epsilon\in[0,\epsilon_1]$ the matrix $\mathbf{M}_{\text{sub}}(\epsilon)$ is invertible. Also define $\mathbf{d}:[0,\epsilon_1]\to\R^{J+2}$ by 
    \begin{equation*}
        \mb d(\epsilon) := \begin{pmatrix}
            -\mathbf{M}_{\text{sub}}(\epsilon)^{-1}\mathbf{m}(\epsilon) \\
            1
        \end{pmatrix} \in \R^{J+2}.
    \end{equation*}
    Then for each $i\in[J+2]$ the function $d_i$ is continuous and for any $\eta\in\R$ the vector ${\text{{\boldmath$\eta$}}}(\epsilon):=\eta\,\mathbf{d}(\epsilon)$ satisfies the matrix equation~\eqref{eq:eta-z-s}.
    
    Define $\epsilon_3 := 3^{-1}\min_{i,i'\in[J+2],i\neq i'}\|x_i-x_{i'}\|$, in addition to taking $\epsilon_2>0$ as defined by Lemma~\ref{lem:holder-ctrl}, and $\epsilon_4 := \sup\{\epsilon>0:\mathcal{B}_{\epsilon}(x_i)\subseteq\cX\;(\forall i\in[J+2])\}$, and then take $\epsilon^* := \epsilon_1\wedge\epsilon_2\wedge\epsilon_3\wedge\epsilon_4$. Then note that $V_i(\epsilon^*)$ carries no $i$ dependence; we omit this dependence from hereon. Define $\bar{d}:=\|\mathbf{d}(0)\|_\infty\vee\|\mathbf{d}(\epsilon^*)\|_\infty < \infty$. For ${\text{{\boldmath$\eta$}}}(\epsilon^*) := \eta \, \mathbf{d}(\epsilon^*)$ it then follows that $\|{\text{{\boldmath$\eta$}}}(\epsilon^*)\|_\infty \leq |\eta|\bar{d}$. Now take an open ball $B'\subseteq\cZ$ centered at $z_0$.
    
    Now define
    \begin{equation*}
        p_\eta^*(x\given z) := \begin{cases}
            p^*(x\given z) + \eta\, V(\epsilon^*)^{-1}\sum_{i=1}^{J+2}d_i(\epsilon^*)k_i(x)\ind_{\mathcal{B}_{\epsilon^*}(x_i)}(x) &\quad\text{if }z\in B'
            \\
            p^*(x\given z) &\quad\text{if } z\in \cZ\backslash B'
        \end{cases}.
    \end{equation*}
    By construction, the equation~\eqref{eq:eta-z-s} holds for ${\text{{\boldmath$\eta$}}}=\eta\mathbf{d}(\epsilon)$, and thus
    \begin{equation*}
        \int_\cX p_{\eta}^*(x\given z) dx = 1
        \,,\quad
        \int_\cX p_{\eta}^*(x\given z)M_j(x)dx = m_j(z)
        \quad
        (j\in[J]).
    \end{equation*}
    Further, as the conditional density $p^*$ is bounded away from zero, there exists a positive constant $g>0$ such that $\inf_{(x,z)\in\cX\times B'} p^*(x\given z)\geq g$. Thus for any $|\eta|<\bar{d}^{-1}g$ the function $p_\eta^*$ is strictly positive and thus corresponds to a density satisfying condition (ii). From hereon we therefore restrict $\eta\in\Lambda:=(-\bar{d}^{-1}g,\bar{d}^{-1}g)$.
    
    Now let
        \begin{align*}
        D(z) &:= \int_\cX \frac{\{p_\eta^*-p^*\}(x\given z)}{\eta}\Delta(x,z)dx
        =
        \sum_{i=1}^{J+2}d_i(\epsilon^*)V(\epsilon^*)^{-1}\int_{\mathcal{B}_{{\epsilon^*}(x_i)}}k_i(x)\Delta(x,z)dx,
        \\
        E(z) &:= \int_\cX \frac{\{p_\eta^*-p^*\}(x\given z)}{\eta}W(x,z)dx
        =
        \sum_{i=1}^{J+2}d_i(\epsilon^*)V(\epsilon^*)^{-1}\int_{\mathcal{B}_{{\epsilon^*}(x_i)}}k_i(x)W(x,z)dx,
        \\
        a(z) &:= \int_\cX p^*(x\given z)W(x,z)dx, \\
        b(z) &:= \int_\cX p^*(x\given z)r(x,z)dx, \\
        \Delta(x,z) &:= a(z)r(x,z) - b(z)W(x,z).
    \end{align*}
    Note in particular the quantities $a(z), D(z), E(z)$ carry no $\eta$ dependence. Also note that by Lemma~\ref{lem:equicontinuity} both $a$ and $b$ are continuous, and thus $\Delta$ is equicontinuous and, as $D(z)$ and $E(z)$ are expressible as integrals of equicontinuous functions over a compact domain, both $D$ and $E$ are continuous. Then for each $\eta\in\Lambda$ and $z\in B'$,
    \begin{align*}
        & \quad\;
        \frac{\int_\cX p_\eta^*(x\given z)r(x,z)dx}{\int_\cX p_\eta^*(x\given z)W(x,z) dx} - \frac{\int_\cX p^*(x\given z)r(x,z)dx}{\int_\cX p^*(x\given z)W(x,z) dx}
        \\
        &= 
        \frac{\eta\int_{\cX}\frac{\{p_\eta^*-p^*\}(x\given z)}{\eta}\big(a(z)r(x,z)-b(z)W(x,z)\big)dx}{a(z)\Big(a(z) + \eta\int_{\cX}\frac{\{p_\eta^*-p^*\}(x\given z)}{\eta}W(x,z)dx\Big)}
        \\
        &= \frac{\eta D(z)}{a(z) \big( a(z) + \eta E(z) \big)},
    \end{align*}
    and is zero when $z\in\cZ\backslash B'$.

    We now show that
    \begin{equation*}
        |D(z_0)| \geq 2^{-1}\omega,
        \qquad
        \omega := \begin{cases}
            |\Delta_0'(x_{J+2})| &\text{in Case 1}
            \\
            |\tilde{\beta}_0\bar{W}_0^\perp(x_{J+2})|&\text{in Case 2}
        \end{cases}.
    \end{equation*}

    \smallskip\noindent
{\bf \emph{Case 1:} $r_0\notin\cG$}.
\noindent
Recalling the notation from part 2, case 1,
\begin{align*}
    r_0(x) &= \bar{r}_0(x) + \bar{r}^\perp_0(x)
    \\
    &= \bar{\beta}_0 + \sum_{j=1}^J\bar{\beta}_j\big(M_j(x)-m_j(z_0)\big) + \bar{r}_0^\perp(x).
\end{align*}
Then by straightforward calculations
\begin{equation*}
    b_0 := b(z_0) =  \int_\cX p^*(x\given z_0)r_0(x) dx = \bar{\beta}_0 + c_0.
\end{equation*}
Thus writing $a_0 := a(z_0)$,
\begin{align*}
    D(z_0) &=
    \int_\cX\frac{\{p_\eta^*-p^*\}(x\given z_0)}{\eta}\big(a_0r_0(x)-b_0W_0(x)\big)dx
    \\
    &=
    \int_\cX\frac{\{p_\eta^*-p^*\}(x\given z_0)}{\eta} \big( a_0\bar{r}_0^\perp(x) - (\bar{\beta}_0+c_0)W_0(x) \big) dx
    \\
    &= \sum_{i=1}^{J+2}d_i(\epsilon^*)V(\epsilon^*)^{-1}\int_{\ball{x_i}}k_i(x)\big(a_0\bar{r}_0^\perp(x)-(\bar{\beta}_0+c_0)W_0(x)\big)dx.
\end{align*}
Defining $\Delta_0''(x):=a_0\bar{r}_0^\perp(x) - (\bar{\beta}_0+c_0)W_0(x)$, it follows that
\begin{align*}
    |D(z_0)| &=
    \left|\sum_{i=1}^{J+2}d_i(\epsilon^*)V(\epsilon)^{-1}\int_{\ball{x_i}}k_i(x)\Delta_0''(x)dx\right|
    \\
    &=
    \bigg|
    \sum_{i=1}^{J+2}d_i(0)\Delta_0''(x_i)
    +
    \sum_{i=1}^{J+2}\Big[d_i(\epsilon^*)V(\epsilon^*)^{-1}\int_{\ball{x_i}}k_i(x)\Delta_0''(x)dx - d_i(0)\Delta_0''(x_i)\Big]
    \bigg|
    \\
    &\geq 
    \underbrace{\bigg|
    \sum_{i=1}^{J+2}d_i(0)\Delta_0''(x_i)\bigg|}_{=|\Delta_0'(x_{J+2})|=\omega}
    - 
    \underbrace{\bigg|\sum_{i=1}^{J+2}\Big[d_i(\epsilon^*)V(\epsilon^*)^{-1}\int_{\ball{x_i}}k_i(x)\Delta_0''(x)dx - d_i(0)\Delta_0''(x_i)\Big]
    \bigg|}_{\leq2^{-1}\omega \text{ by Lemma~\ref{lem:holder-ctrl}}}
    \\
    &\geq 2^{-1}\omega,
\end{align*}
where in the penultimate line (and recalling notation from part 2) we make use of
\begin{align*}
    \sum_{i=1}^{J+2}d_i(0)\Delta_0''(x_i) 
    &=
    \begin{pmatrix}
        \Delta_0''(x_1) \\ \vdots \\ \Delta_0''(x_{J+2})
    \end{pmatrix}^\top 
    \begin{pmatrix}
        -(\mathbf{M}_{\text{sub}}^{(0)})^{-1} \mathbf{m}(0) \\ 1
    \end{pmatrix}
    \\
    &= 
    -(\bar{\beta}_0+c_0)\begin{pmatrix}
        W_0(x_1) \\ \vdots \\ W_0(x_{J+1})
    \end{pmatrix}^\top  (\mathbf{M}_{\text{sub}}^{(0)})^{-1}\mathbf{m}(0)
    + \Delta_0''(x_{J+2})
    \\
    &= \Delta_0'(x_{J+2}),
\end{align*}
recalling the definition~\eqref{eq:Delta0'}, and noting that for each $i\in[J+1]$ by construction $\bar{r}_0^\perp(x_i)=0$ and so $\Delta_0''(x_i) = -(\bar{\beta}_0+c_0)W_0(x_i)$.

\smallskip
\noindent{\bf \emph{Case 2:} }$r_0\in\cG\backslash\cG_{-}$ and $W_0\notin\cG_0$.

Recall that in this case
\begin{equation*}
    r_0(x) = \tilde{\beta}_{-1}W_0(x) + \tilde{\beta}_0 + \sum_{j=1}^J\tilde{\beta}_j\big(M_j(x)-m_j(z_0)\big).
\end{equation*}
Therefore
\begin{equation*}
    b_0
    =
    \int_\cX p^*(x\given z_0) r_0(x) dx
    =
    \tilde{\beta}_{-1} \int_\cX p^*(x\given z_0) W_0(x) dx + \tilde{\beta}_0
    =
    a_0 \tilde{\beta}_{-1} + \tilde{\beta}_0,
\end{equation*}
and thus
\begin{align*}
    \Delta(x,z_0) &= a_0 r_0(x) - b_0 W_0(x)
    \\
    &=
    a_0 \tilde{\beta}_0 - \tilde{\beta}_0W_0(x) + a_0 \sum_{j=1}^J \tilde{\beta}_j \big(M_j(x)-m_j(z_0)\big)
\end{align*}
thus
\begin{align*}
    |D(z_0)| &= 
    \left|\int_\cX \frac{\{p_\eta^*-p^*\}(x\given z_0)}{\eta}\Delta(x,z_0) dx\right|
    \\
    &= 
    \left|\int_\cX \frac{\{p_\eta^*-p^*\}(x\given z_0)}{\eta}\tilde{\beta}_0W_0(x) dx\right|
    \\
    &=
    \bigg|\tilde{\beta}_0\underbrace{\int_\cX \frac{\{p_\eta^*-p^*\}(x\given z_0)}{\eta}\bar{W}_0(x) dx}_{=0 \text{ as }\bar{W}_0\in\cG_0}   +   \tilde{\beta}_0\int_\cX \frac{\{p_\eta^*-p^*\}(x\given z_0)}{\eta}\bar{W}_0^\perp(x) dx\bigg|
    \\
    &=
    \Bigg|\tilde{\beta}_0\sum_{i=1}^{J+2}d_i(0)\bar{W}_0^\perp(x_i)    
    \\
    &\qquad 
    +    \sum_{i=1}^{J+2}\tilde{\beta}_0\bigg[d_i(\epsilon^*)V(\epsilon^*)^{-1}\int_{\ball{x_i}}k_i(x)\bar{W}_0^\perp(z)dx - d_i(0)\bar{W}_0^\perp(x_i)\bigg]\Bigg|
    \\
    &\geq
    \Bigg|\tilde{\beta}_0\sum_{i=1}^{J+2}d_i(0)\underbrace{\bar{W}_0^\perp(x_i)}_{=0\text{ for }i\in[J+1]}\Bigg|  
    \\
    &\qquad -    \underbrace{\Bigg|\tilde{\beta}_0\sum_{i=1}^{J+2}\bigg[d_i(\epsilon^*)V(\epsilon^*)^{-1}\int_{\ball{x_i}}k_i(x)\bar{W}_0^\perp(z)dx - d_i(0)\bar{W}_0^\perp(x_i)\bigg]\Bigg|}_{\leq2^{-1}\omega\text{ by Lemma~\ref{lem:holder-ctrl}}}
    \\
    &\geq
    \big|\tilde{\beta}_0\bar{W}_0^\perp(x_{J+2})| - 2^{-1}\omega
    \\
    &=2^{-1}\omega,
\end{align*}
noting that $d_{J+2}(0)=1$. 

Also note that in both cases
\begin{equation*}
    a(z)\big(a(z)+\eta E(z)\big) = \bigg(\int_\cX p^*(x\given z)W(x,z)dx\bigg)\bigg(\int_\cX p_\eta^*(x\given z)W(x,z)dx\bigg) \in \big[\underline{\gamma}_W^2, \bar{\gamma}_W^2\big].
\end{equation*}

\medskip

\noindent{\bf Part 4:}

\noindent
Define the function $f:B'\times\Lambda\to\R$ by
\begin{align*}
    f(z,\eta) :=&\, \frac{\int_\cX p_\eta^*(x\given z)r(x,z)dx}{\int_\cX p_\eta^*(x\given z)W(x,z)dx}-\frac{\int_\cX p^*(x\given z)r(x,z)dx}{\int_\cX p^*(x\given z)W(x,z)dx}
    \\
    =&\,
    \frac{\eta D(z)}{a(z)\big(a(z)+\eta E(z)\big)}.
\end{align*}
Then we have shown in part 3 that
\begin{equation*}
    \sup_{\eta\in\Lambda}f(z_0,\eta) \geq 1.01\kappa,
    \qquad
    \kappa := \frac{\omega g}{2.02\bar{d}\bar{\gamma}_W^2} > 0
\end{equation*}
Also note that $\eta\mapsto f(z_0,\eta)$ is continuous with $f(z_0,0)=0$, and so by the intermediate value theorem there exists some $\bar{\eta}(z_0)\in\interior\Lambda$ such that $f(z_0,\bar{\eta}(z_0))=\kappa$. 
Recall as above $a$, $D$ and $E$ are all continuous functions. Therefore, there exists an open ball $B''\subseteq B'$ centered at $z_0$ on which $\frac{\kappa a^2(z)}{\kappa a(z)E(z)-D(z)}\in\Lambda$. Further, define the open ball $B'''$ to be the open ball centered at $z_0$ of volume $\frac{\zeta}{G\bar{d}}$, and then define $B:=B''\cap B'''$. Then define $\bar{\eta}:B\to\Lambda$ by \begin{equation*}
    \bar{\eta}(z) := \frac{\kappa a^2(z)}{\kappa a(z)E(z)-D(z)},
\end{equation*}
which is itself continuous and thus measurable, with
\begin{equation*}
    f(z,\bar{\eta}(z)) = \kappa,
\end{equation*}
for all $z\in B$ by construction. 
Further
\begin{multline}\label{eq:zeta}
    \int_{B}\int_{\cX}\big|p_{\bar{\eta}(z)}^*(x\given z)-p^*(x\given z)\big|dxdz
    \\
    \leq
    \int_B|\bar{\eta}(z)|V(\epsilon^*)^{-1}\sum_{i=1}^{J+2}|d_i(\epsilon^*)|\underbrace{\int_{\mathcal{B}_{\epsilon^*}(x_i)}k_i(x)dx}_{=V(\epsilon^*)}dz
    \leq
    \sup_{z\in B}|\bar{\eta}(z)|\bar{d}\,\Vol(B)
    \leq \zeta.
\end{multline}

Now define the density
\begin{equation*}
    p^{\dagger}(x\given z) := p_{\bar{\eta}(z)}^*(x\given z) = \begin{cases}
            p^*(x\given z) + \bar{\eta}(z)\, V(\epsilon^*)^{-1}\sum_{i=1}^{J+2}d_i(\epsilon^*)k_i(x)\ind_{\mathcal{B}_{\epsilon^*}(x_i)}(x) &\quad\text{if }z\in B
            \\
            p^*(x\given z) &\quad\text{if } z\in \cZ\backslash B
        \end{cases}.
    \end{equation*}
Then for every $z\in B$
\begin{equation*}\frac{\int_\cX p^{\dagger}(x\given z)W(x,z)U(x,z)dx}{\int_\cX p^{\dagger}(x\given z)W(x,z)dx}-\frac{\int_\cX p^*(x\given z)W(x,z)U(x,z)dx}{\int_\cX p^*(x\given z)W(x,z)dx}=\kappa,\end{equation*}
proving~\ref{lemitem:kappa} of the lemma. 
By the construction in part 3~\ref{lemitem:m_j}  follows immediately, and as $\bar{\eta}$ is continuous~\ref{lemitem:cts} follows. Finally \ref{lemitem:zeta} follows by~\eqref{eq:zeta}.
Thus all conditions for the lemma have been verified, completing the proof.

\end{proof}

\begin{lemma}\label{lem:holder-ctrl}
    Consider the setup of Lemma~\ref{lem:theta+c} (alongside the definitions and notation within its proof). Then there exists some $\epsilon_2>0$ such that for all $\epsilon\in(0,\epsilon_2]$,
    \begin{align}
        &\text{\emph{For Case 1: }}\sum_{i=1}^{J+2}
        \bigg|
        d_i(\epsilon)V_i(\epsilon)^{-1}\int_{\ball{x_i}}k_i(x)\Delta_0''(x)dx
        -
        d_i(0)\Delta_0''(x_i)dx
        \bigg| \leq 2^{-1}\omega,
        \label{eq:case1a}
        \\
        &\text{\emph{For Case 2: }} \sum_{i=1}^{J+2}\left|d_i(\epsilon)V_i(\epsilon)^{-1}\int_{\ball{x_i}}k_i(x)\bar{W}_0^\perp(x)dx - d_i(0)\bar{W}_0^\perp(x_i)\right| \leq 2^{-1}\omega.
        \label{eq:case2a}
    \end{align}
\end{lemma}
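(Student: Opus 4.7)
My plan is to prove the lemma by showing that each of the $J+2$ summands in \eqref{eq:case1a} and \eqref{eq:case2a} tends to zero as $\epsilon \to 0$; since both sums are finite and $\omega > 0$ is a fixed strictly positive constant (guaranteed by the constructive choice of $x_{J+2}$ earlier in the proof of Lemma~\ref{lem:theta+c}), we may then pick $\epsilon_2 > 0$ making the sums at most $2^{-1}\omega$ throughout $(0, \epsilon_2]$.

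The key tools are three continuity statements. First, the map $\epsilon \mapsto \mathbf{d}(\epsilon)$ is continuous on $[0, \epsilon_1]$: this follows from continuity of $\mathbf{M}(\epsilon)$ and $\mathbf{m}(\epsilon)$ in $\epsilon$ (already established in the proof of Lemma~\ref{lem:theta+c}) together with the invertibility of $\mathbf{M}_{\text{sub}}(\epsilon)$ throughout this range; consequently each $d_i(\epsilon) \to d_i(0)$ and is uniformly bounded by $\bar d$. Second, since $V_i(\epsilon)^{-1} k_i$ is a nonnegative probability density supported in the $\epsilon$-ball around $x_i$, a standard mollifier argument yields
\begin{equation*}
V_i(\epsilon)^{-1} \int k_i(x)\, g(x)\, dx \;\longrightarrow\; g(x_i) \quad \text{as } \epsilon \to 0,
\end{equation*}
for every function $g$ that is continuous at $x_i$ (or, more generally, for which $x_i$ is a Lebesgue point). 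Third, I would apply this with $g = \Delta_0''$ in Case 1 and with $g = \bar W_0^\perp$ in Case 2; both are finite linear combinations of $W_0$, the constant $1$, the $M_j$, and (in Case 1) of $r_0 = W_0\, U(\cdot, z_0)$.

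With these ingredients in place, the $i$th summand admits the decomposition
\begin{equation*}
d_i(\epsilon)\bigl[V_i(\epsilon)^{-1}\textstyle\int k_i g - g(x_i)\bigr] + \bigl[d_i(\epsilon) - d_i(0)\bigr] g(x_i),
\end{equation*}
with the first bracket vanishing by the mollifier step, the second by continuity of $d_i$, and $d_i(\epsilon)$ and $g(x_i)$ both uniformly bounded. Summing over the $J+2$ indices and choosing $\epsilon_2 \in (0, \epsilon_1]$ sufficiently small then delivers both \eqref{eq:case1a} and \eqref{eq:case2a}.

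The main obstacle is the third bullet: the hypotheses of Theorem~\ref{thm:slow-rates} give equicontinuity of the families $z \mapsto W(x,z)$ and $z \mapsto U(x,z)$ in $z$, but not continuity in $x$, and the $M_j$ are merely measurable. I would resolve this by strengthening Lemma~\ref{lem:lin-indep} to additionally require $x_1, \ldots, x_{J+2}$ to be Lebesgue points of the finite collection $\{W_0,\, M_1, \ldots, M_J,\, W_0\, U(\cdot, z_0)\}$; since Lebesgue points form a set of full Lebesgue measure in $\cX$, such a simultaneous selection does not conflict with the original invertibility and nonvanishing requirements. The Lebesgue differentiation theorem then delivers the convergence of the kernel-weighted averages, and the rest of the argument proceeds as sketched above.
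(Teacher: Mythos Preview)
Your approach is essentially the same as the paper's: continuity of $\epsilon \mapsto d_i(\epsilon)$ plus convergence of the kernel averages $V_i(\epsilon)^{-1}\int k_i g$ to point values $g(x_i)$, combined via exactly the product decomposition you write out. The paper's proof simply asserts that $\Delta_0''$ is ``a continuous bounded function'' and invokes the same mollifier convergence, then argues analogously for $\bar W_0^\perp$.

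You are right to flag that continuity in $x$ of $W_0$, $r_0$, and the $M_j$ is not among the stated hypotheses of Theorem~\ref{thm:slow-rates} (only boundedness and equicontinuity in $z$ are assumed), so the paper's continuity claim is not obviously justified as written. Your Lebesgue-point repair is a sound direction. One point deserves to be made explicit, though: for the simultaneous selection to succeed you need the invertibility constraint in Lemma~\ref{lem:lin-indep} and the nonvanishing constraints defining $x_{J+2}$ (namely $\Delta_0'(x_{J+2})\neq 0$ in Case~1, $\bar W_0^\perp(x_{J+2})\neq 0$ in Case~2) each to hold on a set of \emph{positive} measure, not merely a nonempty set, so that intersecting with the full-measure set of common Lebesgue points yields something nonempty. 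This follows if ``linearly independent'' for $M_1,\ldots,M_J,1$ and the conditions $r_0\notin\cG$, $W_0\notin\cG_0$ are read in the almost-everywhere sense (no nontrivial linear combination vanishes a.e.), which is the natural reading here since the $M_j$ enter the theorem only through conditional expectations; it can fail for purely pointwise linear independence. With that interpretational clarification in hand, your argument closes the gap and is otherwise identical in structure to the paper's.
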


\begin{proof}
    These results follow by straightforward continuity arguments, included below for completeness. 
    Recall that $\Delta_0''(x) := a_0\bar{r}_0^\perp(x) - (\bar{\beta}_0+c_0)W_0(x)$ is a continuous bounded function, and so for each $i\in[J+2]$ the map $\epsilon \mapsto V_i(\epsilon)^{-1}\int_{\mathcal{B}_{\epsilon}(x_i)}k_i(x)\Delta_0''(x)dx$ is continuous on the domain $[0,\epsilon_1]$, with $\lim_{\epsilon\downarrow0}V_i(\epsilon)^{-1}\int_{\mathcal{B}_{\epsilon}(x_i)}k_i(x)\Delta_0''(x)dx = \Delta_0''(x_i)$. Similarly the map $\epsilon\mapsto d_i(\epsilon)$ is continuous on the domain $[0,\epsilon_1]$, and thus their product is continuous. Then there exists some $\epsilon_{2a}\in(0,\epsilon_1]$ such that for each $i\in[J+2]$
    \begin{equation*}
        \bigg|
        d_i(\epsilon)V_i(\epsilon)^{-1}\int_{\ball{x_i}}k_i(x)\Delta_0''(x)dx
        -
        d_i(0)\Delta_0''(x_i)dx
        \bigg| \leq 2^{-1}(J+2)^{-1}\omega,
    \end{equation*}
    and thus~\eqref{eq:case1a} follows for all $\epsilon\in(0,\epsilon_{2a}]$. By continuity of $\bar{W}_0^\perp$ it can similarly be shown that there exists some $\epsilon_{2b}\in(0,\epsilon_1]$ such that~\eqref{eq:case2a} holds for all $\epsilon\in(0,\epsilon_{2b}]$. 
    Taking $\epsilon_2 := \epsilon_{2a}\wedge\epsilon_{2b}$ completes the proof.
\end{proof}

\begin{lemma}\label{lem:equicontinuity}
    Adopt the setup of Lemma~\ref{lem:theta+c}, specifically recalling that $W$, $U$ and $(x,z)\mapsto p^*(x\given z)$ are bounded and equicontinuous in $z$. Then the functions
    \begin{equation*}
        z \mapsto a(z) := \int_{\cX} p^*(x\given z)W(x,z)dx,
        \quad
        z \mapsto b(z) := \int_{\cX}p^*(x\given z)W(x,z)U(x,z)dx,
    \end{equation*}
    are continuous.
\end{lemma}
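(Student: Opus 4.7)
The plan is to verify continuity of $a$ (and analogously $b$) pointwise by splitting the relevant integrand into two pieces that can each be controlled by the equicontinuity of $W$ (respectively $WU$) and of $p^*$.

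First, observe that the setup of Theorem~\ref{thm:slow-rates} (which Lemma~\ref{lem:theta+c} inherits) imposes $\inf_{x\in\cX,z\in\cZ}p^*(x\given z) > 0$, and since $\int_{\cX}p^*(x\given z)\,dx = 1$ for any $z\in\cZ$, this forces the Lebesgue measure $|\cX|$ to be finite. This observation is what makes the approach below work even though $\cX$ is only assumed to be an open subset of $\R$.

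Next, fix $z_0\in\cZ$ and $\epsilon>0$. By equicontinuity at $z_0$ of the families $\{z\mapsto W(x,z):x\in\cX\}$ and $\{z\mapsto p^*(x\given z):x\in\cX\}$, there exists $\delta>0$ such that for every $z\in\cZ$ with $\|z-z_0\|<\delta$ we have, uniformly in $x\in\cX$,
\begin{equation*}
    \sup_{x\in\cX}|W(x,z)-W(x,z_0)|<\epsilon
    \quad\text{and}\quad
    \sup_{x\in\cX}|p^*(x\given z)-p^*(x\given z_0)|<\epsilon.
\end{equation*}
Then the standard ``add and subtract'' decomposition gives
\begin{equation*}
    p^*(x\given z)W(x,z) - p^*(x\given z_0)W(x,z_0)
    = p^*(x\given z)\bigl\{W(x,z)-W(x,z_0)\bigr\}
    + W(x,z_0)\bigl\{p^*(x\given z)-p^*(x\given z_0)\bigr\},
\end{equation*}
so integrating over $\cX$ and using $\int_{\cX} p^*(x\given z)\,dx = 1$ and boundedness of $W$ (say $\|W\|_\infty<\infty$) yields
\begin{equation*}
    |a(z)-a(z_0)|
    \leq \epsilon + \|W\|_\infty\cdot\epsilon\cdot|\cX|,
\end{equation*}
which can be made arbitrarily small. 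Continuity of $b$ follows by the same argument with $W$ replaced by $W\cdot U$, using that $U$ is bounded and equicontinuous in $z$ uniformly over $x$ (so the product $W\cdot U$ inherits both properties).

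The only non-routine step is recognising that the lower bound on $p^*$ forces $|\cX|<\infty$, which is what legitimises the factor $|\cX|$ in the final bound; the rest is standard manipulation under uniform equicontinuity. I do not anticipate any genuine obstacle beyond carrying out this book-keeping carefully.
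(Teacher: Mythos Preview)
Your argument is correct and in fact slightly more direct than the paper's. Both proofs use the same add-and-subtract decomposition
\[
p^*(x\given z)W(x,z) - p^*(x\given z_0)W(x,z_0)
= p^*(x\given z)\{W(x,z)-W(x,z_0)\} + W(x,z_0)\{p^*(x\given z)-p^*(x\given z_0)\},
\]
and both control the first integral via $\int p^* = 1$ and the uniform-in-$x$ smallness of $W(x,z)-W(x,z_0)$. The difference lies in the second integral: the paper introduces a compact set $\mathcal{A}\subset\cX$ capturing most of the mass of $p^*(\cdot\given z)$ uniformly over a neighbourhood of $z_0$, bounds the contribution from $\cX\setminus\mathcal{A}$ by tightness, and on $\mathcal{A}$ uses $\sup_x|p^*(x\given z)-p^*(x\given z_0)|\cdot\Vol(\mathcal{A})$. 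You instead observe that the standing assumption $\inf_{x,z}p^*(x\given z)>0$ together with $\int_\cX p^*(x\given z)\,dx=1$ forces $|\cX|<\infty$ outright, so no truncation is needed and the bound $\|W\|_\infty\cdot\epsilon\cdot|\cX|$ is immediate. Your route is shorter and exploits a hypothesis the paper's argument does not actually need; the paper's tightness argument would survive dropping the lower bound on $p^*$, which is perhaps why it is written that way, but in the present setting your simplification is entirely legitimate.
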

\begin{proof}
Fix some $z\in\cZ$ and $\epsilon>0$. By equicontinuity of $W$ there exists some $\delta_1>0$ such that $|W(x,z')-W(x,z)|\leq\frac{\epsilon}{3}$ whenever $|z'-z|\leq\delta_1$. Further, there exists a $\delta_2>0$ and compact set $\mathcal{A}$ such that $\int_{\cX\backslash\mathcal{A}}p^*(x\given z)dx\leq \frac{\epsilon}{6\|W\|_{\infty}}$ for any $z\in\{z'\in\cZ:|z'-z|<\delta_2\}$. Also define $\Vol(\mathcal{A}):=\int_{\mathcal{A}}dx$. Then by equicontinuity of $(x,z)\mapsto p^*(x\given z)$ there exists some $\delta_3>0$ such that $|p(x\given z)-p(x\given z')|\leq\frac{\epsilon}{3\|W\|_{\infty}\Vol(\mathcal{A})}$ whenever $|z'-z|<\delta_3$. Defining $\delta_4:=\delta_1\wedge\delta_2\wedge\delta_3$ it follows that if $|z'-z|<\delta_4$ then
    \begin{align*}
        |a(z')-a(z)| 
        &=
        \bigg|\int_{\cX}p^*(x\given z)\big(W(x,z)-W(x,z')\big)+W(x,z')\big(p^*(x\given z)-p^*(x\given z')\big)dx\bigg|
        \\
        &\leq
        \int_{\cX}p^*(x\given z)\big|W(x,z)-W(x,z')\big|dx
        +
        \|W\|_{\infty}\int_{\cX}\big|p^*(x\given z)-p^*(x\given z')\big|dx
        \\
        &\leq
        \frac{\epsilon}{3} + \|W\|_{\infty}\int_{\mathcal{A}}\big|p^*(x\given z)-p^*(x\given z)\big|dx + \|W\|_{\infty}\int_{\cX\backslash\mathcal{A}}p^*(x\given z)dx 
        \\
        &\hspace{20em} + \|W\|_{\infty}\int_{\cX\backslash\mathcal{A}}p^*(x\given z')dx
        \\
        &\leq
        \frac{\epsilon}{3} + \frac{\epsilon}{3} + \frac{\epsilon}{6} + \frac{\epsilon}{6}
        \\
        &=\epsilon,
    \end{align*}
    and thus $a$ is continuous. The function $b$ can be shown to be continuous by identical arguments.
\end{proof}

\begin{lemma}\label{lem:lin-indep}
    Suppose the functions $f_1,\ldots,f_n:\cX\to\R$ are linearly independent. Then there exist distinct $x_1,\ldots,x_n\in\cX$ such that the matrix
    \[
    \mb F_n:= \begin{pmatrix}
        f_1(x_1) & \cdots & f_n(x_1) \\
        \vdots & \ddots & \vdots \\
        f_1(x_n) & \cdots & f_n(x_n)
    \end{pmatrix}
    \]
    is invertible.  
\end{lemma}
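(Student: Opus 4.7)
The plan is to induct on $n$. The base case $n=1$ is immediate: linear independence of the single function $f_1$ forces $f_1 \not\equiv 0$, so some $x_1 \in \mathcal{X}$ has $f_1(x_1) \neq 0$ and the $1 \times 1$ matrix $(f_1(x_1))$ is invertible.

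For the inductive step, suppose the claim holds for $n-1$, so we have distinct points $x_1, \ldots, x_{n-1} \in \mathcal{X}$ making the top-left $(n-1) \times (n-1)$ submatrix $\mathbf{F}_{n-1}$ (with entries $f_j(x_i)$, $i,j \in [n-1]$) invertible. I would then view the candidate determinant as a function of a free point $x \in \mathcal{X}$: define
\begin{equation*}
    g(x) := \det \begin{pmatrix} f_1(x_1) & \cdots & f_n(x_1) \\ \vdots & \ddots & \vdots \\ f_1(x_{n-1}) & \cdots & f_n(x_{n-1}) \\ f_1(x) & \cdots & f_n(x) \end{pmatrix}.
\end{equation*}
Cofactor expansion along the last row writes $g$ as a linear combination $g(x) = \sum_{j=1}^n c_j f_j(x)$, where each coefficient $c_j = (-1)^{n+j} \det M_{nj}$ is the signed $(n,j)$-minor. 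In particular $c_n = \det \mathbf{F}_{n-1} \neq 0$ by the inductive hypothesis, so the linear combination is nontrivial. Linear independence of $f_1, \ldots, f_n$ then forces $g \not\equiv 0$, so some $x_n \in \mathcal{X}$ satisfies $g(x_n) \neq 0$.

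It remains to check that $x_n$ can be chosen distinct from $x_1, \ldots, x_{n-1}$, but this is automatic: if $x_n = x_i$ for some $i < n$, the matrix above has two equal rows, forcing $g(x_n) = 0$ and contradicting $g(x_n) \neq 0$. Hence $\mathbf{F}_n$ built from the distinct points $x_1, \ldots, x_n$ has nonzero determinant, completing the induction. No step here is really an obstacle; the only subtlety worth stating carefully is the identification of $c_n$ with the inductive submatrix determinant, which is what guarantees $g$ is a nontrivial combination and hence nonzero by linear independence.
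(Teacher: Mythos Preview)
Your proof is correct and follows essentially the same inductive scheme as the paper's: both fix $x_1,\ldots,x_{n-1}$ by the induction hypothesis, treat the $n\times n$ determinant as a function of a free last point, argue it cannot vanish identically (you via cofactor expansion yielding a nontrivial linear combination with $c_n=\det\mathbf{F}_{n-1}\neq 0$; the paper via the equivalent observation that singularity for all $x$ would force $f_{n+1}=\sum_j c_j f_j$ with $\mathbf{c}=\mathbf{F}_{n-1}^{-1}\mathbf{f}_n$), and then note distinctness of $x_n$ is automatic from the repeated-row argument. The two presentations differ only cosmetically.
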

\begin{proof}
We use induction on $n$. Clearly the result is true when $n=1$, as we can take $x_1$ to be any value such that $f_1(x_1) \neq 0$; $f_1$ cannot be the zero function, nor can $\mathcal{X}$ be the empty set, as this would contradict the set $\{f_1\}$ being linearly independent. Suppose now that the result holds for some $n \geq 1$; we shall show it holds at $n+1$. Given linearly independent functions  $f_1,\ldots,f_{n+1}$, we first use the induction hypothesis to pick distinct points $x_1,\ldots,x_n$ such that the corresponding matrix $\mb F_n$ is invertible. Suppose, for a contradiction, that for all $x$,
\[
\mb F_{n+1}(x) := \begin{pmatrix}
        f_1(x_1) & \cdots & f_n(x_1) & f_{n+1}(x_1)\\
        \vdots & \ddots & \vdots & \vdots\\
        f_1(x_n) & \cdots &f_n(x_n) & f_{n+1}(x_n) \\
        f_1(x) & \cdots & f_n(x) & f_{n+1}(x)
    \end{pmatrix}
\]
is singular. Then writing $\mb c := \mb F_n^{-1} \mb f_{n+1}$, where $\mb f_{n+1} = (f_{n+1}(x_1), \ldots, f_{n+1}(x_n))^\top \in \R^n$, we have that
\[
c_1f_1(x) + \cdots +c_n f_n(x) = f_{n+1}(x)
\]
for all $x \in \mathcal{X}$, contradicting the linear independence of $f_1,\ldots,f_{n+1}$. Thus there exists an $x_{n+1}$ such that $\mb F_{n+1}:= \mb F_{n+1}(x_{n+1})$ is invertible. Moreover, we must have that $x_{n+1} \notin \{x_1,\ldots,x_n\}$, as otherwise $\mb F_{n+1}$ would have two identical rows.
\end{proof}

\begin{lemma}[\citet{app-gyorfi}, Lemma 3.1, adapted]\label{lem:seq}
    Let $b\in(0,1]$ be some constant. Let $(a_n)_{n\in\mathbb{N}}$ be a non-increasing sequence of positive numbers converging to zero with $a_1\leq\alpha$ for some constant $0<\alpha<b$. Then there exists a sequence $(p_k)_{k\in\mathbb{N}}$ of non-negative numbers satisfying
    \begin{equation*}
        \sum_{k=1}^\infty p_k = b
        ,
        \quad
        \text{and}
        \quad
        \sum_{k=1}^\infty p_k \Big(1-\frac{p_k}{b}\Big)^n \geq a_n \; \text{ for all }n\in\mathbb{N}.
    \end{equation*}
\end{lemma}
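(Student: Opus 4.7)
The plan is to construct $(p_k)$ explicitly by partitioning it into successive blocks indexed by $J \geq 1$, where all terms in the $J$-th block take a small geometric value $q_J = b(1-\delta)/2^{J-1}$ for a carefully chosen $\delta \in (\alpha/b, 1)$ (which exists because $\alpha < b$). The key heuristic is that the expression $(1-p_k/b)^n$ is close to $1$ whenever $p_k/b$ is small compared to $1/n$, so the block values $q_J$ should shrink geometrically while the block masses $c_J := \sum_{k \in \text{block } J} p_k$ should carry enough weight at high indices to dominate $a_n$ even as $n$ grows.

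First I would fix $\delta \in (\alpha/b, 1)$ and the values $q_J := b(1-\delta)/2^{J-1}$, and record the Bernoulli-type bound $(1-q_J/b)^n \geq 1 - n(1-\delta)/2^{J-1} \geq \delta$, valid whenever $J \geq J(n) := \ceil{\log_2 n}+1$, since then $2^{J-1} \geq n$. Next, I would define tail masses $t_J := a_{2^{J-2}}/\delta$ for $J\geq 2$ and $t_1 := a_1/\delta$, set $c_J := t_J - t_{J+1} \geq 0$ (using monotonicity of $a_n$), and note that $t_J \to 0$ because $a_n \to 0$, giving $\sum_{J\geq 1} c_J = t_1 = a_1/\delta \leq \alpha/\delta < b$. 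I would then redefine $c_1$ by adding the slack $b - a_1/\delta > 0$ so that $\sum_J c_J = b$ exactly, noting that this inflation of $c_1$ can only help the bound we wish to prove.

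I would then concretely realise $(p_k)$: in block $J$ (when $c_J > 0$), include $\floor{c_J/q_J}$ copies of the value $q_J$ plus one additional entry equal to $c_J - q_J \floor{c_J/q_J} \in (0, q_J]$ so that the block mass equals $c_J$ exactly; for $c_J = 0$ the block is empty. Concatenating over $J$ yields a sequence $(p_k)_{k\geq 1}$ with $\sum_k p_k = b$. For the weighted sum, every $p_k$ in the $J$-th block satisfies $p_k \leq q_J$, so $(1-p_k/b)^n \geq (1-q_J/b)^n$, hence the block's contribution is at least $c_J(1-q_J/b)^n$. Restricting to $J \geq J(n)$ and applying the Bernoulli bound yields
\[
\sum_{k=1}^{\infty} p_k\Big(1-\tfrac{p_k}{b}\Big)^n \geq \delta \sum_{J \geq J(n)} c_J = \delta\, t_{J(n)} = a_{2^{J(n)-2}} \geq a_n,
\]
where the last inequality uses $2^{J(n)-2} = 2^{\ceil{\log_2 n}-1} \leq n$.

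The only cases needing slight care are the small ones: for $n = 1$ we have $J(1) = 1$, so the sum telescopes to $b \delta > \alpha \geq a_1$ (here the padding of $c_1$ is used), and for $n = 2$ we get $J(2) = 2$ with $\delta t_2 = a_1 \geq a_2$. I do not expect a serious conceptual obstacle: the construction is explicit and every step is a direct estimate. The main technical point to watch is precisely the choice of $\delta \in (\alpha/b, 1)$, which is where the hypothesis $\alpha < b$ enters quantitatively, and the integer rounding in the block sizes, which is absorbed by the single remainder term per block whose $p_k$ still satisfies $p_k \leq q_J$.
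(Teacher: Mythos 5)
Your proof is correct: the Bernoulli bound $(1-q_J/b)^n \geq 1 - n(1-\delta)/2^{J-1} \geq \delta$ for $2^{J-1}\geq n$, the telescoping identity $\sum_{J\geq J(n)}c_J = t_{J(n)}$ for $J(n)\geq 2$, the comparison $a_{2^{J(n)-2}}\geq a_n$ (since $2^{\lceil\log_2 n\rceil-1}\leq n$), and the padding of $c_1$ (which restores total mass $b$ and settles $n=1$ via $\delta b>\alpha\geq a_1$) all check out, and the hypothesis $\alpha<b$ enters exactly where you say, in choosing $\delta\in(\alpha/b,1)$. Structurally this is the same strategy as the paper's proof (an adaptation of Lemma 3.1 of Gy\"{o}rfi et al.): an initial absorbing chunk of mass plus blocks of small entries whose tail mass is an inflated version of the tail of $(a_n)$, with the inflation factor cancelling a uniform lower bound on $(1-p_k/b)^n$ over the blocks relevant at stage $n$. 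The realizations differ, though: the paper assigns mass $\tfrac{b}{\alpha}(a_n-a_{n+1})$ to the $n$-th block with entries at most $A/n$ and calibrates $A=\tfrac{2b\log(b/\alpha)}{2+\log(b/\alpha)}$ precisely so that $(1-\tfrac{A}{bn})^n\geq e^{-2A/(2b-A)}$ yields $\tfrac{b}{\alpha}e^{-2A/(2b-A)}=1$, whereas you use dyadic blocks with geometrically shrinking entry sizes $q_J$, Bernoulli's inequality in place of the exponential estimate, and an arbitrary $\delta\in(\alpha/b,1)$, avoiding any exact calibration at the cost of the coarser dyadic comparison $a_{2^{\lceil\log_2 n\rceil-1}}\geq a_n$. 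Both routes prove the lemma; yours is a bit more elementary and self-contained, while the paper's stays closer to the cited source. (The only cosmetic point is the remainder entry, which may be $0$ rather than lying in $(0,q_J]$ when $c_J$ is an exact multiple of $q_J$; since non-negative entries are allowed this is immaterial.)
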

\begin{proof}
    A proof for the case of $(b,\alpha)=(1,\frac{1}{2})$ is given by~\citet{app-gyorfi}, Lemma 3.1 (we require a simple extension, included for completeness). 
    Let $p_1 = b(1-\frac{a_1}{\alpha})$, and define the constant $A := \frac{2b\log(\alpha^{-1}b)}{2+\log(\alpha^{-1}b)}\in(0,2b)$. 
    Then there exist integers $(l_n)_{n\in\mathbb{N}}$ such that $1=l_1<l_2<\cdots$ and $(p_k)_{k\in\mathbb{N}}$ such that
    \begin{equation*}
        p_k \leq \frac{A}{n} ,
    \end{equation*}
    for all $k>l_n$ and
    \begin{equation*}
        \sum_{k=l_n+1}^{m_{n+1}}p_k = \frac{b}{\alpha}(a_n-a_{n+1}).
    \end{equation*}
    Thus
    \begin{equation*}
        \sum_{k=1}^\infty p_k = p_1 + \frac{b}{\alpha}\sum_{i=1}^\infty(a_i-a_{i+1}) = b,
    \end{equation*}
    and, for all $n\geq 2$,
    \begin{multline*}
        \sum_{k=1}^\infty p_k \Big(1-\frac{p_k}{b}\Big)^n 
        \geq 
        \left(1-\frac{A}{bn}\right)^n \sum_{k : p_k\leq\frac{A}{n}}p_k
        \geq
        e^{-\frac{2A}{2b-A}}\sum_{k:p_k\leq\frac{A}{n}}p_k
        \\
        \geq e^{-\frac{2A}{2b-A}}\sum_{k=l_n+1}^\infty p_k
        = \frac{b}{\alpha}e^{-\frac{2A}{2b-A}}\sum_{i=n}^\infty
        (a_i-a_{i+1})
        = \frac{b}{\alpha}e^{-\frac{2A}{2b-A}}a_n = a_n,
    \end{multline*}
    where the final equality follows as $A=\frac{2b\log(\alpha^{-1}b)}{2+\log(\alpha^{-1}b)}$.
\end{proof}

\begin{lemma}\label{lem:limsup}
    Let $(x_{n,m})_{n\in\mathbb{N},m\in\mathbb{N}}$ and $(y_{n,m})_{n\in\mathbb{N},m\in\mathbb{N}}$ be sequences of non-negative numbers. Suppose also $\sup_{m\in\mathbb{N}}y_{n,m}\geq 1$ for all $n\in\mathbb{N}$, and for any subsequence $(m_n)_{n\in\mathbb{N}}$ of the natural numbers $\limsup_{n\to\infty}x_{n,m_n}\geq1$. 
    Then 
    \begin{equation*}
        \limsup_{n\to\infty}\sup_{m\in\mathbb{N}}x_{n,m}y_{n,m} \geq 1.
    \end{equation*}
\end{lemma}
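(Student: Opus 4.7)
The plan is a short direct argument. For each $n \in \mathbb{N}$, I will use the hypothesis $\sup_{m \in \mathbb{N}} y_{n,m} \geq 1$ to pick some $m_n \in \mathbb{N}$ with $y_{n, m_n} \geq 1 - 1/n$: if the supremum is attained, take an attaining index; otherwise choose any $m$ within $1/n$ of the supremum. Applying the second hypothesis to this sequence $(m_n)_{n \in \mathbb{N}}$ then yields $\limsup_{n \to \infty} x_{n, m_n} \geq 1$.

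The conclusion will follow from the chain of inequalities
\begin{equation*}
\limsup_{n\to\infty}\sup_{m\in\mathbb{N}} x_{n,m} y_{n,m} \,\geq\, \limsup_{n\to\infty} x_{n,m_n} y_{n,m_n} \,\geq\, \limsup_{n\to\infty} x_{n,m_n}(1-1/n) \,=\, \limsup_{n\to\infty} x_{n,m_n} \,\geq\, 1,
\end{equation*}
where the penultimate equality uses the standard fact that for a non-negative sequence $(a_n)$ and $c_n \to c > 0$ one has $\limsup_n a_n c_n = c \limsup_n a_n$ (with the usual conventions when $\limsup_n a_n = +\infty$).

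No substantive obstacle is expected; the lemma is essentially a packaging step. The only nuance worth flagging is that the word ``subsequence'' in the hypothesis must be read as ``arbitrary sequence $(m_n) \subset \mathbb{N}$'', which matches how the lemma is invoked in the proof of Theorem~\ref{thm:slow-rates} (where the established statement~\eqref{eq:c} holds for any choice of $(m_n)$). With this reading the construction of $m_n$ above need not be monotone in $n$, and the argument goes through directly.
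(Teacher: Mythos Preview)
Your proof is correct and follows essentially the same approach as the paper: for each $n$ pick a near-maximiser $m_n$ of $y_{n,\cdot}$, then invoke the hypothesis on $x_{n,m_n}$. The only cosmetic difference is that the paper uses a fixed $\epsilon\in(0,1)$ (so $y_{n,m'_n}\geq 1-\epsilon$) and lets $\epsilon\downarrow 0$ at the end, whereas you take $\epsilon_n=1/n$ and absorb it via $\limsup_n a_n(1-1/n)=\limsup_n a_n$; your observation about reading ``subsequence'' as ``arbitrary sequence $(m_n)\subset\mathbb{N}$'' is apt and matches how both the paper's own proof of this lemma and its application in Theorem~\ref{thm:slow-rates} proceed.
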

\begin{proof}
    Fix some $\epsilon\in(0,1)$. For each $n\in\mathbb{N}$ there exists some $m'_n\in\mathbb{N}$ such that
    \begin{equation*}
        y_{n,m'_n} \geq \sup_{m\in\mathbb{N}}y_{n,m}-\epsilon \geq 1-\epsilon,
    \end{equation*}
    where the final inequality follows by assumption. Thus
    \begin{equation*}
        x_{n,m'_n}y_{n,m'_n} \geq (1-\epsilon)x_{n,m'_n},
    \end{equation*}
    and so 
    \begin{equation*}
        \limsup_{n\to\infty}\sup_{m\in\mathbb{N}}x_{n,m}y_{n,m}
        \geq
        \limsup_{n\to\infty}x_{n,m'_n}y_{n,m'_n} \geq (1-\epsilon)\limsup_{n\to\infty}x_{n,m'_n} \geq
        1-\epsilon,
    \end{equation*}
    where the final inequality follows by assumption. 
    Taking $\epsilon\downarrow0$ completes the proof.
\end{proof}

\section{Supplementary Information for Section~\ref{sec:robust-eff-est}}

In Appendix~\ref{appsec:norm-proof} we prove the uniform asymptotic Gaussainity result for the weighted $\theta$ estimator (Theorem~\ref{thm:theta_est_asymp_norm}). We conclude the section by exploring the doubly robust condition of~\ref{ass:consistency-and-DR} in more detail, showing in Appendix~\ref{appsec:DR} that in some models we can accommodate slower nuisance convergence regimes than those given by~\eqref{eq:1/4-rates}.
    
\subsection{Proof of Theorem~\ref{thm:theta_est_asymp_norm}}\label{appsec:norm-proof}
\begin{proof}[Proof of Theorem~\ref{thm:theta_est_asymp_norm}]
    The proof consists of three main parts, showing: consistency of $\hat{\theta}$; asymptotic normality of $\hat{\theta}$; and consistency of the estimator $\hat{V}$ for the asymptotic variance $V^*_P$, all of which will be shown to hold uniformly over $P\in\cP$ (for the class of distributions $\cP$ defined through Assumption~\ref{ass:data}). For notational ease we will suppress the dependence of $\eta$ on the covariates $Z$, and so will notate e.g.~$\psi_j(S;\theta,\eta(Z))$ and $\partial_\theta\psi_j(S;\theta,\eta(Z))$ as $\psi_j(S;\theta,\eta)$ and $\partial_\theta\psi_j(S;\theta,\eta)$ respectively throughout. For avoidance of doubt, for any $q\in\mathbb{N}$ quantities such as $\psi_j^q(S;\theta,\eta(Z))$ should be read as $\{\psi_j(S;\theta,\eta(Z))\}^q$. We also notate $\SIkc:=(S_i)_{i\in\cI_k^c}$ and, as in Algorithm~\ref{alg:main} $\hat{\eta}$ is estimated using $\SIkc$ conditioning on $\SIkc$ can be read as equivalent to conditioning on $\hat{\eta}$

    \medskip
    \noindent{\bf Uniform Consistency:} $\hat{\theta} = \theta_P + o_\cP(1)$
    \smallskip

    By Lemma~\ref{lem:unif_cons} (a simple extension of Theorem 5.9 of~\citet{app-vandervaart} for uniform convergence over $\cP$) alongside Assumption~\ref{ass:unique-solution}, it suffices to show that
    \begin{equation}\label{eq:vdv}
        \sup_{\theta\in\Theta}\bigg|
        \frac{1}{n}\sum_{k=1}^K\sum_{i\in\mathcal{I}_k}\psi(S_i;\theta,\hat{\eta}^{(k)},\hat{w}^{(k)})
        -
        \frac{1}{n}\sum_{i=1}^n \E_P\left[\psi(S_i;\theta,\eta_P,w^*_P)\right]
        \bigg| = o_\cP(1).
    \end{equation}
    To show this, we decompose
    \begin{align*}
        &\quad\sup_{\theta\in\Theta}\bigg|
        \frac{1}{n}\sum_{k=1}^K\sum_{i\in\mathcal{I}_k}\psi(S_i;\theta,\hat{\eta}^{(k)},\hat{w}^{(k)})
        -
        \frac{1}{n}\sum_{i=1}^n \E_P\left[\psi(S_i;\theta,\eta_P,w^*_P)\right]
        \bigg|
        \\
        &\leq
        \sum_{j=1}^J \frac{1}{K}\sum_{k=1}^K \sup_{\theta\in\Theta}\bigg|\frac{K}{n}\sum_{i\in\mathcal{I}_k}\big\{\hat{w}_j^{(k)}(Z_i)\psi_j(S_i;\theta,\hat{\eta}^{(k)})-\E_P\left[w_{P,j}^*(Z_i)\psi_j(S_i;\theta,\eta_P)\right]\big\}\bigg|.
    \end{align*}
    Then
        \begin{align*}
        &\quad \E_P\left[\sup_{\theta\in\Theta}\bigg|\frac{K}{n}\sum_{i\in\mathcal{I}_k}\big\{\hat{w}_j^{(k)}(Z_i)\psi_j(S_i;\theta,\hat{\eta}^{(k)})-\E_P\left[w_{P,j}^*(Z_i)\psi_j(S_i;\theta,\eta_P)\right]\big\}\bigg|\bigggiven\SIkc\right]
        \\
        &\leq 
        \frac{K}{n} \sum_{i\in\mathcal{I}_k}\E_P\left[\big|\hat{w}_j^{(k)}(Z_i)-w_{P,j}^*(Z_i)\big|\cdot\sup_{\theta\in\Theta}|\psi_j(S_i;\theta,\eta_P)|\Biggiven\SIkc\right]
        \\
        &\qquad
        +
        \frac{K}{n}\sum_{i\in\mathcal{I}_k}\E_P\left[\hat{w}_j^{(k)}(Z_i)\sup_{\theta\in\Theta}\big|\psi_j(S_i;\theta,\hat{\eta}^{(k)})-\psi_j(S_i;\theta,\eta_P)\big|\Biggiven\SIkc\right],
        \\
        &\leq 
        \frac{K}{n} \sum_{i\in\mathcal{I}_k}\E_P\left[\big(\hat{w}_j^{(k)}(Z_i)-w_{P,j}^*(Z_i)\big)^2\biggiven\SIkc\right]^{\frac{1}{2}}\E_P\left[
        \sup_{\theta\in\Theta}\big(\psi_j(S_i;\theta,\eta_P)\big)^2\right]^{\frac{1}{2}}
        \\
        &\qquad
        +
        \frac{K c_w}{n}\sum_{i\in\mathcal{I}_k}\E_P\left[\sup_{\theta\in\Theta}\big|\psi_j(S_i;\theta,\hat{\eta}^{(k)})-\psi_j(S_i;\theta,\eta_P)\big|\Biggiven\SIkc\right]
        \\
        &= o_\cP(1)
        ,
    \end{align*}
    using the given assumption on convergence of $\hat{w}^{(k)}$, Assumption~\ref{ass:bounded-moments} and Lemma~\ref{lem:eta-consistency}. Therefore~\eqref{eq:vdv} follows, and thus
    \begin{equation*}
        \hat{\theta} = \theta_P + o_\cP(1).
    \end{equation*}

    \medskip
    \noindent{\bf Uniform Asymptotic Normality: }
    \smallskip
    
    We decompose the empirical $\psi$ function as
    \begin{equation}\label{eq:normality-decomp}
    \begin{aligned}
        \frac{1}{n}\sum_{k=1}^K\sum_{i\in\cI_k} \psi(S_i;\hat{\theta},\hat{\eta}^{(k)},\hat{w}^{(k)}) 
        =&
        \frac{1}{n}\sum_{i=1}^n\psi(S_i;\theta_P,\eta_P,w^*_P)
        \\
        &\hspace{-4em}
        +
        \sum_{j=1}^J\frac{1}{K}\sum_{k=1}^K\underbrace{\frac{K}{n}\sum_{i\in\cI_k}\hat{w}_j^{(k)}(Z_i)\big\{\psi_j(S_i;\hat{\theta},\hat{\eta}^{(k)})-\psi_j(S_i;\theta_P,\hat{\eta}^{(k)})\big\}}_{=: A_{\RN{1},j}^{(k)}}
        \\
        &\hspace{-4em}
        +\sum_{j=1}^J\frac{1}{K}\sum_{k=1}^K\underbrace{\frac{K}{n}\sum_{i\in\cI_k}\E_P\big[\hat{w}_j^{(k)}(Z_i)\big\{\psi_j(S_i;\theta_P,\hat{\eta}^{(k)})-\psi_j(S_i;\theta_P,\eta_P)\big\}\,\big|\,\SIkc\big]}_{=:A_{\RN{2},j}^{(k)}}
        \\
        &\hspace{-4em}
        + \sum_{j=1}^J\frac{1}{K}\sum_{k=1}^K\underbrace{\frac{K}{n}\sum_{i\in\cI_k}\big(\hat{w}_j^{(k)}(Z_i)-w_{P,j}^*(Z)\big)\psi_j(S_i;\theta_P,\eta_P)}_{=:A_{\RN{3},j}^{(k)}}
        \\
        &\hspace{-4em}
        -\sum_{j=1}^J\frac{1}{K}\sum_{k=1}^K\underbrace{\sqrt{\frac{K}{n}}\sum_{i\in\cI_k}\hat{\mathds{G}}_{P}^{(k)}\Big(\hat{w}_j^{(k)}(Z_i)\big\{\psi_j(S_i;\theta_P,\hat{\eta}^{(k)})-\psi_j(S_i;\theta_P,\eta_P)\big\}\Big)}_{=:A_{\RN{4},j}^{(k)}}
        ,
    \end{aligned}
    \end{equation}
    where the empirical process term is defined as
    \begin{equation*}
        \hat{\mathds{G}}_{P}^{(k)}(g(S)) = \sqrt{\frac{K}{n}}\sum_{i\in\cI_k}\Big(g(S_i)-\E_P[g(S_i)\given\SIkc]\Big),
    \end{equation*}
    The first and second terms in this decomposition will dominate the other terms, and contribute to the asymptotic Gaussianity results. We first show that the terms indexed by $\RN{2}, \RN{3}$ and $\RN{4}$ are all $o_\cP(n^{-\frac{1}{2}})$.

    For the term  $A_{\RN{2}}:=\sum_{j=1}^J\frac{1}{K}\sum_{k=1}^K A_{\RN{2},j}^{(k)}$,
        \begin{align*}
        \big|A_{\RN{2},j}^{(k)}\big|
        &=
        \bigg|\frac{K}{n}\sum_{i\in\mathcal{I}_k}\E_P\Big[\hat{w}_j^{(k)}(Z_i) \E_P\big[  \nabla_{\eta(z)}\psi_j(S_i;\theta_P,\eta_P) \given Z_i \big]^\top \big(\hat{\eta}^{(k)}(Z_i)-\eta_P(Z_i)\big)\biggiven\SIkc\Big]
        \\
        &\quad
        + \frac{K}{2n}\sum_{i\in\cI_k}\E_P\Big[\hat{w}_j^{(k)}(Z_i)\big(\hat{\eta}^{(k)}(Z_i)-\eta_P(Z_i)\big)^\top \hat{H}_j^{(k)}(Z_i;\tilde{r})\big(\hat{\eta}^{(k)}(Z_i)-\eta_P(Z_i)\big)\biggiven\SIkc\Big]\bigg|
        \\
        &\leq
        \frac{K c_w}{2n}\sum_{i\in\cI_k}\Big|\E_P\Big[\sup_{r\in[0,1]}\big(\hat{\eta}^{(k)}(Z_i)-\eta_P(Z_i)\big)^\top \hat{H}_j^{(k)}(Z_i;r)\big(\hat{\eta}^{(k)}(Z_i)-\eta_P(Z_i)\big)\biggiven\SIkc\Big]\Big|
        \\
        &= o_\cP\big(n^{-\frac{1}{2}}\big).
    \end{align*}
    for some $\tilde{r}\in[0,1]$, following by `Neyman orthogonality' i.e. $\psi_j(S;\theta_P,\eta_P)$ is orthogonal to the nuisance tangent space of the model;
    \begin{equation*}
        \E_P\big[\nabla_{\eta(z)}\psi_j(S;\theta_P,\eta_P)\given Z\big] = 0,
    \end{equation*}
    which holds under the partially parametric model (Theorem~\ref{thm:IF}), and `double robustness' (Assumption~\ref{ass:consistency-and-DR}). Recall that $$\hat{H}_j^{(k)}(Z_i;r) := \E_P\big[\nabla_{\eta(z)}^2\psi_j\big(S_i;\theta_P,\eta_P(Z_i)+r(\hat{\eta}^{(k)}(Z_i)-\eta_P(Z_i))\big)\given Z_i,\SIkc\big]$$.

    For $A_\RN{3} := \sum_{j=1}^J\frac{1}{K}\sum_{k=1}^K A_{\RN{3},j}^{(k)}$, first note that as $\E_P[\psi_j(S_i;\theta_P,\eta_P)\given Z_i]=0$ we have that $\E_P[A_{\RN{3},j}^{(k)}\given\SIkc]=0$. Also
    \begin{align*}
        &\quad\E_P\big[\big(\sqrt{n}A_{\RN{3},j}^{(k)}\big)^2\given\SIkc\big] 
        \\
        &\leq
        \frac{K^2}{n}\sum_{i\in\mathcal{I}_k}\E_P\big[ \big(\hat{w}_j^{(k)}(Z_i)-w_{P,j}^*(Z_i)\big)^2 \psi_j^2(S_i;\theta_P,\eta_P) \given\SIkc\big]
        \\
        &\leq
        \frac{K^2 2^{\frac{4}{2+\delta}}c_w^{\frac{4}{2+\delta}}}{n}\sum_{i\in\mathcal{I}_k}\E_P\big[ \big(\hat{w}_j^{(k)}(Z_i)-w_{P,j}^*(Z_i)\big)^{\frac{2\delta}{2+\delta}} \psi_j^2(S_i;\theta_P,\eta_P) \given\SIkc\big]
        \\
        &\leq
        \frac{K^2 2^{\frac{4}{2+\delta}}c_w^{\frac{4}{2+\delta}}}{n}\sum_{i\in\mathcal{I}_k}
        \underbrace{\E_P\big[ \big(\hat{w}_j^{(k)}(Z_i)-w_{P,j}^*(Z_i)\big)^2 \given\SIkc\big]^{\frac{\delta}{2+\delta}}}_{=o_\cP(1)\text{ as in the statement of the theorem}} \underbrace{\E_P\big[|\psi_j(S_i;\theta_P,\eta_P)|^{2+\delta}\big]^{\frac{2}{2+\delta}}}_{=O_\cP(1)\text{ by Assumption~\ref{ass:bounded-moments}}}
        \\
        &= o_\cP(1),
    \end{align*}
    by H\"{o}lder's inequality. Thus $A_{\RN{3},j}^{(k)}=o_\cP(n^{-\frac{1}{2}})$ and subsequently $A_{\RN{3}}=o_\cP(n^{-\frac{1}{2}})$.

    For the $A_\RN{4} := \sum_{j=1}^J\frac{1}{K}\sum_{k=1}^K A_{\RN{4},j}^{(k)}$ term, first note that $\E[A_{\RN{4},j}^{(k)}\given\SIkc]=0$, and 
    \begin{align*}
        \E_P\big[\big(\sqrt{n}A_{\RN{4},j}^{(k)}\big)^2\given\SIkc\big]
        &\leq
        \frac{K^2}{n}\sum_{i\in\mathcal{I}_k}\E_P\big[\big(\hat{w}_j^{(k)}(Z_i)\big)^2\big\{\psi_j(S_i;\theta_P,\hat{\eta}^{(k)})-\psi_j(S_i;\theta_P,\eta_P)\big\}^2\given\SIkc\big]
        \\
        &\leq
        \frac{K^2 c_w^2}{n}\sum_{i\in\mathcal{I}_k}\E_P\big[\big\{\psi_j(S_i;\theta_P,\hat{\eta}^{(k)})-\psi_j(S_i;\theta_P,\eta_P)\big\}^2\given\SIkc\big]
        \\
        &= o_\cP(1),
    \end{align*}
    thus $A_{\RN{4},j}^{(k)}=o_\cP(n^{-\frac{1}{2}})$ and so $A_{\RN{4}}=o_\cP(n^{-\frac{1}{2}})$.

    For the $A_\RN{1} := \sum_{j=1}^J\frac{1}{K}\sum_{k=1}^K A_{\RN{1},j}^{(k)}$ term we perform a second order Taylor expansion in $\theta$ about $\theta_P$,
    \begin{align*}
        A_{\RN{1},j}^{(k)}
        &=
        \frac{K}{n}\sum_{i\in\mathcal{I}_k} \hat{w}_j^{(k)}(Z_i)\big\{\psi_j(S_i;\hat{\theta},\hat{\eta}^{(k)}) - \psi_j(S_i;\theta_P,\hat{\eta}^{(k)})\big\}
        \\
        &= 
        \bigg( \frac{K}{n}\sum_{i\in\mathcal{I}_k} \hat{w}_j^{(k)}(Z_i)\nabla_\theta\psi_j(S_i;\theta_P,\hat{\eta}^{(k)}) \bigg)\big(\hat{\theta}-\theta_P\big)
        \\
        &\quad + \bigg(\frac{K}{2n}\sum_{i\in\mathcal{I}_k}\hat{w}_j^{(k)}(Z_i)\nabla_\theta^2\psi_j(S_i;\bar{\theta},\hat{\eta}^{(k)})\bigg)\big(\hat{\theta}-\theta_P\big)^2
        \\
        &= \mathcal{A}_j^{(k)}\big(\hat{\theta}-\theta_P\big),
    \end{align*}
    where
    \begin{align}
        \mathcal{A}_j^{(k)}
        &:= 
        \frac{K}{n}\sum_{i\in\mathcal{I}_k} \hat{w}_j^{(k)}(Z_i)\nabla_\theta\psi_j(S_i;\theta_P,\hat{\eta}^{(k)}) + \Big(\frac{K}{2n}\sum_{i\in\mathcal{I}_k}\hat{w}_j^{(k)}(Z_i)\nabla_\theta^2\psi_j(S_i;\bar{\theta},\hat{\eta}^{(k)})\Big)\big(\hat{\theta}-\theta_P\big) 
        \notag
        \\
        &=
        \underbrace{\frac{K}{n}\sum_{i\in\mathcal{I}_k}\E_P[w_{P,j}^*(Z_i)\nabla_\theta\psi_j(S_i;\theta_P,\eta_P)]}_{=: I_j^{(k)} > 0} 
        \label{eq:I_j^{(k)}}
        \\
        &\quad
        +
        \underbrace{\frac{K}{n}\sum_{i\in\mathcal{I}_k}\hat{w}_j^{(k)}(Z_i)\big\{\nabla_\theta\psi_j(S_i;\theta_P,\hat{\eta}^{(k)}) - \nabla_\theta\psi_j(S_i;\theta_P,\eta_P)\big\}}_{=: A_{\RN{1}a,j}^{(k)}} 
        \notag
        \\
        &\quad
        + \underbrace{\frac{K}{n}\sum_{i\in\mathcal{I}_k}\big(\hat{w}_j^{(k)}(Z_i)-w_{P,j}^*(Z_i)\big)\nabla_\theta\psi_j(S_i;\theta_P,\eta_P)}_{=: A_{\RN{1}b,j}^{(k)}} 
        \notag
        \\
        &\quad
        + \underbrace{\frac{K}{n}\sum_{i\in\mathcal{I}_k}\big\{w_{P,j}^*(Z_i)\nabla_\theta\psi_j(S_i;\theta_P,\eta_P)-\E_P[w_{P,j}^*(Z_i)\nabla_\theta\psi_j(S_i;\theta_P,\eta_P)]\big\}}_{=: A_{\RN{1}c,j}^{(k)}} 
        \notag
        \\
        &\quad + 
        \underbrace{\bigg(\frac{K}{2n}\sum_{i\in\mathcal{I}_k}\hat{w}_j^{(k)}(Z_i)\nabla_\theta^2\psi_j(S_i;\bar{\theta},\hat{\eta}^{(k)})\bigg)\big(\hat{\theta}-\theta_P\big)}_{=: A_{\RN{1}d,j}^{(k)}}, 
        \notag
    \end{align}
    for some $\bar{\theta}\in[(\hat{\theta}\wedge\theta_P) \,,\, (\hat{\theta}\vee\theta_P)]$. For the term $A_{\RN{1}a,j}^{(k)}$,
\begin{align*}
    \E_P\big[|A_{\RN{1}a,j}^{(k)}|\given\SIkc\big]
    &\leq
    \frac{K}{n}\sum_{i\in\mathcal{I}_k}\E_P\big[ |\hat{w}_j^{(k)}(Z_i)|\cdot\big|\nabla_\theta\psi_j(S_i;\theta_P,\hat{\eta}^{(k)})-\nabla_\theta\psi_j(S_i;\theta_P,\eta_P)\big| \given\SIkc\big]
    \\
    &\leq 
    \frac{K c_w}{n}\sum_{i\in\mathcal{I}_k}\E_P\big[ \big|\nabla_\theta\psi_j(S_i;\theta_P,\hat{\eta}^{(k)})-\nabla_\theta\psi_j(S_i;\theta_P,\eta_P)\big| \given\SIkc\big]
    \\
    &=o_\cP(1),
\end{align*}
by Lemma~\ref{lem:eta-consistency}. Also,
\begin{align*}
    \E_P\big[|A_{\RN{1}b,j}^{(k)}|\given\SIkc\big]
    &\leq
    \frac{K}{n}\sum_{i\in\mathcal{I}_k} \E_P\Big[\big|\hat{w}_j^{(k)}(Z_i)-w_{P,j}^*(Z_i)\big|\cdot\big|\nabla_\theta\psi_j(S_i;\theta_P,\eta_P)\big|\biggiven\SIkc\Big]
    \\
    &\leq 
    \frac{K}{n}\sum_{i\in\mathcal{I}_k}\E_P\Big[\big(\hat{w}_j^{(k)}(Z_i)-w_{P,j}^*(Z_i)\big)^2\biggiven\SIkc\Big]^{\frac{1}{2}}\E_P\Big[\big(\nabla_\theta\psi_j(S_i;\theta_P,\eta_P)\big)^2\Big]^{\frac{1}{2}}
    \\
    &=
    o_\cP(1),
\end{align*}
using the assumption in the statement of Theorem~\ref{thm:theta_est_asymp_norm} (holding e.g.~for ROSE random forest generated weights by Theorem~\ref{thm:rose_forest_consistency}), and so $A_{\RN{1}b,j}^{(k)}=o_\cP(1)$. 
Also, note that $\E_P\big[A_{\RN{1}c,j}^{(k)}\big]=0$ and
\begin{align*}
    \supP\E\big[\big(A_{\RN{1}c,j}^{(k)}\big)^2\big]
    &\leq
    \frac{K^2}{n^2}\sum_{i\in\mathcal{I}_k}\supP\E_P\Big[\big(w_{P,j}^{*}(Z_i)\big)^2\big(\nabla_\theta\psi_j(S_i;\theta_P,\eta_P)\big)^2\Big]
    \\
    &\leq
    \frac{K^2 c_w^2}{n^2}\sum_{i\in\mathcal{I}_k}\supP\E_P\Big[\big(\nabla_\theta\psi_j(S_i;\theta_P,\eta_P)\big)^2\Big]
    \\
    &=o(1).
\end{align*}
Finally, for the term $A_{\RN{1}d,j}^{(k)}$ note that
\begin{align*}
    \E_P\left[\bigg|\frac{K}{2n}\sum_{i\in\mathcal{I}_k}\hat{w}_j^{(k)}(Z_i)\nabla_\theta^2\psi_j(S_i;\bar{\theta},\hat{\eta}^{(k)})\bigg|\Bigggiven\SIkc\right]
    &\leq
    \frac{K c_w}{2n}\sum_{i\in\mathcal{I}_k}\E_P\left[\sup_{\theta\in\Theta}\big|\nabla_\theta^2\psi_j(S_i;\theta,\hat{\eta}^{(k)})\big|\Biggiven\SIkc\right]
    \\
    &=
    O_\cP(1).
\end{align*}
This, in conjunction with $\hat{\theta}-\theta_P=o_\cP(1)$ gives $A_{\RN{1}d,j}^{(k)}=o_\cP(1)$.

Combining all these results gives $A_{\RN{1}a,j}^{(k)}+A_{\RN{1}b,j}^{(k)}+A_{\RN{1}c,j}^{(k)}+A_{\RN{1}d,j}^{(k)}=o_\cP(1)$ and thus, recalling $I_j^{(k)}$ as defined in~\eqref{eq:I_j^{(k)}},
\begin{equation*}
    A_{\RN{1},j}^{(k)} = \big(I_j^{(k)} + o_\cP(1)\big) \big(\hat{\theta}-\theta_P\big),
\end{equation*}
and thus (recalling both $J$ and $K$ are finite)
\begin{align*}
    A_{\RN{1}} 
    &= 
    \bigg(\sum_{j=1}^J\frac{1}{K}\sum_{k=1}^K I_j^{(k)} + o_\cP(1)\bigg) \big(\hat{\theta}-\theta_P\big)
    \\
    &=
    \bigg(\frac{1}{n}\sum_{i=1}^n\sum_{j=1}^J\E_P\big[w_{P,j}^*(Z_i)\nabla_\theta\psi_j(S_i;\theta_P,\eta_P)\big] + o_\cP(1)\bigg) \big(\hat{\theta}-\theta_P\big)
    .
\end{align*}
Further noting that $\E_P\big[\nabla_\theta\psi_j(S;\theta_P,\eta_P)\given Z\big] \allowbreak = \allowbreak \E_P\big[\partial_\theta\psi_j(S;\theta_P,\eta_P)\given Z\big]$, the decomposition~\eqref{eq:normality-decomp} becomes
\begin{equation*}
    o_\cP(1) + \frac{1}{\sqrt{n}}\sum_{i=1}^n \psi(S_i;\theta_P,\eta_P,w^*_P) = \bigg(\sum_{j=1}^J\E_P[w_{P,j}^*(Z)\partial_\theta\psi_j(S;\theta_P,\eta_P)] + o_\cP(1)\bigg)\cdot\sqrt{n}\big(\hat{\theta}-\theta_P\big).
\end{equation*}
Upon rearranging we have
\begin{equation*}
    \sqrt{n}\big(\hat{\theta}-\theta_P\big)
    =
    \frac{\frac{1}{\sqrt{n}}\sum_{i=1}^n \psi(S_i;\theta_P,\eta_P,w^*_P) + o_\cP(1)}{\sum_{j=1}^J\E_P[w_{P,j}^*(Z)\partial_\theta\psi_j(S;\theta_P,\eta_P)] + o_\cP(1)},
\end{equation*}
noting in particular the non-singularity assumption~\eqref{eq:invertible};
\begin{equation*}
    \inf_{P\in\cP}\sum_{j=1}^J\E_P[w_{P,j}^*(Z)\partial_\theta\psi_j(S;\theta_P,\eta_P)]
    =
    \inf_{P\in\cP}\sum_{j=1}^J\E_P[w_{P,j}^*(Z)\nabla_\theta\psi_j(S;\theta_P,\eta_P)]
    >0
\end{equation*}
The result then follows by Lemmas~\ref{lem:unif_clt} and~\ref{lem:unif_slutsky};
\begin{equation*}
    \lim_{n\to\infty}\supP\sup_{t\in\R}\left|\PP_P\left( \sqrt{n/V^*_P}\big(\hat{\theta}_{\hat{w}}-\theta_P\big) \leq t \right)-\Phi(t)\right| = 0,
\end{equation*}
for $V^*_P$ as in the statement of the theorem.

\medskip
    \noindent{\bf Consistency of $\hat{V}$: }
    \smallskip
    
We finally must show that, for $\hat{V}$ as in Algorithm~\ref{alg:main},
\begin{equation*}
    \lim_{n\to\infty}\supP\sup_{t\in\R}\left|\PP_P\left( \sqrt{n/\hat{V}}\big(\hat{\theta}_{\hat{w}}-\theta_P\big) \leq t \right)-\Phi(t)\right| = 0.
\end{equation*}
By Lemma~\ref{lem:unif_slutsky} it sufficies to show
\begin{equation*}
    \hat{V} = V^*_P + o_\cP(1).
\end{equation*}
The estimator $\hat{V}$ in Algorithm~\ref{alg:main} can be decomposed as
\begin{align}\label{eq:V-decomp}
    \hat{V} &= \frac{\sum_{j=1}^J\frac{1}{K}\sum_{k=1}^K \frac{K}{n}\sum_{i\in\cI_k}\big(\hat{w}_j^{(k)}(Z_i)\big)^2\psi_j^2(S_i;\hat{\theta},\hat{\eta}^{(k)})}{\left(\sum_{j=1}^J\frac{1}{K}\sum_{k=1}^K \frac{K}{n}\sum_{i\in\cI_k}\hat{w}_j^{(k)}(Z_i)\partial_\theta\psi_j(S_i;\hat{\theta},\hat{\eta}^{(k)})\right)^2}
    \nonumber\\
    &=
    \left( \sum_{j=1}^J\E_P\left[w_{P,j}^*(Z)\partial_\theta\psi_j(S;\theta_P,\eta_P)\right] + \sum_{j=1}^J \Big( B_{\RN{1},j} + \frac{1}{K}\sum_{k=1}^K \big(B_{\RN{2},j}^{(k)}+B_{\RN{3},j}^{(k)}+B_{\RN{4},j}^{(k)}\big) \Big) \right)^{-2}
    \nonumber\\
    &\qquad
    \cdot\left(\sum_{j=1}^J\E_P\left[w^{*2}(Z)\psi_j^2(S;\theta_P,\eta_P)\right] + \sum_{j=1}^J \Big(C_{\RN{1},j} + \frac{1}{K}\sum_{k=1}^K\big(C_{\RN{2},j}^{(k)}+C_{\RN{3},j}^{(k)}+C_{\RN{4},j}^{(k)}\big)\Big) \right),
\end{align}
where
\begin{align*}
    B_{\RN{1},j} &:= \frac{1}{n}\sum_{i=1}^n\big(w_{P,j}^*(Z_i)\partial_\theta\psi_j(S_i;\theta_P,\eta_P)-\E_P\left[w^*_P(Z)\partial_\theta\psi(S;\theta_P,\eta_P)\right]\big)
    \\
    B_{\RN{2},j}^{(k)} &:= \frac{K}{n}\sum_{i\in\cI_k}\big(\hat{w}_j^{(k)}(Z_i)-w_{P,j}^*(Z_i)\big)\partial_\theta\psi_j(S_i;\theta_P,\eta_P)
    \\
    B_{\RN{3},j}^{(k)} &:= \frac{K}{n}\sum_{i\in\cI_k}\hat{w}_j^{(k)}(Z_i)\big\{\partial_\theta\psi_j(S_i;\theta_P,\hat{\eta}^{(k)})-\partial_\theta\psi_j(S_i;\theta_P,\eta_P)\big\}
    \\
    B_{\RN{4},j}^{(k)} &:= \frac{K}{n}\sum_{i\in\cI_k}\hat{w}_j^{(k)}(Z_i)\big\{\partial_\theta\psi_j(S_i;\hat{\theta},\hat{\eta}^{(k)})-\partial_\theta\psi_j(S_i;\theta_P,\hat{\eta}^{(k)})\big\}
    \\
    C_{\RN{1},j} &:= \frac{K}{n}\sum_{i=1}^n\Big(\big(w_{P,j}^{*}(Z_i)\big)^2\psi_j^2(S_i;\theta_P,\eta_P) - \E_P\left[w_{P,j}^{*2}(X)\psi_j^2(S;\theta_P,\eta_P)\right]\Big)
    \\
    C_{\RN{2},j}^{(k)} &:= \frac{K}{n}\sum_{i\in\cI_k}\Big(\big(\hat{w}_j^{(k)}(Z_i)\big)^2-\big(w_{P,j}^{*}(Z_i)\big)^2\Big)\psi_j^2(S_i;\theta_P,\eta_P)
    \\
    C_{\RN{3},j}^{(k)} &:= \frac{K}{n}\sum_{i\in\cI_k}\big(\hat{w}_j^{(k)}(Z_i)\big)^2\big\{\psi_j^2(S_i;\theta_P,\hat{\eta}^{(k)})-\psi_j^2(S_i;\theta_P,\eta_P)\big\}
    \\
    C_{\RN{4},j}^{(k)} &:= \frac{K}{n}\sum_{i\in\cI_k}\big(\hat{w}_j^{(k)}(Z_i)\big)^2\big\{ \psi_j^2(S_i;\hat{\theta},\hat{\eta}^{(k)}) - \psi_j^2(S_i;\theta_P,\hat{\eta}^{(k)}) \big\}
    .
\end{align*}
We proceed by showing all of these terms are $o_\cP(1)$ for all $(j,k)\in[J]\times[K]$. 

    For the term $B_{\RN{1},j}$, note that
    \begin{align*}
        &
        \supP\E_P\left[\big(B_{\RN{1},j}\big)^2\right]
        \\
        &\leq \frac{K^2}{n^2}\sum_{i=1}^n\supP\E_P\left[\big(w_{P,j}^*(Z_i)\partial_\theta\psi_j(S_i;\theta_P,\eta_P)-\E_P\left[w_{P,j}^*(Z_i)\partial_\theta\psi_j(S;\theta_P,\eta_P)\right]\big)^2\right]
        \\
        &\leq
        \frac{K^2}{n^2}\sum_{i=1}^n\supP\E_P\left[\big(w_{P,j}^{*}(Z_i)\big)^2\big(\partial_\theta\psi_j(S_i;\theta_P,\eta_P)\big)^2\right]
        \\
        &\leq 
        \frac{K^2 c_w^2}{n}\supP\E_P\left[\big(\partial_\theta\psi_j(S_i;\theta_P,\eta_P)\big)^2\right]
        = O(n^{-1}K) = o(1),
    \end{align*}
    by Assumptions~\ref{ass:bounded-moments}~and~\ref{ass:bounded-w}. Therefore by Markov's inequality $\B{1}=o_\cP(1)$.

    Consider the term $B_{\RN{2},j}^{(k)}$. By the triangle and Cauchy--Schwarz inequalities,
    \begin{multline*}
        \E_P\left[\big|B_{\RN{2},j}^{(k)}\big|\biggiven\SIkc\right]
        \leq
        \frac{K}{n}\sum_{i\in\cI_k}\E_P\bigg[\Big|\big(\hat{w}_j^{(k)}(Z_i)-w_{P,j}^*(Z_i)\big)\partial_\theta\psi_j(S_i;\theta_P,\eta_P)\Big|\Biggiven\SIkc\bigg]
        \\
        \leq \frac{K}{n}\sum_{i\in\cI_k}\E_P\left[\big(\hat{w}_j^{(k)}(Z_i)-w_{P,j}^*(Z_i)\big)^2\biggiven\SIkc\right]^{\frac{1}{2}}\E_P\left[\big(\partial_\theta\psi_j(S_i;\theta_P,\eta_P)\big)^2\right]^{\frac{1}{2}}
        =o_\cP(1),
    \end{multline*}
    using Assumption~\ref{ass:bounded-moments} and Theorem~\ref{thm:rose_forest_consistency}, and so  $B_{\RN{2},j}^{(k)}=o_\cP(1)$.

    For the term $B_{\RN{3},j}^{(k)}$,
    \begin{equation*}
        \E_P\left[\big|B_{\RN{3},j}^{(k)}\big|\biggiven\SIkc\right]
        \leq
        \frac{K c_w}{n}\sum_{i\in\cI_k}\E_P\left[\big|\partial_\theta\psi_j(S_i,\theta_P,\hat{\eta}^{(k)})-\partial_\theta\psi_j(S_i,\theta_P,\eta_P)\big|\biggiven\SIkc\right]
        = o_\cP(1),
    \end{equation*}
    by Lemma~\ref{lem:eta-consistency}. Again, $B_{\RN{3},j}^{(k)}=o_\cP(1)$ follows by Markov's inequality.

    For the term $B_{\RN{4},j}^{(k)}$,
    \begin{align*}
        \supP\E_P\Big[\big|B_{\RN{4},j}^{(k)}\big|\Big]
        &\leq \frac{K c_w}{n}\sum_{i\in\cI_k}\E_P\left[\big|\partial_\theta\psi_j(S_i;\hat{\theta},\hat{\eta}^{(k)})-\partial_\theta\psi_j(S_i;\theta_P,\hat{\eta}^{(k)})\big|\right]
        \\
        &\leq \frac{K c_w}{n}\sum_{i\in\cI_k}\supP\E_P\left[\Big(\sup_{\theta\in\Theta}\big|\nabla_\theta\partial_\theta\psi_j(S_i;\theta,\hat{\eta}^{(k)})\big|\Big)\cdot|\hat{\theta}-\theta_P|
        \right]
        \\
        &\leq \frac{K c_w}{n}\sum_{i\in\cI_k}\supP\E_P\left[\Big(\sup_{\theta\in\Theta}\big|\nabla_\theta\partial_\theta\psi_j(S_i;\theta,\hat{\eta}^{(k)})\big|\Big)^2\right]^{\frac{1}{2}}
        \E_P\left[\big(\hat{\theta}-\theta_P\big)^2\right]^{\frac{1}{2}}
        \\
        &= \frac{K c_w}{n^{\frac{3}{2}}}\sum_{i\in\cI_k}\supP\E_P\left[\Big(\sup_{\theta\in\Theta}\big|\nabla_\theta\partial_\theta\psi_j(S_i;\theta,\hat{\eta}^{(k)})\big|\Big)^2\right]^{\frac{1}{2}}
        \E_P\left[n\big(\hat{\theta}-\theta_P\big)^2\right]^{\frac{1}{2}}
        \\
        &= O(n^{-\frac{1}{2}}),
    \end{align*}
    by Lemma~\ref{lem:unif_root-n-theta-moments-bounded}, and thus by Markov's inequality $B_{\RN{4},j}^{(k)}=o_\cP(1)$.

We show $C_{\RN{1}}=o_\cP(1)$ separately for the two cases $\delta\geq2$ and $\delta\in(0,2)$. If $\delta\geq2$ then
\begin{multline*}
    \supP\E_P\left[C_{\RN{1},j}^2\right]
    \leq \frac{1}{n^2}\sum_{i=1}^n\supP\E_P\left[\big(w_{P,j}^{*}(Z_i)\big)^4\psi_j^4(S_i;\theta_P,\eta_P)\right]
    \\
    \leq\frac{c_w^4}{n^2}\sum_{i=1}^n\supP\E_P\left[\psi_j^4(S_i;\theta_P,\eta_P)\right] = O(n^{-1}),
\end{multline*}
by Assumptions~\ref{ass:bounded-moments}~and~\ref{ass:bounded-w}, with the result then following by Markov's Theorem. If however $\delta\in(0,2)$ then by the von--Bahr--Esseen inequality
\begin{align*}
    \supP\E_P\left[\left|C_{\RN{1},j}\right|^{1+\frac{\delta}{2}}\right] 
    &= n^{-\left(1+\frac{\delta}{2}\right)}\supP\E_P\left[\left|\sum_{i=1}^n\big(w_{P,j}^{*}(Z_i)\big)^2\psi_j^2(S_i;\theta_P,\eta_P)\right|\right]
    \\
    &\leq \left(2-n^{-1}\right)n^{-\left(1+\frac{\delta}{2}\right)}\sum_{i=1}^n\supP\E_P\left[w_{P,j}^{*(2+\delta)}(Z_i)\left|\psi_j(S_i;\theta_P,\eta_P)\right|^{2+\delta}\right]
    \\
    &\leq 2M^{4+2\delta}n^{-\left(1+\frac{\delta}{2}\right)}\sum_{i=1}^n\supP\E_P\left[\left|\psi_j(S_i;\theta_P,\eta_P)\right|^{2+\delta}\right]
    = O\big(n^{-\frac{\delta}{2}}\big).
\end{align*}
In either case, the result $C_{\RN{1},j}=o_\cP(1)$ follows for any $\delta>0$.

For the term $C_{\RN{2},j}^{(k)}$, again consider the cases $\delta\geq 2$ and $\delta\in(0,2)$ separately. If $\delta\geq 2$
\begin{align*}
    &\E_P\left[\big|C_{\RN{2},j}^{(k)}\big|\biggiven\SIkc\right]
    \\
    &\leq \frac{K}{n}\sum_{i\in\cI_k}\supP\E_P\left[\big|\hat{w}_j^{(k)}(Z_i)+w_{P,j}^*(Z_i)\big|\cdot\big|\hat{w}_j^{(k)}(Z_i)-w_{P,j}^*(Z_i)\big|\cdot\psi_j^2(S_i;\theta_P,\eta_P)\biggiven\SIkc\right]
    \\
    &\leq\frac{2 K c_w}{n}\sum_{i\in\cI_k}\E_P\left[\big|\hat{w}_j^{(k)}(Z_i)-w_{P,j}^*(Z_i)\big|\big(\psi_j(S_i;\theta_P,\eta_P)\big)^2\biggiven\SIkc\right]
    \\
    &\leq \frac{2 K c_w}{n}\sum_{i\in\cI_k}\E_P\left[\big(\hat{w}_j^{(k)}(Z_i)-w_{P,j}^*(Z_i)\big)^2\biggiven\SIkc\right]^{\frac{1}{2}}\E_P\left[\psi_j^4(S_i;\theta_P,\eta_P)\right]^{\frac{1}{2}}
    = o_\cP(1),
\end{align*}
using the triangle inequality, the Cauchy--Schwarz inequality, and the ROSE random forest consistency result of Theroem~\ref{thm:rose_forest_consistency}.
If on the other hand $\delta\in(0,2)$ then
\begin{align*}
    &\E_P\left[\big|C_{\RN{2},j}^{(k)}\big|\biggiven\SIkc\right]
    \\
    &\leq\frac{2 K c_w}{n}\sum_{i\in\cI_k}\supP\E_P\left[\big|\hat{w}_j^{(k)}(Z_i)-w_{P,j}^*(Z_i)\big|\psi_j^2(S_i;\theta_P,\eta_P)\biggiven\SIkc\right]
    \\
    &\leq \frac{2 K c_w}{n}\sum_{i\in\cI_k}\supP\E_P\left[\big|\hat{w}_j^{(k)}(Z_i)-w_{P,j}^*(Z_i)\big|^{2+\frac{2-\delta}{\delta}}\Biggiven\SIkc\right]^{\frac{\delta}{2+\delta}}\E_P\left[\left|\psi_j(S_i;\theta_P,\eta_P)\right|^{2+\delta}\right]^{\frac{2}{2+\delta}}
    \\
    &\leq \frac{K \left(2c_w\right)^{\frac{4}{2+\delta}}}{n}\sum_{i\in\cI_k}\supP\E_P\left[\big|\hat{w}_j^{(k)}(Z_i)-w_{P,j}^*(Z_i)\big|^2\biggiven\SIkc\right]^{\frac{\delta}{2+\delta}}\E_P\left[\left|\psi_j(S_i;\theta_P,\eta_P)\right|^{2+\delta}\right]^{\frac{2}{2+\delta}}
    \\
    & = o_\cP(1),
\end{align*}
by the triangle inequality, H{\"o}lder's inequality, Assumptions~\ref{ass:bounded-moments}~and~\ref{ass:bounded-w} and Theorem~\ref{thm:rose_forest_consistency}.

For the term $C_{\RN{3},j}^{(k)}$,
\begin{align*}
    &\E_P\left[\big|C_{\RN{3},j}^{(k)}\big|\biggiven\SIkc\right]
    \\
    &\leq
    \frac{K c_w^{2}}{n}\sum_{i\in\cI_k}\E_P\left[\big|\psi_j^2(S_i;\theta_P,\hat{\eta}^{(k)}) - \psi_j^2(S_i;\theta_P,\eta_P) \big|\biggiven\SIkc\right]
    \\
    &\leq
    \frac{2 K c_w^2}{n}\sum_{i\in\mathcal{I}_k}\E_P\left[\big(\psi_j(S_i;\theta_P,\hat{\eta}^{(k)})\big)^2\biggiven\SIkc\right]^{\frac{1}{2}}\E_P\left[\big\{\psi_j(S_i;\theta_P,\hat{\eta}^{(k)})-\psi_j(S_i;\theta_P,\eta_P)\big\}^2\biggiven\SIkc\right]^{\frac{1}{2}}
    \\
    &\qquad + 
    \frac{K c_w^2}{n}\sum_{i\in\mathcal{I}_k}\E_P\left[\big\{\psi_j(S_i;\theta_P,\hat{\eta}^{(k)})-\psi_j(S_i;\theta_P,\eta_P)\big\}^2\biggiven\SIkc\right]
    \\
    & = o_\cP(1),
\end{align*}
by the Cauchy--Schwarz inequality and Lemma~\ref{lem:eta-consistency}. Therefore it follows that $\E_P\big[|C_{\RN{3},j}^{(k)}|\given\SIkc\big]=o_\cP(1)$ and thus $C_{\RN{3},j}^{(k)}=o_\cP(1)$.

For the $C_{\RN{4},j}^{(k)}$ term, first note
\begin{align*}
    &\E_P\left[\big|C_{\RN{4},j}^{(k)}\big|\biggiven\SIkc\right] 
    \\
    &\leq
    \frac{K c_w^{2}}{n}\sum_{i\in\cI_k}\E_P\left[\big|\psi_j^2(S_i;\hat{\theta},\hat{\eta}^{(k)}) - \psi_j^2(S_i;\theta_P,\hat{\eta}^{(k)}) \big| \biggiven \SIkc\right]
    \\
    &\leq
    \frac{2 K c_w^2}{n}\sum_{i\in\mathcal{I}_k}\E_P\left[\psi_j^2(S_i;\theta_P,\hat{\eta}^{(k)}) \biggiven \SIkc\right]^{\frac{1}{2}}\E_P\left[\big\{\psi_j(S_i;\hat{\theta},\hat{\eta}^{(k)})-\psi_j(S_i;\theta_P,\hat{\eta}^{(k)})\big\}^2 \biggiven \SIkc
    \right]^{\frac{1}{2}}
    \\
    &\qquad + 
    \frac{K c_w^2}{n}\sum_{i\in\mathcal{I}_k}\E_P\left[\big\{\psi_j(S_i;\hat{\theta},\hat{\eta}^{(k)})-\psi_j(S_i;\theta_P,\hat{\eta}^{(k)})\big\}^2\biggiven\SIkc\right]
    ,
    \\
    & = 
    \frac{K c_w^2}{n}\sum_{i\in\mathcal{I}_k}\E_P\left[\big\{\psi_j(S_i;\hat{\theta},\hat{\eta}^{(k)})-\psi_j(S_i;\theta_P,\hat{\eta}^{(k)})\big\}^2\biggiven\SIkc\right] + o_\cP(1),
\end{align*}
by Lemma~\ref{lem:eta-consistency}. Further,
\begin{align*}
    &\quad\supP\E_P\left[\big\{\psi_j(S_i;\hat{\theta},\hat{\eta}^{(k)})-\psi_j(S_i;\theta_P,\hat{\eta}^{(k)})\big\}^2\right]
    \\
    &\leq
    \supP\E_P\left[\Big(\sup_{\theta\in\Theta}\big|\nabla_\theta\psi_j(S_i;\theta,\hat{\eta}^{(k)})\big|\Big)^2\big(\hat{\theta}-\theta_P\big)^2\right]
    \\
    &\leq
    n^{-1}\supP\E_P\left[\Big(\sup_{\theta\in\Theta}\big|\nabla_\theta\psi_j(S_i;\theta,\hat{\eta}^{(k)})\big|\Big)^{2+\delta}\right]^{\frac{1}{2+\delta}} 
    \cdot
    \supP\E_P\Big[\big(\sqrt{n}\big(\hat{\theta}-\theta_P\big)\big)^{\frac{2(2+\delta)}{\delta}}\Big]^{\frac{\delta}{2+\delta}}
    \\
    &= o(1),
\end{align*}
by H\"{o}lder's inequality, Assumptions~\ref{ass:bounded-moments} and the asymptotic normality result of Theorem~\ref{thm:theta_est_asymp_norm} alongside Lemma~\ref{lem:unif_root-n-theta-moments-bounded}.

In summary, we have shown  $B_{\RN{1},j},B_{\RN{2},j}^{(k)},B_{\RN{3},j}^{(k)},B_{\RN{4},j}^{(k)},C_{\RN{1},j},C_{\RN{2},j}^{(k)},C_{\RN{3},j}^{(k)},C_{\RN{4},j}^{(k)} = o_\cP(1)$, and thus
\begin{equation*}
    \hat{V} = V^*_P + o_\cP(1).
\end{equation*}
Applying a uniform version of Slutsky's Theorem (Lemma~\ref{lem:unif_slutsky}) to the decomposition~\eqref{eq:V-decomp} completes the proof;
    \begin{equation*}
        \lim_{n\to\infty}\supP\sup_{t\in\R}\left|\PP_P\left( \sqrt{n/\hat{V}}\big(\hat{\theta}-\theta_P\big) \leq t \right)-\Phi(t)\right| = 0,
    \end{equation*}
\end{proof}

\subsubsection{Auxiliary lemmas for the asymptotic Gaussianity results}

\begin{lemma}\label{lem:eta-consistency}
    Under Assumption~\ref{ass:data} (and adopting the notation and setup of Theorems~\ref{thm:theta_est_asymp_norm}~and~\ref{thm:rose_forest_consistency}) the following hold for each $j\in[J]$:
    \begin{enumerate}[label=(\roman*)]
        \item $\E_P\big[\sup_{\theta\in\Theta}|\psi_j(S;\theta,\hat{\eta}) - \psi_j(S;\theta,\eta_P)| \given \hat{\eta}\big] = o_\cP(1)$,
        \item $\E_P\big[\big\{\psi_j(S;\theta_P,\hat{\eta}) - \psi_j(S;\theta_P,\eta_P)|\big\}^2 \given \hat{\eta}\big] = o_\cP(1)$,
        \item $\E_P\big[\big|\nabla_\theta\psi_j(S;\theta_P,\hat{\eta}) - \nabla_\theta\psi_j(S;\theta_P,\eta_P)|\big| \given \hat{\eta}\big] = o_\cP(1)$,
        \item $\E_P\big[|\partial_\theta\psi_j(S;\theta_P,\hat{\eta}) - \partial_\theta\psi_j(S;\theta_P,\eta_P)| \given \hat{\eta}\big] = o_\cP(1)$.
    \end{enumerate}
\end{lemma}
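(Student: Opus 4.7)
The plan is to treat all four parts by a single template: expand the relevant quantity in $\hat\eta - \eta_P$, control it pointwise by $\|\hat\eta-\eta_P\|_2$ times a derivative in $\eta$, and combine with the nuisance consistency rate from Assumption~\ref{ass:consistency-and-DR}. Concretely, by the fundamental theorem of calculus,
\begin{equation*}
\psi_j(S;\theta,\hat\eta) - \psi_j(S;\theta,\eta_P) = \int_0^1 \nabla_\eta\psi_j\big(S;\theta,\eta_P+r(\hat\eta-\eta_P)\big)^\top(\hat\eta-\eta_P)\,dr,
\end{equation*}
so $\sup_{\theta\in\Theta}|\psi_j(S;\theta,\hat\eta) - \psi_j(S;\theta,\eta_P)|$ is bounded by $\sup_{\theta,r}\|\nabla_\eta\psi_j(S;\theta,\eta_P+r(\hat\eta-\eta_P))\|_2 \cdot \|\hat\eta-\eta_P\|_2$. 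Taking the conditional expectation $\E_P[\cdot\,|\,\hat\eta]$ and applying Cauchy--Schwarz yields part~(i): the second factor contributes $O_{\cP}(n^{-\lambda/2})=o_\cP(1)$ by~\ref{ass:consistency-and-DR}, while the first is $O_\cP(1)$ by a $(2+\delta)$-moment bound on $\nabla_\eta\psi_j$ along the interpolation path, entirely analogous in spirit to Assumption~\ref{ass:bounded-moments}.

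Parts (ii)--(iv) follow the same template with minor modifications. For (ii), I would bound $\{\psi_j(S;\theta_P,\hat\eta)-\psi_j(S;\theta_P,\eta_P)\}^2$ by $\sup_r\|\nabla_\eta\psi_j\|_2^2\cdot\|\hat\eta-\eta_P\|_2^2$ and then apply H\"{o}lder's inequality with exponents $(2+\delta)/2$ and $(2+\delta)/\delta$ so as to separate the bounded $(2+\delta)$-moment factor from an $\|\hat\eta-\eta_P\|_2^{2(2+\delta)/\delta}$ factor; the latter is $o_\cP(1)$ because $\|\hat\eta-\eta_P\|_2$ may, without loss of generality, be treated as uniformly bounded (truncating if necessary and invoking Assumption~\ref{ass:consistency-and-DR}). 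Parts (iii) and (iv) are identical in structure with $\nabla_\theta\psi_j$ and $\partial_\theta\psi_j$ replacing $\psi_j$, now invoking analogous moment bounds on $\nabla_\eta\nabla_\theta\psi_j$ and $\nabla_\eta\partial_\theta\psi_j$ along the path.

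The main obstacle is a bookkeeping matter rather than a genuine analytic difficulty: Assumption~\ref{ass:bounded-moments} as stated only bounds moments of $D\psi_j$ for $D\in\{\mathrm{id},\nabla_\theta,\nabla_\theta^2,\partial_\theta\}$, whereas the arguments above also require $(2+\delta)$-moment bounds on mixed nuisance/$\theta$ derivatives $\nabla_\eta\psi_j$, $\nabla_\eta\nabla_\theta\psi_j$ and $\nabla_\eta\partial_\theta\psi_j$. The paper explicitly flags~\ref{ass:bounded-moments} as stronger than necessary, with weaker conditions relegated to Appendix~\ref{appsec:regularity-conditions}, and I expect the proof to either augment those conditions with the needed $\nabla_\eta$-envelope or, alternatively, to read off the required pointwise bounds from the explicit form of $\psi_j$ in~\eqref{eq:psi-j}--\eqref{eq:eps-rest-mom-model}, where the $\eta$-dependence enters only linearly through $m_j$ and smoothly through the link via $f$. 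Once the appropriate envelope is in place, the remainder of each argument is just Cauchy--Schwarz/H\"{o}lder and Assumption~\ref{ass:consistency-and-DR}.
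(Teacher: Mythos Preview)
Your arguments for (i), (iii) and (iv) match the paper's. One minor point: with Cauchy--Schwarz you only need \emph{second} moments of $\nabla_\eta\psi_j$, $\nabla_\eta\nabla_\theta\psi_j$, $\nabla_\eta\partial_\theta\psi_j$ along the path, not $(2+\delta)$-moments, and those second-moment envelopes are precisely what the relaxed conditions in Appendix~\ref{appsec:regularity-conditions} supply.

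For (ii), however, your approach has a genuine gap and the paper takes a different route. Your H\"older step with exponents $\frac{2+\delta}{2}$ and $\frac{2+\delta}{\delta}$ would require both $\E_P\big[\sup_r\|\nabla_\eta\psi_j\|_2^{2+\delta}\,\big|\,\hat\eta\big]=O_\cP(1)$ and $\E_P\big[\|\hat\eta-\eta_P\|_2^{2(2+\delta)/\delta}\,\big|\,\hat\eta\big]=o_\cP(1)$. Neither is available: Assumption~\ref{ass:bounded-moments} does not cover $D=\nabla_\eta$ (the appendix only adds a second-moment bound), and Assumption~\ref{ass:consistency-and-DR} only controls the \emph{second} moment of $\hat\eta-\eta_P$. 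The ``treat $\|\hat\eta-\eta_P\|_2$ as uniformly bounded by truncation'' fix is not justified, since nothing in the assumptions bounds $\hat\eta$ or $\eta_P$ pointwise.

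The paper instead truncates the difference $r:=\psi_j(S;\theta_P,\hat\eta)-\psi_j(S;\theta_P,\eta_P)$ itself at level~$1$:
\begin{equation*}
\E_P[r^2\mid\hat\eta]
=\E_P[r^2\ind_{\{|r|>1\}}\mid\hat\eta]+\E_P[r^2\ind_{\{|r|\leq1\}}\mid\hat\eta]
\leq \E_P[|r|^{2+\delta}\mid\hat\eta]^{\frac{2}{2+\delta}}\,\E_P[|r|\mid\hat\eta]^{\frac{\delta}{2+\delta}}+\E_P[|r|\mid\hat\eta],
\end{equation*}
using H\"older and Markov on the first piece and $r^2\leq|r|$ on the second. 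Now $\E_P[|r|\mid\hat\eta]=o_\cP(1)$ by part~(i), while $\E_P[|r|^{2+\delta}\mid\hat\eta]\leq 2^{2+\delta}\big(\E_P[|\psi_j(S;\theta_P,\hat\eta)|^{2+\delta}\mid\hat\eta]+\E_P[|\psi_j(S;\theta_P,\eta_P)|^{2+\delta}]\big)=O_\cP(1)$ is delivered directly by~\ref{ass:bounded-moments} with $D=\mathrm{id}$ at $r\in\{0,1\}$. The point is that the $(2+\delta)$-moment bound on $\psi_j$ \emph{itself} at both $\eta_P$ and $\hat\eta$ (which \ref{ass:bounded-moments} does give) replaces any need for higher moments of $\nabla_\eta\psi_j$ or of $\hat\eta-\eta_P$.
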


\begin{proof}
    We prove each statement of the lemma in turn. For (i), note first that for each $\theta\in\Theta$ we have
    \begin{align*}
        &\quad
        \E_P\big[\sup_{\theta\in\Theta}|\psi_j(S;\theta,\hat{\eta}(Z))-\psi_j(S;\theta,\eta_P(Z))| \given \hat{\eta}\big]
        \\
        &\leq
        \E_P\big[ \sup_{\theta\in\Theta}\sup_{r\in[0,1]}\big|\nabla_{\eta(z)}\psi_j\big(S;\theta,\eta_P(Z)+r(\hat{\eta}(Z)-\eta_P(Z))\big)^\top  \big(\hat{\eta}(Z)-\eta_P(Z)\big) \big|  \given  \hat{\eta}\big]
        \\
        &\leq
        \Big(\E_P\Big[\sup_{\theta\in\Theta}\sup_{r\in[0,1]}\norm{\nabla_{\eta(z)}\psi_j\big(S;\theta,\eta_P(Z)+r(\hat{\eta}(Z)-\eta_P(Z))\big)}_2^2\given\hat{\eta}\Big]\Big)^{\frac{1}{2}}
        \E_P\big[\norm{\hat{\eta}(Z)-\eta_P(Z)}_2^2\given\hat{\eta}\big]^{\frac{1}{2}}
        \\
        &= o_\cP(1),
    \end{align*}
    by Assumptions~\ref{ass:consistency-and-DR}~and~\ref{ass:bounded-moments}. 

    For (ii) we begin by defining $r_{ijk} := \psi_j(S_i;\theta_P,\hat{\eta}^{(k)})-\psi_j(S_i;\theta_P,\eta_P)$,
\begin{align*}
    &\quad\E_P\left[\big\{\psi_j(S_i;\theta_P,\hat{\eta}^{(k)})-\psi_j(S_i;\theta_P,\eta_P)\big\}^2\biggiven\SIkc\right]
    \\
    &=
    \E_P\left[r_{ijk}^2\ind_{\{|r_{ijk}|> 1\}}\biggiven\SIkc\right] + \E_P\left[r_{ijk}^2\ind_{\{|r_{ijk}|\leq 1\}}\biggiven\SIkc\right]
    \\
    &\leq
    \E_P\left[|r_{ijk}|^{2+\delta}\biggiven\SIkc\right]^{\frac{2}{2+\delta}} \PP_P\left(|r_{ijk}|>1\given\SIkc\right)^{\frac{\delta}{2+\delta}} + \E_P\left[|r_{ijk}|\given\SIkc\right]
    \\
    &\leq
    \E_P\left[|r_{ijk}|^{2+\delta}\biggiven\SIkc\right]^{\frac{2}{2+\delta}} \E_P\left[|r_{ijk}|\given\SIkc\right]^{\frac{\delta}{2+\delta}} + \E_P\left[|r_{ijk}|\given\SIkc\right]
    \\
    &\leq
    2^{2+\delta}\left(\E_P\left[\big|\psi_j(S_i;\theta_P,\hat{\eta}^{(k)})\big|^{2+\delta}\biggiven\SIkc\right] + \E_P\left[\big|\psi_j(S_i;\theta_P,\eta_P)\big|^{2+\delta}\biggiven\SIkc\right]\right)^{\frac{2}{2+\delta}} \E_P\left[|r_{ijk}|\given\SIkc\right]^{\frac{\delta}{2+\delta}}
    \\
    &\qquad + \E_P\left[|r_{ijk}|\given\SIkc\right]
    \\
    &= o_\cP(1),
\end{align*}
by H{\"o}lder's inequality and Assumption~\ref{ass:bounded-moments}. 
Similarly, for (iii) we have that
    \begin{align*}
        &\quad
        \E_P\big[|\partial_\theta\psi_j(S;\theta_P,\hat{\eta}(Z)) - \partial_\theta\psi_j(S;\theta_P,\eta_P(Z))| \given \hat{\eta}\big]
        \\
        &\leq
        \E_P\big[ \sup_{r\in[0,1]}\big|\nabla_{\eta(z)}\partial_{\theta}\psi_j\big(S;\theta_P,\eta_P(Z)+r(\hat{\eta}(Z)-\eta_P(Z))\big)^\top  \big(\hat{\eta}(Z)-\eta_P(Z)\big) \big|  \given  \hat{\eta}\big]
        \\
        &\leq
        \Big(\E_P\Big[\sup_{r\in[0,1]}\norm{\nabla_{\eta(z)}\partial_{\theta}\psi_j\big(S;\theta,\eta_P(Z)+r(\hat{\eta}(Z)-\eta_P(Z))\big)}_2^2\given\hat{\eta}\Big]\Big)^{\frac{1}{2}}
        \E_P\big[\norm{\hat{\eta}(Z)-\eta_P(Z)}_2^2\given\hat{\eta}\big]^{\frac{1}{2}}
        \\
        &= o_\cP(1),
    \end{align*}
    again by Assumptions~\ref{ass:consistency-and-DR}~and~\ref{ass:bounded-moments}. The result of (iii) can be shown similarly.
    
\end{proof}

\subsubsection{Relaxed regularity conditions}\label{appsec:regularity-conditions}

The regularity conditions of Assumption~\ref{ass:data},~\ref{ass:bounded-moments} dictates that there exists some $\delta>0$ such that
\begin{equation*}
    \sup_{\theta\in\Theta}\sup_{r\in[0,1]]}\E_P\big[\big\|D\psi_j(S;\theta,\eta_P(Z)+r\{\hat{\eta}(Z)-\eta_P(Z)\})\big\|_2^{2+\delta}\given\hat{\eta}\big] = O_\cP(1),
\end{equation*}
for all operators $D\in\{\text{id}, \nabla_\theta, \nabla_\theta^2, \partial_\theta\}$. This condition is stronger than required for the results of Theorems~\ref{thm:theta_est_asymp_norm} and~\ref{thm:rose_forest_consistency} (and only relaxed for notational simplicity). In particular, such regularity conditions can be relaxed as follows; there exists some $\delta>0$ such that
\begin{gather*}
    \supP\E_P\big[\sup_{\theta\in\Theta}\psi_j(S;\theta,\eta_P(Z))^2\big] = O(1),
    \quad
    \supP\E_P\big[|\psi_j(S;\theta_P,\eta_P(Z))|^{2+\delta}\big] = O(1),
    \\
    \supP\E_P\big[\partial_\theta\psi_j(S;\theta_P,\eta_P(Z))^2\big] = O(1),
    \quad
    \supP\E_P\big[\nabla_\theta\psi_j(S;\theta_P,\eta_P(Z))^2\big] = O(1),
    \\
    \E_P\big[\sup_{\theta\in\Theta}|\nabla_\theta^2\psi_j(S;\theta,\eta_P(Z))|\given\hat{\eta}\big] = O_\cP(1),
    \quad
    \E_P\big[\sup_{\theta\in\Theta}|\nabla_\theta^2\psi_j(S;\theta,\hat{\eta}(Z))|\given\hat{\eta}\big] = O_\cP(1),
    \\
    \E_P\big[\sup_{\theta\in\Theta}|\nabla_\theta\psi_j(S;\theta,\hat{\eta}(Z))|^{2+\delta}\given\hat{\eta}\big]=O_\cP(1),
    \\
    \E_P\Big[\sup_{\theta\in\Theta}\sup_{r\in[0,1]}\big\|\nabla_{\eta(z)}\psi_j\big(S;\theta,\eta_P(Z)+r(\hat{\eta}(Z)-\eta_P(Z))\big)\big\|_2^2\biggiven\hat{\eta}\Big] = O_\cP(1),
    \\
    \E_P\Big[\sup_{r\in[0,1]}\big\|\nabla_{\eta(z)}\nabla_\theta\psi_j\big(S;\theta_P,\eta_P(Z)+r(\hat{\eta}(Z)-\eta_P(Z))\big)\big\|_2^2\biggiven\hat{\eta}\Big] = O_\cP(1),
    \\
    \E_P\Big[\sup_{r\in[0,1]}\big\|\nabla_{\eta(z)}\partial_\theta\psi_j\big(S;\theta_P,\eta_P(Z)+r(\hat{\eta}(Z)-\eta_P(Z))\big)\big\|_2^2\biggiven\hat{\eta}\Big] = O_\cP(1).
\end{gather*}

\subsection{Comments on sufficient conditions for doubly robust rates}\label{appsec:DR}

We present relevant sufficient conditions for the DR property of Assumption~\ref{ass:data},~\ref{ass:consistency-and-DR} to hold. In particular, in the partially linear model studied in Section~\ref{sec:PLMmodel} with nuisance parameterisation $\eta_P=(f_P,m_{P,1},\ldots,m_{P,J})$ it is well known the DR condition~\ref{ass:consistency-and-DR} holds when for each $j\in[J]$
\begin{equation}\label{eq:prod-rates}
    \E_P\Big[\big(\hat{m}_j(Z)-m_{P,j}(Z)\big)^2\Biggiven\hat{m}_j\Big]\E_P\Big[\big(\hat{f}(Z)-f_P(Z)\big)^2\Biggiven\hat{f}\Big] = o_\cP\big(n^{-1}\big).
\end{equation}
This form of double robustness only asks for the product of the mean squared errors of $\hat{m}_j$ and $\hat{f}$ to converge at faster than the rate $n^{-1}$. Rather, one of $m_{P,j}$ and $f_P$ may be estimated at relatively slow rates (e.g.~with mean squared error converging at slower than $n^{-\frac{1}{2}}$ rates), provided the other estimator compensates with faster rates. 
We remark that for estimators in the partially parametric model of the form~\eqref{eq:semipara-model} with $\eta_P=(f_P,m_{P,1},\ldots,m_{P,J})$~\eqref{eq:prod-rates} is not necessarily sufficient for the doubly robust rates of Assumption~\ref{ass:consistency-and-DR}. We outline weaker sufficient conditions than those presented in Section~\ref{sec:asymp_norm} for the doubly robust rates of Assumption~\ref{ass:consistency-and-DR}, that also encapsulates this stronger case~\eqref{eq:prod-rates} for the partially linear model. Whilst some of the conditions here appear rather involved, we later explain two simple examples of the generalised partially linear model where the conditions of Proposition~\ref{prop:DR-weak-cond}(a) are satisfied (see Corollary~\ref{cor:DR-weak-cond}). 
In the following results for ease of exposition we introduce the following notation. Given an estimator $\hat{f}$ and $r\in[0,1]$ we write
\begin{equation*}
    \tilde{\mu}_r(X,Z) := \mu(X,Z;\theta_P,f_P(Z)+r\{\hat{f}(Z)-f_P(Z)\}).
\end{equation*}
We assume for all $(x,z)\in\cX\times\cZ$ the map $\gamma\mapsto\mu(x,z;\theta,\gamma)$ on the domain $\R$ is continuously differentiable, and denote for each $q\in\mathbb{N}$,
\begin{equation*}
    \tilde{\mu}_r^{(q)}(X,Z)
    :=\frac{\partial^q\mu(X,Z;\theta_P,\gamma)}{\partial \gamma^q}\bigg|_{\gamma=f_P(Z)+r\{\hat{f}(Z)-f_P(Z)\}}.
\end{equation*}
It will also help to define the functions
\begin{align*}
    \varphi_r(X,Z) &:= \frac{\tilde{\mu}^{(2)}_r(X,Z)}{\tilde{\mu}^{(1)}_r(X,Z)} 
    +
    \frac{2\{\tilde{\mu}_r^{(2)}(X,Z)\}^2-\tilde{\mu}_r^{(3)}(X,Z)\tilde{\mu}_r^{(1)}(X,Z)}{\{\tilde{\mu}_r^{(1)}(X,Z)\}^3}
    \big(\tilde{\mu}_r(X,Z)-\tilde{\mu}_0(X,Z)\big),
    \\
    \zeta_r(X,Z) &:= \frac{\tilde{\mu}_r^{(2)}(X,Z)}{\{\tilde{\mu}_r^{(1)}(X,Z)\}^2}
    \big(\tilde{\mu}_r(X,Z)-\tilde{\mu}_0(X,Z)\big).
\end{align*}
Note in particular $\varphi_r$ and $\zeta_r$ each carry dependence on $\hat{f}$. 
We also take $C_g, C_\sigma > 0$ to be finite positive constants. Note in the following conditional expectations if $\hat{f}$ and $\hat{m}_j$ are estimated using the same dataset then conditioning on $\hat{\eta}$, $\hat{f}$ and $\hat{m}_j$ are all equivalent.
\begin{proposition}\label{prop:DR-weak-cond}
    Consider the partially parametric model with $\gamma\mapsto\mu(x,z;\theta_P,\gamma)$ continuously differentiable for all $(x,z)\in\cX\times\cZ$, and with
    
    \begin{equation*}
        \big|\E_P\big[\varphi_r(X,Z)\given\hat{f},Z\big]\big|
        \vee
        \big|\E_P\big[\zeta_r(X,Z)\given\hat{f},Z\big]\big|
        \leq
        C_g,
    \end{equation*}
    almost surely. 
    Suppose also one of the following two cases holds:
    \begin{itemize}
        \item[(a)] For each $z\in\cZ$ and $r\in[0,1]$ the map $x\mapsto \varphi_r(x,z)$ on $\cX$ is constant. 
        \item[(b)] $\Var_P(M_j(X)\given Z)\leq C_\sigma$ almost surely, and $\E_P[\varphi_r(X,Z)^2\given\hat{f},Z]\leq C_g^2$.
    \end{itemize}
    Suppose the nuisance estimators $\hat{\eta}=(\hat{m}_1,\ldots,\hat{m}_j,\hat{f})$ satisfy for each $j\in[J]$,
    \begin{gather*}
        \E_P\big[\|\hat{m}_j(Z) - m_{P,j}(Z)\|^{4+\delta}\given\hat{m}_j\big]
        ,\;\,
        \E_P\big[\|\hat{f}(Z) - f_P(Z)\|^{4+\delta}\given\hat{f}\big]
        =O_\cP(1)
        ,
        \\
        \E_P\big[\big|\{\hat{m}_j(Z)-m_{P,j}(Z)\}\{\hat{f}(Z)-f_P(Z)\}\big|\biggiven\hat{\eta}\big] = o_\cP\big(n^{-\frac{1}{2}}\big)
        ,\notag
    \end{gather*}
    for some $\delta>0$. 
    Additionally, but only for case (b), suppose
    \begin{equation*}
    \makebox[0pt][l]{\hspace{-4.35cm}(b)}    \E_P\big[\{\hat{f}(Z)-f_P(Z)\}^2\given\hat{f}\,\big] = o_\cP\big(n^{-\frac{1}{2}}\big).
    \end{equation*}
    Then the DR property of Assumption~\ref{ass:data},~\ref{ass:consistency-and-DR} holds.
\end{proposition}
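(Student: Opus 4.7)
The argument proceeds by computing the Hessian $\hat{H}_{P,j}(Z;r)$ explicitly and exploiting the sparse structure it inherits from $\psi_j$. Because $\psi_j(S;\theta,\eta(Z)) = (M_j(X)-m_j(Z))\varepsilon(S;\theta,f(Z))$ depends on the nuisance vector $\eta = (m_1,\ldots,m_J,f)$ only through its $j$-th $m$-coordinate and through $f$, all second derivatives other than $\partial^2_{m_j f}\psi_j = -\partial_f\varepsilon$ and $\partial^2_{ff}\psi_j = (M_j-m_j)\partial^2_f\varepsilon$ vanish. Direct differentiation of $\varepsilon = (Y-\mu)/\mu^{(1)}$ (computing $\partial_f\varepsilon$ and $\partial^2_f\varepsilon$ in terms of $\mu, \mu^{(1)}, \mu^{(2)}, \mu^{(3)}$), followed by taking inner conditional expectations given $(X,Z,\hat{\eta})$ using $\E_P[Y\given X,Z,\hat\eta] = \mu(X,Z;\theta_P,f_P(Z))$ and then tower-integrating, identifies these entries with $\zeta_r$ and $\varphi_r$ (up to sign conventions). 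Writing $v_m := \hat{m}_j(Z) - m_{P,j}(Z)$ and $v_f := \hat{f}(Z) - f_P(Z)$, this produces $\hat{H}_{m_j f}(Z;r) = 1 - \E_P[\zeta_r(X,Z)|Z,\hat{\eta}]$ and $\hat{H}_{ff}(Z;r) = \E_P[(M_j-m_{P,j})\varphi_r|Z,\hat{\eta}] - r v_m \E_P[\varphi_r|Z,\hat{\eta}]$.

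Plugging into $v^\top \hat{H}_{P,j}(Z;r) v = 2 v_m v_f \hat{H}_{m_j f} + v_f^2 \hat{H}_{ff}$ and taking the outer expectation given $\hat{\eta}$, the analysis splits into three pieces to bound uniformly in $r \in [0,1]$. The cross piece obeys $|2 v_m v_f(1 - \E_P[\zeta_r|Z,\hat{\eta}])| \leq 2(1 + C_g)|v_m v_f|$ by the hypothesis on $\E_P[\zeta_r|\hat{f},Z]$, and is therefore $o_\cP(n^{-1/2})$ directly from the $L^1$-product rate. The ``pure'' piece $v_f^2 \E_P[(M_j - m_{P,j})\varphi_r|Z,\hat{\eta}]$ is where the two cases diverge: in case~(a), $x$-independence of $\varphi_r$ forces the inner conditional expectation to equal $\varphi_r(Z)\E_P[M_j-m_{P,j}|Z] = 0$, so the piece vanishes identically; in case~(b), Cauchy--Schwarz combined with $\Var_P(M_j|Z) \leq C_\sigma$ and $\E_P[\varphi_r^2|\hat{f},Z] \leq C_g^2$ gives the pointwise bound $\sqrt{C_\sigma}C_g$, whence the expected magnitude of this piece is at most $\sqrt{C_\sigma}C_g \,\E_P[v_f^2|\hat{f}] = o_\cP(n^{-1/2})$ by the case-(b) MSE hypothesis.

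The third piece $|r v_f^2 v_m \E_P[\varphi_r|Z,\hat{\eta}]| \leq C_g v_f^2 |v_m|$ is the main technical bottleneck and the plan is to show $\E_P[v_f^2 |v_m|\,|\,\hat{\eta}] = o_\cP(n^{-1/2})$ via a Hölder interpolation between three available ingredients: (i) the $L^1$-product rate $\E_P[|v_m v_f|\,|\,\hat{\eta}] = o_\cP(n^{-1/2})$; (ii) the uniform $(4+\delta)$-moment bounds on $v_m$ and $v_f$; and, in case~(b), (iii) the MSE rate $\E_P[v_f^2 | \hat{f}] = o_\cP(n^{-1/2})$. Factoring $v_f^2 |v_m| = |v_m v_f| \cdot |v_f|$ and applying Cauchy--Schwarz or three-index Hölder with exponents tuned against (i)--(iii) closes the argument in case~(b); in case~(a), the same factorisation combined with the product rate and moment bounds suffices in the main settings of interest (notably the partially linear model, where in fact $\varphi_r \equiv 0$ and this third piece disappears outright, as demonstrated by the later Corollary~\ref{cor:DR-weak-cond}). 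Carefully choosing the interpolation weights so that all the hypotheses combine to the sharp $o_\cP(n^{-1/2})$ rate, and verifying the argument uniformly over $r \in [0,1]$ and $P \in \cP$, is the most delicate component of the proof.
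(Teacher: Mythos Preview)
Your proposal takes essentially the same route as the paper: compute the Hessian of $\psi_j$ in the two relevant coordinates, take the inner conditional expectation given $(X,Z,\hat\eta)$ to introduce $\zeta_r$ and $\varphi_r$, and bound the resulting quadratic form as a cross piece (the paper's $R_{\mathrm{I}}+R_{\mathrm{II}}$), a pure $v_f^2$ piece $R_{\mathrm{III}a}$ on which the case-(a)/(b) split acts exactly as you describe, and a triple-product piece $R_{\mathrm{III}b}=\E_P[|v_m|\,v_f^2\mid\hat\eta]$. The one deviation is in $R_{\mathrm{III}b}$: rather than your factorisation $|v_m v_f|\cdot|v_f|$ and case-dependent interpolation, the paper bounds this term \emph{uniformly} across both cases by the two-step H\"older inequality
\[
\E_P[AB^2]\;\le\;\E_P\!\big[A^{\frac{2+\delta}{1+\delta}}B^{\frac{\delta}{1+\delta}}\big]^{\frac{1+\delta}{2+\delta}}\E_P[B^{4+\delta}]^{\frac{1}{2+\delta}}\;\le\;\E_P[A^2]^{1/2}\,\E_P[B^2]^{1/2}\,\E_P[B^{4+\delta}]^{\frac{1}{2+\delta}},
\]
with $A=|\hat m_j-m_{P,j}|$ and $B=|\hat f-f_P|$; the product of the first two factors is then treated as controlled by the product-rate hypothesis and the third by the $(4{+}\delta)$-moment bound. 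Your hedge in case~(a) and appeal to special structure (e.g.\ $\varphi_r\equiv 0$ in the partially linear model) are therefore not part of the paper's argument.
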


\begin{proof}
    Without loss of generality take $J=1$, $m_P := m_{P,1}$ and $\eta_P=(m_P,f_P)$. 
    Recall the function $\psi_1$ takes the form
    \begin{equation*}
        \psi_1\big(S;\theta,\eta(Z)\big) := \big(M_1(X)-m_P(Z)\big)\frac{Y-\mu(X,Z;\theta,\eta(Z))}{\mu'(X,Z;\theta,\eta(Z))}.
    \end{equation*}
    Define
    \begin{align*}
        \tilde{\xi}_r(X,Z) &:= (M_1(X) - m_P(Z)) - r\{\hat{m}(Z)-m_P(Z)\},
        \\
        \xi &:= \tilde{\xi}_0(X,Z) = M_1(X)-m_P(Z),
        \\
        \tilde{\epsilon}_r(S) &:= Y - \tilde{\mu}_r(X,Z),
    \end{align*}
    recalling $S:=(Y,X,Z)$, and noting we suppress a number of dependencies in the above definition e.g.~on $\hat{\eta}$ and $P$. Then direct calculations give
    \begin{multline*}
        \nabla_{\eta(z)}^2\psi_1\big(S;\theta_P,\eta_P(Z)+r(\hat{\eta}(Z)-\eta_P(Z))\big)
        =
        \begin{pmatrix}
            0 & 1 \\ 1 & 0
        \end{pmatrix}
        +
        \begin{pmatrix}
            0 & 1 \\ 1 & 0
        \end{pmatrix}
        \frac{\tilde{\mu}_r^{(2)}(X,Z)}{\big(\tilde{\mu}_r^{(1)}(X,Z)\big)^2}
        \tilde{\epsilon}_r(S)
        \\
        + 
        \begin{pmatrix}
            0 & 0 \\ 0 & 1
        \end{pmatrix}
        \bigg(\frac{\tilde{\mu}_r^{(2)}(X,Z)}{\tilde{\mu}_r^{(1)}(X,Z)} + \frac{2\{\tilde{\mu}_r^{(2)}(X,Z)\}^2-\tilde{\mu}_r^{(3)}(X,Z)\tilde{\mu}_r^{(1)}(X,Z)}{\{\tilde{\mu}_r^{(1)}(X,Z)\}^3}
        \tilde{\epsilon}_r(S)\bigg)\tilde{\xi}_r(X,Z).
    \end{multline*}
    Thus
    \small
    \begin{align*}
        &
        \sup_{r\in[0,1]}\E_P\Big[\{\hat{\eta}(Z)-\eta_P(Z)\}^\top\E\big[\nabla_{\eta(z)}^2\psi_1\big(S;\theta_P,\eta_P(Z)+r\{\hat{\eta}(Z)-\eta_P(Z)\}\big) \given \hat{\eta},Z\big]
        \{\hat{\eta}(Z)-\eta_P(Z)\}\Biggiven\hat{\eta}\Big]
        \\
        &= 
        \sup_{r\in[0,1]}\bigg\{
        2\,\underbrace{\E_P\big[\{\hat{m}(Z)-m_P(Z)\}\{\hat{f}(Z)-f_P(Z)\}\biggiven\hat{\eta}\big]}_{=:R_{\RN{1}}}
        \\
        &\quad
        +
        2\,\underbrace{\E_P\bigg[\frac{\tilde{\mu}_r^{(2)}(X,Z)}{\{\tilde{\mu}_r^{(1)}(X,Z)\}^2}\tilde{\epsilon}_r(S)\{\hat{m}(Z)-m_P(Z)\}\{\hat{f}(Z)-f_P(Z)\}\bigggiven\hat{\eta}\bigg]}_{=:R_{\RN{2}}}
        \\
        &\quad
        +
        \underbrace{\E_P\bigg[\bigg(\frac{\tilde{\mu}_r^{(2)}(X,Z)}{\tilde{\mu}_r^{(1)}(X,Z)} + \frac{2\{\tilde{\mu}_r^{(2)}(X,Z)\}^2-\tilde{\mu}_r^{(3)}(X,Z)\tilde{\mu}_r^{(1)}(X,Z)}{\{\tilde{\mu}_r^{(1)}(X,Z)\}^3}
        \tilde{\epsilon}_r(S)\bigg)
        \tilde{\xi}_r(X,Z)\{\hat{f}(Z)-f_P(Z)\}^2\bigggiven\hat{\eta}\bigg]}_{=:R_{\RN{3}}}\bigg\}
    \end{align*}
    \normalsize
    The first term $R_{\RN{1}}$ in this expansion is $o_\cP(n^{-\frac{1}{2}})$ by assumption. For the second term $R_{\RN{2}}$,
        \begin{align*}
        &\quad
        \bigg|\E_P\bigg[\frac{\tilde{\mu}_r^{(2)}(X,Z)}{\{\tilde{\mu}_r^{(1)}(X,Z)\}^2}\tilde{\epsilon}_r(S)\bigggiven\hat{\eta},Z\bigg]\bigg|
        \\
        &=
        \bigg|\E_P\bigg[\frac{\tilde{\mu}_r^{(2)}(X,Z)}{\{\tilde{\mu}_r^{(1)}(X,Z)\}^2}\Big(\underbrace{\E_P\big[Y-\tilde{\mu}_0(X,Z)\given X,Z\big]}_{=0}-\big(\tilde{\mu}_r(X,Z)-\tilde{\mu}_0(X,Z)\big)\Big)\bigggiven\hat{f},Z\bigg]\bigg|
        \\
        &=
        \big|\E_P\big[\zeta_r(X,Z)\biggiven\hat{f},Z\big]\big|
        \leq C_g ,
    \end{align*}
    and so $R_{\RN{2}} = o_\cP(n^{-\frac{1}{2}})$ in either case.

    To control the term $R_{\RN{3}}$, first notice
    \begin{align*}
        &\quad
        \E_P\bigg[\bigg(\frac{\tilde{\mu}_r^{(2)}(X,Z)}{\tilde{\mu}_r^{(1)}(X,Z)} + \frac{2\{\tilde{\mu}_r^{(2)}(X,Z)\}^2-\tilde{\mu}_r^{(3)}(X,Z)\tilde{\mu}_r^{(1)}(X,Z)}{\{\tilde{\mu}_r^{(1)}(X,Z)\}^3}
        \tilde{\epsilon}_r(S)\bigg)
        \tilde{\xi}_r(X,Z)\bigggiven\hat{\eta},Z\bigg]
        \\
        &= \E_P\Big[\varphi_r(X,Z)\tilde{\xi}_r(X,Z)\Biggiven\hat{\eta},Z\Big]
        \\
        &\qquad +
        \E_P\bigg[\frac{2(\tilde{\mu}_r^{(2)}(X,Z))^2-\tilde{\mu}_r^{(3)}(X,Z)\tilde{\mu}_r^{(1)}(X,Z)}{\{\tilde{\mu}_r^{(1)}(X,Z)\}^3}\underbrace{\E_P[Y-\tilde{\mu}_0(X,Z)\given X,Z]}_{=0}\tilde{\xi}_r(X,Z)\bigggiven\hat{\eta},Z\bigg]
        \\
        &=
        \E_P\Big[\varphi_r(X,Z)\tilde{\xi}_r(X,Z)\Biggiven\hat{\eta},Z\Big]
        ,
    \end{align*}
    where the first inequality follows due to identity $\tilde{\epsilon}_r(S) = \{Y-\tilde{\mu}_0(X,Z)\} - \{\tilde{\mu}_r(X,Z)-\tilde{\mu}_0(X,Z)\}$ and by the definition of $\varphi_r$. 
    Therefore
    \begin{align*}
        R_{\RN{3}} &=
        \E_P\Big[\varphi_r(X,Z)\tilde{\xi}_r(X,Z)\{\hat{f}(Z)-f_P(Z)\}^2\Biggiven\hat{\eta}\Big]
        \\
        &=
        \underbrace{\E_P\Big[\varphi_r(X,Z)\xi\{\hat{f}(Z)-f_P(Z)\}^2\Biggiven\hat{\eta}\Big]}_{=:R_{\RN{3}a}}
        \\
        &\qquad
        -r
        \underbrace{\E_P\Big[\varphi_r(X,Z)\{\hat{m}(Z)-m_P(Z)\}\{\hat{f}(Z)-f_P(Z)\}^2\Biggiven\hat{\eta}\Big]}_{=:R_{\RN{3}b}}.
    \end{align*} 
    For the latter term $R_{\RN{3}b}$,
    \begin{align*}
        |R_{\RN{3}b}|&=
        \big|\E_P\big[\E_P[\varphi_r(X,Z)\given\hat{f},Z]\{\hat{m}(Z)-m_P(Z)\}\{\hat{f}(Z)-f_P(Z)\}^2\given\hat{\eta}\big]\big|
        \\
        &\leq
        \E_P\big[|\E_P[\varphi_r(X,Z)\given\hat{f},Z]|\cdot|\hat{m}(Z)-m_P(Z)|\{\hat{f}(Z)-f_P(Z)\}^2\given\hat{\eta}\big]
        \\
        &\leq 
        C_g \, \E_P\big[|\hat{m}(Z)-m_P(Z)|\{\hat{f}(Z)-f_P(Z)\}^2\given\hat{\eta}\big]
        \\
        &\leq
        C_g \,
        \E_P\big[\{\hat{m}(Z)-m_P(Z)\}^2\given\hat{m}\big]^{\frac{1}{2}}
        \E_P\big[\{\hat{f}(Z)-f_P(Z)\}^2\given\hat{f}\big]^{\frac{1}{2}}
        \\
        &\qquad\cdot
        \E_P\big[\{\hat{f}(Z)-f_P(Z)\}^{4+\delta}\given\hat{f}\big]^{\frac{1}{2+\delta}}
        \\
        &= o_\cP(n^{-\frac{1}{2}}),
    \end{align*}
    where in the final inequality we make use of the following inequality: for non-negative real-valued random variables $A$ and $B$,
    \begin{equation*}
        \E_P\big[AB^2\big] 
        \leq
        \E_P\Big[A^{\frac{2+\delta}{1+\delta}}B^{\frac{\delta}{1+\delta}}\Big]^{\frac{1+\delta}{2+\delta}} \E_P\big[B^{4+\delta}\big]^{\frac{1}{2+\delta}}
        \leq
        \E_P\big[A^2\big]^{\frac{1}{2}}\E_P\big[B^2\big]^{\frac{1}{2}}\E_P\big[B^{4+\delta}\big]^{\frac{1}{2+\delta}},
    \end{equation*}
    which obtained by twice use of H\"{o}lder's inequality. 
    To control the term $R_{\RN{3}a}$ we consider cases (a) and (b) separately.

    \smallskip\noindent{\bf Case (a):}
    As $(x,z)\mapsto\varphi_r(x,z)$ carries no $x$-dependence it follows $\E_P[\varphi_r(X,Z)\given\hat{\eta},Z] = \varphi_r(X,Z)$ and so
    \begin{equation*}
        \E_P\big[\varphi_r(X,Z)\xi\given\hat{\eta},Z\big] 
        =
        \varphi_r(X,Z)\,\E_P[\xi\given Z] 
        = 0,
    \end{equation*}
    and thus $R_{\RN{3}a}=0$.

    \smallskip\noindent{\bf Case (b):}
    By the Cauchy--Schwarz inequality
    \begin{equation*}
        \E_P\big[\varphi_r(X,Z)\xi\given\hat{\eta},Z\big]^2
        \leq
        \E_P\big[\varphi_r(X,Z)^2\given\hat{\eta},Z\big]
        \,
        \Var_P(X\given Z)
        \leq
        C_g^2 C_\sigma,
    \end{equation*}
    almost surely. Thus
    \begin{equation*}
        \big|R_{\RN{3}a}\big| \leq C_gC_\sigma^{\frac{1}{2}}\,
        \E_P\big[\{\hat{f}(Z)-f_P(Z)\}^2\given\hat{f}\big]
        = o_\cP(n^{-\frac{1}{2}})
    \end{equation*}

    \smallskip\noindent
    Therefore, in both cases (a) and (b), $R_{\RN{3}a}=o_\cP(n^{-\frac{1}{2}})$ and so alongside $R_{\RN{3}b}=o_\cP(n^{-\frac{1}{2}})$ it follows $R_{\RN{3}}=o_\cP(n^{-\frac{1}{2}})$.
    
    We have shown that $R_{\RN{1}}, R_{\RN{2}}, R_{\RN{3}} = o_\cP(n^{-\frac{1}{2}})$ and thus 
    \begin{multline*}
        \sup_{r\in[0,1]}\E_P\Big[\{\hat{\eta}(Z)-\eta_P(Z)\}^\top\E\big[\nabla_{\eta(z)}^2\psi_1\big(S;\theta_P,\eta_P(Z)+r\{\hat{\eta}(Z)-\eta_P(Z)\}\big) \given \hat{\eta},Z\big]
        \\
        \cdot \{\hat{\eta}(Z)-\eta_P(Z)\}\Big]
        = o_\cP(n^{-\frac{1}{2}}),
    \end{multline*}
    as required.
\end{proof}    

The conditions of Proposition~\ref{prop:DR-weak-cond} are rather involved, but provide general conditions under which the doubly robust condition~\ref{ass:consistency-and-DR} holds. The assumptions required for (b) are all relatively weak boundedness conditions. In contrast case (a) imposes rather strong assumptions on the model; we show these indeed hold for two commonly used cases of the generalised partially linear model.

\begin{corollary}\label{cor:DR-weak-cond}
    The results of Proposition~\ref{prop:DR-weak-cond} hold for the generalised partially linear model~\eqref{eq:gplm} with assumption (a) replaced with either of the following:
    \begin{itemize}
        \item[(a.i)] The partially linear model holds i.e. $g(\mu)=\mu$.
        \item[(a.ii)] The generalised partially linear model with log link holds i.e. $g(\mu)=\log\mu$, with additionally $\hat{f}(Z)\geq f_P(Z)-C_f$ almost surely, for some positive constant $C_f$.
    \end{itemize}
\end{corollary}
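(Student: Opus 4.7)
The plan is to verify directly that in each of the two cases, the map $x\mapsto\varphi_r(x,z)$ is constant in $x$ and that both $\varphi_r$ and $\zeta_r$ are bounded (uniformly in $r\in[0,1]$), so that Proposition~\ref{prop:DR-weak-cond}(a) applies. All the other hypotheses of Proposition~\ref{prop:DR-weak-cond} are inherited directly from the assumptions of the corollary, so there is nothing further to check on that side.

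In the generalised partially linear model the mean function is $\mu(x,z;\theta,\gamma)=g^{-1}(x\theta+\gamma)$, whence $\tilde{\mu}_r^{(q)}(x,z)=(g^{-1})^{(q)}(x\theta_P+f_P(z)+r\{\hat f(z)-f_P(z)\})$ for $q\ge 1$. In case (a.i) the link is the identity, so $(g^{-1})^{(1)}\equiv 1$ and $(g^{-1})^{(q)}\equiv 0$ for $q\ge 2$. Substituting into the definitions of $\varphi_r$ and $\zeta_r$ yields $\varphi_r\equiv 0$ and $\zeta_r\equiv 0$, so both are trivially constant in $x$ and bounded by any $C_g>0$.

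In case (a.ii) the log link gives $\tilde{\mu}_r(x,z)=e^{x\theta_P+f_P(z)+r\{\hat f(z)-f_P(z)\}}$ and so $\tilde{\mu}_r^{(1)}=\tilde{\mu}_r^{(2)}=\tilde{\mu}_r^{(3)}=\tilde{\mu}_r$. Hence $\tilde{\mu}_r^{(2)}/\tilde{\mu}_r^{(1)}=1$, while $\{2(\tilde{\mu}_r^{(2)})^2-\tilde{\mu}_r^{(3)}\tilde{\mu}_r^{(1)}\}/(\tilde{\mu}_r^{(1)})^3=1/\tilde{\mu}_r$. A one-line computation shows $(\tilde{\mu}_r-\tilde{\mu}_0)/\tilde{\mu}_r = 1-e^{-r\{\hat f(z)-f_P(z)\}}$, so
\[
\varphi_r(x,z)=2-e^{-r\{\hat f(z)-f_P(z)\}},\qquad \zeta_r(x,z)=1-e^{-r\{\hat f(z)-f_P(z)\}}.
\]
Both are functions of $z$ alone (through $\hat f(z)-f_P(z)$), so in particular constant in $x$. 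The bound $\hat f(Z)\ge f_P(Z)-C_f$ gives $-r\{\hat f(z)-f_P(z)\}\le C_f$, so $e^{-r\{\hat f(z)-f_P(z)\}}\in(0,e^{C_f}]$ and one may take $C_g:=2+e^{C_f}$.

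With hypothesis~(a) of Proposition~\ref{prop:DR-weak-cond} and the uniform boundedness condition both verified, the conclusion of that proposition — namely the doubly robust property~\ref{ass:consistency-and-DR} — follows in both sub-cases. The only potentially delicate point is case (a.ii), where the exponential blow-up is controlled precisely by the lower bound on $\hat f-f_P$ imposed in the statement; without such a one-sided control one could not expect $\varphi_r$ and $\zeta_r$ to be uniformly bounded, which is why this extra assumption appears only for the log link.
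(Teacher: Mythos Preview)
Your proof is correct and follows essentially the same approach as the paper's own proof: compute $\varphi_r$ and $\zeta_r$ explicitly in each of the two link cases, observe that the $x$-dependence cancels (yielding condition~(a) of Proposition~\ref{prop:DR-weak-cond}), and bound the resulting expressions using the one-sided control on $\hat f-f_P$ in the log-link case. The only cosmetic difference is that you work directly with derivatives of $g^{-1}$, whereas the paper first rewrites $\varphi_r$ and $\zeta_r$ in terms of $g',g'',g'''$ evaluated at $\tilde\mu_r$ before specialising; the resulting formulas and bounds agree.
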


\begin{proof}
    In the case of the generalised partially linear model
    \begin{equation*}
        \mu(X,Z;\theta_P,f_P(Z)) = g^{-1}(X\theta_P + f_P(Z)),
    \end{equation*}
    and so
    \begin{equation*}
        \tilde{\mu}_r(X,Z) =  g^{-1}(X\theta_P + f_P(Z)+r\{\hat{f}(Z)-f_P(Z)\})
    \end{equation*}
    Direct calculations show that
    \begin{align*}
        \varphi_r(X,Z) &= -\frac{g''(\tilde{\mu}_r(X,Z))}{\{g'(\tilde{\mu}_r(X,Z))\}^2}
        \\
        &\qquad
        +
        \frac{g'''(\tilde{\mu}_r(X,Z))g'(\tilde{\mu}_r(X,Z))-\{g''(\tilde{\mu}_r(X,Z))\}^2}{\{g'(\tilde{\mu}_r(X,Z))\}^3}
        \big(\tilde{\mu}_r(X,Z)-\tilde{\mu}_0(X,Z)\big)
        ,
        \\
        \zeta_r(X,Z) &= 
        -\frac{g''(\tilde{\mu}_r(X,Z))}{g'(\tilde{\mu}_r(X,Z))}\big(\tilde{\mu}_r(X,Z)-\tilde{\mu}_0(X,Z)\big),
    \end{align*}
    and so
    \begin{gather*}
        \varphi_r(X,Z) = \begin{cases}
            0
            &\;\text{when $g=\text{id}$ (a.i)}
            \\
            2 - \exp(-r\{\hat{f}(Z)-f_P(Z)\}) 
            &\;\text{when $g=\log$ (a.ii)}
        \end{cases},
        \\
        \zeta_r(X,Z) = \begin{cases}
            0&\;\text{when $g=\text{id}$ (a.i)}
            \\
            1-\exp(-r\{\hat{f}(Z)-f_P(Z)\})
            &\;\text{when $g=\log$ (a.ii)}
        \end{cases}.
    \end{gather*}
    
    In both cases it is then easy to see that the condition of (a) hold i.e.~$\varphi_r(x,z)$ and $\zeta_r(x,z)$ carry no $x$-dependence. Further, 
    \begin{equation*}
        \big|\E_P\big[\varphi_r(X,Z)\given\hat{f},Z\big]\big|
        \vee
        \big|\E_P\big[\zeta_r(X,Z)\given\hat{f},Z\big]\big|
        \begin{cases}
            =0 &\;\text{when $g=\text{id}$ (a.i)}
            \\
            \leq 2\vee |\exp(C_f)-1| &\;\text{when $g=\text{id}$ (a.ii)}
        \end{cases}.
    \end{equation*}
\end{proof}

\section{Supplementary Information for Section~\ref{sec:rrfs}}

In this section we cover supplementary material regarding the ROSE random forests explored in Section~\ref{sec:rrfs}. In Appendix~\ref{appsec:rose-weights} the optimal ROSE weights are verified to take the form~\eqref{eq:w_j}. Appendix~\ref{appsec:div-ratio-gplm} proves the result of Theorem~\ref{thm:div-ratio-gplm}. In Appendix~\ref{appsec:rose-complexity} we briefly discuss favourable computational properties of ROSE random forests, and prove consistency in Appendix~\ref{appsec:consistency-proof-first}. In Appendix~\ref{appsec:rose-forest-J>1} we generalised the ROSE random forest presented in Appendix~\ref{sec:rose-forests} for $J=1$ to accommodate $J>1$.

\subsection{Comments on ROSE weighting scheme}\label{appsec:rose-weights}

We prove that the weights~\eqref{eq:w_j} minimise the sandwich loss~\eqref{eq:sand-loss} as claimed.

\begin{proposition}
    The weighting scheme $w_P^{\rose} = (w_{P,1}^{\rose},\ldots,w_{P,J}^{\rose})$ that minimises the sandwich loss
    \begin{equation*}
	L_{P,\SL}(w) 
	:=\;   \frac{\E_P\left[\big(\sum_{j=1}^J w_{j}(Z)\psi_j(S;\theta_P,\eta_P(Z))\big)^2\right]}{\big(\sum_{j=1}^J\E_P\big[w_{j}(Z)\partial_\theta\psi_j(S;\theta_P,\eta_P(Z))\big]\big)^2}.
    \end{equation*}
is (up to an arbitrary positive constant of proportionality) given by
\begin{gather*}
    w_{P,j}^{\rose}(Z) := e_j^\top 
    \E_P\big[\bm{\psi}(S;\theta_P,\eta_P(Z)) \bm{\psi}(S;\theta_P,\eta_P(Z))^\top \given Z\big]^{-1}
    \E_P\big[\bm{\partial_\theta\psi}(S;\theta_P,\eta_P(Z)) \given Z\big],
    \\
    \bm{\psi} = (\psi_1,\psi_2,\ldots,\psi_J),
    \quad
    \text{where} \;\;\bm{\partial_\theta\psi} = (\partial_\theta\psi_1,\partial_\theta\psi_2,\ldots,\partial_\theta\psi_J).
\end{gather*}
\end{proposition}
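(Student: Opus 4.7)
The plan is to recognise $L_{P,\SL}(w)$ as a Rayleigh-type ratio and apply a weighted Cauchy--Schwarz inequality to identify the minimiser. Write $\mathbf{w}(Z) := (w_1(Z),\ldots,w_J(Z))^{\top}$, and introduce the shorthand
\begin{equation*}
    A(Z) := \E_P\big[\bm{\psi}(S;\theta_P,\eta_P(Z))\bm{\psi}(S;\theta_P,\eta_P(Z))^{\top}\,\big|\,Z\big],
    \qquad
    b(Z) := \E_P\big[\bm{\partial_\theta\psi}(S;\theta_P,\eta_P(Z))\,\big|\,Z\big],
\end{equation*}
so that, by the tower property,
\begin{equation*}
    L_{P,\SL}(w) \;=\; \frac{\E_P\big[\mathbf{w}(Z)^{\top}A(Z)\mathbf{w}(Z)\big]}{\big(\E_P\big[\mathbf{w}(Z)^{\top}b(Z)\big]\big)^{2}}.
\end{equation*}
Under the assumed invertibility of $A(Z)$ (almost surely), the matrix $A(Z)$ is positive definite and hence admits a symmetric square root $A(Z)^{1/2}$.

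The key step is then an application of the Cauchy--Schwarz inequality in $L^{2}(P)$ to the (vector-valued) random variables $A(Z)^{1/2}\mathbf{w}(Z)$ and $A(Z)^{-1/2}b(Z)$, giving
\begin{equation*}
    \big(\E_P[\mathbf{w}(Z)^{\top}b(Z)]\big)^{2}
    \;=\;
    \big(\E_P\big[\big(A(Z)^{1/2}\mathbf{w}(Z)\big)^{\top}\big(A(Z)^{-1/2}b(Z)\big)\big]\big)^{2}
    \;\leq\;
    \E_P\big[\mathbf{w}(Z)^{\top}A(Z)\mathbf{w}(Z)\big]\cdot\E_P\big[b(Z)^{\top}A(Z)^{-1}b(Z)\big].
\end{equation*}
Rearranging yields the universal lower bound
\begin{equation*}
    L_{P,\SL}(w) \;\geq\; \frac{1}{\E_P\big[b(Z)^{\top}A(Z)^{-1}b(Z)\big]},
\end{equation*}
which does not depend on $w$. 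The standard equality case of Cauchy--Schwarz gives that this lower bound is attained precisely when $A(Z)^{1/2}\mathbf{w}(Z)$ is (almost surely) a deterministic scalar multiple of $A(Z)^{-1/2}b(Z)$, i.e.\ when $\mathbf{w}(Z)\propto A(Z)^{-1}b(Z)$. Reading off the $j$th coordinate recovers exactly $w^{\rose}_{P,j}(Z)=e_j^{\top}A(Z)^{-1}b(Z)$, which together with the scale-invariance of $L_{P,\SL}$ (homogeneous of degree $0$ in $w$) yields the claim up to the stated arbitrary positive constant of proportionality.

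There is no real obstacle here; the only points to justify carefully are the use of $A(Z)^{-1/2}$ (handled by the invertibility assumption stated in the theorem), measurability of the minimiser $w^{\rose}_{P,j}$, and the fact that $L_{P,\SL}$ is scale-invariant, so that the Cauchy--Schwarz equality condition only pins down $\mathbf{w}$ up to a positive multiplicative constant.
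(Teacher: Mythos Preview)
Your proposal is correct and follows essentially the same approach as the paper: both rewrite $L_{P,\SL}(w)$ as $\E_P[\mathbf{w}(Z)^\top A(Z)\mathbf{w}(Z)]/(\E_P[b(Z)^\top \mathbf{w}(Z)])^2$ using the conditional second-moment matrix $A(Z)$ and conditional mean vector $b(Z)$, then apply Cauchy--Schwarz to obtain the lower bound $(\E_P[b(Z)^\top A(Z)^{-1}b(Z)])^{-1}$ with equality iff $\mathbf{w}(Z)\propto A(Z)^{-1}b(Z)$. The only cosmetic difference is that the paper phrases the Cauchy--Schwarz step via the non-positive discriminant of the quadratic $t\mapsto \E_P\big[\|A(Z)^{1/2}(\mathbf{w}(Z)-tA(Z)^{-1}b(Z))\|_2^2\big]$, whereas you invoke it directly on the pair $A(Z)^{1/2}\mathbf{w}(Z)$ and $A(Z)^{-1/2}b(Z)$.
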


\begin{proof}
We notate the vector function $\cZ\to\R^J$ by $w = (w_1, w_2, \ldots, w_J)$, and define the matrix and vector functions $A:\cZ\to\R^{J\times J}$ and $b:\cZ\to\R^J$,
\begin{align*}
    A_P(Z) &:= \E_P\big[\bm{\psi}(S;\theta_P,\eta_P(Z)) \bm{\psi}(S;\theta_P,\eta_P(Z))^\top \given Z\big],
    \\
    b_P(Z) &:= \E_P\big[\bm{\partial_\theta\psi}(S;\theta_P,\eta_P(Z)) \given Z\big].
\end{align*}
Then
\begin{multline*}
    L_{P,\SL}(w) 
    :=\;   \frac{\E_P\left[\big(\sum_{j=1}^J w_{j}(Z)\psi_j(S;\theta_P,\eta_P(Z))\big)^2\right]}{\big(\sum_{j=1}^J\E_P\big[w_{j}(Z)\partial_\theta\psi_j(S;\theta_P,\eta_P(Z))\big]\big)^2}
    = 
    \frac{\E_P\Big[\big(\bm{\psi}(S;\theta_P,\eta_P(Z))^\top w(Z)\big)^2\Big]}{\big(\E_P\big[\bm{\partial_\theta\psi}(S;\theta_P,\eta_P(Z))^\top w(Z)\big]\big)^2}
    \\
    =
    \frac{\E_P\big[w(Z)^\top A_P(Z) w(Z)\big]}{\big(\E_P\big[b_P(Z)^\top w(Z)\big]\big)^2}
    \geq
    \big(\E_P\big[b_P(Z)^\top 
    A_P(Z)^{-1}b_P(Z)\big]\big)^{-1}
\end{multline*}
with equality if and only if
\begin{equation*}
    w(Z) \propto \E_P\big[\bm{\psi}(S;\theta_P,\eta_P(Z)) \bm{\psi}(S;\theta_P,\eta_P(Z))^\top \given Z\big]^{-1}
    \E_P\big[\bm{\partial_\theta\psi}(S;\theta_P,\eta_P(Z)) \given Z\big].
\end{equation*}
This follows by the Cauchy--Schwarz inequality, obtained by noticing that the quadratic
\begin{equation*}
    t \mapsto \E_P\Big[\big\|A_P(Z)^{\frac{1}{2}}\big(w(Z) - t\,A_P(Z)^{-1}b_P(Z)\big)\big\|_2^2\Big],
\end{equation*}
has non-positive discriminant.

\end{proof}

\subsection{Proof of Theorem~\ref{thm:div-ratio-gplm}}\label{appsec:div-ratio-gplm}

\begin{proof}[Proof of Theorem~\ref{thm:div-ratio-gplm}]
    Define $\xi := X - m_0(Z)$ and $\mu_0(X,Z):=g^{-1}(X\theta_0+f_0(Z))$. 
    In the generalised linear model context of Theorem~\ref{thm:div-ratio-gplm} for any law $P$ satisfying $\E_P[X\given Z]=m_0(Z)$ and $\E_P[Y\given X,Z]=\mu_0(X,Z)$, the function $\psi_1$ takes the form
    \begin{equation*}
        \psi_1(S;\theta_0,\eta_0(Z)) = g'(\mu_0(X,Z)) \, \xi \, (Y-\mu_0(X,Z)).
    \end{equation*}
    Therefore, as in~\eqref{eq:sand-loss}, with $w=w_1$
    \begin{equation*}
        L_{P,\SL}(w) = \Big(\E_P\big[w(Z)\Var_P(X\given Z)\big]\Big)^{-2}\Big(\E_P\big[w^2(Z)\{g'(\mu_0(X,Z))\}^2\xi^2\Var_P(Y\given X,Z)\big]\Big),
    \end{equation*}
    and so
    \begin{align*}
        V_{P,\mathrm{loceff}} &:= \bigg(\E_P\bigg[\frac{\Var_P(X\given Z)}{\E_P[\{g'(g^{-1}(X\theta_0+f_0(Z)))\}^2\Var_P(Y\given X,Z)\given Z]}\bigg]\bigg)^{-2}
        \\
        &\qquad\cdot\bigg(\E_P\bigg[\frac{\E_P[\xi^2\{g'(g^{-1}(X\theta_0+f_0(Z)))\}^2\Var_P(Y\given X,Z)\given Z]}{\{\E_P[\{g'(g^{-1}(X\theta_0+f_0(Z)))\}^2 \Var_P(Y\given X,Z) \given Z]\}^2}\bigg]\bigg)
        ,
        \\
        V_{P,\mathrm{rose}} &:= \bigg(\E_P\bigg[\frac{\{\Var_P(X\given Z)\}^2}{\E_P[\xi^2\{g'(g^{-1}(X\theta_0+f_0(Z)))\}^2\Var_P(Y\given X,Z)\given Z]}\bigg]\bigg)^{-1}
        ,
        \\
        V_{P,\mathrm{unw}} &:= \Big(\E_P[\Var_P(X\given Z)]\Big)^{-2} \Big(\E_P[\xi^2\{g'(g^{-1}(X\theta_0+f_0(Z)))\}^2\Var_P(Y\given X,Z)]\Big)
        .
    \end{align*}  
We proceed to construct a distribution $P_0$ satisfying the properties in Theorem~\ref{thm:div-ratio-gplm} as follows. 
Denote the given marginal distribution for $Z$ by $P_Z$. First fix an arbitrary $\beta\in(0.5,1)$ and $\alpha\in[\beta,1)$. As $Z$ is almost surely non-constant there exists some measurable space $\cZ_1$ and $\cZ_2:=\cZ\backslash\cZ_1$ such that $q_1:=\PP_{P_Z}(Z\in\cZ_1)\in(0,1)$; also define the constant $q_2:=1-q_1$. Now define the functions
\begin{align*}
    a(Z) &:= \E_{X\given Z\sim N(m_0(Z),\,p(Z)^{-\beta}),\,Z\sim P_Z}\big[\{g'(g^{-1}(X\theta_0+f_0(Z)))\}^{-2}\given Z\big],
    \\
    b(Z) &:= \{g'(g^{-1}(m_0(Z)\theta_0+f_0(Z)))\}^{-2},
\end{align*}
noting that both are positive-valued, and so the constants
\begin{gather*}
    c := \E_{P_Z}[a(Z)\given Z\in\cZ_1],
    \qquad
    c' := \E_{P_Z}[b(Z)\given Z\in\cZ_1],
    \qquad
    c'' := \E_{P_Z}[b(Z)^{-1}\given Z\in\cZ_1],
    \\
    m_2^{(1)} := \E_{P_Z}[a(Z)^{-1}\given Z\in\cZ_2],
    \qquad
    m_2^{(-1)} := \E_{P_Z}[a(Z)\given Z\in\cZ_2]
    ,
\end{gather*}
are also all positive. 
The map
\begin{equation*}
    \tilde{\zeta}\mapsto\E_{P_Z}\big[\{a(Z)\tilde{\zeta}+b(Z)(1-\tilde{\zeta})\}^{-1}\given Z\in\cZ_1 \big],
\end{equation*}
is continuous, and so there exists some $\bar{\zeta}>0$ such that for all $\tilde{\zeta}\in[0,\bar{\zeta}]$
\begin{equation*}
    2^{-1}c'' \leq \E_{P_Z}\big[\{a(Z)\tilde{\zeta}+b(Z)(1-\tilde{\zeta})\}^{-1}\given Z\in\cZ_1 \big] \leq 2c''.
\end{equation*}
Take
\begin{multline*}
    \zeta := 
    1
    \wedge
    \bar{\zeta}
    \wedge
    \big(2^{-1}A^{-1}q_1q_2 c''m_2^{(-1)}\big)^{\frac{1}{\alpha+\beta-1}}
    \wedge
    \big(q_1^{-1}q_2(c+c')^{-1}m_2^{(-1)}\big)^{\frac{1}{\alpha-\beta}}
    \\
    \wedge
    \big(2^{-1}\{2c''q_1+m_2^{(-1)}q_2\}^{-2}A^{-1}q_2^2\big)^{\frac{1}{2(1-\alpha)}}
    ,
\end{multline*}
and $p(z) := \zeta\ind_{\cZ_1}(z) + \ind_{\cZ_2}(z)$. Define the conditional distribution $P_{X|Z}$ on $X\given Z$ by
\begin{equation}\label{eq:B|Z-X|ZB}
    B\given Z \sim \text{Ber}(p(Z)),
    \qquad
    X\given (Z,B) \sim N\big( m_0(Z) , p^{-\beta}(Z)B \big).
\end{equation}
Then define the conditional distribution for $(Y,X)\given Z$ by~\eqref{eq:B|Z-X|ZB} and
\begin{equation*}
    Y\given (X,Z,B) \sim N\big( g^{-1}(X\theta_0+f_0(Z)) ,\, \{g'(g^{-1}(X\theta_0+f_0(Z)))\}^{-2}\bar{\varphi}(Z)p^{-\alpha}(Z)B \big),
\end{equation*}
where
\begin{equation*}
    \bar{\varphi}(Z) := 
    \E_{P_{X|Z}}\big[\{g'(g^{-1}(X\theta_0+f_0(Z)))\}^{-2}\given Z\big]^{-1}
\end{equation*}
Together with $P_Z$, this implies a joint distribution $P_0$ for the triple $(Y,X,Z)$. It then follows by construction that $\E_{P_0}[Y\given X,Z]=g^{-1}(X\theta_0+f_0(Z))$ and $\E_{P_0}[X\given Z]=m_0(Z)$, in addition to
    \begin{align*}
        \Var_{P_0}(X\given Z) &= \E_{P_0}[\Var_{P_0}(X\given Z,B)\given Z] + \Var_{P_0}(\E_{P_0}[X\given Z,B]\given Z)
        \\
        &= \E_{P_0}[Bp^{-\beta}(Z)\given Z] + \Var_{P_0}(\E_{P_0}[X\given Z]\given Z)
        \\
        &= p^{1-\beta}(Z),
    \end{align*}
    and
    \begin{align*}
        \E_{P_0}\big[\xi^2\{g'(g^{-1}(X\theta_0+f_0(Z)))\}^2\Var_{P_0}(Y\given X,Z)\given Z\big]
        &=
        \E_{P_0}\big[\Var_{P_0}(X\given Z,B)\bar{\varphi}(Z)p^{-\alpha}(Z)B\given Z\big]
        \\ 
        &= \E_{P_0}\big[B^2p^{-\alpha-\beta}(Z)\bar{\varphi}(Z)\given Z\big]
        \\ 
        &= \E_{P_0}\big[Bp^{-\alpha-\beta}(Z)\bar{\varphi}(Z)\given Z\big]
        \\
        &= p^{1-\alpha-\beta}(Z)\bar{\varphi}(Z),
    \end{align*}
    and similarly
    \begin{equation*}
        \E_{P_0}\big[\{g'(g^{-1}(X\theta_0+f_0(Z)))\}^2\Var_{P_0}(Y\given X,Z)\given Z\big] = p^{1-\alpha}(Z)\bar{\varphi}(Z).
    \end{equation*}
    Therefore
    \begin{align*}
        V_{P_0,\mathrm{loceff}} &:= \big(\E_{P_0}\big[p(Z)^{\alpha-\beta}\bar{\varphi}(Z)^{-1}\big]\big)^{-2}\big(\E_{P_0}\big[p(Z)^{\alpha-\beta-1}\bar{\varphi}(Z)^{-1}\big]\big)
        ,
        \\
        V_{P_0,\mathrm{rose}} &:= \big(\E_{P_0}\big[p(Z)^{1+\alpha-\beta}\bar{\varphi}(Z)^{-1}\big]\big)^{-1}
        ,
        \\
        V_{P_0,\mathrm{unw}} &:= \big(\E_{P_0}\big[p(Z)^{1-\beta}\big]\big)^{-2} \big(\E_{P_0}\big[p(Z)^{1-\alpha-\beta}\bar{\varphi}(Z)\big]\big)
        .
    \end{align*}
    Now
    \begin{multline*}
        \bar{\varphi}(Z)^{-1} = \E_{P_{X|Z}}\big[\{g'(g^{-1}(X\theta_0+f_0(Z)))\}^{-2}\given Z\big]
        \\
        =
        a(Z)p(Z)+b(Z)(1-p(Z))
        =
        \begin{cases}
            a(Z)
            &\;\text{if $Z\in\cZ_1$}
            \\
            a(Z)\zeta + b(Z)(1-\zeta)
            &\;\text{if $Z\in\cZ_1$}
        \end{cases}.
    \end{multline*}
    Then define 
    \begin{gather*}
        m_1^{(1)}(\zeta) := \E_{P_0}[\bar{\varphi}(Z)\given Z\in\cZ_1],
        \qquad
        m_2^{(1)} = \E_{P_0}[\bar{\varphi}(Z)\given Z\in\cZ_2],
        \\
        m_1^{(-1)}(\zeta) := \E_{P_0}[\bar{\varphi}(Z)^{-1} \given Z\in\cZ_1],
        \qquad
        m_2^{(-1)} = \E_{P_0}[\bar{\varphi}(Z)^{-1} \given Z\in\cZ_2],
    \end{gather*}
    all of which we note are positive. 
    Also note that by construction $m_1^{(1)}(\zeta)\in[2^{-1}c'',\,2c'']$
    and
    \begin{equation*}
        m_1^{(-1)}(\zeta)
        =
        \E_{P_Z}\big[\bar{\varphi}(Z)^{-1}\given Z\in\cZ_1\big]
        =
        c\,\zeta + c'(1-\zeta)
    \end{equation*}
    Then
    \begin{align*}
        \frac{V_{P_0,\mathrm{unw}}}{V_{P_0,\mathrm{rose}}}
        &=
        \frac{\E_{P_0}[p^{1-\alpha-\beta}(Z)\bar{\varphi}(Z)]\E_{P_0}[p^{1+\alpha-\beta}(Z)\bar{\varphi}(Z)^{-1}]}{\big\{\E_{P_0}[p^{1-\beta}(Z)]\big\}^2}
        \\
        &= 
        \frac{\{\zeta^{1-\alpha-\beta}m_1^{(1)}(\zeta)q_1 + m_2^{(1)}q_2\}  \{\zeta^{1+\alpha-\beta}m_1^{(-1)}(\zeta)q_1 + m_2^{(-1)}q_2\}}{\{\zeta^{1-\beta}q_1 + q_2\}^2}
        \\
        &\geq
        m_1^{(1)}(\zeta)m_2^{(-1)}q_1q_2\zeta^{1-\alpha-\beta}
        \\
        &\geq
        2^{-1} c'' m_2^{(-1)}q_1q_2\zeta^{1-\alpha-\beta}
        \\
        &\geq A,
    \end{align*}
    where we make use of the inequalities $\zeta\leq1$ (so that $\zeta^{1-\beta}q_1+q_2\leq q_1+q_2=1$), $\zeta\leq\bar{\zeta}$ (so that $m_1^{(1)}(\zeta)\geq2^{-1}c''$), and $\zeta\leq \big(2^{-1}A^{-1}q_1q_2 c''m_2^{(-1)}\big)^{\frac{1}{\alpha+\beta-1}}$ for the final inequality, noting that $1-\alpha-\beta<0$.
    
    For the ratio ${V_{P_0,\mathrm{loceff}}}/{V_{P_0,\mathrm{unw}}} $,
    \begin{align*}
        \frac{V_{P_0,\mathrm{loceff}}}{V_{P_0,\mathrm{unw}}} 
        &= 
        \frac{\E_{P_0}[p^{\alpha-\beta-1}(Z)\bar{\varphi}(Z)^{-1}]\big\{\E_{P_0}[p^{1-\beta}(Z)]\big\}^2}{\big\{\E_{P_0}[p^{\alpha-\beta}(Z)\bar{\varphi}(Z)^{-1}]\big\}^2\E_{P_0}[p^{1-\alpha-\beta}(Z)\bar{\varphi}(Z)]}
        \\
        &=
        \frac{\{\zeta^{\alpha-\beta-1}m_1^{(1)}q_1 + m_2^{(1)}q_2\}  \{\zeta^{1-\beta}q_1 + q_2\}^2  }{\{\zeta^{\alpha-\beta}m_1^{(-1)}q_1 + m_2^{(-1)}q_2\}^2  \{\zeta^{1-\alpha-\beta}m_1^{(1)}q_1 + m_2^{(1)}q_2\}}
        \\
        &\geq
        \frac{q_2^2}{2\{m_1^{(-1)}(\zeta)q_1+m_2^{(-1)}q_2\}^2} \,\zeta^{2(\alpha-1)}
        \\
        &\geq
        \frac{q_2^2}{2\{2c''q_1+m_2^{(-1)}q_2\}^2} \,\zeta^{2(\alpha-1)}
        \\
        &\geq A,
    \end{align*}
    with the final inequality following as $\zeta\leq \big(2^{-1}\{2c''q_1+m_2^{(-1)}q_2\}^{-2}A^{-1}q_2^2\big)^{\frac{1}{2(1-\alpha)}}$, alongside
    \begin{equation*}
        \zeta^{\alpha-\beta}m_1^{(-1)}(\zeta)q_1 + m_2^{(-1)}q_2
        \leq
        \zeta^{\alpha-\beta}(c+c')q_1+m_2^{(-1)}q_2
        \leq
        2\zeta^{\alpha-\beta}(c+c')q_1,
     \end{equation*}
    where in the above we make use of: $\zeta\leq1$; $\alpha\geq\beta$; $m_1^{(1)}(\zeta)=c+c'\zeta \leq c+c'$; and $\zeta\leq\big(q_1^{-1}q_2(c+c')^{-1}m_2^{(-1)}\big)^{\frac{1}{\alpha-\beta}}$.
    
    Thus the required result is obtained;
    \begin{equation*}
        \frac{V_{P_0,\mathrm{loceff}}}{V_{P_0,\mathrm{unw}}}\geq A
        \quad
        \text{and}
        \quad
        \frac{V_{P_0,\mathrm{unw}}}{V_{P_0,\mathrm{rose}}}\geq A.
    \end{equation*}

\end{proof}

\subsection{Note on computational complexity of ROSE random forests}\label{appsec:rose-complexity}

Selecting the optimal split point for the $m$th iteration of the decision tree in Algorithm~\ref{alg:roseforest} requires updating at most $(1-\alpha)^{m-1} nd$ goodness-of-fit measures, the set of which can be calculated in $O((1-\alpha)^{m-1}nd)$ complexity. Therefore, the computational complexity to build a tree with $M$ splitting points is upper bounded by
\begin{equation*}
    \sum_{m=1}^M (1-\alpha)^{m-1}nd \leq nd \sum_{m=1}^{\infty} (1-\alpha)^{m-1} = \alpha^{-1}nd
\end{equation*}

\subsection{Asymptotic consistency of ROSE random forests (Theorem~\ref{thm:rose_forest_consistency})}\label{appsec:consistency-proof-first}

For notational simplicity throughout the following proof we will denote $\hat{w}_1^{\rose}$ by $\hat{w}_1$, omitting the superscript. Note in the below, conditioning on $\hat{w}_1^{\rose}$ (i.e. conditioning on the data used for this estimator) is equivalent to conditioning on $S_\cI := (S_i)_{i\in\cI}$ (i.e. the data used to construct each ROSE decision tree's splits and evaluations) in addition to $\hat{\eta}$ and $\hat{\theta}$ (i.e. the data used for the $\eta_P$ estimator and the pilot $\theta$ estimator). Also, for avoidance of doubt, given non-negative functions e.g.~$\omega_{ib}$ and for any $q\in\R$ we notate exponentiation by $\omega_{ib}^q(z)=\{\omega_{ib}(z)\}^q$.

\begin{proof}[Proof of Theorem~\ref{thm:rose_forest_consistency}]
    Define the `residual terms'
    \begin{align*}
        R_i&:=\psi_1^2(S_i;\hat{\theta},\hat{\eta}(Z_i))-\psi_1^2(S_i;\theta_P,\eta_P(Z_i)),
        \\
        R_{\theta,i}&:=\partial_\theta\psi_1(S_i;\hat{\theta},\hat{\eta}(Z_i))-\partial_\theta\psi_1(S_i;\theta_P,\eta_P(Z_i)),
    \end{align*}
    and for each $z\in\cZ$ define
    \begin{align*}
        L_i(z) &:= \psi_1^2(S_i;\theta_P,\eta_P(Z_i)) - \E_P[\psi_1^2(S_i;\theta_P,\eta_P(Z_i))|Z_i=z, \hat{\eta}],
        \\
        Q_i(z) &:= \partial_\theta\psi_1(S_i;\theta_P,\eta_P(Z_i)) - \E_P\left[\partial_\theta\psi_1(S_i;\theta_P,\eta_P(Z_i))|Z_i=z, \hat{\eta}\right],
        \\
        U_{\theta}(z)&:=\E_P\left[\partial_\theta\psi_1(S_i;\theta_P,\eta_P(Z_i))|Z_i=z\right].
    \end{align*}
    Also recall $\omega_{ib}(z)$ as defined in Algorithm~\ref{alg:roseforest} and let $$\omega_i(z):=\frac{1}{B}\sum_{b=1}^B\omega_{ib}(z)$$. Then
    \begin{equation*}
        \big( \hat{w}_1(z)-w_{P,1}^{\rose}(z)\big)^2 = \frac{\Delta^2(z)}{U^2(z)\big(U(z)+\tilde{\Delta}(z)\big)^2},
    \end{equation*}
    where
    \begin{align*}
        \tilde{\Delta}(z) &:= \sum_{i=1}^n\omega_i(z)\big\{L_i(z)+R_i\big\}
        ,
        \\
        \Delta(z) &:= U(z)\sum_{i=1}^n\omega_i(z)\big\{Q_i(z)+R_{\theta,i}\big\} - U_{\theta}(z)\tilde{\Delta}(z).
    \end{align*}
    Without loss of generality take $c_w\geq c_1^{-1}C_2$; otherwise replace $c_w$ with $c_w\vee (c_1^{-1}C_2)$. Then for an arbitrary $\kappa\in(0,1)$ and $P\in\cP$,
    \begin{align*}
        &\quad
        \E_P\left[\big( \hat{w}_1(Z)-w_{P,1}^{\rose}(Z)\big)^2\Biggiven\Strain\right]
        \\
        &=
        \E_P\left[\big( \hat{w}_1(Z)-w_{P,1}^{\rose}(Z)\big)^2\ind_{\left\{\left|\tilde{\Delta}(Z)\right|<(1-\kappa)|U(Z)|\right\}}\Biggiven\Strain\right]
        \\
        &\quad+\E_P\left[\big( \hat{w}_1(Z)-w_{P,1}^{\rose}(Z)\big)^2\ind_{\left\{\left|\tilde{\Delta}(Z)\right|\geq(1-\kappa)|U(Z)|\right\}}\Biggiven\Strain\right]
        \\
        &\leq 2c_w\E_P\left[\big| \hat{w}_1(Z)-w_{P,1}^{\rose}(Z)\big|\ind_{\left\{\left|\tilde{\Delta}(Z)\right|<(1-\kappa)|U(Z)|\right\}}\Biggiven\Strain\right]
        \\
        &\quad + 4c_w^2\PP_P\left(\big|\tilde{\Delta}(Z)\big|>(1-\kappa)|U(Z)|\Biggiven\Strain\right)
        \\
        &\leq \frac{2c_w}{\kappa}\E_P\left[\frac{|\Delta(Z)|}{U^2(Z)}\bigggiven\Strain\right] + 4c_w^2\PP_P\left(\big|\tilde{\Delta}(Z)\big|>(1-\kappa)|U(Z)|\Biggiven\Strain\right)
        \\
        &\leq \frac{2c_w}{\kappa}\E_P\left[\frac{|\Delta(Z)|}{U^2(Z)}\bigggiven\Strain\right] + \frac{4c_w^2}{1-\kappa}\E_P\left[\frac{|\tilde{\Delta}(Z)|}{|U(Z)|}\bigggiven\Strain\right]
        \\
        &\leq \frac{2c_w}{c_1^2\kappa}\E_P\big[|\Delta(Z)|\biggiven\Strain\big] + \frac{4c_w^2}{c_1(1-\kappa)}\E_P\big[|\tilde{\Delta}(Z)|\biggiven\Strain\big],
    \end{align*}
with the first inequality following by the Cauchy--Schwarz and triangle inequalities, the second by Markov's inequality, and the third by Assumption~\ref{ass:holder}. Therefore, it suffices to show that $\E_P[|\Delta(Z)|\given\Strain]$ and $\E_P[|\tilde{\Delta}(Z)|\given\Strain]$ are each $o_\cP(1)$. By the triangle inequality
\begin{multline}\label{eq:Delta}
    \E_P\big[|\Delta(X)|\biggiven\Strain\big] \leq M \Bigg(
        \underbrace{\E_P\bigg[\bigg|\sum_{i=1}^n\omega_i(Z)Q_i(Z_i)\bigg|\bigggiven\Strain\bigg]}_{=:\mathcal{Q}_{\RN{1}}}
        + \underbrace{\E_P\bigg[\bigg|\sum_{i=1}^n\omega_i(Z)L_i(Z_i)\bigg|\bigggiven\Strain\bigg]}_{_{=:\mathcal{L}_{\RN{1}}}}
        \\
        + \underbrace{\E_P\bigg[\bigg|\sum_{i=1}^n\omega_i(Z)\big(Q_i(Z)-Q_i(Z_i)\big)\bigg|\bigggiven\Strain\bigg]}_{=:\mathcal{Q}_{\RN{2}}}
        + \underbrace{\E_P\bigg[\bigg|\sum_{i=1}^n\omega_i(Z)\big(L_i(Z)-L_i(Z_i)\big)\bigg|\bigggiven\Strain\bigg]}_{=:\mathcal{L}_{\RN{2}}}
        \\
        + \underbrace{\E_P\bigg[\bigg|\sum_{i=1}^n\omega_i(Z)R_i\bigg|\bigggiven\Strain\bigg]}_{=:\cR_{\RN{2}}}
        + \underbrace{\E_P\bigg[\bigg|\sum_{i=1}^n\omega_i(Z)R_{\theta,i}\bigg|\bigggiven\Strain\bigg]}_{=:\cR_{\RN{1}}},
\end{multline}
and
\begin{equation}\label{eq:Delta-tilde}
    \E_P\big[|\tilde{\Delta}(Z)|\biggiven\Strain\big] \leq \mathcal{L}_{\RN{1}} + \mathcal{L}_{\RN{2}} + \cR_{\RN{2}}.
\end{equation}
The terms $(\mathcal{L}_{\RN{1}},\mathcal{Q}_{\RN{1}})$ and $(\mathcal{L}_{\RN{2}},\mathcal{Q}_{\RN{2}})$ act as variance and bias terms respectively, and $(\cR_{\RN{1}},\cR_{\RN{2}})$ act as error terms from the estimators of the nuisance functions.
By (a uniform version of) Slutsky's Theorem (Lemma~\ref{lem:unif_slutsky}), it suffices to show that all these terms are $o_\cP(1)$; we show each in turn.

To bound the $\mathcal{Q}_{\RN{1}}$ term
\begin{equation}\label{eq:Q1}
    \mathcal{Q}_{\RN{1}} = \E_P\left[\bigg|\frac{1}{B}\sum_{b=1}^B\sum_{i=1}^n\omega_{ib}(Z)Q_i(Z_i)\bigg|\bigggiven\Strain\right]
    \leq
    \frac{1}{B}\sum_{b=1}^B \E_P\left[\bigg|\sum_{i=1}^n\omega_{ib}(Z)Q_i(Z_i)\bigg|\bigggiven\Strain\right].
\end{equation}
Then by Jensen's inequality,
\begin{align*}
    \E_P\left[\bigg|\sum_{i=1}^n\omega_{ib}(Z)Q_i(Z_i)\bigg|\bigggiven\Strain\right]
    &\leq
    \E_P\left[\Big(\sum_{i=1}^n\omega_{ib}(Z)Q_i(Z_i)\Big)^2\bigggiven\Strain\right]^{\frac{1}{2}}
    \\
    &=
    \bigg(\sum_{i=1}^n\sum_{i'=1}^nQ_i(Z_i)Q_{i'}(Z_{i'})\E_P\left[\omega_{ib}(Z)\omega_{i'b}(Z)\given\Strain\right]\bigg)^{\frac{1}{2}}
\end{align*}
By the honesty property~\ref{P1} the data used to generate the splits of the tree $T_{jb}$ and thus $\omega_{jb}$ are independent of the data used for evaluation on the tree indexed by $b$, and thus $\omega_{jb}\independent (S_i)_{i\in\cI_{\text{eval},b}}$. Consequently, for any $i,i'\in\cI_{\text{eval},b}$ it then follows that $\big(Q_i(Z_i),Q_{i'}(Z_{i'})\big)\allowbreak\independent\allowbreak\big(\omega_{ib}(Z),\omega_{i'b}(Z)\big)\given Z_i$. In conjunction with the fact that $\E_P[Q_i(Z_i)\given Z_i]=0$ and Jensen's inequality,
\begin{align*}
    &\quad \supP\E_P\left[\bigg|\sum_{i=1}^n\omega_{ib}(Z)Q_i(Z_i)\bigg|\right]
    \\
    &\leq 
    \supP\E_P\left[\bigg(\sum_{i=1}^n\omega_{ib}(Z)Q_i(Z_i)\bigg)^2\right]^{\frac{1}{2}}
    \\
    &=
    \supP\E_P\left[\sum_{i\in\cI_{\text{eval},b}}\sum_{i'\in\cI_{\text{eval},b}}\omega_{ib}(Z)\omega_{i'jb}(Z)Q_i(Z_i)Q_{i'}(Z_{i'})\right]^{\frac{1}{2}}
    \\
    &=
    \supP\E_P\left[\sum_{i\in\cI_{\text{eval},b}}\sum_{i'\in\cI_{\text{eval},b}}\E_P\left[\omega_{ib}(Z)\omega_{i'b}(Z)\given Z,Z_{\text{train}},\hat{\eta}\right]\E_P\left[Q_i(Z_i)Q_{i'}(Z_{i'})\given Z_i,Z_{i'},\hat{\eta}\right]\right]^{\frac{1}{2}}
    \\
    &=\supP\E_P\left[\sum_{i=1}^n\E_P\left[\omega_{ib}^2(Z)\given Z,Z_{\text{train}},\hat{\eta}\right]\E_P\left[Q_i^2(Z_i)\given Z_i,\hat{\eta}\right]\right]^{\frac{1}{2}}
    \\
    &=\supP\left(\sum_{i=1}^n\E_P\left[\omega_{ib}^2(Z)Q_i^2(Z_i)\right]\right)^{\frac{1}{2}}
    \\
    &\leq
    \supP\left(\sum_{i=1}^n\E_P\Big[\omega_{ib}^{\frac{2(2+\delta)}{\delta}}(Z)\Big]^{\frac{\delta}{2+\delta}}\E_P\Big[|Q_i(Z_i)|^{2+\delta}\Big]^{\frac{2}{2+\delta}}\right)^{\frac{1}{2}}
    \\
    &\leq
    \supP\left(\bigg\{\max_{i\in[n]}\E_P\Big[|Q_i(Z_i)|^{2+\delta}\Big]^{\frac{2}{2+\delta}}\bigg\} \E_P\Big[\max_{i\in[n]}\omega_{ib}^{\frac{4+\delta}{\delta}}(Z)\Big]^{\frac{\delta}{2+\delta}} \right)^{\frac{1}{2}} 
    \\
    &\leq
    \supP\left(\bigg\{\max_{i\in[n]}\E_P\Big[|\partial_\theta\psi_1(S_i;\theta_P,\eta_P(Z_i))|^{2+\delta}\Big]^{\frac{2}{2+\delta}}\bigg\} \big(\min_{l_{b}}k(l_{b})\big)^{-\frac{4+\delta}{2+\delta}} \right)^{\frac{1}{2}}
    \\
    &= o(1),
\end{align*}
where the second inequality is a direct application of H\"{o}lder's inequality, and the third inequality uses that $\sum_{i=1}^n \omega_{ib}(z)=1$.
Thus with~\eqref{eq:Q1} it follows
\begin{equation*}
    \supP\E_P\big[\mathcal{Q}_{\RN{1}}\big]
    \leq
    \frac{1}{B}\sum_{b=1}^B \supP\E_P\left[\bigg|\sum_{i=1}^n\omega_{ib}(Z)Q_i(Z_i)\bigg|\right]
    =
    o(1).
\end{equation*}
By Markov's inequality we thus have $\mathcal{Q}_{\RN{1}}=o_\cP(1)$. 

For the term $\mathcal{L}_{\RN{1}}$ we consider the two cases $\delta\geq 2$ and $\delta\in(0,2)$ separately. If $\delta\geq 2$ using the same arguments to that used to bound the term $\mathcal{Q}_{\RN{1}}$ gives
\begin{equation*}
    \supP\E_P\left[\bigg|\sum_{i=1}^n\omega_{ib}(Z)L_i(Z_i)\bigg|\right]
    \leq
    \supP\left(\bigg\{\max_{i\in[n]}\E_P\Big[\psi_1^4(S_i;\theta_P,\eta_P(Z_i))\Big]^{\frac{1}{2}}\bigg\} \big(\min_{l_{b}}k(l_{b})\big)^{-\frac{3}{2}} \right)^{\frac{1}{2}}.
\end{equation*}
If instead $\delta\in(0,2)$ then we use the von--Bahr--Esseen inequality on the sequence of mean zero terms $\big(\omega_{ijb}(Z)L_i(Z_i)\big)_{i\in[n]}$. Note these terms are mean zero as
\begin{equation*}
    \E_P\left[\omega_{ib}(Z)L_i(Z_i)\right]
    =
    \E_P\big[\E_P\left[\omega_{ib}(Z)\given Z_i\right]\E_P\left[L_i(Z_i)\given Z_i\right]\big]
    =
    0
    ,
\end{equation*}
using the honesty property~\ref{P1}, and so we can apply the von--Bahr--Esseen inequality;
\begin{align*}
    &\quad \supP\E_P\left[\bigg|\sum_{i=1}^n\omega_{ib}(Z)L_i(Z_i)\bigg|\right]
    \\
    &\leq 
    \supP\E_P\left[\bigg|\sum_{i=1}^n\omega_{ib}(Z)L_i(Z_i)\bigg|^{1+\frac{\delta}{2}}\right]^{\frac{2}{2+\delta}}
    \\
    &\leq
    \big(2-n^{-1}\big)^{\frac{2}{2+\delta}} \supP\bigg(\sum_{i=1}^n\E_P\left[\omega_{ib}^{1+\frac{\delta}{2}}(Z)|L_i(Z_i)|^{1+\frac{\delta}{2}}\right]\bigg)^{\frac{2}{2+\delta}}
    \\
    &\leq
    \big(2-n^{-1}\big)^{\frac{2}{2+\delta}} \supP\bigg(\sum_{i=1}^n\E_P\left[\omega_{ib}^{2+\delta}(Z)\right]^{\frac{1}{2}}\E_P\left[|L_i(Z_i)|^{2+\delta}\right]^{\frac{1}{2}}\bigg)^{\frac{2}{2+\delta}}
    \\
    &\leq
    \supP\bigg(\bigg\{\max_{i\in[n]}\E_P\left[\big|\psi_1(S_i;\theta_P,\eta_P(Z_i))\big|^{2+\delta}\right]^{\frac{2}{2+\delta}}\bigg\} \E_P\left[\max_{i\in[n]}\omega_{ib}^{\frac{\delta}{2}}(Z)\right]\bigg)^{\frac{2}{2+\delta}}
    \\
    &\leq
    \supP\bigg(\bigg\{\max_{i\in[n]}\E_P\left[\big|\psi_1(S_i;\theta_P,\eta_P(Z_i))\big|^{2+\delta}\right]^{\frac{2}{2+\delta}}\bigg\} \big(\min_{l_{b}}k(l_{b})\big)^{-\frac{\delta}{2}}\bigg)^{\frac{2}{2+\delta}}
    \\
    &= o(1),
\end{align*}
and thus in either case
\begin{equation*}
    \supP\E_P\big[\mathcal{L}_{\RN{1}}\big]
    \leq
    \frac{1}{B}\sum_{b=1}^B \supP\E_P\left[\bigg|\sum_{i=1}^n\omega_{ib}(Z)L_i(Z_i)\bigg|\right]
    =
    o(1),
\end{equation*}
and so again by Markov's inequality $\mathcal{L}_{\RN{1}} = o_\cP(1)$.

    We now show that the `bias terms' $\mathcal{Q}_{\RN{2}}, \mathcal{L}_{\RN{2}}=o_\cP(1)$. Note that by Assumption~\ref{ass:holder}
    \begin{multline*}
        \mathcal{Q}_{\RN{2}} \leq \E_P\bigg[\sum_{i=1}^n\omega_i(Z)\big|Q_i(Z)-Q_i(Z_i)\big|\Biggiven\Strain\bigg]
        \leq L^2\E_P\left[\sum_{i=1}^n\omega_i(Z)\norm{Z-Z_i}^{\beta}\bigggiven\Strain\right]
        \\
        \leq \frac{L^2}{B}\sum_{b=1}^B\E_P\Big[\big(\diam\,l_{b}(Z)\big)^{\beta}\biggiven\Strain\Big]
        ,
    \end{multline*}
    and so it suffices to show for a single tree $T_{b}$ that $\E_P\big[(\diam\,l_{b}(Z))^{\beta}\given\Strain\big]=o_\cP(1)$. This follows by Lemma~\ref{lem:diam},
    and so $\mathcal{Q}_{\RN{2}}=o_\cP(1)$. Analogous arguments show that $\mathcal{L}_{\RN{2}}=o_\cP(1)$.

Finally we show that the error terms $\cR_{\RN{1}}, \cR_{\RN{2}} = o_\cP(1)$. For the $\cR_{\RN{1}}$ term
\begin{align*}
    \cR_{\RN{1}} &\leq \sum_{i=1}^n \E_P\big[\omega_i(Z)\given\Strain\big] |R_{\theta,i}|
    \\
    & \leq \left(\sum_{i=1}^n\E_P[\omega_i(Z)\given\Strain]^p\right)^{\frac{1}{p}} \left(\sum_{i=1}^n|R_{\theta,i}|^q\right)^{\frac{1}{q}}
    \\
    &\leq \left(\Big\{\max_{i\in[n]}\E_P[\omega_i(Z)\given\Strain]\Big\}\sum_{i=1}^n|R_{\theta,i}|^q\right)^{\frac{1}{q}}
    \\
    & = O_\cP\big(n^{\frac{1}{q}-\lambda}\big) = o_\cP(1),
\end{align*}
using H\"{o}lder's inequality (with $q$ such that $q>\lambda^{-1}$ and $p=q/(q-1)$, and taking any $\lambda\in(0,\frac{1}{2})$), the fact that (by definition) $\sum_{i=1}^n\omega_i(Z)=1$, and the last step following due to
\begin{align*}
    &\qquad
    \E_P\left[n^{\lambda}\big|\partial_\theta\psi(S_i;\hat{\theta},\hat{\eta}(Z_i))-\partial_\theta\psi(S_i;\theta_P,\eta_P(Z_i))\big|\biggiven\hat{\eta},\hat{\theta}\right]
    \\
    &\leq
    \E_P\left[n^{\lambda}\big|\partial_\theta\psi(S_i;\hat{\theta},\hat{\eta}(Z_i))-\partial_\theta\psi(S_i;\hat{\theta},\eta_P(Z_i))\big|\biggiven\hat{\eta},\hat{\theta}\right]
    \\
    &\qquad
    + \E_P\left[n^{\lambda}\big|\partial_\theta\psi(S;\hat{\theta},\eta_P(Z_i))-\partial_\theta\psi(S_i;\theta_P,\eta_P(Z_i))\big|\biggiven\hat{\eta},\hat{\theta}\right]
    \\
    &\leq 
    \underbrace{\sup_{\theta\in\Theta}\E_P\left[n^{\lambda}\big|\partial_\theta\psi(S_i;\theta,\hat{\eta}(Z_i))-\partial_\theta\psi(S;\theta,\eta_P(Z_i))\big|\biggiven\hat{\eta}\right]}_{=o_\cP(1)}
    \\
    &\qquad + \underbrace{n^{\lambda}|\hat{\theta}-\theta_P|}_{=O_\cP\big(n^{\lambda-\frac{1}{2}}\big)}\cdot\sup_{\theta\in\Theta}\E_P\left[\big|\nabla_\theta\partial_\theta\psi(S_i;\theta,\eta_P(Z_i))\big|\right]
    \\
    & = o_\cP(1),
\end{align*}
following analogous arguments to the proof of Lemma~\ref{lem:eta-consistency}, and utilising Assumption~\ref{ass:bounded-moments}, and the asymptotic normality of the unweighted $\theta$ estimator (on a single fold) as shown in the proof of Theorem~\ref{thm:rose_forest_consistency} (and thus, by Markov's inequality, it follows that $|R_{\theta,i}|=o_\cP\big(n^{-\lambda}\big)$). Similar arguments show that $\cR_{\RN{2}}=o_\cP(1)$.

Therefore all the terms on the right hand side of~\eqref{eq:Delta}~and~\eqref{eq:Delta-tilde} are $o_\cP(1)$ and so applying a uniform version of Slutsky's Theorem (Lemma~\ref{lem:unif_slutsky}) completes the proof;
\begin{equation*}
    \E_P\left[\big( \hat{w}_1^{\rose}(Z)-w_{P,1}^{\rose}(Z)\big)^2\Biggiven\hat{w}_1^{\rose}\right]^{\frac{1}{2}} = o_\cP(1).
\end{equation*}

\end{proof}

\begin{lemma}\label{lem:diam}
    Under the setup of Theorem~\ref{thm:rose_forest_consistency} we have that
    \begin{equation*}
        \E_P\left[\left(\diam\, l_{b}(Z)\right)^\gamma\,\Big|\,\Strain\right] = o_\cP(1),
    \end{equation*}
    for any $\gamma>0$.
\end{lemma}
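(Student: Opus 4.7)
The proof proceeds along lines familiar from the random-forest literature (cf.\ \citet{meinshausen, wagerwalther, grfs}), with care taken to ensure convergence is uniform in $P\in\cP$. Since $\cZ=[0,1]^d$ gives the deterministic envelope $\diam\, l_b(Z) \leq \sqrt{d}$, it suffices by bounded convergence to show that for every $\epsilon > 0$,
\[
\sup_{P\in\cP}\PP_P\big(\diam\, l_b(Z) > \epsilon \,\big|\, S_\cI\big) = o_\cP(1).
\]

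The first step is a lower bound on the depth $D_b(Z)$ of the path from the root of $T_b$ to the leaf $l_b(Z)$. Repeated application of the $\alpha$-regularity property (\ref{P3}) shows that any node at depth $D$ of the tree contains at most $(1-\alpha)^D |\cI_{\text{split},b}|$ training observations; combined with property (\ref{P2}) that $n^{-1}\max_l k(l)=o(1)$, this forces $D_b(Z) \geq D_n$ for some random sequence $D_n$ with $1/D_n = o_\cP(1)$, and the bound is distribution-free so holds uniformly over $\cP$.

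The second step controls the number of splits along each coordinate. By the random split property (\ref{P4}), conditionally on the data and the path so far, each split on the path to $l_b(Z)$ uses coordinate $q$ with probability at least $\pi/d$; hence the count $d_q(Z)$ of splits along coordinate $q$ on this path stochastically dominates a $\mathrm{Binomial}(D_b(Z),\pi/d)$ random variable. Hoeffding's inequality together with a union bound over $q\in[d]$ yields $\min_q d_q(Z) \geq D_n \pi/(2d)$ with probability tending to one, uniformly in $P$.

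The key step, which is also the main obstacle, is to convert $\alpha$-regularity in \emph{data fractions} into a geometric bound on the \emph{side lengths} of the leaf. Under Assumption~\ref{ass:bounded-density} the density of $Z$ is bounded between positive constants on $[0,1]^d$ uniformly in $P\in\cP$, so a concentration-of-empirical-measure argument over the VC-class of axis-aligned rectangles arising as nodes of the tree (in the spirit of \citet{wagerwalther}) shows that, with probability tending to one and uniformly in $P$, every split partitions its parent's extent in the split coordinate in a ratio bounded away from both $0$ and $1$ by some $\alpha'>0$ depending only on $\alpha$ and the density bounds. Iterating over the $\geq D_n\pi/(2d)$ splits along each coordinate gives $\max_q s_q(Z) \leq (1-\alpha')^{D_n\pi/(2d)} = o_\cP(1)$, hence $\diam\, l_b(Z) \leq \sqrt{d}\max_q s_q(Z) = o_\cP(1)$ uniformly in $P\in\cP$, and the bounded-convergence reduction above yields the claim.
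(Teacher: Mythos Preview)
Your overall strategy matches the paper's in the first two steps (a depth lower bound via P2/P3 and stochastic domination of per-coordinate split counts via P4), but the crucial third step is handled quite differently. The paper does not attempt to show that each split shrinks the side length by a fixed factor. Instead it works with the \emph{marginal} empirical CDFs: writing $I_b(z;q)=[a_q,b_q]$, it bounds
\[
|I_b(z;q)| \;\lesssim\; F^{(q)}(b_q)-F^{(q)}(a_q) \;\leq\; 2\|F_n^{(q)}-F^{(q)}\|_\infty + \tfrac{1}{n}\sum_i \ind_{I_b(z;q)}(Z_{iq}),
\]
controls the first summand by Dvoretzky--Kiefer--Wolfowitz (distribution-free $O(n^{-1/2})$ bounds, giving uniformity over $\cP$ for free), and bounds the second by $(1-\alpha)^{S_b(z;q)}$, whose conditional expectation is handled directly via the binomial moment-generating function rather than via a Hoeffding-type high-probability bound. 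The point is that concentration is only ever required at the global scale $n$, never at the scale of individual tree nodes.

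By contrast, your step~4 asserts that with high probability \emph{every} split partitions the parent's side in a ratio bounded in $[\alpha',1-\alpha']$. Deriving this from $\alpha$-regularity (which constrains empirical counts, not volumes) requires \emph{relative} concentration of the empirical measure over all rectangles arising in the tree, including the deepest internal nodes containing of order $\min_l k(l)$ points. Under P2 as stated, $\min_l k(l)\to\infty$ with no rate; the Wager--Walther type bounds you invoke typically need $\min_l k(l)$ to grow at least polylogarithmically in $n$ for the relative error to vanish uniformly over the tree. So as written there is a small gap: either you need to strengthen P2 slightly (which is common in the random-forest literature and harmless in practice), or you need an argument that does not rely on per-split geometric shrinkage holding at the deepest nodes. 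The paper's marginal-DKW route sidesteps this issue entirely, which is the main thing its approach buys over yours.
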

\begin{proof}
    Note \citet{app-meinshausen} proves a result to this effect for a fixed distribution $P$, pointwise in $z\in\mathcal{Z}$, and with $\gamma=1$; our proof here follows their method closely.
    
    Given some $z\in\cZ$ the rectangular region given by the leaf $l_{b}(z)$ that contains $z$ for the $b$th tree can be represented by $d$ intervals $I_{b}(z;q)$ (indexed by $q\in[d]$) as
    \begin{equation*}
        l_{b}(z) = \otimes_{q=1}^{d} I_{b}(z;q).
    \end{equation*}
    Define the terms $N_{b}(z) := \left\{i\in\cI_{\text{eval},b} : Z_i\in l_{b}(z)\right\}$ and $N_{b}(z;q) := \left\{i\in\cI_{\text{eval},b} : Z_{iq} \in I_{b}(z;q) \right\}$, so $k(l_{b}(z)) = N_{b}(z')$ for any $z'\in l_{b}(z)$. Further, let $S_{b}(z;q)$ to be the number of times the $q$th variable is split when $z\in\cZ$ is passed through the tree $T_{b}$. Similarly define $S_{b}(z)$ to be the number of nodes passed when $z\in\cZ$ is passed through $T_{b}$, so $S_{b}(z)=\sum_{q=1}^pS_{b}(z;q)$. Also define $S_{b}^{\min} := \min_{z\in\cZ}S_{b}(z)$. Then
    \begin{equation*}
        n\alpha^{S_{b}^{\min}}
        = \max_{z\in\cZ}\big(n\alpha^{S_{b}(z)}\big)
        \leq \max_{z\in\cZ}N_{b}(z)
        = \max_{l_{b}}k(l_{b}(z))
        = o(1),
    \end{equation*}
    by Assumptions~\ref{P2}~and~\ref{P3}, and so also
    \begin{equation*}
        \alpha^{S_{b }^{\min}} = o(1).
    \end{equation*}
    Now, for any $\phi\in(0,1)$,
    \begin{align*}
        \E_P\left[\phi^{\underset{q\in[d]}{\min} S_{b}(Z;q) }\right]
        &\leq \sum_{q=1}^{d} \E_P\left[\phi^{S_{b}(Z;q)}\right]
        \\
        &= \sum_{q=1}^{d} \E_P\left[\E_P\left[\phi^{S_{b}(Z;q)} \biggiven S_{b}(Z) \right]\right]
        \\
        &\leq d \,\E_P\left[\left(1-(1-\phi)\frac{\pi}{p}\right)^{S_{b}(Z)}\right]
        \\
        & \leq d \,\E_P\left[\big(\alpha^{S_{b}^{\min}}\big)^{\frac{\log\left(1-(1-\phi)\frac{\pi}{p}\right)}{\log\alpha}}\right],
    \end{align*}
    where the second inequality follows as  $S_{b}(Z;q)\given Z \overset{d}{=}\sum_{i=1}^{S_{b}(Z)}Y_i\given Z$ where $Y_i\sim\text{Ber}(p_i)$ for $p_i\geq\frac{\pi}{d}$ 
    and thus $S_{b}(Z;q)\given Z$ is stochastically dominated by $\text{Bin}\big(S_{b}(Z),\frac{\pi}{d}\big)\given Z$, and where we also make use of $y\mapsto\phi^y$ as a decreasing function on $y\in\mathbb{N}$.

    Now, we claim that
    \begin{equation}\label{eq:need-to-show}
        A_{b} := \E_P\left[(1-\alpha)^{\gamma\underset{q\in[d]}{\min}S_{b}(Z;q)} \Biggiven T_{b}\right] = o_\cP(1).
    \end{equation}
    To prove~\eqref{eq:need-to-show}  take $\phi=(1-\alpha)^2$ in the above result, so for any $\epsilon>0$
    \begin{equation*}
        \sup_{P\in\cP}\PP_P\left(A_{b}>\epsilon\right)
        \leq \epsilon^{-1}\sup_{P\in\cP}\E_P\left[A_{b}\right]
        \\
        \leq d \, \supP\E_P\left[\left(\alpha^{S_{b}^{\min}}\right)^{\frac{\log\left(1-\left(1-(1-\alpha)^{\gamma}\right)\frac{\pi}{p}\right)}{\log\alpha}}\right] = o(1),
    \end{equation*}
    by applying Lemma~\ref{lem:bounded_op1} to the term $\alpha^{S_{b}^{\min}}$, and noting that $1-\big(1-(1-\alpha)^{\gamma}\big)\frac{\pi}{p}<1$. 
    Thus $A_{b}=o_\cP(1)$. Now, for any $z\in\cZ$ and $q\in [d]$
    \begin{equation*}
        \frac{1}{n}\sum_{i=1}^n\ind_{I_{b}(z;q)}(Z_{iq}) \leq (1-\alpha)^{S_{b}(z;q)},
    \end{equation*}
    and so
    \begin{equation*}
        \max_{q\in[d]}\left(\frac{1}{n}\sum_{i=1}^n\ind_{I_{b}(z;q)}(Z_{iq})\right)^{\gamma} \leq (1-\alpha)^{\gamma\underset{q\in[d]}{\min} S_{b}(z;q)}.
    \end{equation*}
    Thus
    \begin{equation}\label{eq:pt1}
        \E_P\left[\max_{q\in[d]}\left(\frac{1}{n}\sum_{i=1}^n\ind_{I(Z;q)}(Z_{iq})\right)^{\gamma} \bigggiven T_{b}\right] \leq A_{b} = o_\cP(1).
    \end{equation}
    Define
    \begin{equation*}
        F^{(q)}(t) := \PP_P\big(Z_{iq}\leq t\big)
        ,\quad\text{and}\quad
        F_n^{(q)}(t) := \frac{1}{n}\sum_{i=1}^n\ind_{[Z_{iq},\infty)}(t)
        .
    \end{equation*}
    Then
    \begin{align*}
        \E_P\left[\left(\diam\, l_{b}(Z)\right)^{\gamma}\biggiven\Strain\right] 
        &= \E_P\left[\bigg(\max_{q\in[d]}\left|I_{b}(Z;q)\right|\bigg)^{\gamma} \Biggiven T_{b}\right]
        \\
        &\lesssim 
            \E_P\left[\max_{q\in[d]}\sup_{t\in[0,1]}\big|F_n^{(q)}(t)-F^{(q)}(t)\big|^{\gamma} \Biggiven T_{b}\right]
            \\
            &\quad
            + \E_P\left[\max_{q\in[d]}\bigg(\frac{1}{n}\sum_{i=1}^n\ind_{I_{b}(Z;q)}(Z_{iq})\bigg)^{\gamma} \Biggiven T_{b}\right]
            + o_\cP(1)
        \\
        &\leq
            \underbrace{\E_P\Big[\max_{q\in[d]}\sup_{t\in[0,1]}\big|F_n^{(q)}(t)-F^{(q)}(t)\big|^{\gamma} \biggiven T_{b}\Big]}_{=:B_{b}} + o_\cP(1),
    \end{align*}
    and so it suffices to show that $B_{b}=o_\cP(1)$. Firstly, note that for every $\epsilon>0$
    \begin{align*}
        \sup_{P\in\cP}\PP_P\bigg(\max_{q\in[d]}\sup_{t\in[0,1]}\big|F_n^{(q)}(t)-F^{(q)}(t)\big|^{\gamma}>\epsilon\bigg)
        &\leq \sup_{P\in\cP}\sum_{q=1}^{d}\PP_P\bigg(\sup_{t\in[0,1]}\big|F_n^{(q)}(t)-F^{(q)}(t)\big|>\epsilon^{\frac{1}{\gamma}}\bigg)
        \\
        &\leq 2d\,\text{exp}\big(-2n\epsilon^{\frac{2}{\gamma}}\big) = o(1)
    \end{align*}
    using the Dvoretzky--Kiefer--Wolfowitz inequality (the upper bound of which is independent of $P\in\cP$).
    In conjunction with Lemma~\ref{lem:bounded_op1} we have
    \begin{equation*}
        \sup_{P\in\cP}\E_P\left[B_{b}\right] = \sup_{P\in\cP}\E_P\bigg[\max_{q\in[d]}\sup_{t\in[0,1]}\big|F_n^{(q)}(t)-F^{(q)}(t)\big|^{\gamma}\bigg] = o(1)
    \end{equation*}
    and so by Markov's inequality $B_{b}=o_\cP(1)$, completing the proof.
\end{proof}

\subsection{ROSE random forest plus (\texorpdfstring{$J>1$}{TEXT})}\label{appsec:rose-forest-J>1}

Algorithm~\ref{alg:roseforest} develops a ROSE random forest for estimators with $J=1$. Here we present an extension for the case $J>1$, designed to estimate the optimal $J$ weight functions consistently (see Theorem~\ref{thm:rose_forest_plus_consistency}) in a computationally fast manner. This consistency results is based predominantly on the evaluation criterion being implemented and not the splitting rule. The only reliance on the tree splitting in the consistency results are through Assumption~\ref{ass:forest}. Specifically, the splitting rule we employ is likely suboptimal in finite finite samples, but regardless our extension provides a computationally lean pilot estimator that still demonstrates valid asymptotic (Theorem~\ref{thm:rose_forest_plus_consistency}) and promising practical (Section~\ref{sec:sim3}) performance.

We now motivate the ROSE random forest plus algorithm. Given an estimator $\hat{\eta}$ of $\eta_P$ and a pilot estimator $\hat{\theta}_{\text{init}}$ of $\theta_P$ (achieved via e.g.~the zero of the unweighted $\psi$ function), and data indexed by the index set $\mathcal{I}$, the empirical sandwich loss~\eqref{eq:sand-loss} is
\begin{multline*}
    \hat{L}_{\SL}(w) := 
    \Bigg(\sum_{j=1}^J\sum_{i\in\cI}w_j(Z_i)\partial_\theta\psi_j(S_i;\hat{\theta}_{\text{init}},\hat{\eta}(Z_i))\Bigg)^{-2}
    \\
    \cdot
    \Bigg(\sum_{j=1}^J\sum_{j'=1}^J\sum_{i\in\cI}w_j(Z_i)w_{j'}(Z_i)\psi_j(S_i;\hat{\theta}_{\text{init}},\hat{\eta}(Z_i))\psi_{j'}(S_i;\hat{\theta}_{\text{init}},\hat{\eta}(Z_i))\Bigg).
\end{multline*}
Suppose for each $j\in[J]$ the state space $\cZ$ for $Z$ is partitioned into disjoint rectangular regions $R\in\cR_j$ i.e.~$\cZ = \uplus_{R\in\cR_j}R$ for all $j\in[J]$, and suppose $w_j$ is piece-wise constant given this partition i.e.~for all $j\in[J]$, and for $R\in\cR_j$, $w_j(Z_i) = w_{j,R}$ if $i\in I_{j,R} := \{i\in\cI:Z_i\in R\}$. Under this weight model the sandwich loss becomes
\begin{align*}
    \hat{L}_{\SL}(w) &:= 
    \Bigg(\sum_{j=1}^J\sum_{R\in\cR_j}w_{j,R}\sum_{i\in I_{j,R}}\partial_\theta\psi_j(S_i;\hat{\theta}_{\text{init}},\hat{\eta}(Z_i))\Bigg)^{-2}
    \\
    &\qquad\cdot
    \Bigg(\sum_{j=1}^J\sum_{R\in\cR_j}\sum_{j'=1}^J\sum_{R'\in\cR_{j'}}w_{j,R}w_{j',R'}\sum_{i\in I_{j,R}\cap I_{j',R'}}\psi_j(S_i;\hat{\theta}_{\text{init}},\hat{\eta}(Z_i))\psi_{j'}(S_i;\hat{\theta}_{\text{init}},\hat{\eta}(Z_i))\Bigg)
    \\
    &= \big({\bf w}^\top  {\bf a}\big)^{-2} \big({\bf w}^\top  {\bf{F}} {\bf w}\big),
\end{align*}
where
\begin{align}
    {\bf w} &:= \text{vec}\big((w_{j,R})_{j\in[J],R\in\cR_j}\big), \label{eq:bfw}
    \\
    {\bf a} &:= \text{vec}\big((a_{j,R})_{j\in[J],R\in\cR_j}\big),
    \qquad
    a_{j,R} := \sum_{i\in I_{j,R}} \partial_\theta\psi_j(S_i;\hat{\theta}_{\text{init}},\hat{\eta}(Z_i)), \label{eq:aa}
    \\
    {\bf F} &:= \begin{pmatrix}(F_{11,RR'})_{R,R'\in\cR_1^2}&\cdots&(F_{1J,RR'})_{R,R'\in\cR_1\times\cR_J}\\\vdots & \ddots & \vdots\\(F_{J1,RR'})_{R,R'\in\cR_J\times\cR_1}&\cdots&(F_{JJ,RR'})_{R,R'\in\cR_J^2}\end{pmatrix}, \label{eq:F}
    \\
    F_{jj',RR'} &:= \sum_{i\in I_{j,R}\cap I_{j',R'}} \psi_j(S_i;\hat{\theta}_{\text{init}},\hat{\eta}(Z_i))\psi_{j'}(S_i;\hat{\theta}_{\text{init}},\hat{\eta}(Z_i)). \notag
\end{align}
The optimal weights for this partition are
\begin{equation}\label{eq:w_P-J}
    {\bf w} := {\bf F}^{-1} {\bf a},
\end{equation}
up to a constant multiplicative factor, following by the Cauchy--Schwarz inequality. The minimising variance is thus
$\hat{L}_{\SL}({\bf w}) = \big({\bf a}^\top {\bf F}^{-1}{\bf a}\big)^{-1}$. As such the quantity ${\bf a}^\top {\bf F}^{-1}{\bf a}$ can be used to motivate a splitting rule for decision trees, albeit a computationally more involved splitting rule. Note in particular that ${\bf F}$ takes the form of a $J\times J$ block matrix, with diagonal blocks being diagonal matrices. As such, for the special case of $J=1$ the minimising weights take the form ${\bf F}^{-1}{\bf a}=(F_{11,RR}^{-1}a_{1R})_{R\in\cR_1}$ and similarly the reciprocal of the minimising variance takes a simpler form ${\bf a}^\top {\bf F}^{-1}{\bf a}=\sum_{R\in\cR_1}F_{11,RR}^{-1}a_{1R}^2$, so that the goodness of fit metric as in ROSE decision trees for $J=1$ can be written as a function of the data only contained within the candidate region to be split; this property drives the computational leanness of ROSE decision trees matching that of classical CART random forests.

For a general $J>1$ however such quantities used for splitting are of a greater computational complexity. To trade off some finite sample optimality for favourable computation, we propose a computationally lean alternative to the ROSE random forest (ROSE random forest plus). The (potentially somewhat suboptimal) splits are generated using the ROSE decision tree algorithm (Algorithm~\ref{alg:rosetree}) as if only that individual $\psi_j$ were to be used. Given such splits (obtained from $J$ runs of Algorithm~\ref{alg:rosetree}) the joint optimal weight functions $(w_j)_{j\in[J]}$ can be estimated using the evaluation criterion~\eqref{eq:w_P-J}. A forest can then be generated from such trees by a mean aggregation. The full algorithm is given as Algorithm~\ref{alg:rose-random-forest-J}.

\RestyleAlgo{ruled}
\begin{algorithm}[ht]
\KwIn{Data $\left(S_i\right)_{i\in\cI}$ associated with the $\psi$ function~\eqref{eq:w-psi}; estimator $\hat{\eta}$ of nuisance functions; pilot estimator $\hat{\theta}$ of $\theta_P$; $c_{\text{split}}>0$ proportion of data chosen for each tree in the forest; ROSE tree hyperparameters.}

\For{$b\in[B]$} {

Generate random subsets of the data $\cI_{\text{split},b}$ and $\cI_{\text{eval},b}$ as disjoint subsets of $\cI$ with $|\cI_{\text{split},b}|, |\cI_{\text{eval},b}| \approx 2^{-1}c_{\text{split}}|\cI|$ (by bootstrapping or sampling without replacement).

\For{$j\in[J]$} {
    Grow a ROSE decision tree $T_{jb}$ as in Algorithm~\ref{alg:rosetree} (using only the $b$th split and evaluation data $\cI_{\text{split},b}$ and $\cI_{\text{eval},b}$), outputting leaves $\cL_{jb}$ and leaf index sets $(I_{jb,l})_{l\in\cL_{jb}}$.
}

    \For{$j\in[J]$ and $l\in\cL_{jb}$}{
        Calculate $a_{jl,b} = \sum_{i\in I_{jl,b}} \partial_\theta\psi_j(S_i;\hat{\theta}_{\text{init}},\hat{\eta}(Z_i))$.
        \\
        \For{$j'\in[J]$ and $l'\in\cL_{j'b}$}{
        Calculate $F_{jj',ll'} = \sum_{i\in I_{jl,b}\cap I_{j'l',b}} \psi_j(S_i;\hat{\theta}_{\text{init}},\hat{\eta}(Z_i))\psi_{j'}(S_i;\hat{\theta}_{\text{init}},\hat{\eta}(Z_i))$.
        
    }
    }

    Construct ${\bf a}_b = \text{vec}\big((a_{jl,b})_{j\in[J],l\in\cL_{jb}}\big)$.

    Construct ${\bf F}_b = \begin{pmatrix}(F_{11,ll'})_{l,l'\in\cL_{1b}^2}&\cdots&(F_{1J,ll'})_{l,l'\in\cL_{1b}\times\cL_{Jb}}\\\vdots & \ddots & \vdots\\(F_{J1,ll'})_{l,l'\in\cL_{Jb}\times\cL_{1b}}&\cdots&(F_{JJ,ll'})_{l,l'\in\cL_{Jb}^2}\end{pmatrix}$.

    Calculate ${\bf \hat{w}}_{b} = {\bf F}_b^{-1}{\bf a}_b$, giving weight values $(\hat{w}_{jlb})_{j\in[J],l\in\cL_{jb}}$ as in~\eqref{eq:bfw}

}

\For{$j\in[J]$} {
Calculate
$\hat{w}_j^{\roseplus}(z) = B^{-1}\sum_{b=1}^B \sum_{l\in\cL_{jb}} \ind_{(z\in l)} \hat{w}_{jlb}(z)$
}

\KwOut{ROSE random forest weight functions $\hat{w}_j^{\roseplus}:\R^{d}\to\R$ for each $j\in[J]$.}
\caption{ROSE random forests ($J>1$ finite)}
\label{alg:rose-random-forest-J}
\end{algorithm}

We present consistency of ROSE random forest plus (Algorithm~\ref{alg:rose-random-forest-J}) analogous to that of~Theorem~\ref{thm:rose_forest_consistency} for $J=1$ ROSE random forests. Note however that whilst Theorem~\ref{thm:rose_forest_plus_consistency} provides theoretical justification on the use of Algorithm~\ref{alg:rose-random-forest-J} through the asymptotic optimality of the evaluation criterion used in Algorithm~\ref{alg:rose-random-forest-J}, the proof only relies on the splitting rule for trees through the properties of Assumption~\ref{ass:forest}.

\begin{theorem}[Consistency of ROSE random forests plus]\label{thm:rose_forest_plus_consistency}
Let Assumptions~\ref{ass:data} and~\ref{ass:data-2} hold for a class of distributions  $\mathcal{P}$. 
Then, for $J\geq1$ finite, the ROSE random forest plus estimator $\hat{w}^{\roseplus}$ (Algorithm~\ref{alg:rose-random-forest-J}, satisfying Assumption~\ref{ass:forest}) is uniformly consistent in estimating the optimal weights~\eqref{eq:w_j}: for each $j\in[J]$,
    \begin{equation*}
        \E_P\Big[\big(\hat{w}_j^{\roseplus}(Z)-w_{P,j}^{\rose}(Z)\big)^2 \Biggiven \hat{w}_j^{\roseplus}\Big]
        = o_\cP(1).
    \end{equation*}
\end{theorem}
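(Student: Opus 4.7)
The plan is to generalise the argument of Theorem~\ref{thm:rose_forest_consistency} to the matrix-valued setting using the closed form~\eqref{eq:w_P-J} for the joint leaf-wise optimal weights. For each tree index $b$ and point $z\in\cZ$, write $l_{jb}(z)$ for the leaf of $T_{jb}$ containing $z$, and define the per-tree kernel weights $\omega_{ijb}(z):=|\{i'\in\cI_{\text{eval},b}:Z_{i'}\in l_{jb}(z)\}|^{-1}\ind_{\{Z_i\in l_{jb}(z),\,i\in\cI_{\text{eval},b}\}}$. Because the diagonal blocks of $\mathbf{F}_b$ are themselves diagonal (as $I_{jl,b}\cap I_{jl',b}=\emptyset$ for $l\neq l'$), a Schur-complement decomposition shows that the output $\hat{w}_{jlb}$ evaluated at any $z\in l$ equals the $j$-th coordinate of the solution $\bm{\lambda}\in\R^J$ of a local $J\times J$ linear system $\hat{G}_b(z)\bm{\lambda}=\hat{g}_b(z)$, where $\hat{g}_{jb}(z):=\sum_{i\in\cI}\omega_{ijb}(z)\partial_\theta\psi_j(S_i;\hat{\theta}_{\text{init}},\hat{\eta}(Z_i))$ and $\hat{G}_{jj'b}(z)$ is an overlap-averaged version of $\psi_j\psi_{j'}(S_i;\hat{\theta}_{\text{init}},\hat{\eta}(Z_i))$ across the two partitions $\cL_{jb}$ and $\cL_{j'b}$ near $z$.

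First, I would establish uniform consistency of each $\hat{g}_{jb}(z)$ for $\alpha_{P,j}(z):=\E_P[\partial_\theta\psi_j(S;\theta_P,\eta_P(Z))\given Z=z]$, and analogously of the diagonal entries $\hat{G}_{jjb}(z)$ for $G_{P,jj}(z):=\E_P[\psi_j^2(S;\theta_P,\eta_P(Z))\given Z=z]$, by reusing verbatim the variance--bias--nuisance-error decomposition~\eqref{eq:Delta}--\eqref{eq:Delta-tilde} from the proof of Theorem~\ref{thm:rose_forest_consistency}. The variance terms are controlled via the von~Bahr--Esseen inequality, the honesty property~\ref{P1} and Assumption~\ref{ass:bounded-moments}; the bias terms via the vanishing-diameter Lemma~\ref{lem:diam} and the H\"older continuity~\ref{ass:holder} (which extends to the products $\psi_j\psi_{j'}$ by Cauchy--Schwarz under Assumption~\ref{ass:bounded-cond-moments}); and the nuisance-error terms via Assumption~\ref{ass:consistency-and-DR} together with the $\sqrt{n}$-consistency of the pilot estimator $\hat{\theta}_{\text{init}}$ inherited from Theorem~\ref{thm:theta_est_asymp_norm}.

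The main obstacle is that the off-diagonal entries $\hat{G}_{jj'b}(z)$ for $j\neq j'$ involve sums over intersections $I_{jl,b}\cap I_{j'l',b}$ of leaves drawn from \emph{different} partitions, whose sizes are not directly controlled by the standard forest properties of Assumption~\ref{ass:forest}. I would handle this by passing to the common refinement of the two partitions at $z$: since the diameters of both $l_{jb}(z)$ and $l_{j'b}(z)$ vanish in probability by Lemma~\ref{lem:diam}, so does that of their intersection; combined with the bounded-density condition~\ref{ass:bounded-density}, the node-size property~\ref{P2}, and the evaluation/splitting independence guaranteed by~\ref{P1}, this suffices to show that the overlap cell contains a diverging number of evaluation points, permitting the same kernel-consistency bounds from the previous step to carry over to $\hat{G}_{jj'b}(z)$.

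Finally, Assumption~\ref{ass:bounded-cond-moments} provides, via diagonal dominance and Cauchy--Schwarz on the off-diagonal entries, a uniform lower bound on the smallest eigenvalue of $G_P(z)$, so that the map $G\mapsto G^{-1}$ is Lipschitz on the relevant limit set. A uniform Slutsky step (Lemma~\ref{lem:unif_slutsky}) then propagates the entry-wise convergence $(\hat{G}_b(z),\hat{g}_b(z))\to(G_P(z),\alpha_P(z))$ to convergence of $e_j^\top\hat{G}_b(z)^{-1}\hat{g}_b(z)$ to $w_{P,j}^{\rose}(z)$. Averaging over the $B$ trees, taking an $\E_P[\,\cdot\given\hat{w}_j^{\roseplus}]$ expectation, and applying a bounded-convergence step that mirrors the concluding display in the proof of Theorem~\ref{thm:rose_forest_consistency} delivers the required $o_{\cP}(1)$ bound on $\E_P[(\hat{w}_j^{\roseplus}(Z)-w_{P,j}^{\rose}(Z))^2\given\hat{w}_j^{\roseplus}]$.
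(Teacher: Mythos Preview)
The paper's proof is far terser than yours: it notes that the splitting step in Algorithm~\ref{alg:rose-random-forest-J} is separable over $j$ (so Assumption~\ref{ass:forest} and hence Lemma~\ref{lem:diam} apply to each tree $T_{jb}$), invokes Lemma~\ref{lem:unif_max} to reduce to a single fixed $j$, and then defers entirely to ``analogous arguments'' to Theorem~\ref{thm:rose_forest_consistency} without spelling out how the matrix evaluation~\eqref{eq:w_P-J} is handled. Your attempt to make this explicit is therefore more substantive than what the paper actually provides.

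However, your central structural claim does not hold. You assert that a Schur-complement decomposition reduces $\hat{\mathbf{w}}_b=\mathbf{F}_b^{-1}\mathbf{a}_b$ to a \emph{local} $J\times J$ system $\hat{G}_b(z)\bm{\lambda}=\hat{g}_b(z)$ at each $z$. But the $J$ trees $T_{1b},\ldots,T_{Jb}$ have \emph{different} partitions, so the off-diagonal blocks of $\mathbf{F}_b$ are not diagonal; the row of $\mathbf{F}_b\hat{\mathbf{w}}_b=\mathbf{a}_b$ indexed by $(j,l_{jb}(z))$ couples to every $\hat{w}_{j',l',b}$ for which $l'\cap l_{jb}(z)\neq\emptyset$, and there can be many such $l'$ in $\cL_{j'b}$, not only $l_{j'b}(z)$. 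Iterated Schur complements do not collapse this to a $J\times J$ system indexed solely by the leaves through $z$---your ``overlap-averaged'' $\hat{G}_{jj'b}(z)$ is not a well-defined coefficient until you already know the remaining entries of $\hat{\mathbf{w}}_b$. What is plausible asymptotically is that, as all leaf diameters vanish, weights on neighbouring leaves tend to the same H\"older-continuous limit and the non-local contributions can be replaced by local ones up to $o_\cP(1)$; but formalising that substitution requires a uniform-in-leaf a priori bound on $\hat{\mathbf{w}}_b$ that you have not established. A secondary gap: Cauchy--Schwarz bounds $|\E_P[\psi_j\psi_{j'}\mid Z]|$ but says nothing about its H\"older modulus, so Assumption~\ref{ass:holder} does not extend to cross-moments in the way you claim.
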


\begin{proof}
    Note that as the splitting construction of $\hat{w}_j^{\roseplus}$ (as in Algorithm~\ref{alg:rose-random-forest-J}) is separable over $j\in[J]$, it suffices (by Lemma~\ref{lem:unif_max}) to prove that for a single fixed $j\in[J]$
    \begin{equation*}
        \E_P\Big[\big(\hat{w}_j^{\roseplus}(Z)-w_{P,j}^{\rose}(Z)\big)^2\biggiven\hat{w}_j^{\roseplus}\Big] = o_\cP(1).
    \end{equation*}
    The proof then follows by analogous arguments to that of Theorem~\ref{thm:rose_forest_consistency} (see Appendix~\ref{appsec:consistency-proof-first}).
\end{proof}

\section{Further details of numerical experiments in Section~\ref{sec:numerical-results}}\label{appsec:sims}

We present further details and additional simulation resutls for the numerical experiments of Section~\ref{sec:numerical-results}.

\subsection{Further details for Figure~\ref{fig:EIF_performing_poorly}}\label{appsec:Fig2a}
For the experiment of Figure~\ref{fig:EIF_performing_poorly}a, we study the following partially linear model. Take $n=20\,000$ iid instances of the partially linear model with $Z\sim N_{10}\big({\bf 0},(0.9^{|j-k|})_{(j,k)\in[10]^2}\big)$ and
\begin{gather*}
    p(Z) := \text{expit}(3Z_1) \vee 0.01,
    \quad
    B\given Z \sim \text{Ber}(p(Z)),
    \quad
    X\given (Z,B) \sim N\bigg(\sum_{j=1}^5\text{expit}(Z_j) \;,\; \frac{B+0.01}{p(Z)}\bigg),
    \\
    Y \given (X,Z,B) \sim N\bigg(\theta_0X + \sum_{j=1}^5\text{expit}(Z_j) \;,\; \frac{B+0.01}{\sqrt{p(Z)}}\bigg).
\end{gather*}
The results are presented in Figure~\ref{fig:EIF_performing_poorly}a.

\subsection{Simulation 1: Further details}\label{appsec:sim1}

\subsubsection{Hyperparameter tuning details}
For both simulations, the nuisance functions $(l_0,m_0)$ are fitted using random forests with the \texttt{ranger} package~\citep{app-ranger}. Hyperparameter tuning is performed for the parameters in the grid $\texttt{sample.fraction}\allowbreak\in\{0.01,\allowbreak0.05,0.1\allowbreak,0.2\allowbreak,0.5\allowbreak,0.8\allowbreak,1.0\}$ and $\texttt{min.node.size}\in\{10\allowbreak,20\allowbreak,30\allowbreak,40\allowbreak,50\allowbreak,100\allowbreak,200\allowbreak,500\allowbreak,1000\allowbreak,2000\}$ (note that, except in the case if $\texttt{sample.fraction}=1.0$ no tuning selects parameters on the extremes of this grid). All other parameters are taken as default as per the \texttt{ranger} package, in particular: 500 trees per forest; the number of variables considered for a split at each node $\texttt{mtry}=\big\lceil\sqrt{d}\big\rceil=4$); and data sampling for each tree performed with replacement. For the hyperparameter tuning cross-validation was performed (with CV criterion of the out-of-bag squared error) with both ROSE random forests and locally efficient CART random forests having optimal maximum depth of two and three in Simulations 1a and 1b respectively.

For Simulation 1a, performing cross-validation selected for both $\E[Y\given Z]$ and $\E[X\given Z]$ estimators $(\texttt{sample.fraction},\allowbreak\texttt{min.node.size})\allowbreak=(0.05,20)$. For the semiparametric efficient estimator, the additional nuisance functions $(\sigma_0, h_0)$ (as in Section~\ref{sec:PLMmodel}) must be estimated, and are estimated via the same cross validation scheme, selecting optimal hyperparameter values of $(\texttt{sample.fraction},\texttt{min.node.size})=(1.0,100)$ for the $\sigma_0$-learner and $(\texttt{sample.fraction}, \texttt{min.node.size})=(1.0,20)$ for the $h_0$-learner. Note that the optimal \texttt{sample.fraction} parameter is much larger for the $h_0$-learner than the $m_0$-learner; a practical indication of the informal (motivating) notion that the $h_0$ nuisance function may be a `harder' nuisance function to estimate than the $m_0$ nuisance function (see e.g.~Figure~\ref{fig:surface}).

For Simulation 1b, cross validation is performed via the same scheme as for Simulation 1a, in this case for the three nuisance functions $\big(\E[X\given Z],\allowbreak \E[Y\given X,Z],\allowbreak \E[g(\E[Y\given X,Z])\given Z]\big)$, selecting optimal hyperparameters of $\texttt{min.node.size}=10$ for all three nuisance functions, and $\texttt{sample.fraction}=0.05, 1.0$ and $0.1$ for the three nuisance functions respectively. For the semiparametric efficient estimator, the two additional nuisance functions $\sigma_0(X,Z) := \E[g'(\mu(X,Z))^2(Y-\mu(X,Z))^2\given X,Z]$ and $h_0(Z) := \E[\sigma_0^{-2}(X,Z)\given Z]^{-1}\E[\sigma_0^{-2}(X,Z) X \given Z]$ are estimatied by the same cross-validation scheme, selecting $(\texttt{min.node.size},\allowbreak\texttt{sample.fraction})\allowbreak =\allowbreak (100,\allowbreak1.0)$ and $(20,\allowbreak0.5)$ respectively.

\subsection{Simulation 2: Further details}\label{appsec:sim2}

\subsubsection{Hyperparameter tuning details}
To estimate weights in both cases we use minimum node size 10, and select data for each tree by random selection with replacement (with \texttt{sample.fraction}=1). We consider trees of depth 1 to 15 (note that the optimal trees in every simulation are not at the extrema in both cases) with the optimal tree depth selected via cross-validation. For the ROSE random forests we use the sandwich loss cross-validation criterion. For the CART random forests we use the standard prediction error CART cross-validation criterion.

\subsubsection{Additional details on simulation results}

Figure~\ref{fig:sim2-app} displays the mean squared error of the $\theta_0$ estimators (weighted with ROSE and CART random forests) for each tree depth (ranging from 0 to 15). In particular, the suboptimal performance of CART random forests in this simulation are likely a result of two properties: a suboptimal splitting rule; and the CART cross-validation criterion not targetting the mean squared error of the resulting $\theta_0$ estimator as accurately as the sandwich loss cross-validation criterion. In the example given in Figure~\ref{fig:sim2-app} any choice of fixed tree depth from 5 to 9 would actually perform better than carrying out cross-validation to find the supposed `optimal' tree depth. This suggests perhaps a wider concern for CART random forests in this setting; it appears performing hyperparameter tuning over a wider range of hyperparameters for the CART random forests can result in poorer performance for the inferential task of interest.

\begin{figure}[ht]
   \centering
   \includegraphics[width=0.9\textwidth]{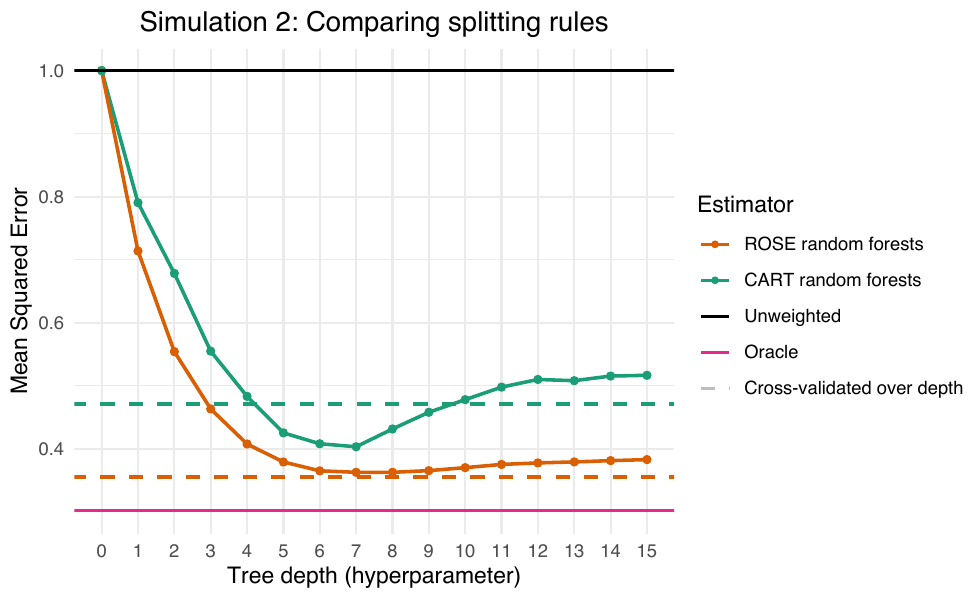}
    \caption{Mean squared error (as a ratio to the unweighted estimator) of estimators weighted by ROSE and CART random forests with varying tree depths for the simulation of Section~\ref{sec:sim2} with $n=6400$ (1000 simulations).}
   \label{fig:sim2-app}
\end{figure}

\subsection{Simulation 3: Further details}\label{appsec:sim3}

\subsubsection{Hyperparameter tuning details}

As in Simulations~1 and~2 all nuisance functions except for $\PP(X\given Z)$ are fitted with random forests using the \texttt{ranger} package, with hyperparameter tuning for  $\texttt{sample.fraction}\in\{0.01,\allowbreak 0.05,\allowbreak 0.1,\allowbreak 0.2,\allowbreak 0.5,\allowbreak 0.8,\allowbreak 1.0\}$ and $\texttt{min.node.size}\in\{10, \allowbreak 20, \allowbreak 30, \allowbreak 40, \allowbreak 50, \allowbreak 100, \allowbreak 200, \allowbreak 500, \allowbreak 1000, \allowbreak 2000\}$. All other hyperparameters are taken as the \texttt{ranger} package default parameters (for example 500 trees per forest). For each sample size $n$, to determine the optimal hyperparameters a grid search is used, with cross validation-criterion for all nuisance function the squared error (or in the case of estimating the nuisance function $h_0$ the weighted least squared error). The nuisance function $\PP(X\given Z)$ is fitted using probability random forests via the \texttt{grf} package~\citep{app-grf-code} with hyperparameter tuning as above. For the two ROSE random forests the maximum depth was taken to be 5 for every sample size; this was the optimal hyperparameter obtained via cross-validation with the sandwich loss criterion.

\subsubsection{Additional details on simulation results}

Additional details on the estimators of Simulation~3 (Section~\ref{sec:sim3}) are presented in Table~\ref{tab:sim3-appendix}. For a discussion see Section~\ref{sec:sim3}.

\begin{table}
	\begin{center}
		\begin{tabular}{c|ccccc}
			\toprule
                \multirow{3}{*}{Sample Size, $n$} & \multicolumn{5}{c}{Simulation 3}
			\\
                & \begin{tabular}{@{}c@{}c@{}} Squared Bias\\$(\times 10^{-5})$\end{tabular} & \begin{tabular}{@{}c@{}c@{}}Variance\\$(\times 10^{-5})$\end{tabular} &\begin{tabular}{@{}c@{}c@{}} MSE (ratio to\\unweighted) \end{tabular}
                &
                \begin{tabular}{@{}c@{}c@{}} \underline{Squared Bias}\\MSE \end{tabular}
                &
                \begin{tabular}{@{}c@{}c@{}} Coverage \\($95\%$) \end{tabular}
                \\
			\midrule
            \multicolumn{5}{l}{\,Unweighted estimator} 
            \\
            \midrule
			$10\,000$ & 0.001 & 8.146 & 1 & 0.016\% & 95.4\% \\
			$20\,000$ & 0.002 & 4.171 & 1 & 0.049\% & 94.7\% \\
			$40\,000$ & 0.0004 & 2.080 & 1 & 0.018\% & 94.8\% \\
                $80\,000$ & 0.0006 & 0.997 & 1 & 0.065\% & 95.2\% \\
		\midrule
            \multicolumn{5}{l}{\,ROSE random forest ($J=1$) estimator} 
            \\
            \midrule
			$10\,000$ & 0.018 & 7.984 & 0.982 & 0.224\% & 95.3\% \\
			$20\,000$ & 0.009 & 4.035 & 0.969 & 0.228\% & 94.7\% \\
			$40\,000$ & 0.004 & 1.998 & 0.963 & 0.220\% & 94.8\% \\
                $80\,000$ & 0.003 & 0.955 & 0.960 & 0.330\% & 95.3\% \\
		\midrule
  \multicolumn{5}{l}{\,ROSE random forest ($J=2$) estimator} 
            \\
            \midrule
			$10\,000$ & 0.006 & 7.425 & 0.912 & 0.079\% & 95.4\% \\
			$20\,000$ & 0.005 & 3.770 & 0.905 & 0.120\% & 94.4\% \\
			$40\,000$ & 0.002 & 1.839 & 0.885 & 0.132\% & 94.9\% \\
                $80\,000$ & 0.0005 & 0.876 & 0.879  & 0.062\% & 95.4\% \\
		\midrule
  \multicolumn{5}{l}{\,Semiparametric efficient estimator} 
            \\
            \midrule
			$10\,000$ & 1.089 & 6.659 & 0.951 & 14.05\% & 93.2\% \\
			$20\,000$ & 0.855 & 3.429 & 1.027 & 19.96\% & 91.2\% \\
			$40\,000$ & 0.405 & 1.676 & 1.001 & 19.45\% & 91.5\% \\
                $80\,000$ & 0.286 & 0.786 & 1.075 & 26.65\% & 90.4\% \\
			\bottomrule
		\end{tabular}                                                                      
		\caption{Results of Simulation 3 for different sample sizes (4000 simulations).}\label{tab:sim3-appendix}
	\end{center}
\end{table}

\subsection{Real-world data: Further details}\label{appsec:real-world}

For the real-world data example, all nuisance functions are fit with random forests using the \texttt{ranger} package~\citep{app-ranger} and consist of 500 trees, with hyperparameters selected for each random forest using the \texttt{tuneRanger} package~\citep{app-tuneRanger}. In the case of the ROSE random forest weight function, each forest consists of 500 trees, each of maximum depth 20 and minimum node size 5. All estimators employ cross-fitting (DML2) with $K=10$ folds.

\section{Auxiliary lemmas for uniform convergence results}

For the uniform convergence results we prove we require the following lemmas.

\begin{lemma}\label{lem:uniform-simpleones}
    Let $(X_n)_{n\in\mathbb{N}}$ and $(Y_n)_{n\in\mathbb{N}}$ be sequences of real-valued random variables. Then
    \begin{enumerate}[label=(\roman*)]
        \item If $X_n=o_\cP(1)$ and $Y_n=o_\cP(1)$ then $X_n+Y_n=o_\cP(1)$.
        \item If $X_n=o_\cP(1)$ and $Y_n=O_\cP(1)$ then $X_nY_n=o_\cP(1)$.
        \item If $X_n=o_\cP(1)$ and $Y_n\leq X_n$ for all $n\in\mathbb{N}$ uniformly over $P\in\cP$ then $Y_n=o_\cP(1)$.
        \item If there exists some $\alpha\geq1$ such that $\E_P\left[\left|X_n\right|^\alpha\big|Y_n\right] = o_\cP(1)$ then $X_n=o_\cP(1)$.
    \end{enumerate}
\end{lemma}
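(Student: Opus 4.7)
The plan is to prove each of the four parts directly from the definitions
\[
A_{P,n}=o_\cP(1)\iff\lim_{n\to\infty}\supP\PP_P(|A_{P,n}|>\epsilon)=0\;\;(\forall\epsilon>0),
\]
together with the analogous statement for $O_\cP(1)$, by uniformising the standard pointwise $o_P$/$O_P$ arguments. The key observation throughout is that every pointwise bound I will use is of the form ``sum of probabilities of events that depend only on $\epsilon$, $\delta$ and $n$'', so $\supP$ passes through cleanly.

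For (i), decompose $\{|X_n+Y_n|>\epsilon\}\subseteq\{|X_n|>\epsilon/2\}\cup\{|Y_n|>\epsilon/2\}$, union-bound, then take $\supP$. For (ii), fix $\epsilon,\delta>0$; by $Y_n=O_\cP(1)$ choose $M=M_{\delta/2}$ and $N=N_{\delta/2}$ with $\sup_{n\geq N}\supP\PP_P(|Y_n|>M)<\delta/2$, then bound $\{|X_nY_n|>\epsilon\}\subseteq\{|X_n|>\epsilon/M\}\cup\{|Y_n|>M\}$ and use that $\supP\PP_P(|X_n|>\epsilon/M)\to0$ by $X_n=o_\cP(1)$. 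For (iii) (interpreting the hypothesis in its natural setting where both $X_n$ and $Y_n$ are nonnegative, which is how this fact is invoked in the preceding proofs, so that $Y_n\leq X_n$ almost surely gives $\{|Y_n|>\epsilon\}\subseteq\{|X_n|>\epsilon\}$), monotonicity of $\PP_P$ yields $\PP_P(|Y_n|>\epsilon)\leq\PP_P(|X_n|>\epsilon)$, and the conclusion follows on taking $\supP$.

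The main (and essentially only) non-routine part is (iv), which needs a conditional Markov step. The plan is to fix $\epsilon,\delta>0$ and decompose
\[
\PP_P(|X_n|>\epsilon) \leq \PP_P\!\bigl(\E_P[|X_n|^\alpha\given Y_n]>\delta\bigr) + \E_P\!\bigl[\ind_{\{\E_P[|X_n|^\alpha\given Y_n]\leq\delta\}}\PP_P(|X_n|^\alpha>\epsilon^\alpha\given Y_n)\bigr].
\]
Applying conditional Markov inside the expectation on the right bounds the second term by $\epsilon^{-\alpha}\delta$, uniformly in $P$. Taking $\supP$ on both sides, the first term on the right tends to zero as $n\to\infty$ by the hypothesis that $\E_P[|X_n|^\alpha\given Y_n]=o_\cP(1)$, so $\limsup_{n\to\infty}\supP\PP_P(|X_n|>\epsilon)\leq\epsilon^{-\alpha}\delta$; sending $\delta\downarrow0$ gives $X_n=o_\cP(1)$. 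I expect this last conditional-splitting step to be the one the reader benefits most from seeing written out, since it is the only place where the conditioning on $Y_n$ does genuine work.
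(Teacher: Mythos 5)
Your proof is correct. Note that the paper states Lemma~\ref{lem:uniform-simpleones} without any proof, treating all four parts as routine consequences of the definitions of $o_\cP(1)$ and $O_\cP(1)$, so there is no paper argument to compare against; what you have written supplies exactly the standard uniformised union-bound and Markov arguments that the paper leaves implicit, and the $\supP$ does indeed pass through each bound since the bounding events depend only on $\epsilon$, $\delta$, $M$ and $n$. Two small remarks. First, on (iii): you are right that the statement as literally written fails for signed $Y_n$ (e.g.\ $X_n\equiv 0$, $Y_n\equiv -n$); your reading that the comparison is intended for nonnegative quantities, so that $\{|Y_n|>\epsilon\}\subseteq\{|X_n|>\epsilon\}$, is the correct one and matches how the lemma is used in the paper's arguments, where the dominated sequences are nonnegative — making this explicit is appropriate. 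Second, your conditional-Markov split in (iv) is sound: bounding $\PP_P(|X_n|>\epsilon)$ by $\PP_P\big(\E_P[|X_n|^\alpha\given Y_n]>\delta\big)+\epsilon^{-\alpha}\delta$ and sending $\delta\downarrow 0$ after the limit in $n$ is exactly what is needed. An equivalent route, more in the style of the paper's toolkit, is to write $\PP_P(|X_n|>\epsilon)\leq \E_P\big[\min\{1,\epsilon^{-\alpha}\E_P[|X_n|^\alpha\given Y_n]\}\big]$ and invoke the boundedness argument of Lemma~\ref{lem:bounded_op1}; your direct $\delta$-split achieves the same conclusion without that extra machinery.
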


\begin{lemma}\label{lem:unif_max}
    Let $(X_n^{(1)})_{n\in\mathbb{N}}, \ldots, (X_n^{(K)})_{n\in\mathbb{N}}$ be sequences of real-valued random variables satisfying $X_n^{(k)}=o_{\mathcal{P}}(1)$ for all $k\in[K]$, where $K\in\mathbb{N}$ is finite. Then $\max_{k\in[K]}X_n^{(k)} = o_{\mathcal{P}}(1)$.
\end{lemma}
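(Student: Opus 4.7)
The plan is to reduce the assertion to a union bound after unpacking the uniform little-$o_\mathcal{P}$ notation. Recall from Section~\ref{sec:notation} that $X_n^{(k)}=o_\mathcal{P}(1)$ means precisely $\lim_{n\to\infty}\sup_{P\in\cP}\PP_P(|X_n^{(k)}|>\epsilon)=0$ for every $\epsilon>0$. So I want to establish the analogous statement for $\max_{k\in[K]}X_n^{(k)}$.

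First I would observe the deterministic set inclusion
\begin{equation*}
\Big\{\big|\max_{k\in[K]}X_n^{(k)}\big|>\epsilon\Big\} \;\subseteq\; \bigcup_{k\in[K]}\big\{|X_n^{(k)}|>\epsilon\big\},
\end{equation*}
which holds because $\max_{k\in[K]}X_n^{(k)}>\epsilon$ forces $X_n^{(k)}>\epsilon$ for some $k$, while $\max_{k\in[K]}X_n^{(k)}<-\epsilon$ forces $X_n^{(k)}<-\epsilon$ for every $k$; in either case at least one of the $|X_n^{(k)}|$ exceeds $\epsilon$. Applying $\PP_P$, the union bound, and then taking a supremum over $P\in\cP$ yields
\begin{equation*}
\sup_{P\in\cP}\PP_P\Big(\big|\max_{k\in[K]}X_n^{(k)}\big|>\epsilon\Big) \;\leq\; \sum_{k=1}^{K}\sup_{P\in\cP}\PP_P\big(|X_n^{(k)}|>\epsilon\big).
\end{equation*}

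The final step is to let $n\to\infty$. Since $K$ is finite and each summand on the right tends to $0$ by hypothesis (applied at the fixed level $\epsilon$), the right-hand side vanishes. As $\epsilon>0$ was arbitrary, this is exactly the definition of $\max_{k\in[K]}X_n^{(k)}=o_\mathcal{P}(1)$. There is no real obstacle here: the only subtlety worth flagging is that $|\max_k X_n^{(k)}|$ is not simply $\max_k|X_n^{(k)}|$, so the set inclusion above must be justified by the two-sided case analysis rather than by swapping max and absolute value. Finiteness of $K$ is essential and enters through the union bound.
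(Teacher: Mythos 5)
Your proof is correct and follows essentially the same route as the paper's: a union bound over the finitely many indices after controlling $|\max_k X_n^{(k)}|$ by the individual $|X_n^{(k)}|$ (the paper routes this through the pointwise inequality $|\max_k X_n^{(k)}|\leq\max_k|X_n^{(k)}|$, which your two-sided case analysis effectively re-derives). Nothing is missing.
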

\begin{proof}
    For any $\epsilon>0$
    \begin{equation*}
        \sup_{P\in\mathcal{P}}\PP_P\Big(
        \big|\max_{k\in[K]}X_n^{(k)}\big| > \epsilon\Big)
        \leq \sup_{P\in\mathcal{P}}\PP_P\Big(\max_{k\in[K]}\big|X_n^{(k)}\big|>\epsilon\Big)
        \leq\sum_{k=1}^K\sup_{P\in\mathcal{P}}\PP_P\Big(\big|X_n^{(k)}\big|>\epsilon\Big)
        = o(1).
    \end{equation*}
\end{proof}

\begin{lemma}[\citet{app-shahpeters}, Lemma 25]\label{lem:bounded_op1}
    Let $(X_n)_{n\in\mathbb{N}}$ be a sequence of non-negative real-valued random variables such that $X_n = o_\cP(1)$. Suppose $X_n\leq C$ for all $n\in\mathbb{N}$ for some $C>0$. Suppose $\alpha\geq 1$. Then
    \begin{equation*}
        \supP \E_P\left[X_n^\alpha\right] = o(1).
    \end{equation*}
\end{lemma}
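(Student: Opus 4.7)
The plan is to execute a direct truncation argument that decomposes the moment into a small-value part, controlled deterministically by the cut-off, and a large-value part, controlled by the uniform convergence in probability together with the almost sure bound $X_n \leq C$. Since $X_n^\alpha$ is nonnegative and bounded by $C^\alpha$, this is essentially the only tool we need.

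Concretely, I would fix an arbitrary $\epsilon > 0$ and split
\[
\E_P[X_n^\alpha] = \E_P[X_n^\alpha \ind_{\{X_n \leq \epsilon\}}] + \E_P[X_n^\alpha \ind_{\{X_n > \epsilon\}}] \leq \epsilon^\alpha + C^\alpha\, \PP_P(X_n > \epsilon).
\]
Taking the supremum over $P \in \cP$ on both sides gives
\[
\sup_{P \in \cP} \E_P[X_n^\alpha] \leq \epsilon^\alpha + C^\alpha \sup_{P \in \cP} \PP_P(X_n > \epsilon).
\]
By the hypothesis $X_n = o_\cP(1)$, the second term on the right hand side tends to zero as $n \to \infty$, so $\limsup_{n \to \infty} \sup_{P \in \cP} \E_P[X_n^\alpha] \leq \epsilon^\alpha$. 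Letting $\epsilon \downarrow 0$ then yields $\sup_{P \in \cP} \E_P[X_n^\alpha] = o(1)$, as required.

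There is no real obstacle here; the only subtlety worth highlighting is that the uniform boundedness $X_n \leq C$ is essential for making the tail contribution uniform in $P$ (without it one would need a uniform integrability hypothesis in place of the almost sure bound). The argument uses $\alpha \geq 1$ only implicitly through $\epsilon^\alpha \to 0$ as $\epsilon \downarrow 0$, which in fact holds for any $\alpha > 0$, so the conclusion could be stated slightly more generally, but the stated regime $\alpha \geq 1$ is all that is needed elsewhere in the paper.
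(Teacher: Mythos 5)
Your proof is correct and is essentially identical to the paper's own argument: the same truncation of $X_n^\alpha$ at level $\epsilon$, bounding the large-value part by $C^\alpha\,\PP_P(X_n>\epsilon)$ via the almost sure bound, and then sending $\epsilon\downarrow 0$. Your remark that $\alpha\geq 1$ could be relaxed to $\alpha>0$ is accurate but inessential.
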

\begin{proof}
    \citet{app-shahpeters} proves the following result for $\alpha=1$ (the extension to $\alpha\geq1$ given here follows trivially).
    For any $\epsilon>0$,
    \begin{equation*}
        X_n^\alpha = X_n^\alpha\ind_{\left\{\left|X_n\right|>\epsilon\right\}} + X_n^\alpha\ind_{\left\{\left|X_n\right|\leq\epsilon\right\}}
        \leq C^\alpha\ind_{\left\{\left|X_n\right|>\epsilon\right\}} + \epsilon^\alpha,
    \end{equation*}
    and so
    \begin{equation*}
        \supP\E_P\left[X_n^\alpha\right] \leq C^\alpha \PP_P(X_n>\epsilon) + \epsilon^\alpha = o(1) + \epsilon^\alpha,
    \end{equation*}
    as $X_n=o_\cP(1)$. Taking $\epsilon\downarrow 0$ completes the proof.
\end{proof}

\begin{lemma}[\citet{app-shahpeters}, Lemma 18]\label{lem:unif_clt}
Let $(X_{n,i})_{n\in\mathbb{N},i\in[n]}$ be a triangular array of real-valued random variables satisfying: (i) $X_{n,1},\ldots,X_{n,n}$ are independent; (ii) $\E_P[X_{n,i}]=0$ for all $n\in\mathbb{N}, i\in[n], P\in\cP$; (iii) there exists some $\Delta>0$ such that $$\lim_{n\to\infty}\supP\bigg(\sum_{i=1}^n\E_P\big[X_{n,i}^2\big]\bigg)^{-\big(1+\frac{\Delta}{2}\big)} \bigg(\sum_{i=1}^n\E_P\big[|X_{n,i}|^{2+\Delta}\big]\bigg) = 0.$$ Then $S_n := \big(\sum_{i=1}^n \E_P[X_{n,i}^2]\big)^{-\frac{1}{2}}\big(\sum_{i=1}^n X_{n,i}\big)$ converges uniformly in distribution to $N(0,1)$ i.e.
\begin{equation*}
    \lim_{I\to\infty}\sup_{P\in\mathcal{P}_I}\sup_{t\in\R}\left|\PP_{P}\left(S_I\leq t\right)-\Phi(t)\right|=0.
\end{equation*}

\end{lemma}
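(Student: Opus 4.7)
The plan is to deduce uniform distributional convergence from a pointwise (non-asymptotic) Berry--Esseen-type inequality whose right-hand side is an explicit function of the moments of the $X_{n,i}$. The structure is: bound $\sup_{t}|\PP_P(S_n\leq t)-\Phi(t)|$ by $C_\Delta L_n(\Delta,P)$ for each $P$, with a constant $C_\Delta$ depending on $\Delta$ but independent of $P$, where
\[
L_n(\Delta,P) := \bigg(\sum_{i=1}^n\E_P[X_{n,i}^2]\bigg)^{-(1+\Delta/2)}\sum_{i=1}^n\E_P\big[|X_{n,i}|^{2+\Delta}\big]
\]
is the Lyapunov ratio. Taking $\supP$ on both sides and invoking assumption (iii) then yields the claim.

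First, I would reduce to the regime $\Delta\in(0,1]$. By log-convexity of $p\mapsto\log\E_P[|X_{n,i}|^p]$ (equivalently, by H\"older's inequality), for any $0<\Delta'\leq\Delta$ and $\theta:=(\Delta-\Delta')/\Delta\in[0,1)$ one has $\E_P[|X_{n,i}|^{2+\Delta'}]\leq \E_P[X_{n,i}^2]^{\theta}\,\E_P[|X_{n,i}|^{2+\Delta}]^{1-\theta}$. Summing over $i$ and applying H\"older to the resulting sum gives $L_n(\Delta',P)\leq L_n(\Delta,P)^{1-\theta}$, so assumption (iii) for some $\Delta>0$ propagates to any smaller positive exponent, and we may assume $\Delta\in(0,1]$ without loss of generality.

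Second, I would invoke the Berry--Esseen inequality for independent (not necessarily identically distributed) triangular arrays with finite $(2+\Delta)$-th moment, $\Delta\in(0,1]$: the classical Berry--Esseen theorem for $\Delta=1$, or Bikelis' extension for $\Delta\in(0,1)$. These yield an absolute constant $C_\Delta>0$ such that for every $P\in\cP$ and every $n\in\mathbb{N}$,
\[
\sup_{t\in\R}\big|\PP_P(S_n\leq t)-\Phi(t)\big| \leq C_\Delta\,L_n(\Delta,P).
\]
Since $C_\Delta$ does not depend on $P$, taking $\supP$ and then letting $n\to\infty$ gives
\[
\limsup_{n\to\infty}\supP\sup_{t\in\R}\big|\PP_P(S_n\leq t)-\Phi(t)\big|\;\leq\;C_\Delta\limsup_{n\to\infty}\supP L_n(\Delta,P)\;=\;0
\]
by assumption (iii), and the nonnegativity of the left-hand side upgrades $\limsup$ to $\lim$, yielding the stated uniform convergence.

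The only substantive ingredient is the pointwise Berry--Esseen-type bound in the second step; everything else is either H\"older algebra or a direct appeal to (iii). If one prefers a self-contained derivation rather than citing Bikelis, the standard route is via characteristic functions: a second-order Taylor expansion of $\E_P[\exp(itX_{n,i}/s_n)]$ with $s_n^2=\sum_i\E_P[X_{n,i}^2]$, combined with a $(2+\Delta)$-th moment remainder estimate, yields $|\phi_{S_n}(t)-e^{-t^2/2}|\leq C_\Delta L_n(\Delta,P)|t|^{2+\Delta}$ on a window $|t|\leq c\,L_n(\Delta,P)^{-1/\Delta}$; Esseen's smoothing lemma then converts this into the required supremum distance bound, with a constant depending only on $\Delta$. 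The crucial point throughout is that every constant is absolute in $P$, which is precisely what enables the uniform-in-$\cP$ conclusion.
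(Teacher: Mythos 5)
Your proposal is correct, but it proves the lemma by a different route from the one the paper relies on: the paper gives no proof at all, importing the statement verbatim from \citet{app-shahpeters} (their Lemma 18), where the argument is the standard qualitative one — assume uniform convergence fails, extract a sequence $(P_n)\subset\cP$ along which the Kolmogorov distance stays bounded away from zero, verify the Lindeberg condition for the array under that sequence from the uniform Lyapunov condition (iii), and conclude via the Lindeberg--Feller CLT together with P\'olya's theorem to get uniformity in $t$, yielding a contradiction. You instead use a quantitative bound: the non-identically-distributed Berry--Esseen/Bikelis inequality $\sup_t|\PP_P(S_n\leq t)-\Phi(t)|\leq C_\Delta L_n(\Delta,P)$ with $C_\Delta$ independent of $P$, preceded by the Lyapunov/H\"older interpolation $L_n(\Delta',P)\leq L_n(\Delta,P)^{\Delta'/\Delta}$ to reduce to $\Delta\in(0,1]$ where that inequality is classically stated; both steps check out (the exponent bookkeeping in the interpolation is right, and the constant's independence of $P$ is exactly what licenses taking $\supP$). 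What your approach buys is a shorter, constructive argument with an explicit rate in terms of the Lyapunov ratio, at the cost of invoking a heavier external theorem; the subsequence-plus-Lindeberg route is more elementary in its ingredients and is the one the cited source actually uses. The only cosmetic caveat is that the displayed conclusion in the paper's statement has index typos ($I$, $\mathcal{P}_I$ in place of $n$, $\cP$), which your proof correctly treats as the intended limit in $n$; you also implicitly assume $\sum_i\E_P[X_{n,i}^2]>0$, which is needed for $S_n$ and $L_n$ to be defined and is implicit in (iii) as well.
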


\begin{lemma}[\citet{app-shahpeters}, Lemma 20]\label{lem:unif_slutsky}
Let \(\left(X_n\right)_{n\in\mathbb{N}}\) and \(\left(Y_n\right)_{n\in\mathbb{N}}\) be sequences of real-valued random variables. Suppose
\begin{equation*}
    \lim_{n\to\infty}\sup_{P\in\cP}\sup_{t\in\R}\left|\PP_{P}\left(X_n\leq t\right)-\Phi(t)\right| = 0.
\end{equation*}
Then we have:
\begin{enumerate}
    \item[(a)] \[\text{If } Y_I=o_\cP(1) \text{ then } \lim_{n\to\infty}\sup_{P\in\mathcal{P}}\sup_{t\in\R}\left|\PP_{P}\left(X_n+Y_n\leq t\right)-\Phi(t)\right| = 0;\]
    \item[(b)] \[\text{If } Y_n=1+o_{\mathcal{P}}(1) \text{ then } \lim_{n\to\infty}\sup_{P\in\mathcal{P}}\sup_{t\in\R}\left|\PP_{P}\left(X_n/Y_n\leq t\right)-\Phi(t)\right| = 0.\]
\end{enumerate}
\end{lemma}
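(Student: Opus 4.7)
The plan is to adapt the classical proof of Slutsky's theorem to the uniform setting by carefully propagating suprema over $P \in \mathcal{P}$ at each step, with the only new ingredient being uniform continuity of the Gaussian cdf $\Phi$, which supplies a single $\epsilon$-bound that works simultaneously over all $P$.

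For part (a), I would begin with the standard sandwich: for any $\epsilon > 0$ and any $t \in \mathbb{R}$,
\begin{equation*}
\mathbb{P}_P(X_n \le t - \epsilon) - \mathbb{P}_P(|Y_n| > \epsilon) \;\le\; \mathbb{P}_P(X_n + Y_n \le t) \;\le\; \mathbb{P}_P(X_n \le t + \epsilon) + \mathbb{P}_P(|Y_n| > \epsilon),
\end{equation*}
so that, subtracting $\Phi(t)$ and using $\Phi(t\pm \epsilon) - \Phi(t)$ as a bridge,
\begin{equation*}
\bigl|\mathbb{P}_P(X_n + Y_n \le t) - \Phi(t)\bigr| \;\le\; \sup_{s\in\mathbb{R}}\bigl|\mathbb{P}_P(X_n \le s) - \Phi(s)\bigr| + \omega_\Phi(\epsilon) + \mathbb{P}_P(|Y_n| > \epsilon),
\end{equation*}
where $\omega_\Phi(\epsilon) := \sup_{t\in\mathbb{R}} |\Phi(t+\epsilon)-\Phi(t-\epsilon)|$. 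Crucially, the right-hand side is uniform in $t$, so I can now take $\sup_{t}$ followed by $\sup_{P \in \mathcal{P}}$, and use that (i) the first term vanishes as $n \to \infty$ by hypothesis, (ii) the third term vanishes by $Y_n = o_\mathcal{P}(1)$, and (iii) $\omega_\Phi(\epsilon) \to 0$ as $\epsilon \downarrow 0$ by uniform continuity of $\Phi$. Letting $n \to \infty$ then $\epsilon \downarrow 0$ gives the claim.

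For part (b), the plan is to reduce to part (a) by writing $X_n / Y_n = X_n + X_n(1 - Y_n)/Y_n$ and showing the remainder $R_n := X_n(1 - Y_n)/Y_n$ is $o_\mathcal{P}(1)$. Two preliminary facts are needed: first, $X_n = O_\mathcal{P}(1)$, which follows from the uniform convergence hypothesis by choosing $M$ with $1-\Phi(M) + \Phi(-M) < \epsilon/2$ and then using uniform closeness of $\mathbb{P}_P(X_n \le \pm M)$ to $\Phi(\pm M)$ for large $n$; second, $1/Y_n = O_\mathcal{P}(1)$, which follows since $Y_n = 1 + o_\mathcal{P}(1)$ implies $\sup_P \mathbb{P}_P(|Y_n| < 1/2) \to 0$, so $1/Y_n$ is bounded by $2$ with probability tending to one uniformly in $P$. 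Combining with $1 - Y_n = o_\mathcal{P}(1)$ and using the elementary $o_\mathcal{P}\cdot O_\mathcal{P} = o_\mathcal{P}$ product rule (Lemma~\ref{lem:uniform-simpleones}(ii)) twice gives $R_n = o_\mathcal{P}(1)$. Applying part (a) to $X_n / Y_n = X_n + R_n$ concludes the proof.

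The main technical point, which is mild, is the product rule $O_\mathcal{P}(1) \cdot o_\mathcal{P}(1) = o_\mathcal{P}(1)$ for uniform-in-$P$ stochastic orders, since one must verify that the constants appearing in the $O_\mathcal{P}$ bound do not depend on $P$; this is straightforward from the definitions given in Section~\ref{sec:notation}. No single step is genuinely hard—the only care needed is to ensure that the $\epsilon$ chosen in the perturbation argument, and the $M$ chosen for the $O_\mathcal{P}(1)$ bound on $X_n$, do not depend on $P$, which is automatic because both are selected from the universal function $\Phi$ rather than from any $P$-dependent distribution.
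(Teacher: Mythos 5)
Your proof is correct and follows essentially the same route as the argument the paper relies on (the lemma is stated here only with a citation to Shah and Peters, Lemma 20, and no proof is reproduced in this paper): for (a) the sandwich bound plus uniform continuity of $\Phi$ yields a bound that is uniform in $t$ and $P$ before taking limits, and for (b) the reduction to (a) via $X_n/Y_n = X_n + X_n(1-Y_n)/Y_n$ together with the uniform $o_\cP$/$O_\cP$ product rule is the standard device. The only point worth a word is that the decomposition in (b) presupposes $Y_n \neq 0$, which is harmless because $\sup_{P\in\cP}\PP_P(|Y_n-1|>1/2)\to 0$ makes that event uniformly negligible.
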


\begin{lemma}\label{lem:unif_root-n-theta-moments-bounded}
    Let $(X_n)_{n\in\mathbb{N}}$ be a sequence of real-valued random variables that is uniformly asymptotically Gaussian
    \begin{equation}\label{eq:lemass}
        \lim_{n\to\infty}\sup_{P\in\cP}\sup_{t\in\R}\left|\PP_P\big(X_n\geq t\big)-\Phi(t)\right| = 0.
    \end{equation}
    Then for any $q\geq 1$,
    \begin{equation*}
        \supP\E_P\big[|X_n|^{q}\big] = \E_{Z\sim N(0,1)}\big[|Z|^q\big] + o(1) = O(1),
    \end{equation*}
    where $N(0,1)$ denotes a standard normal random variable.
\end{lemma}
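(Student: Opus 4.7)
The plan is to upgrade the uniform Kolmogorov-distance hypothesis to uniform convergence of $q$th moments by verifying uniform integrability of $\{|X_n|^q : n \in \mathbb{N}, P \in \cP\}$ and then invoking the standard moment-convergence theorem. First I would use the layer-cake identity to write
\[
\E_P[|X_n|^q] = \int_0^\infty q t^{q-1}\,\PP_P(|X_n|>t)\,dt,
\qquad
\E[|Z|^q] = \int_0^\infty q t^{q-1}\,\PP(|Z|>t)\,dt,
\]
so that it suffices to show that the $L^1$ distance between the two tail functions, weighted by $q t^{q-1}$, vanishes uniformly in $P \in \cP$. Setting $\epsilon_n := \sup_{P \in \cP}\sup_{t \in \R} |\PP_P(X_n \leq t) - \Phi(t)|$, the hypothesis \eqref{eq:lemass} gives $\epsilon_n \to 0$, and decomposing $\{|X_n|>t\}$ into $\{X_n>t\}\cup\{X_n<-t\}$ yields $\sup_{P,t} |\PP_P(|X_n|>t) - \PP(|Z|>t)| \leq 2\epsilon_n$.

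Next I would split the integral at a cutoff $M > 0$. The bulk contribution is handled directly by the uniform Kolmogorov bound:
\[
\sup_{P \in \cP}\int_0^M q t^{q-1}\bigl|\PP_P(|X_n|>t)-\PP(|Z|>t)\bigr|\,dt \leq 2\epsilon_n M^q \xrightarrow{n \to \infty} 0,
\]
and the Gaussian tail $\int_M^\infty q t^{q-1}\PP(|Z|>t)\,dt \to 0$ as $M \to \infty$ because $\E|Z|^q < \infty$. The substantive remaining task is to control $\sup_{P \in \cP}\int_M^\infty q t^{q-1} \PP_P(|X_n|>t)\,dt$ and make it arbitrarily small by choice of $M$ (uniformly in all $n$ sufficiently large), which is precisely uniform integrability of $\{|X_n|^q\}$.

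The main obstacle is exactly this tail control: the naive envelope $\PP_P(|X_n|>t) \leq 2(1-\Phi(t)) + 2\epsilon_n$ is not integrable against $t^{q-1}$ on $[M,\infty)$ because of the constant term $\epsilon_n$. To circumvent this I would introduce an intermediate threshold $t^*_n := \Phi^{-1}(1-\epsilon_n^{1/2})$, which is of order $\sqrt{\log(1/\epsilon_n)}$, and treat the regions $[M, t^*_n]$ and $[t^*_n, \infty)$ separately. On the intermediate region the envelope $q t^{q-1}[2(1-\Phi(t)) + 2\epsilon_n]$ integrates to something vanishing, because $\epsilon_n (t^*_n)^q = O\bigl(\epsilon_n (\log(1/\epsilon_n))^{q/2}\bigr) = o(1)$, and the Gaussian piece is absorbed into the tail of $\E|Z|^q$. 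On the far region $[t^*_n, \infty)$ one combines monotonicity of $t \mapsto \PP_P(|X_n|>t)$ with its smallness at $t^*_n$ (bounded by $2\epsilon_n^{1/2} + 2\epsilon_n \leq 4\epsilon_n^{1/2}$) and the Gaussian tail rate to close the integral. Chaining these three pieces---bulk, intermediate, far tail---delivers the uniform integrability and hence the conclusion $\supP\E_P[|X_n|^q] = \E|Z|^q + o(1)$, which is in particular $O(1)$.
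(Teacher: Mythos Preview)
Your far-tail step does not close. On $[t_n^*,\infty)$ the only control you have is the constant bound $\PP_P(|X_n|>t)\leq 4\epsilon_n^{1/2}$; there is no ``Gaussian tail rate'' for $\PP_P(|X_n|>t)$ available from the hypothesis, which gives only an \emph{additive} error $\epsilon_n$ in the distribution function, not a multiplicative one. Consequently $\int_{t_n^*}^\infty q t^{q-1}\PP_P(|X_n|>t)\,dt$ is bounded above only by $4\epsilon_n^{1/2}\int_{t_n^*}^\infty q t^{q-1}\,dt=\infty$, and the argument stalls exactly where you flagged the difficulty.

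This gap is not repairable: the lemma as stated is false. Take $\cP=\{P\}$ a singleton and let $X_n$ equal a standard normal $Z$ with probability $1-1/n$ and equal $n$ with probability $1/n$. Then $\sup_t|\PP(X_n\leq t)-\Phi(t)|\leq 1/n\to 0$, yet $\E|X_n|^q=(1-1/n)\E|Z|^q+n^{q-1}$, which diverges for $q>1$ and tends to $\E|Z|+1\neq\E|Z|$ for $q=1$. Convergence in Kolmogorov distance simply does not imply convergence of moments without a uniform-integrability hypothesis, and none is supplied.

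For comparison, the paper tries to close the same gap by asserting that for all large $n$ and all $P\in\cP$ one has $\PP_P(X_n\geq t)\leq 2(1-\Phi(t))$ for every $t\in\R$. The counterexample above defeats this too; even the benign shift $X_n=Z+\delta_n$ with $\delta_n>0$ fails it, since $(1-\Phi(t-\delta_n))/(1-\Phi(t))\to\infty$ as $t\to\infty$. Your diagnosis that the naive envelope $2(1-\Phi(t))+2\epsilon_n$ is the obstruction was correct; the point is that under the stated hypotheses no workaround exists.
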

\begin{proof}
    Using the `layer-cake' formula
    \begin{align*}
        \E\big[|X_n|^q\big] &=
        \int_0^\infty \PP_P\big(|X_n|^q>t\big) dt
        \\
        &= \int_0^\infty \PP_P\big(|X_n|\geq t^{\frac{1}{q}}\big) dt
        \\
        &= \int_0^{\infty} \Big\{1-\Phi\big(t^{\frac{1}{q}}\big)+\Phi\big(-t^{\frac{1}{q}}\big)\Big\}dt
        - \int_0^{\infty} \Big\{\PP_P\big(X_n\leq t^{\frac{1}{q}}\big)-\Phi\big(t^{\frac{1}{q}}\big)\Big\}dt
        \\
        &\qquad + \int_0^{\infty} \Big\{\PP_P\big(X_n\leq -t^{\frac{1}{q}}\big)-\Phi\big(-t^{\frac{1}{q}}\big)\Big\}dt
        \\
        &=\E\big[|Z|^q\big] - q\int_0^{\infty} t^{q-1}\Big\{\PP_P(X_n\leq t)-\Phi(t)\Big\}dt
        \\
        &\qquad + q\int_0^{\infty} t^{q-1}\Big\{\PP_P(X_n\leq -t)-\Phi(-t)\Big\}dt.
    \end{align*}
    where $Z\sim N(0,1)$. Now, we claim that there exists some $N\in\mathbb{N}$ such that for all $n\geq N$ and all $P\in\cP$
    \begin{equation*}
        \PP_P(X_n\geq t) \leq 2 (1-\Phi(t))
        \qquad
        (\forall t\in\R).
    \end{equation*}
    We show this claim holds by contradiction now. Suppose the claim does not hold; for all $N\in\mathbb{N}$ there exists some $n_0\geq N$, some $P_0\in\cP$ and some $t_0\in\R$ such that
    \begin{equation*}
        \PP_P(X_n>t_0) > 2(1-\Phi(t)).
    \end{equation*}
    Then
    \begin{multline*}
        \sup_{P\in\cP}\sup_{t\in\R}\left|\PP_P(X_n\leq t)-\Phi(t)\right|
        \geq
        \left|\PP_{P_0}(X_n\leq t_0)-\Phi(t_0)\right|
        \\
        \geq
        \left|\PP_{P_0}(X_n>t_0)\right| - \left|1-\Phi(t_0)\right|
        >
        |1-\Phi(t_0)|,
    \end{multline*}
    and thus
    \begin{equation*}
        \lim_{n\to\infty}\sup_{P\in\cP}\sup_{t\in\R}\left|\PP_P(X_n\leq t)-\Phi(t)\right| \geq |1-\Phi(t_0)|,
    \end{equation*}
    contradicting~\eqref{eq:lemass}. 
    Therefore, there exists some $N'\in\mathbb{N}$ such that
    \begin{equation*}
        \sup_{n\geq N}\supP\left|\PP_P(X_n\geq t)-(1-\Phi(t))\right| \leq (1-\Phi(t)).
    \end{equation*}
    
    Given $\epsilon>0$, let $N_\epsilon\in\mathbb{N}$ be such that for all $n\geq N_\epsilon$ and all $P\in\cP$
    \begin{equation*}
        \sup_{t\in\R} \left|\PP_P(X_n\leq t)-\Phi(t)\right| \leq \epsilon.
    \end{equation*}
    Then for all $n\geq N_\epsilon \vee N'$
    \begin{align*}
        &\left|q\int_0^{\infty}t^{q-1}\Big\{\PP_P(X_n\leq t)-\Phi(t)\Big\}dt\right|
        \\
        &\leq q\int_0^{\infty}t^{q-1}\left|\PP_P(X_n\leq t)-\Phi(t)\right|dt
        \\
        &\leq 
        q\int_0^M t^{q-1}\left|\PP_P(X_n\leq t)-\Phi(t)\right|dt
        +
        q\int_M^{\infty} t^{q-1}\left|\PP_P(X_n\leq t)-\Phi(t)\right|dt
        \\
        &\leq M^q\epsilon + q \int_M^{\infty} t^{q-1} (1-\Phi(t)) dt
        \\
        &= M^q\epsilon + I(M),
    \end{align*}
    where
    \begin{equation*}
        I(M) := \int_M^\infty \frac{t^q}{\sqrt{2\pi}}e^{-\frac{1}{2}t^2}dt.
    \end{equation*}
    Note that $\lim_{M\to\infty}I(M)=0$. Then taking $M:=\epsilon^{-\frac{1}{q+1}}$
    \begin{equation*}
        \left|q\int_0^{\infty}t^{q-1}\Big\{\PP_P(X_n\leq t)-\Phi(t)\Big\}dt\right| \leq \epsilon^{\frac{1}{q+1}} + I(\epsilon^{-\frac{1}{q+1}}).
    \end{equation*}
    As $\epsilon>0$ was taken to be arbitrary, taking $\epsilon\downarrow 0$ yields
    \begin{equation*}
        \supP q\int_0^{\infty}t^{q-1}\Big\{\PP_P(X_n\leq t)-\Phi(t)\Big\}dt = o(1).
    \end{equation*}
    Analogous arguments similarly show
    \begin{equation*}
        \supP q\int_0^\infty t^{q-1} \Big\{\PP_P(X_n\leq -t)+\Phi(-t)\Big\}
        =
        o(1),
    \end{equation*}
    and thus
    \begin{equation*}
        \supP\E_P\big[|X_n|^q\big] = \E_{Z\sim N(0,1)}\big[|Z|^q\big] + o(1).
    \end{equation*}
    The result then follows as $\E_{Z\sim N(0,1)}\big[|Z|^q\big]=2^{\frac{q-1}{2}}\pi^{-\frac{1}{2}}\Gamma\big(\frac{q}{2}+1\big)<\infty$ for any finite $q\geq 1$.
\end{proof}

\begin{lemma}\label{lem:unif_cons}
    Let $(\hat{\Psi}_{P,n}:\Theta\to\R)_{P\in\cP,n\in\mathbb{N}}$ be sequences of random real-valued functions and $(\Psi_P:\Theta\to\R)_{P\in\cP}$ be a set of deterministic functions such that
    \begin{equation*}
        \sup_{\theta\in\Theta}\big|\hat{\Psi}_{P,n}(\theta)-\Psi_P(\theta)\big| = o_{\cP}(1),
    \end{equation*}
    and for every $\epsilon>0$
    \begin{equation*}
        \inf_{P\in\cP}\inf_{\underset{|\theta-\theta_P|\geq\epsilon}{\theta\in\Theta}}\Psi_P(\theta) - \Psi_P(\theta_P) 
         > 0.
    \end{equation*}
    Then any sequence of estimators $\hat{\theta}_n$ with $\hat{\Psi}_{P,n}(\hat{\theta}_n)\leq \hat{\Psi}_{P,n}(\theta_P) + o_{\cP}(1)$ converges uniformly in probability to $\theta_P$;
    \begin{equation*}
        \hat{\theta}_n = \theta_P + o_{\cP}(1).
    \end{equation*}
\end{lemma}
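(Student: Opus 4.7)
The plan is to adapt the classical M-estimator consistency argument (cf.\ van der Vaart, \S5.2) by tracking how each bound depends on $P$ and showing every step holds uniformly over $\cP$. Fix an arbitrary $\epsilon>0$. By the well-separation assumption, set
\begin{equation*}
\eta(\epsilon) \;:=\; \inf_{P\in\cP}\inf_{\theta\in\Theta:\,|\theta-\theta_P|\geq\epsilon}\bigl\{\Psi_P(\theta)-\Psi_P(\theta_P)\bigr\} \;>\;0,
\end{equation*}
noting that $\eta(\epsilon)$ is a deterministic quantity not depending on $P$ or $n$.

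Next, on the event $\{|\hat{\theta}_n-\theta_P|\geq\epsilon\}$ we have $\Psi_P(\hat{\theta}_n)-\Psi_P(\theta_P)\geq\eta(\epsilon)$ by construction of $\eta(\epsilon)$. I would then perform the standard three-term decomposition
\begin{align*}
\Psi_P(\hat{\theta}_n)-\Psi_P(\theta_P)
&= \bigl[\Psi_P(\hat{\theta}_n)-\hat{\Psi}_{P,n}(\hat{\theta}_n)\bigr] + \bigl[\hat{\Psi}_{P,n}(\hat{\theta}_n)-\hat{\Psi}_{P,n}(\theta_P)\bigr] \\
&\quad + \bigl[\hat{\Psi}_{P,n}(\theta_P)-\Psi_P(\theta_P)\bigr] \\
&\leq 2\sup_{\theta\in\Theta}\bigl|\hat{\Psi}_{P,n}(\theta)-\Psi_P(\theta)\bigr| + R_{P,n},
\end{align*}
where $R_{P,n}:=\hat{\Psi}_{P,n}(\hat{\theta}_n)-\hat{\Psi}_{P,n}(\theta_P)$ satisfies $R_{P,n}\leq o_{\cP}(1)$ by the near-minimizer hypothesis. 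Combining gives the inclusion
\begin{equation*}
\bigl\{|\hat{\theta}_n-\theta_P|\geq\epsilon\bigr\} \;\subseteq\; \bigl\{2\sup_{\theta\in\Theta}|\hat{\Psi}_{P,n}(\theta)-\Psi_P(\theta)| + R_{P,n}\geq\eta(\epsilon)\bigr\}.
\end{equation*}

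Taking $\sup_{P\in\cP}\PP_P(\cdot)$ on both sides and splitting the right-hand event at the level $\eta(\epsilon)/2$ between its two additive pieces, each piece has uniform probability tending to $0$: the first by the uniform uniform-convergence hypothesis $\sup_\theta|\hat{\Psi}_{P,n}-\Psi_P|=o_\cP(1)$, and the second because $R_{P,n}$ is bounded above by an $o_\cP(1)$ term. Hence $\sup_{P\in\cP}\PP_P(|\hat{\theta}_n-\theta_P|\geq\epsilon)\to 0$, which is the definition of $\hat{\theta}_n=\theta_P+o_\cP(1)$.

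There is really no serious obstacle here; the argument is an exercise in bookkeeping. The only subtlety is the very first step: one must extract a uniform separation constant $\eta(\epsilon)$ that does not depend on $P$, which is precisely what the second hypothesis of the lemma supplies (it is stated with $\inf_{P\in\cP}$ outside the $\theta$-infimum for exactly this reason). Given that, every other inequality is pointwise in $P$ and the final passage to $\sup_P\PP_P$ is immediate.
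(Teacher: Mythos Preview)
Your proposal is correct and follows essentially the same approach as the paper's proof: both adapt the classical van der Vaart \S5.2 argument via the same three-term decomposition of $\Psi_P(\hat{\theta}_n)-\Psi_P(\theta_P)$, bound two terms by $2\sup_\theta|\hat{\Psi}_{P,n}-\Psi_P|$ and the third by the near-minimizer hypothesis, and then invoke the uniform well-separation constant (your $\eta(\epsilon)$, the paper's $\nu_\epsilon$) to pass to probabilities. The only difference is cosmetic ordering.
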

\begin{proof}
    We extend the proof of Theorem~5.7 of \citet{app-vandervaart} to the analogous result with uniform convergence in probability. Using the property of the sequence of estimators $(\hat{\theta}_n)_{n\in\mathbb{N}}$ as well as the first of the assumptions of the theorem, we have

    \begin{align*}
        \Psi_P(\hat{\theta}_n) - \Psi_P(\theta_P) 
        &=
        \big(\Psi_P(\hat{\theta}_n) - \hat{\Psi}_{P,n}(\hat{\theta}_n)\big) + \big(\hat{\Psi}_{P,n}(\theta_P) - \Psi_P(\theta_P)\big) + \big(\hat{\Psi}_{P,n}(\hat{\theta}_n) - \hat{\Psi}_{P,n}(\theta_P)\big) \\
        &\leq
        2\sup_{\theta\in\Theta}\big|\hat{\Psi}_{P,n}(\theta)-\Psi_P(\theta_P)\big| + o_{\mathcal{P}}(1) \\
        &= o_{\mathcal{P}}(1).
    \end{align*}
    By the second part of the assumption, for any $\epsilon>0$ there exists some $\nu_\epsilon>0$ such that $\Psi_P(\theta) \geq \Psi_P(\theta_P) + \nu_\epsilon$ for any $\theta\in\Theta$ satisfying $|\theta-\theta_P|>\epsilon$. 
    Hence,
    \begin{equation*}
        \sup_{P\in\mathcal{P}}\PP_P\big(|\hat{\theta}_n-\theta_P|>\epsilon\big)
        \leq
        \sup_{P\in\mathcal{P}}\PP_P\big(\Psi_P(\theta) \geq \Psi_P(\theta_P) + \nu_\epsilon\big)
        = o(1),
    \end{equation*}
    thus $\hat{\theta}_n = \theta_P + o_\cP(1)$.
\end{proof}

\end{document}